\documentclass{article}

\usepackage[english]{babel}

\usepackage[letterpaper,top=2cm,bottom=2cm,left=3cm,right=3cm,marginparwidth=1.75cm]{geometry}

\usepackage{amsmath}
\usepackage{graphicx}
\usepackage{amssymb}
\usepackage{amsthm}
\usepackage{braket}
\usepackage{enumerate}
\usepackage[colorlinks=true, allcolors=blue]{hyperref}
\usepackage{tikz-cd}
\usepackage{relsize}
\usepackage{keytheorems}

\usepackage{cleveref}

\usepackage{mathpazo}
\usepackage{euler}

\newcommand{\minis}{ {\mspace{1mu} }}

\newcommand{\blfootnote}[1]{%
  \begingroup
  \renewcommand\thefootnote{}\footnote{#1}%
  \addtocounter{footnote}{-1}%
  \endgroup
}

\newcommand{\subjclass}[2]{\blfootnote{\textbf{#1 Mathematics Subject Classification:}  #2}}
\newcommand{\keywords}[1]{\blfootnote{\textbf{Keywords and phrases:} #1}}

\newkeytheorem{theorem}
\newkeytheorem{ques}[name=Question]
\newkeytheorem{conj}[name=Conjecture]

\newtheorem{thm}{Theorem}[section]
\newtheorem{lemma}[thm]{Lemma}

\newtheorem{prop}[thm]{Proposition}

\newtheorem{cor}[thm]{Corollary}

\theoremstyle{definition}
\newtheorem{defn}[thm]{Definition}
\newtheorem{rmk}[thm]{Remark}
\newtheorem{ex}[thm]{Example}

\newtheoremstyle{stilecasi}
  {}% spazio sopra
  {}% spazio sotto
  {\upshape}% font del corpo (tondo/normale)
  {}% rientro
  {\bfseries}% font dell'intestazione (grassetto)
  {.}% punteggiatura dopo l'intestazione
  { }% spazio dopo l'intestazione
  {\thmname{#1}\thmnumber{ #2}\thmnote{($\minis$#3$\minis$)}}% specifica dell'intestazione

\theoremstyle{stilecasi}
\newtheorem{caso}{Case} % Senza [section], la numerazione è progressiva (1, 2, 3...)
\theoremstyle{plain}
\newtheorem{clamio}{Claim}[caso]

\crefname{thm}{Theorem}{Theorems}
\crefname{theorem}{Theorem}{Theorems}
\crefname{lemma}{Lemma}{Lemmas}
\crefname{prop}{Proposition}{Propositions}
\crefname{defn}{Defintion}{Definitions}
\crefname{rmk}{Remark}{Remarks}
\crefname{lemmadef}{Lemma-Definition}{Lemma-Definition}
\crefname{propdef}{Proposition-Definition}{Proposition-Definitions}
\crefname{ex}{Example}{Examples}
\crefname{cor}{Corollary}{Corollaries}
\crefname{ques}{Question}{Questions}
\crefname{conj}{Conjecture}{Conjectures}
\crefname{clamio}{Claim}{Claims}
\crefname{caso}{Case}{Cases}
\crefname{equation}{}{}
\Crefname{equation}{}{}
\crefformat{equation}{(#2#1#3)}

\newenvironment{dimclamio}[1][\proofname]{%
  \renewcommand{\proofname}{#1}%
  \begin{proof}[Proof of the claim.]%
}{%
  \end{proof}%
}

\numberwithin{equation}{section}

\newcommand{\affline}{\mathbb{A}}
\newcommand{\NN}{\mathbb{N}}
\newcommand{\PP}{\mathbb{P}}
\newcommand{\RR}{\mathbb{R}}
\newcommand{\CC}{\mathbb{C}}

\newcommand{\anellointeri}{{K^\circ}}
\newcommand{\massimale}{{K^{\circ\circ}}}

\newcommand{\cld}{\overline{D}}

\newcommand{\awanb}{{\mathbf{A}^1_K}}
\newcommand{\pwanb}{{\mathbf{P}^1_K}}
\newcommand{\hypb}{{\mathbf{H}_K}}
\newcommand{\bdisk}{{\mathcal{D}}}
\newcommand{\bannulus}{{\mathcal{A}}}
\newcommand{\PGL}{\operatorname{PGL}}
\newcommand{\bdv}{\operatorname{BDV}}
\newcommand{\cpa}{\operatorname{CPA}}
\newcommand{\ag}{\mathcal{AG}}
\newcommand{\supp}{\operatorname{supp}}
\newcommand{\aff}[1]{\operatorname{aff} \Set{#1}}
\newcommand{\spando}{\operatorname{span}}
\newcommand{\rf}{{\operatorname{Fix}_{>1}}}
\newcommand\dir[1]{%
    \mathchoice
        {\overrightarrow{#1}}%               1. inside \displaystyle (es. dentro \begin{equation})
        {\overrightarrow{\scriptstyle #1}}%  2. inside \textstyle (es. dentro la matematica nel testo $...$)
        {\overrightarrow{\scriptstyle #1}}%  3. inside \scriptstyle (nei pedici/apici di primo livello)
        {\overrightarrow{\scriptscriptstyle #1}}% 4. inside \scriptscriptstyle (nei pedici di secondo livello)
}
\newcommand{\meas}{\operatorname{Meas}}

\newcommand{\dep}{\operatorname{dep}}
\newcommand{\fatou}{\mathcal{F}}
\newcommand{\julia}{\mathcal{J}}

\newcommand{\fix}{{\operatorname{Fix}}}
\newcommand{\nafl}{{\operatorname{Fix}_{\geq 1}}}
\newcommand{\nas}{\operatorname{NAS}}
\newcommand{\uuno}{\mathbb{1}}
\newcommand{\id}{\operatorname{id}}
\newcommand{\uunononrepel}[1]{\uuno \left ( \substack{ #1 \text{ is a non-repelling} \\ \text{fixed direction}}\right )}

\title{Counting non-attracting subtrees\\
and the equidistribution of repelling points}
\author{Lorenzo Bottiglione}
\date{}

\begin{document}
\maketitle

\begin{abstract}
    Consider the action of a rational function on the Berkovich projective line over an algebraically closed field complete with respect to a nontrivial non-archimedean absolute value. We  say a connected component of the fixed locus is a ``non-attracting subtree'' if it contains no attracting points. We define a  ``degree of a non-attracting subtree,'' in terms of the local degrees of the rational function at its points. We show that the degrees of non-attracting subtrees sum up to the degree of the rational function. This counting result is obtained by studying the potential of a discrete measure charging each non-attracting subtree with mass equal to its degree. We prove that as we iterate the rational function these measures equidistribute to the equilibrium measure. Under certain conditions, notably in the case of polynomials, this reduces to the equidistribution of repelling points. We also show that   repelling type II points become negligible under iteration when the Lyapunov exponent is positive. These results give a partial and a complete answer to questions of Favre and Rivera-Letelier.
\end{abstract}
\keywords{Berkovich dynamics, equilibrium measure, fixed locus, repelling points, equidistribution.}
\subjclass{2020}{\href{https://mathscinet.ams.org/msc/msc2020.html?t=37P50}{37P50}, \href{https://mathscinet.ams.org/msc/msc2020.html?t=37P05}{37P05}.}

\section{Introduction}\label{introduzione}
    \paragraph{Background and motivation.} Let $K$ be an algebraically closed field that is complete with respect to a nontrivial non-archimedean absolute value $| \minis \cdot \minis |$ and let $R \in K(z)$ be a rational function. In \cite{MR2578470} Favre and Rivera-Letelier lay the foundations of the ergodic theory of the dynamical system given by the action of $R$ on the Berkovich projective line $\pwanb$ over $K$. The Berkovich projective line, introduced by Berkovich in \cite{MR1070709}, is a compact real tree, containing the set $\PP^1(K)$ of $K$-points (endowed with the topology induced by the absolute value $| \minis \cdot \minis |$)  as a dense subspace. Due to its good topological properties, the Berkovich projective line proved to be the right setting to study the dynamics of non-archimedean rational functions. 
    In analogy with the work of Brolin, Lyubich, and  Freire--Lopes--Ma\~n\'e in the complex case, the authors in \cite{MR2578470} prove that iterated preimages of (almost) any point under $R$ equidistribute to a canonical probability measure $\nu_R$, referred to as the equilibrium measure (see \cref{brolin} below). They also prove that periodic $K$-points equidistribute to the same measure, under certain conditions, later relaxed by Okuyama in \cite{MR4563763}. Their \textit{Question 1} then asks whether equidistribution holds for repelling periodic points. In \cite{favre2026rigiditeexpansionetentropie}, the same authors rephrase this question in a more precise way in their \textit{Conjecture 1}, which we state here below.
    \begin{conj}[\cite{favre2026rigiditeexpansionetentropie}, Conjecture 1]\label{question1prime}
        Let $R\in K(z)$ be a rational function with $\deg R \geq 2$, denote by $\rf(R^n)$ the set of repelling fixed points of $R^n$, and let
        \begin{equation*}
            \sigma_R = \sum_{\zeta \in \rf(R)} (\deg_R \zeta)\times \delta_\zeta \minis .
        \end{equation*}
        Then 
        \begin{equation*}
            (\deg R^n)^{-1}\sigma_{R^n} \xrightarrow[\ n \to \infty \ ]{} \nu_R \quad \text{weakly on } \pwanb.
        \end{equation*}
    \end{conj}
    (Here $\delta_\zeta$ is the Dirac delta  at a point $\zeta\in \pwanb$, $R^n$ denotes the $n$-fold composition of $R$ with itself --- the $n$-th \textit{iterate} of $R$ ---, and $\deg_{R^n} \zeta$ denotes the \textit{local degree} of $R^n$ at $\zeta$.)\\
    The authors show in the same paper \cite{favre2026rigiditeexpansionetentropie} that the conjecture is true  for rational functions that are \textit{affine Bernoulli} and that repelling $K$-points equidistribute for rational functions with positive \textit{Lyapunov exponent} (see \cref{thmb3frl} below).

    In other eqidistribution results concerning a sequence of finite sets in $\pwanb$ determined by iterates of $R$, the finite set determined by $R^n$ usually has some nice counting property: the preimages of a point under $R^n$ are always $\deg R^n$ if we count them according to their local degree, the periodic $K$-points of period $n$ are always $\deg R^n + 1$ if we count them as solutions to the equation $R^n(z) =z$, and the \textit{crucial measures} --- defined by Rumely in \cite{Rumely_2017} and shown to equidistribute to $\nu_R$ by Jacobs in \cite{MR3679789} and \cite{MR3761441} --- have total mass $\deg R^n -1$.
    On the other hand,  the number of fixed repelling points can vary: if $R$ has good reduction (see Section \ref{sezionedinamica} below) or is a polynomial, the number of repelling fixed points is exactly $\deg R$ (counting with multiplicity; the statement about polynomials is found in \cref{polynomials} below), while in \cref{esempiodirumely} (which is Example C in \cite{rumely2014geometry}) we have $2\deg R -2$ fixed repelling points (counting with multiplicity). This variability suggests that it might be difficult to find an object associated to $\sigma_{R^n}$ to help us study its equidistribution (such as a potential function) with a simple ``natural'' expression in terms of $R^n$. 

    In the complex case  \cref{question1prime} is true and, similarly to the non-archimedean case, and to the best of the author's knowledge, there is no known simple expression depending on $\deg R^n$ that counts the number of fixed repelling points of $R^n$. To obtain the equidistribution of repelling points for a rational function $R\in \CC(z)$ acting on $\PP^1(\CC)$, one circumvents this counting issue by first proving the equidistribution of \textit{all} periodic points (solutions to the equation $R^n(z)=z$ as $n \to \infty$) and subsequently applying the classical result of Fatou (see for example \cite{MR2193309}, Theorem 13.1)  by which non-repelling cycles are finitely many, and thus negligible with respect to the total mass $(\deg R)^n+1$ of $n$-periodic points. As we mentioned above, the solutions to the equation $R(z) =z$ equidistribute to $\nu_R$ in the non-archimedean case as well. However, these solutions are all $K$-points, while repelling points in $\pwanb$ often lie in $\pwanb \smallsetminus \PP^1(K)$, and in turn there can be infinitely many non-repelling cycles in $\PP^1(K)$, so Fatou's theorem fails. Therefore, unfortunately, what works in the complex case does not translate word by word.

    In this paper we propose a possible non-archimedean translation of the strategy outlined above to prove the equidistribution of repelling points in the complex case: we first find an equidistribution result where the counting is more regular (\cref{equidistrotau} below), and subsequently we study the error term(s) that separate it from the statement about repelling points.
        
    \paragraph{Main results.}  While fixed points in $\PP^1(K)$ are finitely many, there are often uncountably many fixed points in $\pwanb\smallsetminus \PP^1(K)$. Moreover, they are generally not isolated, and therefore behave quite differently with respect to the complex case. We therefore will focus on connected components of the fixed locus instead of individual points. Work in this direction can be found in \cite{patra2026connectedcomponentsberkovichfixed} and \cite{faber2026structurecomponentsfixedlocus}. In this paper we restrict specifically to the connected components of the locus $\nafl(R)$ of fixed points that are not attracting. We call these components \textbf{non-attracting subtrees} and denote their set by $\nas(R)$.\footnote{We avoided using ``connected component of the non-attracting fixed locus'' as terminology, because the most obvious shorthand would then have been ``non-attracting component''; in a dynamical context such a phrase would most likely be understood as referring to some kind of connected component of the Fatou set, therefore causing confusion.}
    Given a non-attracting subtree $\Gamma \in \nas(R)$ we define its \textbf{degree}
    \begin{equation}\label{gradodigamma}
        \deg_R \Gamma = 1 + \sum_{\zeta\in \Gamma} (\deg_R \zeta -1) \minis .
    \end{equation}
    Given a point $\alpha\in \pwanb\smallsetminus \PP^1(K)$, we then study the measure
    \begin{equation}\label{deftauintro}
        \tau_R^\alpha =    \sum_{\Gamma \in \nas(R)} \left ( \delta_{r_\Gamma(\alpha)} \ + \  \sum_{\zeta \in \Gamma} \left ( \deg_R \zeta - 1 \right ) \delta_\zeta  \right )  ,
    \end{equation}
    where $r_\Gamma(\alpha)$ denotes the point on $\Gamma$ that is closest to $\alpha$, \textit{the retraction of} $\alpha$ \textit{on } $\Gamma$. This measure seems to corroborate the terminology \textit{peaked component} from \cite{faber2026structurecomponentsfixedlocus}: thinking of $\tau_R^\alpha$ as our view of the non-attracting fixed locus $\nafl(R)$ from $\alpha$, of each non-attracting subtree $\Gamma$ we see its closest point $r_\Gamma(\alpha)$ and behind that we can only see the ``peaks'', that is, points $\zeta\in \Gamma$ satisfying $\deg_R \zeta \geq 2$.
    
    We prove the following.
    \begin{theorem}[label=tausuigamma]
        Let $R(z)\in K(z)$ be a nonconstant rational function. Then the positive measure $\tau^\alpha_R$ is finite, has total mass $\deg R$, and charges each $\Gamma \in \nas(R)$ with mass $\deg_R \Gamma$. In particular,
            \begin{equation*}
                \sum_{\Gamma \in \nas(R)} \deg_R \Gamma \; = \; \deg R \, .
            \end{equation*}
    \end{theorem}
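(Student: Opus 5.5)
The plan is to realize $\tau_R^\alpha$ as the Laplacian of an explicit potential on $\pwanb$. Everything then follows from the fact that a Laplacian has total mass zero, together with a local computation at each point. The guiding model is the classical case on $\PP^1(K)$: there $\Delta \log|R(z)-z|$, or more precisely its homogeneous, $\alpha$-based version, records the $\deg R+1$ fixed points against the poles. The function to use on the whole Berkovich line is a Gromov-product type function measuring how far $R(\xi)$ separates from $\xi$ as seen from $\alpha$. Concretely, take
\[
u_\alpha(\xi) \;=\; \rho\bigl(\alpha,\ \xi\wedge_\alpha R(\xi)\bigr),
\]
where $\xi\wedge_\alpha \eta$ is the point where the paths $[\alpha,\xi]$ and $[\alpha,\eta]$ diverge and $\rho$ is the hyperbolic metric. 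Suitably interpreted (via the Hsia kernel at $\alpha$), $u_\alpha$ is locally of bounded differential variation. The target identity is
\[
\Delta u_\alpha \;=\; \deg R\cdot\delta_\alpha \;-\; \tau_R^\alpha ,
\]
with the sign fixed by the paper's convention for $\Delta$.

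The first step is to compute $\Delta u_\alpha$ away from the fixed locus. At a point $\xi$ with $R(\xi)\neq\xi$, the function $u_\alpha$ is locally the distance from $\alpha$ to a point that moves along a segment with slope governed by the directional multiplicities $m_R(\xi,\vec v)$. On a direction not containing $\alpha$, the point $\xi\wedge_\alpha R(\xi)$ either stays fixed or equals $\xi$, depending on whether the direction maps onto the one containing $R(\xi)$. Summing the outgoing slopes shows that $\Delta u_\alpha=0$ at non-fixed points other than $\alpha$. At $\alpha$ itself the slopes sum to $\deg R$, because directional degrees over the direction containing $R(\alpha)$ add up to $\deg R$.

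The second step is the analysis at fixed points. If $\zeta$ is an attracting fixed point, or a repelling one that is isolated in a trivial way, the same bookkeeping gives the correct local contribution. For a fixed point $\zeta$ of type II, the tangent map at $\zeta$ is the reduction $\bar R_\zeta$, of degree $\deg_R\zeta$. The outgoing slope of $u_\alpha$ in a direction $\vec v$ is then $1$ if $\vec v$ is fixed and non-repelling (so the fixed locus continues that way), and is determined by $m_R(\zeta,\vec v)$ otherwise. Counting fixed directions of $\bar R_\zeta$, with the direction toward $\alpha$ treated separately, should produce the local mass $(\deg_R\zeta-1)$, plus $1$ exactly when $\zeta=r_\Gamma(\alpha)$. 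Along the interior of an edge of $\Gamma$ with $\deg_R=1$ the function is linear, so there is no mass.

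Given the identity, the conclusions follow. Since $\Delta u_\alpha$ has total mass $0$, $\tau_R^\alpha$ is finite with mass $\deg R$; in particular only finitely many $\zeta\in\nafl(R)$ have $\deg_R\zeta\geq2$ and only finitely many $\Gamma$ exist. The mass of $\tau_R^\alpha$ on each $\Gamma$ is $1+\sum_{\zeta\in\Gamma}(\deg_R\zeta-1)=\deg_R\Gamma$ by definition. Summing over $\nas(R)$ gives the stated formula.

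The main obstacle is the type II fixed-point computation. It requires verifying that the count of non-repelling fixed directions of $\bar R_\zeta$ (fixed points of a degree-$d$ map on $\PP^1$ over the residue field, with multiplicity) exactly matches the slope contributions. It also requires showing that attracting fixed points contribute nothing: for these, $\bar R_\zeta$ is constant or the relevant directions are repelling. A further difficulty is to handle correctly the directions in which $R$ has a fixed direction that is not part of the fixed locus. I would first try to resolve this using Rivera-Letelier's formula for the number of fixed $K$-points in a direction, $\#\fix$ in $B(\vec v)$ expressed via $m_R(\zeta,\vec v)$ and whether $\vec v$ is fixed. I would compare this count with the classical identity $\Delta\log|R(z)-z|$ restricted to $\PP^1(K)$, which serves as the known boundary behaviour of $u_\alpha$.
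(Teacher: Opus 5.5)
\textbf{There is a genuine gap: the proposal's central identity is false.} The function $u_\alpha(\xi)=\varrho(\alpha,\xi\wedge_\alpha R(\xi))$ does not satisfy $\Delta u_\alpha=\pm(\deg R\,\delta_\alpha-\tau^\alpha_R)$, for either sign.

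\emph{A counterexample.} Take residue characteristic $\neq 2$, $R(z)=z^2$ and $\alpha=\zeta_{\mathrm G}:=\zeta(0,1)$. The fixed locus is $\{0,\infty\}\cup[1,\zeta_{\mathrm G}]$, and $0,\infty$ are attracting. So $\nas(R)=\{[1,\zeta_{\mathrm G}]\}$ and $\tau^\alpha_R=2\delta_{\zeta_{\mathrm G}}$, hence $\deg R\,\delta_\alpha-\tau^\alpha_R=0$. Now $R(\zeta(0,r))=\zeta(0,r^2)$ lies beyond $\zeta(0,r)$ as seen from $\zeta_{\mathrm G}$, and $[1,\zeta_{\mathrm G}]$ is pointwise fixed. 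So on the tripod $\Sigma=[\zeta(0,s),\zeta(0,s^{-1})]\cup[\zeta_{\mathrm G},\zeta(1,s)]$ with $0<s<1$ you get $u_\alpha=\varrho(\zeta_{\mathrm G},\cdot)$. Therefore $\Delta_\Sigma u_\alpha=-3\delta_{\zeta_{\mathrm G}}+\delta_{\zeta(0,s)}+\delta_{\zeta(0,s^{-1})}+\delta_{\zeta(1,s)}\neq 0$.

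\emph{Your local claims fail as well.} Take the same $R$ with $\alpha=\zeta(0,c)$, $c>1$. Then $u_\alpha$ vanishes on $[\zeta(0,\sqrt c),\alpha]$ and grows with slope $2$ from $\zeta(0,\sqrt c)$ down to $\zeta_{\mathrm G}$. It therefore has:
\begin{itemize}
\item mass $-2$ at the non-fixed point $\zeta(0,\sqrt c)=R^{-1}(\alpha)$;
\item mass $-1$ at $\alpha$, not $\pm\deg R$;
\item mass $0$ at the repelling point $\zeta_{\mathrm G}$, which $\tau^\alpha_R$ charges with mass $2$.
\end{itemize}

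\emph{What $u_\alpha$ actually computes} is the classical fixed-point formula. Up to normalization, $u_\alpha$ is $-\log\delta(\xi,R(\xi))_\alpha$. At $\alpha=\zeta_{\mathrm G}$, for normalized $R=P/Q$, one has $-\log\|z,R(z)\|=-\log|P(z)-zQ(z)|+\log\max(1,|z|)+\log\max(|P(z)|,|Q(z)|)$, which gives
\[
\Delta u_\alpha=\sum_{a\in\fix(R)\cap\PP^1(K)}m_a\,\delta_a\;-\;\delta_\alpha\;-\;R^*\delta_\alpha ,
\]
where $m_a$ is the multiplicity of $a$ as a solution of $R(z)=z$. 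On $\hypb$ this measure is supported on $\{\alpha\}\cup R^{-1}(\alpha)$. It also charges attracting type I points (for example $0$ and $\infty$ when $R=z^2$), which $\tau^\alpha_R$ never does. Forward displacement $\xi\mapsto R(\xi)$ only sees the $\deg R+1$ classical fixed points. No bookkeeping at type II fixed points, and no choice of sign, turns this into $\tau^\alpha_R$.

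\textbf{The missing idea is to build the potential from backward data, not forward data.} The paper sets $T^\alpha_R(\zeta)=\int_\alpha^\zeta t^\alpha_R\,d\lambda$, where $t^\alpha_R(\zeta)=R^*\delta_\zeta(V^\alpha(\zeta))$ counts the preimages of $\zeta$ itself that lie in directions at $\zeta$ pointing away from $\alpha$.
\begin{itemize}
\item \cref{contaccio} computes the values of $t^\alpha_R$ on germs of segments: $\dep_R v$ for $v\neq\dir{\zeta\alpha}$, and $\deg R-\dep_R\dir{\zeta\alpha}-\deg_R\dir{\zeta\alpha}\cdot\uuno(\dir{\zeta\alpha}\text{ is a non-repelling fixed direction})$ in the direction toward $\alpha$.
\item Okuyama's identity $\deg R=\deg_R\zeta\cdot\uuno(\zeta\text{ fixed})+\sum_v\dep_R v$ then gives the Laplacian masses: $\deg_R\zeta-\uuno(\dots)$ at non-attracting fixed points, $0$ at attracting and at non-fixed points, and $\deg_R\alpha\cdot\uuno(\alpha\text{ fixed})-\deg R$ at $\alpha$. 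This is exactly where the local degrees of type II points enter, which $u_\alpha$ cannot capture.
\item These masses are nonnegative away from $\alpha$, so the integer-valued $t^\alpha_R$ decreases away from $\alpha$ and vanishes off a quasi-finite subgraph. That is what gives finiteness of $\tau^\alpha_R$.
\item \cref{piuvicinoadalphaspento} converts $1-\uuno(\dots)$ into $\delta_{r_\Gamma(\alpha)}$.
\end{itemize}
Your concluding step is fine: total mass zero of a Laplacian, then summing over $\nas(R)$. But it only applies once the identity is proved for this potential, not for $(\xi\,|\,R(\xi))_\alpha$.
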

    \cref{tausuigamma} is obtained by studying a potential function $T_R^\alpha : \pwanb\smallsetminus \PP^1(K) \to \RR$, whose Laplacian is shown to be $\Delta T_{R}^\alpha = \tau_R^\alpha - (\deg R)\delta_\alpha$. The potential $T_R^\alpha$ is defined by integrating a function $t_R^\alpha :\pwanb \smallsetminus \PP^1(K)\to\NN$ on segments stemming from $\alpha$ with respect to the Lebesgue measure. The function $t_R^\alpha$ can be thought of as the Berkovich analogue of a distribution function for the measure $\tau_R^\alpha$.
    In general, for a positive measure $\mu$ on $\pwanb$, we can define such a distribution function $f^\alpha_\mu : \pwanb\smallsetminus \PP^1(K) \to \RR_{\geq 0}$ by assigning to a point $\zeta\in \pwanb \smallsetminus \PP^1(K)$ the mass of $\mu$ whose only path to $\alpha$ passes through $\zeta$. For comparison, if  $\mu$ were a measure on the real line $\RR$, the role of $\alpha$ would be played by $\infty$: the value of the distribution function at $x\in \RR$ would be the mass of $\mu$ whose only path to infinity is through $x$, that is, $f^\infty_\mu(x) = \mu\{ y\in \RR \ | \ y \leq x\}$.  \\
    The function $t^\alpha_R$ though, is initially defined ``diagonally'', without mentioning the measure $\tau_R^\alpha$ directly. Namely, for $\zeta \in \pwanb \smallsetminus \PP^1(K)$, the integer $t_R^\alpha(\zeta)$ is the number of \textit{preimages under} $R$ \textit{of} $\zeta$ \textit{itself} whose only path to $\alpha$ is through $\zeta$. Using this diagonal definition, the following equidistribution result then follows easily from the aforementioned equidistribution of preimages (Brolin's \cref{brolin}).
    \begin{theorem}[label=equidistrotau]%\label{equidistrotau}
        Let $R \in K(z)$ with $\deg R \geq 2$. Then, as $n\to\infty$, the measures $\tau_{R^n}^\alpha$ equidistribute to the equilibrium measure $\nu_R$.
    \end{theorem}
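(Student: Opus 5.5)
The plan is to work at the level of potentials and compare $T^\alpha_{R^n}$ with a potential of preimage measures, for which Brolin's \cref{brolin} is available. On $\pwanb$, weak convergence of probability measures is equivalent to locally uniform convergence (away from $\alpha$) of suitably normalized potentials relative to $\delta_\alpha$, or at least to pointwise convergence on $\hypb$ together with uniform bounds. By \cref{tausuigamma}, $\tau^\alpha_{R^n}$ has mass $\deg R^n$, and $\Delta T^\alpha_{R^n} = \tau^\alpha_{R^n} - (\deg R^n)\delta_\alpha$. It therefore suffices to show that
\[
(\deg R^n)^{-1}\bigl(T^\alpha_{R^n}(\zeta) - T^\alpha_{R^n}(\alpha)\bigr)
\]
converges, locally uniformly on $\hypb$, to the potential $u_\alpha$ of $\nu_R - \delta_\alpha$ normalized at $\alpha$. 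In terms of the distribution functions, the statement becomes that $(\deg R^n)^{-1} t^\alpha_{R^n}$ converges to $f^\alpha_{\nu_R}$ in $L^1$ of the length measure along every segment $[\alpha,\zeta]$, with a uniform bound; integrating along segments then gives the convergence of potentials.

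The key input is the diagonal description. The number $t^\alpha_{R^n}(\zeta)$ counts, with multiplicity, the points of $R^{-n}(\zeta)$ lying in the part of $\pwanb$ that sees $\alpha$ only through $\zeta$, that is, outside the direction at $\zeta$ containing $\alpha$. Write $\mu^\zeta_n = (\deg R^n)^{-1}(R^n)^*\delta_\zeta$. Then
\[
(\deg R^n)^{-1} t^\alpha_{R^n}(\zeta) = f^\alpha_{\mu^\zeta_n}(\zeta),
\]
the evaluation at $\zeta$ of the distribution function of the measure $\mu^\zeta_n$, which depends on $\zeta$ itself. Brolin's \cref{brolin} states that $\mu^\xi_n \to \nu_R$ for every non-exceptional $\xi$, and in particular for every $\xi \in \hypb$ outside the at most one exceptional type II point. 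The aim is then to show $f^\alpha_{\mu^\zeta_n}(\zeta) \to f^\alpha_{\nu_R}(\zeta)$ for almost every $\zeta$ on each segment, with respect to length. The set $\{\eta : \eta \text{ lies behind } \zeta \text{ from } \alpha\}$ is closed, and its boundary is just $\{\zeta\}$. Weak convergence therefore gives convergence of masses whenever $\nu_R$ does not charge $\zeta$. Since $\nu_R$ charges at most one point, and only in the case of potential good reduction, this holds for all but at most one $\zeta$. Dominated convergence then applies, with $0 \le t/\deg \le 1$ giving the bound.

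The main obstacle is making this pointwise step rigorous. Brolin's theorem is usually stated for a fixed starting point, and here the starting point is $\zeta$, while the test set $U_\zeta = \{\eta \text{ behind } \zeta\}$ also depends on $\zeta$. For fixed $\zeta$ both are fixed, so pointwise convergence in $\zeta$ is fine, but one must check that the characteristic function of $U_\zeta$ can be sandwiched between continuous functions. I would do this with the functions $\eta \mapsto \max\bigl(0, 1 - \epsilon^{-1} d(\eta, U_\zeta)\bigr)$, built using the hyperbolic metric restricted to the direction containing $\alpha$ near $\zeta$, or by approximating with finitely many closed discs. I also need to handle the exceptional point and the possible atom of $\nu_R$; these form a finite set of zero length, so they do not affect the integrals. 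Finally, local uniform convergence of the potentials follows from the $1$-Lipschitz bound in the hyperbolic metric: since $0 \le t/\deg \le 1$, the normalized potentials are equi-Lipschitz on each segment. From this, the convergence of the potentials $(\deg R^n)^{-1} T^\alpha_{R^n}$ yields weak convergence of their Laplacians, which gives $(\deg R^n)^{-1}\tau^\alpha_{R^n} \to \nu_R$.
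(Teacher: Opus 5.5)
Your route is the same as the paper's. You use the diagonal identity $(\deg R^n)^{-1}t^\alpha_{R^n}(\zeta)=f^\alpha_{\mu^\zeta_n}(\zeta)$, apply Brolin's theorem at each fixed $\zeta$, apply Portmanteau using $\partial V^\alpha(\zeta)=\{\zeta\}$, and use dominated convergence along $[\alpha,\zeta]$ to get $(\deg R^n)^{-1}T^\alpha_{R^n}\to F^\alpha_{\nu_R}$ pointwise on $\hypb$.

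Treating a possible atom of $\nu_R$ as one $\lambda$-null point is fine. It spares you the separate potential-good-reduction case, which the paper handles via \cref{tauincasobenridotto}.

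The step you call the main obstacle is just \cref{portmantone}, and no sandwich is needed. The cutoff you build from $\varrho$ would not be weakly continuous anyway. Points at a fixed hyperbolic distance from $\zeta$, in side branches at $\xi_k\in(\zeta,\alpha)$ with $\xi_k\to\zeta$, converge weakly to $\zeta$. Also, $V^\alpha(\zeta)$ is open and does not contain $\zeta$. This matters for the identity with $t$ when $R^n(\zeta)=\zeta$.

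The last step is asserted rather than proved, and it has two problems.

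\emph{Local uniform convergence is the wrong notion.} Your equi-Lipschitz bound gives uniformity only on finite subgraphs. Weak convergence does not give uniformity on weak neighbourhoods either. If the $\eta_k$ lie in distinct directions at a type II point $\zeta$ (none containing $\alpha$), all at distance $c$ from $\zeta$, then $\delta_{\eta_k}\to\delta_\zeta$ but $F^\alpha_{\delta_{\eta_k}}(\eta_k)-F^\alpha_{\delta_\zeta}(\eta_k)=c$. What you actually have, and what suffices, is pointwise convergence on $\hypb$ together with the continuity of $-\Delta$ on $\ag[\alpha]$ (\cref{arachelloverdechiuso,arachelloverdesattoh}). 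To use this you must check that $-(\deg R^n)^{-1}T^\alpha_{R^n}\in\ag[\alpha]$. This holds because $T^\alpha_{R^n}$ and $F^\alpha_{\tau^\alpha_{R^n}}$ have the same Laplacian and both vanish at $\alpha$, so they coincide, and then \cref{potenzialiefzdidistro} applies.

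\emph{The limit potential is never identified.} This is the genuinely missing step. You take for granted that $F^\alpha_{\nu_R}=\int_\alpha^{\,\cdot}f^\alpha_{\nu_R}\,d\lambda$ is ``the potential of $\nu_R-\delta_\alpha$.'' But \cref{potenzialiefzdidistro} gives $\Delta F^\alpha_\mu=\mu-\delta_\alpha$ only for discrete $\mu$. Without this identification, the continuity theorem yields only $(\deg R^n)^{-1}\tau^\alpha_{R^n}\to\delta_\alpha+\Delta F^\alpha_{\nu_R}$, an unidentified limit.

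The paper supplies this by running your Portmanteau and dominated-convergence argument once more, on the discrete probability measures $\nu_n=(\deg R^n)^{-1}(R^n)^*\delta_{\zeta(0,1)}\to\nu_R$. This gives $F^\alpha_{\nu_n}\to F^\alpha_{\nu_R}$ pointwise, and hence $\Delta F^\alpha_{\nu_R}=\lim_n(\nu_n-\delta_\alpha)=\nu_R-\delta_\alpha$ by the same two results. Alternatively, Tonelli gives $F^\alpha_{\nu_R}(\zeta)=\int\lambda\bigl([\alpha,\zeta]\cap[\alpha,\eta]\bigr)\,d\nu_R(\eta)$, a standard potential whose Laplacian can be quoted from \cite{MR2599526}. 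When $\nu_R$ is a Dirac mass, this step is immediate. With these two additions your argument is complete.
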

    To prove \cref{question1prime} we then want to study the error term that separates $\tau_{R^n}^\alpha$ from $\sigma_{R^n}^\alpha$. Setting 
    \[\tau^\alpha_{R, \operatorname{ind}} =\sum_{\substack{\Gamma \in\nas(R) \\ : \ \Gamma \cap \rf(R) = \varnothing }} \delta_{r_\Gamma(\alpha)} \minis , \qquad 
    \tau^\alpha_{R,\operatorname{sh}} = \sum_{\substack{\Gamma \in \nas(R) \\ : \ \Gamma \cap \rf(R) \neq \varnothing}} \left (  - \minis\delta_{r_\Gamma(\alpha)} +  \sum_{\zeta\in \rf(R) \cap\Gamma} \delta_\zeta \right ),\]
    we can write the difference $\tau_R^\alpha -\sigma_R$ as the difference of its positive and negative parts:
    \begin{equation*}
        \tau_R^\alpha - \sigma_R = \tau^\alpha_{R,\operatorname{ind}} - \tau^\alpha_{R,\operatorname{sh}}.
    \end{equation*}
    The positive part $\tau^\alpha_{R,\operatorname{ind}}$ puts mass $1$ on each \textit{indifferent} subtree (i.e., a non-attracting subtree containing no repelling points). The negative part $\tau^\alpha_{R,\operatorname{sh}}$ reflects how $\tau_R^\alpha$ treats a non-attracting subtree $\Gamma$ containing multiple repelling points as a single object, in which in some sense each repelling point has ``one unit of local degree \textit{shared} with the others'' (the $-1$ and $1$ in  \cref{gradodigamma}). In analogy to the non-repelling cycles in the complex case, we could hope that the number of cycles of indifferent subtrees be finite or at least negligible with respect to the total mass $\deg R^n$ of $\tau_R^\alpha$ as $n\to \infty$, thus getting us rid of the term $\tau^\alpha_{R^n, \operatorname{ind}}$. An asymptotic study of either error term would seem to require new ideas; in this paper we limit ourselves to examining some special cases in which one or both error terms vanish for all iterates.
    
    In the case of polynomials, if we choose $\alpha$ close to infinity, both error terms vanish.
    \begin{theorem}[label=polynomials]%\label{polynomials}
        Let $R\in K[z]$ be a polynomial with $\deg R \geq 2$ and let $\alpha$ be in the Fatou component containing the point $\infty\in \pwanb$. Then $\tau^\alpha_{R^n} = \sigma_{R^n}$ for all $n$, and therefore, $\sum_{\zeta\in \rf(R)} \deg_R \zeta = \deg R$ and
        \begin{equation*}
            \left( \deg R^n \right )^{-1}\sigma_{R^n} \xrightarrow[\; n\to \infty \;]{} \nu_R \minis , \; \text{weakly on } \pwanb .
        \end{equation*}
    \end{theorem}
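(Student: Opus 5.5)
Since $R^n$ is again a polynomial and the Fatou component $U_\infty$ of $\infty$ is the same for all iterates, it suffices to prove $\tau^\alpha_R=\sigma_R$ for a single polynomial $R$ and any $\alpha\in U_\infty\smallsetminus\PP^1(K)$. The counting statement then follows from \cref{tausuigamma}: $\tau^\alpha_R$ has total mass $\deg R$, so $\sigma_R$ does too. The equidistribution statement follows from \cref{equidistrotau} applied to $\tau^\alpha_{R^n}=\sigma_{R^n}$. By the decomposition $\tau_R^\alpha-\sigma_R=\tau^\alpha_{R,\operatorname{ind}}-\tau^\alpha_{R,\operatorname{sh}}$, whose two terms are mutually singular positive measures, I must show two things for every $\Gamma\in\nas(R)$: $\Gamma$ contains a repelling point (so the indifferent part vanishes), and every repelling point in $\Gamma$ equals $r_\Gamma(\alpha)$ (so the shared part vanishes).

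The key geometric input is the structure of polynomial dynamics near $\infty$. The basin $U_\infty$ contains no fixed points of $\pwanb\smallsetminus\PP^1(K)$: its points escape to the superattracting point $\infty$, and $\infty$ itself is attracting, hence excluded from $\nafl(R)$. Consequently every $\Gamma$ lies in the filled Julia set. Moreover, since $\alpha\in U_\infty$ and $U_\infty$ is connected, the segment from $\alpha$ to $\Gamma$ can be taken to run through $U_\infty$ and then along the direction at $r_\Gamma(\alpha)$ pointing toward $\infty$; call this direction $\vec v_\infty$. Write $\zeta_0=r_\Gamma(\alpha)$.

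The main step, and the one I expect to be the obstacle, is showing that $\zeta_0$ is repelling and that no other point of $\Gamma$ is. I would use the following polynomial fact: for a type II fixed point $\zeta$ with $\deg_R\zeta=d$, the direction toward $\infty$ is fixed and the reduction is a polynomial of degree $d$. The reduction at $\zeta$ is conjugate to a polynomial because $R^{-1}(\infty)=\{\infty\}$ forces the direction to $\infty$ to be totally invariant. Hence $\zeta$ is repelling iff $d\ge2$. For a point $\zeta\in\Gamma$ other than $\zeta_0$, the direction $\vec v_\infty$ at $\zeta$ points into $\Gamma$, toward $\zeta_0$. Local degree is non-increasing as one moves toward $\infty$ along the tree for polynomials, since it equals the multiplicity of the direction toward $\infty$. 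The points of $\Gamma$ beyond $\zeta_0$ are fixed, so the segment $[\zeta,\zeta_0]$ is pointwise fixed and maps isometrically only if the local degree along it is $1$. Pushing this through the piecewise-linear structure, a fixed segment with local degree $\geq2$ would be expanded by a factor $\geq 2$ in the hyperbolic metric, which is impossible. So $\deg_R=1$ on the interior of every fixed segment in $\Gamma$, and also at its far endpoints: the direction toward $\infty$ at such an endpoint is in $\Gamma$ and has multiplicity equal to $\deg_R\zeta$, which must be $1$. Thus all points of $\Gamma$ other than $\zeta_0$ are indifferent.

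Finally I need $\deg_R\zeta_0\geq2$. By \cref{tausuigamma} we know $\sum_\Gamma\deg_R\Gamma=\deg R$, and the previous step gives $\deg_R\Gamma=\deg_R\zeta_0$. To show $\deg_R\zeta_0\ge2$ directly, I would argue by contradiction. If $\deg_R\zeta_0=1$, then the direction $\vec v_\infty$ at $\zeta_0$ has multiplicity $1$, and the annulus-type region between $\zeta_0$ and $U_\infty$ would be mapped with local degree $1$. Points just beyond $\zeta_0$ toward $\infty$ are not fixed, since $\zeta_0$ is the retraction. Being outside $\Gamma$, they must then move away from $\zeta_0$ along $\vec v_\infty$ at unit rate, because the degree is $1$. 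Equivalently, the direction $\vec v_\infty$ contains a fixed or attracting structure compatible with degree $1$. This contradicts the fact that, near $\zeta_0$ from the $\infty$ side, the map must expand by the degree $\deg R^{?}$ required for escape. I would make this precise via the Green's function $g(\zeta)=\lim d^{-n}\log^+\|R^n\|$, which satisfies $g\circ R=d\,g$ and is affine along $\vec v_\infty$ with slope equal to the product of local degrees. A fixed point $\zeta_0$ with $g(\zeta_0)=0$ whose neighbouring points have $g>0$ forces the slope to be multiplied by $\deg_R\zeta_0=d$ under $R$. Hence $\deg_R\zeta_0\geq 2$ once $\deg R\geq2$, and $\zeta_0$ is repelling.
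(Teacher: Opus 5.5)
Your reduction to a single iterate and your bookkeeping through $\tau^\alpha_{R,\operatorname{ind}}$ and $\tau^\alpha_{R,\operatorname{sh}}$ match the paper. So does your treatment of $\zeta\in\Gamma\smallsetminus\{\zeta_0\}$: the direction $\dir{\zeta\infty}$ lies in $T_\zeta\Gamma$, so it has local degree $1$, and for a polynomial $\deg_R\zeta=\deg_R\dir{\zeta\infty}$. The gap is the last step, showing that $\zeta_0=r_\Gamma(\alpha)$ is repelling.

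You set out to prove $\deg_R\zeta_0\geq 2$, but that is false in general. $\zeta_0$ can be a type I repelling point, and then $\deg_R\zeta_0=1$ by \cref{classifidinamica} (d). For example, take $R(z)=z^2+c$ with $|c|>1$ and residue characteristic $\neq 2$. Both finite fixed points are repelling type I points, and each forms its own non-attracting subtree.

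Your Green's function argument is also unsound. $g$ need not be affine on a segment leaving $\zeta_0$ towards $\infty$. If it were affine there with positive slope, then $g\circ R=(\deg R)\,g$ would force $\deg_R\zeta_0=\deg R$. That fails, e.g., for $R(z)=z^2+\pi z^3$ with $0<|\pi|<1$. There $\zeta(0,1)=r_\Gamma(\infty)$ is a repelling fixed point of local degree $2$, while $\deg R=3$; and $\zeta(0,r)\mapsto\zeta(0,r^2)$ for $r$ slightly above $1$, so $g$ cannot be affine near $\zeta(0,1)$. The contradiction sketch before it (``move away from $\zeta_0$ at unit rate'', ``$\deg R^{?}$'') is not an argument either.

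The missing idea is to argue with the fixed direction rather than with degrees:
\begin{enumerate}[(i)]
\item By \cref{polifissanoinfinititangenti}, $\dir{\zeta_0\infty}$ is fixed by the tangent map at $\zeta_0$.
\item It is not in $T_{\zeta_0}\Gamma$, so by \cref{indifferelligammelli} it is not indifferent.
\item It is not attracting, because $\zeta_0$ is not an attracting fixed point (\cref{taledirezionetalepunto} (a)).
\item Hence it is repelling by \cref{dicotomiadirezioni}, and $\zeta_0$ is repelling by \cref{taledirezionetalepunto} (b), whatever its type.
\end{enumerate}
This is \cref{piuvicinoadalphaspento} (a) with $\alpha=\infty$. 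For type II $\zeta_0$, your degree-one contradiction can actually be completed. By \cref{expansionbydegree}, $R$ would be an isometry on a short segment from the fixed point $\zeta_0$ into the fixed direction $\dir{\zeta_0\infty}$, hence the identity there. Those points would then be non-attracting fixed points, so $\Gamma$ would extend past $\zeta_0=r_\Gamma(\alpha)$.

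Three smaller points:
\begin{enumerate}[(a)]
\item For a type I point $\zeta\in\Gamma\smallsetminus\{\zeta_0\}$, $\deg_R\zeta=1$ does not by itself exclude a multiplier with $|\lambda|>1$. You need that its only direction is indifferent, and then \cref{taledirezionetalepunto} (b).
\item Your monotonicity remark is backwards: for polynomials the local degree is non-decreasing towards $\infty$. You never use it.
\item Your claim that $U_\infty$ contains no non-attracting fixed points rests on identifying $U_\infty$ with the escape set, which you do not justify. The paper avoids this by working with $r_\Gamma(\infty)$ first, and only afterwards using $r_\Gamma(\infty)\in\julia_R$ (\cref{chistadove} (b)) to get $r_\Gamma(\alpha)=r_\Gamma(\infty)$.
\end{enumerate}
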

    When $R$ has a connected Julia set, we can make the error term $\tau^\alpha_{R^n,\operatorname{sh}}$ vanish for all $n$.
    \begin{theorem}[label=connectedcounted]%\label{connectedcounted}
         Let $R\in K(z)$ be a rational function with $\deg R \geq 2$ whose Julia set $\julia_R \subset \pwanb$ is connected. Then, $\julia_R \smallsetminus \PP^1(K) \neq \varnothing$ and taking $\alpha \in \julia_R \smallsetminus \PP^1(K)$, we have $\tau^\alpha_{R^n, \operatorname{sh}} = 0$ for all $n$.\\
         If in addition we assume that the number of indifferent subtrees of $R^n$ is $o(\deg R^n)$, or equivalently that $\sum_{\zeta\in \rf(R^n)} \deg_{R^n}\zeta  \sim \deg R^n$, then we have
        \begin{equation*}
            \left(\deg R^n \right )^{-1}\sigma_{R^n} \xrightarrow[\; n\to \infty \;]{} \nu_R \minis , \; \text{weakly on } \pwanb .
        \end{equation*}
    \end{theorem}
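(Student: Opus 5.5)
The plan has three parts: show the Julia set contains non-classical points, show $\tau^\alpha_{R^n,\operatorname{sh}}=0$ for $\alpha$ in the Julia set, and then combine the counting identity with \cref{equidistrotau}.

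\emph{Non-classical Julia points.} Since $\julia_{R^n}=\julia_R$, it is enough to treat $n=1$, and the first claim is the same for all iterates. A connected subset of $\pwanb$ with at least two points is a subtree containing a nondegenerate segment, hence a point of type II, III or IV. So we only need $\julia_R$ to have more than one point. If $\julia_R$ were a single point, it would be a totally invariant point. A totally invariant type I point forces $\julia_R$ to be infinite (perfect) by the standard classification, so this case cannot occur. In the type II case, $R$ has potential good reduction, so $\julia_R$ is that single type II point, which is already non-classical. Either way, $\julia_R\smallsetminus\PP^1(K)\neq\varnothing$.

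\emph{Vanishing of the shared term.} Fix $\alpha\in\julia_R\smallsetminus\PP^1(K)$ and a subtree $\Gamma\in\nas(R^n)$ containing repelling points. By definition, $\tau^\alpha_{R^n,\operatorname{sh}}$ restricted to $\Gamma$ vanishes exactly when $\Gamma$ contains a single repelling point $\zeta$ of $R^n$ and $r_\Gamma(\alpha)=\zeta$. The key inputs are that repelling fixed points of type II lie in $\julia_R$, and that non-attracting fixed points outside the Julia set lie in the Fatou set, where $R^n$ is locally an isometry along $\Gamma$ (indifferent). I would argue as follows.

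\begin{enumerate}[(a)]
\item The set $\Gamma\cap\julia_R$ has at most one point. If two points $\zeta_1,\zeta_2\in\Gamma$ both lay in $\julia_R$, the segment $[\zeta_1,\zeta_2]$ would lie in $\Gamma$, since $\Gamma$ is a tree and connected. It would also lie in $\julia_R$, since $\julia_R$ is connected and hence a subtree, and convex hulls in trees are unique. But a nondegenerate segment of fixed points contains interior points of local degree $1$ that are fixed along a fixed arc; such points are indifferent and lie in the Fatou set, by Rivera-Letelier's classification that type II points in the Julia set with all directions fixed are repelling.
\item Since repelling points lie in $\julia_R$, step (a) shows that $\Gamma$ contains at most one repelling point $\zeta$.
\item The path from $\alpha$ to $\Gamma$ first meets $\Gamma$ at $\zeta$. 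The convex hull of $\{\alpha,\zeta\}$ lies in $\julia_R$, so $[\alpha,\zeta]\subset\julia_R$. If $r_\Gamma(\alpha)$ were a point $\xi\neq\zeta$, then $\xi\in[\alpha,\zeta]\cap\Gamma\subset\julia_R\cap\Gamma$, which contradicts step (a).
\end{enumerate}

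The main obstacle is step (a): I need the precise fact from the paper's Section~\ref{sezionedinamica} that non-repelling fixed points of type II--IV belong to the Fatou set. I would cite Rivera-Letelier's result that a fixed type II point is in the Julia set iff it is repelling (or $\deg\geq2$). Type III and IV fixed points are always in the Fatou set.

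\emph{The equidistribution statement.} By \cref{tausuigamma}, $\sigma_{R^n}$ has mass $\deg R^n$ minus the number of indifferent subtrees plus the mass of $\tau_{\operatorname{sh}}$. Since $\tau_{\operatorname{sh}}=0$, the two hypotheses are equivalent. Moreover $\tau^\alpha_{R^n}-\sigma_{R^n}=\tau^\alpha_{R^n,\operatorname{ind}}\geq0$ has mass $o(\deg R^n)$. So for a continuous $\varphi$,
\[
\bigl|(\deg R^n)^{-1}\textstyle\int\varphi\, d(\tau^\alpha_{R^n}-\sigma_{R^n})\bigr|\leq\|\varphi\|_\infty\, o(1).
\]
Combining this bound with \cref{equidistrotau} gives the convergence.
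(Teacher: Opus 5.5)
Your overall architecture is the paper's, and your (b), (c) and the final mass argument are fine. The idea is that connectedness of $\julia_R$ puts $[\alpha,\zeta]$ inside $\julia_R$ for every repelling $\zeta\in\Gamma$, while a pointwise-fixed arc cannot lie entirely in $\julia_R$. Hence each repelling subtree has exactly one repelling point, and that point is $r_\Gamma(\alpha)$.

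\textbf{The gap is in (a), at exactly the input you flagged.} The fact you propose to cite is false: it says that a fixed type II point is in $\julia_R$ only if it is repelling, i.e.\ that non-repelling fixed type II points are Fatou. By \cref{chistadove} (c), an indifferent type II fixed point lies in $\julia_R$ as soon as it has a non-periodic direction $v$ with $R(\bdisk(v))=\pwanb$.

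Here is a concrete counterexample. Let $K$ have residue characteristic $0$, take $0<|t|<1$, and set $R(z)=2z(z-1)/(z-1+t)$.
\begin{enumerate}[(i)]
\item $R$ fixes every point of the arc $(0,\infty)$.
\item The Gauss point is interior to this arc and has $\deg_R=1$, since its reduction is $z\mapsto 2z$.
\item The residue direction of $1$ contains the pole $1-t$, so its disk maps onto $\pwanb$, and its orbit $1\mapsto 2\mapsto 4\mapsto\cdots$ is not periodic.
\end{enumerate}
So the Gauss point is an indifferent Julia point in the interior of a fixed arc. The criterion you quote about ``all directions fixed'' does not apply anyway: an interior point of a fixed arc need only fix the two directions along the arc. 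Observing that fixed type III points are Fatou does not rescue the argument in general either, since there are no type III points when $|K^\times|=\RR_{>0}$.

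What you actually need is weaker, and the paper proves it as \cref{seriddiallorafatu}: a nondegenerate arc on which $R$ is the identity contains a nondegenerate open subsegment lying in $\fatou_R$. The proof goes as follows.
\begin{itemize}
\item After a change of coordinates, shrink the arc to $(\zeta(0,r),\zeta(0,r'))$ with $r'$ close to $r$, so that $\bannulus(\zeta(0,r),\zeta(0,r'))$ contains no poles.
\item At each interior point $\xi$, every direction off the arc has its disk mapped into $\pwanb\smallsetminus\{\infty\}$, hence not onto $\pwanb$.
\item The two directions along the arc are fixed.
\item Therefore \cref{chistadove} (a), (c) give $\xi\in\fatou_R$.
\end{itemize}
With this lemma in place of your citation, (a) holds: two points of $\Gamma\cap\julia_R$ would produce a fixed arc inside $\julia_R$. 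The rest of your argument then goes through.

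\textbf{A smaller point.} Your reason why $\julia_R$ cannot be a single type I point is wrong as stated. For example, $R(z)=z^2$ has totally invariant type I points, yet $\julia_R=\{\zeta(0,1)\}$. The correct one-line argument is the paper's: a one-point Julia set means $\nu_R$ charges a point, and by \cref{reduciergodo} that point must be of type II.
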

    In \cite{favre2026rigiditeexpansionetentropie}, Favre and Rivera-Letelier pose another two conjctures. Their \cref{conj3} involves the \textit{Lyapunov exponent}, defined as the integral $L(R)=\int_\pwanb \log || R'|| d \nu_R$, where $|| R' ||$ denotes the \textit{spherical derivative} of $R$ (see \cite{MR3369346} for the definition).
    \begin{conj}\label[conj]{conj2}
        Let $R\in K(z)$ with $\deg R \geq 2$. Then the rational function $R$ admits a unique measure of maximal entropy $\mu_R$ and we have
        \begin{equation}
            (\#\fix_{>1}(R^n))^{-1}\sum_{\zeta\in \fix_{>1}(R^n)} \delta_\zeta \ \xrightarrow[\ n\to\infty \ ]{} \ \mu_R \minis , \text{ weakly on } \pwanb .
        \end{equation}
    \end{conj} 
    \begin{conj}\label[conj]{conj3}
        Let $R\in K(z)$ with $\deg R \geq 2$. If $L(R) >0$, then we have
        \begin{equation}
            (\deg R^n)^{-1} \sum_{\zeta \in \fix_{>1} \cap \hypb} (\deg_R \zeta) \times\delta_\zeta \ \xrightarrow[\ n\to\infty \ ]{} \ 0 \minis , \text{ weakly on } \pwanb.
        \end{equation}
    \end{conj}
    The aforementioned result about the equidistribution of repelling $K$-points in the same paper \cite{favre2026rigiditeexpansionetentropie} is the following.
    \begin{thm}[\cite{favre2026rigiditeexpansionetentropie}, Th\'eor\`eme B, (3)] \label[thm]{thmb3frl}
        Let $R\in K(z)$ with $\deg R \geq 2$. If $L(R) >0$, then $\nu_R(\PP^1(K))=1$, the metric and topological entropy of $R$ are both equal to $\deg R$ and we have
        \begin{equation}
            (\deg R^n)^{-1} \sum_{\zeta\in \fix_{>1}(R^n) \cap \PP^1(K)} \delta_\zeta \ \xrightarrow[ \ n\to\infty \ ]{} \ \nu_R \minis , \text{ weakly on } \pwanb .
        \end{equation}
    \end{thm}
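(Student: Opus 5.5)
The plan is to follow the complex-dynamics strategy of Briend--Duval and Ma\~n\'e, adapted to the non-archimedean setting. The hypothesis $L(R)>0$ is used first to push $\nu_R$ onto $\PP^1(K)$, and then to build many univalent, contracting inverse branches on small $K$-disks.

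\emph{Step 1: $\nu_R(\PP^1(K))=1$.} The set $\hypb=\pwanb\smallsetminus\PP^1(K)$ is totally invariant, and $\nu_R$ is ergodic, so $\nu_R(\hypb)\in\{0,1\}$. Suppose $\nu_R(\hypb)=1$. I would show that $\log\|R'\|\le 0$ on $\hypb$. The reason is that at a point of $\hypb$ the spherical derivative is controlled by how $R$ expands the hyperbolic metric along directions, relative to the chordal normalisation. If this pointwise bound does not directly hold, I would instead show that the Birkhoff averages of $\log\|R'\|$ vanish along typical orbits in $\hypb$: the quantity $\log\|(R^n)'\|$ can only grow by the logarithm of a local degree, and that is negligible after dividing by $n$. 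Either way we get $L(R)\le 0$, a contradiction. This is the step I expect to be the main obstacle, since it needs a precise formula for $\|R'\|$ at non-classical points.

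\emph{Step 2: entropy.} $\nu_R$ is balanced, so its Jacobian is $\deg R$ $\nu_R$-almost everywhere. This uses that $\nu_R$ gives no mass to the finitely many critical points and that $\nu_R(\PP^1(K))=1$. The Rokhlin formula then gives $h_{\nu_R}=\log\deg R$. The general upper bound $h_{\mathrm{top}}\le\log\deg R$ closes the chain of inequalities, by the variational principle.

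\emph{Step 3: equidistribution of repelling $K$-points.}
\begin{enumerate}[(a)]
\item Pass to the natural extension of $(R,\nu_R)$. By Oseledets and $L(R)>0$, for a typical backward orbit $(z_{-n})$ the inverse branches of $R^n$ along it are defined on a $K$-disk $D$ of radius $r(\varepsilon)>0$ around $z_0$.
\item For a set of such orbits of measure at least $1-\varepsilon$, these branches are univalent and contract by a factor $e^{-n(L-\varepsilon)}$. In the non-archimedean setting no Koebe-type distortion estimate is needed: a rational map that is univalent from a disk onto a disk is a scaled isometry.
\item Cover a given open set $U$ by such disks. Using the balanced property, at least $(1-2\varepsilon)\deg R^n\cdot\nu_R(D)$ of the $n$-th preimage branches of $D$ land strictly inside $D$.
\item Each such branch is a contraction of $D$ into itself, so it has a unique fixed point, and this point is a repelling fixed $K$-point of $R^n$. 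Distinct branches give distinct points.
\item This yields $\liminf (\deg R^n)^{-1}\#(\fix_{>1}(R^n)\cap U)\ge(1-O(\varepsilon))\,\nu_R(U)$.
\end{enumerate}
For the matching upper bound, the fixed $K$-points of $R^n$ number at most $\deg R^n+1$. Hence the total mass of the normalised measures is asymptotically at most $1$. Together with the lower bound on every open set, this forces weak convergence to $\nu_R$.
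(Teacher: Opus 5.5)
The paper gives no proof of this statement. It is quoted from Favre--Rivera-Letelier and used only as a black box, so your proposal has to stand on its own. Steps 2 and 3 are sensible: Step 3 is a reasonable transposition of Briend--Duval, and you rightly read the entropy value as $\log\deg R$. But Step 1, on which both depend, has a genuine gap, and both mechanisms you suggest for it are false.

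The pointwise bound $\log\|R'\|\le 0$ on $\hypb$ fails. Suppose the residue characteristic is not $2$, $0<|c|<1$, and $R(z)=z^2/c$. For every $z\in K$ with $|z|=|c|^{1/2}$ we have $|R(z)|=1$ and $\|R'\|(z)=|2z/c|=|c|^{-1/2}$. The seminorm formula defining $\|R'\|$ on $\hypb$ gives the same value $|c|^{-1/2}>1$ at the type II point $\zeta=\zeta(0,|c|^{1/2})$, which $R$ maps to the Gauss point $\zeta_G=\zeta(0,1)$.

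The same example kills your fallback. Here $\deg_R\zeta=2$ while $|c|^{-1/2}$ is arbitrarily large, so $\log\|R'\|$ is not controlled by the logarithm of a local degree. The local degree governs the expansion of $\varrho$ along directions, not the chordal derivative. Moreover $\frac1n\log\deg_{R^n}$ is not negligible in general. It equals $\log\deg R$ at the Julia point of a map with potential good reduction, which is exactly a case where $\nu_R(\hypb)=1$.

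What is true, and what makes Step 1 work, is that $\log\|R'\|$ is bounded on $\hypb$ by a coboundary. For $\zeta\in\hypb$, write $\operatorname{diam}(\zeta)=e^{-\varrho(\zeta,\zeta_G)}\in(0,1]$ for the chordal diameter of the disk defining $\zeta$. Then
\[
\|R'\|(\zeta)\le\frac{\operatorname{diam}(R(\zeta))}{\operatorname{diam}(\zeta)} .
\]
To see this, conjugate by elements of $\PGL(2,\anellointeri)$ so that $\zeta=\zeta(a,r)$ and $R(\zeta)=\zeta(b,s)$ lie in the closed unit disk. The inequality then reads $|R'|_\zeta\le s/r$. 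This follows from $|f'|_{\zeta(a,r)}\le |f|_{\zeta(a,r)}/r$ for all $f\in K(z)$, applied to $f=R-b$. Summing along an orbit gives
\[
\frac1n\sum_{k=0}^{n-1}\log\|R'\|(R^k\zeta)\;\le\;\frac1n\log\frac{\operatorname{diam}(R^n\zeta)}{\operatorname{diam}(\zeta)}\;\le\;\frac{\varrho(\zeta,\zeta_G)}{n}\;\longrightarrow\;0 .
\]
Now $\log\|R'\|$ is bounded above and $\int\log\|R'\|\,d\nu_R=L(R)>-\infty$, so $\log\|R'\|\in L^1(\nu_R)$. Birkhoff's theorem then gives $L(R)\le 0$ whenever $\nu_R(\hypb)=1$.

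With this inserted, the rest of your plan is reasonable, with one omission in 3(c). The balanced property gives each univalent branch $g$ of $R^{-n}$ on $D$ mass $\nu_R(g(D))=\nu_R(D)/\deg R^n$. To know that a proportion about $\nu_R(D)$ of these branches land in $D$, you need mixing of $\nu_R$, i.e.\ $\nu_R(D\cap R^{-n}D)\to\nu_R(D)^2$.
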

    The authors point out that solving \cref{conj3} affirmatively, together with the theorem above implies \cref{question1prime,conj2} in the case of positive Lyapunov exponent. We will see that using our \cref{tausuigamma} together with \cref{thmb3frl} above we can solve \cref{conj3} affirmatively. 
    \begin{theorem}[label=positivelyapunov]
        Let $R\in K(z)$ with $\deg R \geq 2$. If $L(R) >0$, then we have
        \begin{equation}
            (\deg R^n)^{-1} \sum_{\zeta \in \fix_{>1} \cap \hypb} (\deg_R \zeta) \times\delta_\zeta \ \xrightarrow[\ n\to\infty \ ]{} \ 0 \minis , \text{ weakly on } \pwanb .
        \end{equation}
    \end{theorem}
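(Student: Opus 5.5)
The plan is a mass-counting argument: the total weighted count of repelling points of $R^n$ is at most $\deg R^n$, and the repelling $K$-points already use up almost all of it. In the statement the sum is over $\zeta\in\fix_{>1}(R^n)\cap\hypb$, weighted by $\deg_{R^n}\zeta$. A positive measure whose total mass tends to $0$ converges weakly to $0$, so it suffices to prove
\begin{equation*}
    (\deg R^n)^{-1}\sum_{\zeta\in \fix_{>1}(R^n)\cap\hypb}\deg_{R^n}\zeta \xrightarrow[\ n\to\infty\ ]{} 0 .
\end{equation*}

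First, I would show that \cref{tausuigamma} bounds the total weighted number of repelling points. Fix $n$ and apply \cref{tausuigamma} to $R^n$, which gives $\sum_{\Gamma\in\nas(R^n)}\deg_{R^n}\Gamma=\deg R^n$. Repelling fixed points lie in $\nafl(R^n)$, so each lies in some $\Gamma$. For a given $\Gamma$, let $k\ge 1$ be the number of repelling points it contains. Each such point has $\deg_{R^n}\zeta\ge 2$ if it is of type II, and local degree $1$ if it is a $K$-point. I would then bound
\begin{equation*}
\sum_{\zeta\in\rf(R^n)\cap\Gamma}\deg_{R^n}\zeta \le k+\sum_{\zeta\in\Gamma}(\deg_{R^n}\zeta-1)\le k-1+\deg_{R^n}\Gamma .
\end{equation*}
This loses the $k-1$ term, which is exactly the shared-degree phenomenon that $\tau_{R,\operatorname{sh}}$ encodes. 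To control it I would use the structure of $\Gamma$: a repelling $K$-point is an isolated fixed point, so it forms its own subtree $\Gamma=\{z\}$ with $\deg\Gamma=1$.

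Next I would check whether two repelling type II points can share a subtree. They can, as Rumely's example shows $2\deg R-2$ repelling points. So instead I would separate the $K$-points from the type II points in the count. Repelling $K$-points are singletons in $\nas(R^n)$, each contributing $\deg\Gamma=1$. Every other subtree containing a repelling type II point $\zeta$ has $\deg\Gamma\ge \deg\zeta\ge 2$, and in fact $\deg\Gamma -1\ge \sum_{\zeta\in\Gamma\cap\hypb}(\deg\zeta-1)\ge \tfrac12\sum_{\zeta\in\Gamma\cap\hypb\cap\rf}\deg\zeta$, using $d-1\ge d/2$ for $d\ge2$. Summing over all subtrees and writing $N_n=\#(\fix_{>1}(R^n)\cap\PP^1(K))$ gives
\begin{equation*}
    N_n+\tfrac12\sum_{\zeta\in\fix_{>1}(R^n)\cap\hypb}\deg_{R^n}\zeta\le \deg R^n .
\end{equation*}

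Finally I would invoke \cref{thmb3frl}. Its weak convergence to the probability measure $\nu_R$, tested against the constant function $1$ on the compact space $\pwanb$, gives $N_n/\deg R^n\to 1$. The displayed inequality then forces the hyperbolic sum to be $o(\deg R^n)$, which is the claim.

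The step I expect to be the main obstacle is justifying that a repelling $K$-point forms a singleton subtree, equivalently that it is not contained in a larger connected non-attracting fixed set. A connected set of fixed points through a $K$-point $z$ must contain a segment $[z,\xi]$ fixed pointwise. Near a repelling point, $R^n$ expands distances along the direction toward $\xi$ by the factor $|(R^n)'(z)|>1$, so nearby points on the segment cannot be fixed. I would make this precise using the local linearity of $R^n$ on small disks around a non-critical $K$-point.
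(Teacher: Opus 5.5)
Your proposal is correct and is essentially the paper's proof. You isolate the repelling $K$-points as singleton subtrees of degree $1$, which is the paper's \cref{formaditausuciascungamma}(a); that result rests on the same power-series picture $\zeta(z,s)\mapsto\zeta(z,|\lambda|s)$ you sketch. Note that it is the radii that grow by $|\lambda|$ there, while in $\varrho$ this is a translation rather than an expansion. You then bound the type II repelling mass in the remaining subtrees by twice their degrees via $d\le 2(d-1)$, and combine \cref{tausuigamma} with \cref{thmb3frl} to get at most $2(\deg R^n-N_n)=o(\deg R^n)$; using $2(\deg_{R^n}\Gamma-1)$ instead of the paper's $2\deg_{R^n}\Gamma$ is a harmless refinement.
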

    
    \paragraph{Organization of the paper.} In Section \ref{preliminaries} we fix our notation, recall results from the literature, define some new technical notions that will play a role in the proofs to follow, and establish some basic facts about them. The discussion of these new notions is concentrated in Sections \ref{grafiquasifinitiretrazpotenz} and \ref{defqualitadinamiche}. In Section \ref{nonattcptsec} we introduce non-attracting subtrees for a fixed rational function $R\in K(z)$ and study their first basic properties (Section \ref{defefattibasesuisottalberi}), we introduce the measure $\tau_R^\alpha$ and prove \cref{tausuigamma} (Section \ref{contodeisottalberi}). In Section \ref{sezequidistro} we prove \cref{equidistrotau} (Section \ref{sezequidistrotau}) and Theorems \ref{polynomials}, \ref{connectedcounted}, and \ref{positivelyapunov} (Section \ref{sezequidistropuntirep}).
    \paragraph{Acknowledgements.} I would like to thank Farbod Shokrieh for his encouragement and support throughout the research process, and Robert L. Benedetto and Joseph H. Silverman for the helpful discussions. This work was partially supported by NSF CAREER DMS-2044564 grant and by the Jane Street Graduate Research Fellowship.
    
\section{Preliminaries} \label{preliminaries}
    We fix now for the rest of the paper a field $K$ that is algebraically closed and complete with respect to a nontrivial non-archimedean absolute value $| \minis \cdot \minis | : K \to \RR_{\geq 0}$. We denote the multiplicative subgroup of $\RR_{>0}$ that is the image of $K^\times = K \smallsetminus \{0\}$ under $| \minis \cdot \minis |$ by $\left | K^\times \right |$.  Recall that a \textbf{multiplicative seminorm} on a commutative ring with unity $A$ is a function $ | \minis \cdot \minis | : A \to \RR_{\geq 0}$ such that $|0| =0,|1| = 1$, $|a\cdot b| = |a| \cdot |b| $ and $|a+b| \leq \max \{|a|, |b|\}$, for all $a,b \in A$. We will denote by $\anellointeri = \{ a \in K \ : \ |a| \leq 1\}$ the ring of integers of $K$, by $\massimale = \{ a \in K \ : \ |a| < 1\}$ the unique maximal ideal of $\anellointeri$, and by $k = \anellointeri / \massimale$ the residue field.
    
    We include $0$ in the set $\NN$ of natural numbers. The notation $A\subset B$ will indicate non-strict containment of a set $A$ in a set $B$. We will use the symbol ``$\sqcup$'' for disjoint unions. Given a proposition $P$, the notation $\uuno(P)$ is to be evaluated as $1$ when $P$ is true and $0$ when $P$ is false. We will use a similar notation for indicator functions of subsets: if $A \subset B$, then for all $x\in B$ we define $\uuno_A(x) = 1$ if $x\in A$ and $\uuno_A(x) = 0$ if $x \in B\smallsetminus A$.  For a function $f:A\to B$ we denote by $f^{-1} C$ the preimage of a subset $C\subset B$. We denote by $\# E$ the cardinality of a set $E$. For a topological space $X$ and a subset $Y\subset X$ we denote the closure of $Y$ in $X$ by $\overline{Y}$. When considering an order $\leq$ on a topological segment, we will always assume $\leq$ to be one of the two orders inducing the topology. We will sometimes denote the finite set $\{1, \ldots, n\}$ by $[n]$.

    In the sections below we will introduce the Berkovich projective line and the action of a rational function on it. For details about the former see \cite{MR1070709}, Chapter 6 from \cite{MR3890051}, and the first two chapters from \cite{MR2599526}; for details about the latter see Chapters 7 and 8 from \cite{MR3890051} and Chapters 8 and 9 from \cite{MR2599526}.
    
\subsection{Points of the Berkovich projective line}\label{theberkoline}
    We denote by $\awanb$ the set of all multiplicative seminorms on the ring $K[z]$  that extend the absolute value on $K$ and we call it the \textbf{Berkovich affine line} over $K$. In viewing this set as a space, we will mostly denote its points by Greek letters, e.g., $\zeta \in \awanb$, writing $\zeta = | \minis \cdot \minis |_\zeta \in \awanb$ when considering $\zeta$ as a seminorm. We define the \textbf{Berkovich projective line} over $K$ as the union $\pwanb := \awanb \sqcup \{\infty\}$, where $\infty = | \minis \cdot \minis |_\infty : K[z] \to [0, \infty]$ sends a constant $a\in K$ to its absolute value $|a|$ and all other polynomials to $\infty \in [0,\infty]$. Notice that this function is not a semi-norm. We endow $\pwanb$ with the coarsest topology such that for each $f \in K[z]$ the map $\pwanb \to [0,\infty], \zeta \mapsto | \minis f \minis |_\zeta =: |  f(\zeta) |$ is continuous. This topology is often referred to as the \textit{weak} or \textit{Gel'fand} topology. It makes $\pwanb$ into a Hausdorff (sequentially) compact space. We can view the set of $K$-points of the projective line, $\PP^1(K)$, as a dense subspace of $\pwanb$, identifying $z_0 \in \PP^1(K)$ with the function $K[z]\to [0, \infty], f \mapsto |f(z_0)|$ (here we put $|\infty| = \infty$).
    
    Given $a \in K, r \geq 0$, we will define the \textbf{closed} $K$\textbf{-disk of radius} $r$ \textbf{about} $a$ as $\cld(a,r) := \{ z \in K \ |  \ |z-a| \leq r\}$. Given a decreasing sequence of disks of the above form, $S = (D_n)_{n \in \NN}$, $D_1 \supset D_2 \supset \ldots \supset D_n \supset \ldots$, setting $|f|_S = \lim_{n\to \infty} \sup_{z \in D_n} |f(z)|$ defines a multiplicative seminorm $| \minis \cdot \minis |_S$ on $K[z]$. Every  point $\zeta \in \awanb$ arises this way, and can be classified based on the sequence $(D_n)_{n \in \NN}$ as: \textbf{type I} if $D_n$ are all the same singleton; \textbf{type II} if $D_n = \cld(a,r)$ for all $n$, for some $a\in K$ and $r \in |K^\times |$; \textbf{type III} if  $D_n = \cld(a,r)$ for all $n$, for some $a\in K$ and $r \in \RR_{\geq 0 } \smallsetminus |K^\times |$; \textbf{type IV} if $\bigcap_n D_n = \varnothing$.
    
    Points of type I are exactly the elements of $\mathbb{A}^1(K) \subset \awanb$. We extend the classification to $\pwanb$ by declaring $\infty \in \pwanb$ to be of type I. We will use the notation $\hypb := \pwanb\smallsetminus \PP^1(K)$. A point $\zeta \in \pwanb$ of type I, II, or III corresponding to a disk $\cld(a,r), \ a \in K, r \geq 0$, will sometimes be denoted by $\zeta = \zeta(a,r)$.
            
    \subsection{Tree structure, segments, and the distance $\varrho( \cdot, \cdot)$}\label{strutturadalbero}
        Let $\zeta, \zeta' \in \awanb$, defined respectively by the sequences of disks $(D_n)_{n \in \NN}$ and $(D_n')_{n \in \NN}$. We write $\zeta \leq \zeta'$ if every $D_n'$ contains all but finitely many of the $D_n$, or equivalently
        \begin{equation*}
                \zeta \leq \zeta'   \quad  :\iff  \quad  \left ( \, |f(\zeta)| \leq |f(\zeta')|  , \; \forall \ f \in K[z] \, \right ).
        \end{equation*}
       This defines a partial order on $\awanb$, which we extend to $\pwanb$ by adding $\infty$ as the maximal element. Any two points $\zeta, \zeta' \in \pwanb$ then admit a join $\zeta \lor \zeta' \in \pwanb$ characterized by
        \[ \left |f(\zeta \lor \zeta') \right |  = \inf \Set{ \left |f(\xi) \right | \ | \ \xi \geq \zeta, \ \xi \geq \zeta' }, \qquad \forall \ f \in K[z].\]
        Then for $\zeta, \zeta' \in \pwanb$,  the set
        \[[\zeta, \zeta'] := \Set{ \xi \ | \  \zeta \leq \xi \leq \zeta \lor \zeta' \text{ or } \zeta' \leq \xi \leq \zeta \lor \zeta'} \]
       is the unique topological segment in $\pwanb$ having $\zeta$ and $\zeta'$ as its endpoints, which we denote by $[\zeta,\zeta']$ and refer to as a \textbf{closed segment}. The existence and uniqueness of the segment connecting two points is often referred to by saying that $\pwanb$ is \textbf{uniquely arc-connected}. Notice that when $\zeta=\zeta'$ we have $[\zeta,\zeta'] =\{\zeta\}$. We will also be using the notations $(\zeta, \zeta'):= [\zeta,\zeta'] \smallsetminus \{\zeta,\zeta'\}, (\zeta,\zeta'] := [\zeta,\zeta'] \smallsetminus \{\zeta\}, [\zeta,\zeta') := [\zeta,\zeta'] \smallsetminus \{\zeta'\}$, for respectively \textbf{open} and \textbf{half-open segments}. 
        \begin{lemma}[\cite{MR2599526}, Proposition B.1$.$ (B)] \label[lemma]{segmentsharingonept}
            Let $\zeta,\zeta',\zeta'' \in \pwanb$. If $(\zeta,\zeta') \cap (\zeta', \zeta'') = \varnothing$, then $[\zeta,\zeta'] \cup [\zeta', \zeta''] = [\zeta, \zeta'']$.
        \end{lemma}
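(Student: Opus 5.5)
Throughout we use the description of $[\zeta,\zeta']$ as the union of the two order-intervals $[\zeta,\zeta\lor\zeta']$ and $[\zeta',\zeta\lor\zeta']$, together with the fact that $[\zeta,\zeta']$ is the unique topological segment joining $\zeta$ and $\zeta'$ (unique arc-connectedness). The plan is to show that $[\zeta,\zeta']\cup[\zeta',\zeta'']$ is itself an arc from $\zeta$ to $\zeta''$; uniqueness then forces it to equal $[\zeta,\zeta'']$.

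We may assume $\zeta,\zeta',\zeta''$ are distinct. If $\zeta'$ equals one of the others, the union is a single segment (note $[\zeta,\zeta]=\{\zeta\}$). If $\zeta=\zeta''\neq\zeta'$, then $(\zeta,\zeta')=(\zeta',\zeta'')$ is nonempty, since a nondegenerate arc has interior points; this contradicts the hypothesis.

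The key step is to check that $[\zeta,\zeta']\cap[\zeta',\zeta'']=\{\zeta'\}$. By hypothesis the open parts are disjoint, so we only need to rule out the endpoints $\zeta$ and $\zeta''$ lying on the other segment. Suppose $\zeta''\in(\zeta,\zeta')$. Then the subarc $[\zeta'',\zeta']\subset[\zeta,\zeta']$ is an arc joining $\zeta''$ and $\zeta'$, so by uniqueness it equals $[\zeta',\zeta'']$. Hence $(\zeta',\zeta'')\subset(\zeta,\zeta')$, and $(\zeta',\zeta'')$ is nonempty because $\zeta'\neq\zeta''$. This contradicts disjointness. The case $\zeta\in(\zeta',\zeta'')$ is symmetric, and $\zeta=\zeta''$ was already excluded.

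Given this, fix homeomorphisms $\gamma_1:[0,1]\to[\zeta,\zeta']$ and $\gamma_2:[1,2]\to[\zeta',\zeta'']$ with $\gamma_1(1)=\gamma_2(1)=\zeta'$. Concatenating them gives a continuous map $\gamma:[0,2]\to\pwanb$. It is injective because the two images meet only at $\zeta'$, which is hit only at time $1$. Since $[0,2]$ is compact and $\pwanb$ is Hausdorff, $\gamma$ is a homeomorphism onto its image. That image is therefore an arc from $\zeta$ to $\zeta''$, and by unique arc-connectedness it equals $[\zeta,\zeta'']$.

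The only delicate point is the endpoint step, that is, excluding $\zeta''\in(\zeta,\zeta')$. The subarc-uniqueness argument above handles it, so I expect no real obstacle. One only needs care that arcs here are nondegenerate, so that their interiors are nonempty.
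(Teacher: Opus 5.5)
Your proof is correct. The paper gives no proof of its own here: the lemma is imported from Baker--Rumely \cite{MR2599526}, Proposition B.1 (B), so there is no in-paper argument to match, and yours is a self-contained replacement.

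It rests only on facts the paper records in Section 2: that $\pwanb$ is Hausdorff, and that it is uniquely arc-connected with $[\zeta,\zeta']$ the unique arc.

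\begin{itemize}
\item You first upgrade the hypothesis on open segments to $[\zeta,\zeta']\cap[\zeta',\zeta'']=\{\zeta'\}$. The sub-arc argument excluding $\zeta''\in(\zeta,\zeta')$ and $\zeta\in(\zeta',\zeta'')$ is exactly what this step requires.
\item You handle the degenerate cases correctly. In particular, $\zeta=\zeta''\neq\zeta'$ contradicts the hypothesis, since then $(\zeta,\zeta')=(\zeta',\zeta'')\neq\varnothing$.
\item You then concatenate the two arcs. A continuous injection from the compact interval $[0,2]$ into a Hausdorff space is an embedding, so the union is an arc from $\zeta$ to $\zeta''$, and uniqueness gives $[\zeta,\zeta'']$.
\end{itemize}

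This is the standard argument valid in any uniquely arc-connected Hausdorff space. It never uses the partial order $\leq$ or the description of segments via $\zeta\lor\zeta'$, which you mention at the outset but could drop.

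The citation buys the paper brevity. Your version makes explicit the passage from disjoint open segments to closed segments meeting only at $\zeta'$, which is where the lemma's hypothesis actually does its work.
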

        There exists a canonical distance function $\varrho : \hypb \times \hypb \to \RR_{\geq 0}$ (see \cite{MR2599526} \S 2.7), inducing a topology on $\hypb$ that is strictly stronger than its topology as a subspace of $\pwanb$ (where the topology on $\pwanb$ is the one described in Section \ref{theberkoline}). The two topologies coincide when restricted to segments (or finite unions thereof). For a segment $I \subset \RR$ (open, closed, or half-open), we will say a map $\gamma: I \to \hypb$ that is an isometry onto its image is an \textbf{arc-length parametrization} of the segment $\gamma(I)$. The distance $\varrho( \cdot , \cdot )$ is preserved under the action of $\PGL(2,K)$ (see \cref{sezioneazione} below).
              
    \subsection{Berkovich disks, directions at a point, and affinoids}\label{dischidirezioniaffinoidi}
	    Given $\zeta \in \pwanb$, its complement $\pwanb \smallsetminus \{\zeta\}$ can be endowed with an equivalence relation $\sim_\zeta$ defined by $\xi \sim_\zeta \xi' \iff \zeta \notin [\zeta, \zeta']$. Equivalence classes obtained this way are exactly the connected components of $\pwanb \smallsetminus \{\zeta\}$, whose set we denote by $\pi_0(\pwanb \smallsetminus \{\zeta\})$. When viewed as a subset of $\pwanb$, an element of $\pi_0(\pwanb \smallsetminus \{\zeta\})$ for some $\zeta\in \pwanb$, is called an \textbf{open } (\textbf{Berkovich}) \textbf{disk}. We can also define an equivalence relation on the set of nonempty segments $(\zeta,\xi]$ stemming from $\zeta$, calling two segments $(\zeta,\xi],(\zeta,\xi']$ equivalent if they intersect. We call such an equivalence class a \textbf{direction} at $\zeta$ and we call the set of directions at $\zeta$ the \textbf{tangent space} at $\zeta$, denoted by $T_\zeta\pwanb$.
	    \begin{prop}[\cite{MR3890051}, Proposition 6.38]\label[prop]{classifitangent}
	        \begin{enumerate}[(a)]
	            \item  The tangent space of a point of type I or type IV is a singleton.
	            \item The tangent space of a point of type III has $2$ elements.
	            \item The tangent space of a point of type II is infinite.
	        \end{enumerate}
	    \end{prop}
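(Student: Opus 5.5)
The proof reduces the count of directions to a count of the ways of leaving $\zeta$ inside the tree. First I will check that two segments $(\zeta,\xi]$ and $(\zeta,\xi']$ intersect if and only if $\xi\sim_\zeta\xi'$. If $\zeta\notin[\xi,\xi']$, then \cref{segmentsharingonept} gives that the segments cannot meet only at $\zeta$. Conversely, if they share a point $\eta$, then $[\xi,\eta]\cup[\eta,\xi']$ is a path from $\xi$ to $\xi'$ avoiding $\zeta$, and by unique arc-connectedness $[\xi,\xi']$ lies inside this union. So $T_\zeta\pwanb$ is in bijection with $\pi_0(\pwanb\smallsetminus\{\zeta\})$. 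In practice I will work with the partial order: for $\xi\ne\zeta$, the initial part of $[\zeta,\xi]$ either goes \emph{up}, into $(\zeta,\zeta\lor\xi]$, or goes \emph{down}, into $(\xi,\zeta]$ when $\xi<\zeta$. Since $\PGL(2,K)$ acts by homeomorphisms and preserves types, I may move $\infty$ to a finite point of the same type (e.g.\ $0$) when needed.

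The up-direction is unique whenever $\zeta\neq\infty$. The set $\{\eta:\eta\geq\zeta\}$ is totally ordered: it is the chain of points given by disks containing all $D_n$ (for $\zeta=\zeta(a,r)$, it is the chain $\zeta(a,s)$, $s\geq r$, up to $\infty$). Hence any two upward initial segments overlap. It remains to analyze the downward directions by type.

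\textbf{Type IV.} There is no $\xi<\zeta$: such a $\xi$ would have all its disks eventually inside every $D_n$, contradicting $\bigcap D_n=\varnothing$. So only the up-direction exists.

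\textbf{Type I.} For $\zeta=a\in K$ there is nothing below $a$, so again there is a single direction (and $\infty$ reduces to this case via a Möbius map).

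\textbf{Type III.} Let $\zeta=\zeta(a,r)$ with $r\notin|K^\times|$. Any $\xi<\zeta$ lies below some $\zeta(b,s)$ with $|b-a|\le r$ and $s<r$. Since $r\notin|K^\times|$, we get $|b-a|<r$. Enlarging $s$ to $\max(s,|b-a|)<r$ gives $\zeta(b,s)=\zeta(a,s)$. So every downward segment contains $(\zeta(a,s'),\zeta)$ for all $s'$ close to $r$, and the down-direction is unique. It is different from the up-direction, since the two lie on opposite sides of $\zeta$ in the order. This gives exactly two directions.

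\textbf{Type II.} Let $\zeta=\zeta(a,r)$ with $r=|c|$. For $b,b'\in\cld(a,r)$ with $|b-b'|=r$, the join is $b\lor b'=\zeta$, so $(\zeta,b]\cap(\zeta,b']=\varnothing$ and these give distinct directions. The classes of $\cld(a,r)$ modulo open disks of radius $r$ correspond, via $z\mapsto (z-a)/c$, to the residue field $k$. The field $k$ is algebraically closed because $K$ is, hence infinite, so there are infinitely many directions.

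The main obstacle is the type III step: proving that every point below $\zeta$ eventually sits on the single chain $\zeta(a,s)$. This is exactly where $r\notin|K^\times|$ enters, and I would handle it by the disk computation above.
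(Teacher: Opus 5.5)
The paper gives no proof of this proposition; it quotes it from Benedetto's book. Your direct verification from the disk description and the partial order is therefore a self-contained alternative, and most of it is sound. The identification of directions with the classes of $\sim_\zeta$, the up/down dichotomy via $\zeta\lor\xi$, the uniqueness of the upward direction, and the type I, II and III cases all work.

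The gap is in the type IV case. You argue that a point $\xi<\zeta$ would have its disks eventually inside every $D_n$, contradicting $\bigcap_n D_n=\varnothing$. This is a contradiction only when $\xi$ is of type I, II or III, because then a single disk $E$ lies in every $D_n$. If $\xi$ is itself of type IV, defined by disks $E_m$, then $\xi\le\zeta$ only says that each $D_n$ contains the $E_m$ for $m\ge m(n)$. No single $E_m$ has to lie in $\bigcap_n D_n$. In fact your argument never uses $\xi\neq\zeta$, and $\xi=\zeta$ (take $E_m=D_m$) satisfies exactly the same hypothesis. So the step cannot be complete as written.

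What is missing is the lemma that nothing lies strictly below a type IV point, which needs a comparison of radii.
\begin{itemize}
\item Write $D_n=\cld(a_n,r_n)$ and $E_m=\cld(b_m,s_m)$, with nonincreasing radii and limits $r$ and $s$.
\item The intersections are empty, so the radii never stabilize (nested closed disks of equal radius coincide). Hence $r_n>r$ and $s_m>s$ for all $n,m$.
\item Every $E_m$ meets every $D_n$, since both contain $E_{m'}$ for $m'$ large. So of any such pair, the disk with the smaller radius is contained in the other.
\item If $s_m<r$ for some $m$, then $E_m\subset D_n$ for all $n$. This gives your intended contradiction $E_m\subset\bigcap_n D_n=\varnothing$.
\item Otherwise $s_m>s\ge r$ for every $m$, so $D_n\subset E_m$ for all large $n$. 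The two sequences are then mutually cofinal and define the same seminorm, i.e.\ $\xi=\zeta$.
\end{itemize}

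The same lemma is also what justifies your description of $\{\eta:\eta\ge\zeta\}$ as a chain when $\zeta$ is of type IV, since a type IV point $\eta\ge\zeta$ must then equal $\zeta$. With this lemma added, your proof is complete.
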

	    \begin{rmk}\label[rmk]{segmentiapertiduetre}
	        It follows from the proposition above that any nonempty open segment consists of points of type II and type III, and is therefore in particular contained in $\hypb$.
	    \end{rmk}
	    Given $\zeta,\xi \in \pwanb$ distinct, we will denote by $\dir{\zeta\xi}$ the direction $v\in T_\zeta \pwanb$ that is the equivalence class of the segment $(\zeta,\xi]$. If $\zeta = \xi$, the notation $\dir{\zeta\xi} = \dir{\zeta\zeta}$ will denote the point $\zeta = \xi$. Notice that in this case, $\dir{\zeta\zeta} = \zeta$ is by definition not a direction. For $\zeta \neq \xi$ we will say that $(\zeta,\xi]$ (or $(\zeta,\xi)$) is a \textbf{segment stemming into} $v$. By sending the direction $\dir{\zeta\xi}$ to the connected component in $\pi_0(\pwanb\smallsetminus \{\zeta\})$ containing $\xi$, we obtain a bijection between $T_\zeta\pwanb$ and $\pi_0(\pwanb \smallsetminus \{\zeta\})$. In particular, if $\xi,\xi' \in \bdisk(v)$ for $v \in T_\zeta\pwanb$, then $(\zeta,\xi] \cap (\zeta,\xi'] \neq \varnothing$. We will denote the open disk corresponding to a direction $v\in T_\zeta\pwanb$ by $\bdisk(v)$. The disk $\bdisk(v)$ has then boundary $\partial \bdisk(v) = \{\zeta\}$. 
	    \begin{lemma}\label[lemma]{lemmabruttu}
	        Let $\xi\in(\zeta,\zeta')\subset \pwanb$. Then 
	        \begin{equation*}
	            \overline{\bdisk(\dir{\xi\zeta'})} \subset \bdisk(\dir{\zeta\xi})
	        \end{equation*}
	    \end{lemma}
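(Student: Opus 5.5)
We must show that the closure of $\bdisk(\dir{\xi\zeta'})$ is contained in $\bdisk(\dir{\zeta\xi})$. The plan has two steps: first show that the open disk $\bdisk(\dir{\xi\zeta'})$ itself, together with its boundary point $\xi$, lies in $\bdisk(\dir{\zeta\xi})$; then show that the closure of $\bdisk(\dir{\xi\zeta'})$ adds nothing beyond $\xi$.

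\emph{Step 1.} Since $\partial\bdisk(\dir{\xi\zeta'})=\{\xi\}$ and $\xi\neq\zeta$, certainly $\xi\in\bdisk(\dir{\zeta\xi})$. Now take $\eta\in\bdisk(\dir{\xi\zeta'})$; we need $\zeta\notin[\xi,\eta]$. Because $\eta\sim_\xi\zeta'$, the segments $(\xi,\eta]$ and $(\xi,\zeta']$ intersect. On the other hand, $\xi\in(\zeta,\zeta')$, so the segments $(\xi,\zeta]$ and $(\xi,\zeta']$ are disjoint: by uniqueness of arcs they are the two halves of $[\zeta,\zeta']$. Hence $\dir{\xi\zeta}\neq\dir{\xi\zeta'}=\dir{\xi\eta}$, so $\zeta$ and $\eta$ lie in different components of $\pwanb\smallsetminus\{\xi\}$, and therefore $\xi\in[\zeta,\eta]$. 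Suppose for contradiction that $\zeta\in[\xi,\eta]$. Then $[\zeta,\eta]\subset[\xi,\eta]$, and this subarc contains $\xi$ only as an endpoint of $[\xi,\eta]$; that forces $\zeta=\xi$, which is false. Thus $\zeta\notin[\xi,\eta]$, i.e.\ $\eta\sim_\zeta\xi$, and so $\eta\in\bdisk(\dir{\zeta\xi})$.

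\emph{Step 2.} We show the closure is $\bdisk(\dir{\xi\zeta'})\cup\{\xi\}$. The set $\pwanb\smallsetminus(\bdisk(\dir{\xi\zeta'})\cup\{\xi\})$ is the union of the other components of $\pwanb\smallsetminus\{\xi\}$. These components are open, since $\pwanb\smallsetminus\{\xi\}$ is open and the connected components of an open subset of $\pwanb$ are open (local connectedness of $\pwanb$). Hence their union is open, so $\bdisk(\dir{\xi\zeta'})\cup\{\xi\}$ is closed. This gives $\overline{\bdisk(\dir{\xi\zeta'})}\subset\bdisk(\dir{\xi\zeta'})\cup\{\xi\}$, which combined with Step 1 proves the lemma.

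The only possibly delicate point is the openness of the Berkovich open disks, i.e.\ the local connectedness of $\pwanb$. This is standard (\cite{MR2599526}, Ch.~2), and I would simply cite it.
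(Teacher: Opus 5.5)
Your proof is correct and follows essentially the same route as the paper. You show that $\xi$ and each point $\eta$ of $\bdisk(\dir{\xi\zeta'})$ lie in $\bdisk(\dir{\zeta\xi})$ via a segment/direction argument, then observe that the closure adds only the boundary point $\xi$. The differences are minor: the paper obtains $[\zeta,\eta]=[\zeta,\xi]\cup[\xi,\eta]$ directly from \cref{segmentsharingonept} where you use the component characterization plus a subarc argument, and you justify the closure identity through local connectedness where the paper says it follows from the definitions.
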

	    \begin{proof}
	         Let $\xi'\in \bdisk(\dir{\xi\zeta'})$. By our hypothesis $\dir{\xi\zeta}, \dir{\xi\zeta'}$ are distinct directions at $\xi$, whence $[\zeta,\xi)$ and $(\xi,\xi']$ are disjoint. Therefore by \cref{segmentsharingonept}  we have $(\zeta,\xi]\cup[\xi,\xi'] = (\zeta,\xi']$. This shows that $\dir{\zeta\xi} = \dir{\zeta\xi'}$ and hence that $\xi'\in \bdisk(\dir{\zeta\xi})$. Since $\xi'\in \bdisk(\dir{\xi\zeta'})$ was arbitrary and since it follows from the definitions that $\xi \in \bdisk(\dir{\zeta\xi})$, we obtain that $\overline{\bdisk(\dir{\xi\zeta'})} = \bdisk(\dir{\xi\zeta'}) \sqcup \{\xi\} \subset \bdisk(\dir{\zeta\xi})$.
	    \end{proof}
	    \begin{lemma}\label[lemma]{lemmaobbrobrioso}
	        Let $\zeta,\xi\in \pwanb$ be distinct. Then 
	        \begin{equation*}
	        	\bdisk(\dir{\zeta\xi}) \smallsetminus \bdisk(\dir{\xi\zeta}) = \pwanb \smallsetminus \bdisk(\dir{\xi\zeta}) \minis .
	        \end{equation*}
	    \end{lemma}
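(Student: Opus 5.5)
The plan is to prove the two inclusions separately. Write $D := \bdisk(\dir{\zeta\xi})$ and $E := \bdisk(\dir{\xi\zeta})$. Since $\zeta \neq \xi$, both directions make sense, and by construction $\xi \in D$ and $\zeta \in E$. The inclusion $D \smallsetminus E \subset \pwanb \smallsetminus E$ is trivial, so the content of the statement is that $\pwanb \smallsetminus E \subset D$. In words: every point not in the component of $\pwanb\smallsetminus\{\xi\}$ containing $\zeta$ must lie in the component of $\pwanb\smallsetminus\{\zeta\}$ containing $\xi$.

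To show this, take $\eta \in \pwanb \smallsetminus E$ and split into two cases.

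\textbf{Case $\eta = \xi$.} Then $\eta \in D$ by definition.

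\textbf{Case $\eta \neq \xi$.} Then $\eta$ lies in some component $\bdisk(w)$ with $w \in T_\xi \pwanb$, and $w \neq \dir{\xi\zeta}$ because $\eta \notin E$. So $\dir{\xi\eta} \neq \dir{\xi\zeta}$, i.e. the segments $(\xi,\zeta]$ and $(\xi,\eta]$ do not intersect. In particular $(\zeta,\xi) \cap (\xi,\eta) = \varnothing$, and \cref{segmentsharingonept} gives
\[
[\zeta,\xi] \cup [\xi,\eta] = [\zeta,\eta].
\]
Now argue as in the proof of \cref{lemmabruttu}.
\begin{itemize}
\item First, $\eta \neq \zeta$: otherwise $(\xi,\eta] = (\xi,\zeta]$ would meet $(\xi,\zeta]$.
\item Hence $(\zeta,\eta]$ is a nonempty segment stemming from $\zeta$ that contains $(\zeta,\xi]$.
\item It therefore meets $(\zeta,\xi]$, so $\dir{\zeta\eta} = \dir{\zeta\xi}$, i.e. $\eta \in D$.
\end{itemize}
Equivalently, one can note that $\zeta \notin [\xi,\eta]$, since the segment is the union of $\{\xi\}$ and $(\xi,\eta]$, neither of which contains $\zeta$; by the definition of $\sim_\zeta$ this gives $\eta \sim_\zeta \xi$.

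The only delicate point, and the step I expect to need the most care, is this bookkeeping with degenerate cases: $\eta = \xi$, $\eta = \zeta$, and $\zeta$ of type I/IV, where the tangent space is a singleton. These are handled by the case distinctions above, and the argument never uses the type of $\zeta$. One should also confirm that the hypothesis of \cref{segmentsharingonept} is met with the open segments. It is, since $(\zeta,\xi) \subset (\xi,\zeta]$ and $(\xi,\eta) \subset (\xi,\eta]$, and these two half-open segments are disjoint.
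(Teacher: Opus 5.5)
Your proof is correct and follows essentially the paper's route. The paper also splits $\pwanb\smallsetminus\bdisk(\dir{\xi\zeta})$ into $\{\xi\}$ and the disks $\bdisk(u)$ with $u\neq\dir{\xi\zeta}$, uses \cref{segmentsharingonept} to place $\xi$ on $(\zeta,\eta)$, and then cites \cref{lemmabruttu} to get $\overline{\bdisk(\dir{\xi\eta})}\subset\bdisk(\dir{\zeta\xi})$, whereas you inline that lemma's argument pointwise (your alternative via $\zeta\notin[\xi,\eta]$ and the definition of $\sim_\zeta$ is also valid and does not even need \cref{segmentsharingonept}).
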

	    \begin{proof}
	        By the definition of disks as connected components, we have
	        \begin{equation}\label{complementuccio}
	            \pwanb \smallsetminus \bdisk(\dir{\xi\zeta}) = \{\xi\} \sqcup \bigsqcup_{u\in T_\xi\pwanb \smallsetminus \{\dir{\xi\zeta}\} }\bdisk(u) \minis .
	        \end{equation}
	        Let $u\in T_\xi\pwanb\smallsetminus \{\dir{\xi\zeta}\}$ and $\zeta' \in \bdisk(u)$. Then, since $u = \dir{\xi\zeta'} \neq \dir{\xi\zeta}$, we have that $[\zeta,\xi) \cap (\xi,\zeta'] = \varnothing$. It then follows from \cref{segmentsharingonept} that $\xi\in (\zeta,\zeta')$, and therefore by \cref{lemmabruttu},
	        \begin{equation}
	            \{\xi\} \sqcup \bdisk(u) = \overline{\bdisk(\dir{\xi\zeta'})} \subset \bdisk(\dir{\zeta\xi}) \minis .
	        \end{equation}
	        Taking a union over all $u\in T_\xi\pwanb\smallsetminus \{\dir{\xi\zeta}\}$ in the inclusion above, and looking at \cref{complementuccio}, we obtain the inclusion ``$\supset$'' in our statement. The opposite inclusion is trivial.
	    \end{proof}    
	    The intersection of finitely many open disks is called an \textbf{open affinoid}. For an open affinoid $V$ intersection of disks $\bdisk_1 , \ldots , \bdisk_r$, we can always choose the disks to not contain one another. In that case, denoting by $\zeta_i$ the boundary point of $\bdisk_i$, we will have a non-empty intersection if and only if $\bdisk_i = \bdisk(\dir{\zeta_i \zeta_j}), \ \forall \ i \neq j$, that is, if and only if each disk contains all the boundary points of the others.  It follows that for $r \geq 2$, such an affinoid is only determined by its boundary points. The notation
	    $V =: \aff{\zeta_1, \ldots , \zeta_r}$ is therefore well-defined. We denote an open affinoid with two boundary points $\zeta,\xi \in \pwanb$ by $\bannulus(\zeta,\xi)$ and call such an affinoid an \textbf{open annulus}. Notice that $\bannulus(\zeta,\xi) = \bdisk(\dir{\zeta\xi}) \cap \bdisk(\dir{\xi\zeta})$, $\overline{\bannulus(\zeta,\xi)} = \overline{\bdisk(\dir{\zeta\xi})} \cap \overline{\bdisk(\dir{\xi\zeta})}$, and $\{\zeta\} \sqcup \bannulus(\zeta,\xi) = \overline{\bdisk(\dir{\zeta\xi})} \cap \bdisk(\dir{\xi\zeta})$.
	    \begin{lemma}\label[lemma]{lemmadisgrazia}
	        Let $\zeta,\zeta',\xi \in \pwanb$ distinct. Then there exists $\zeta'' \in (\zeta,\zeta')$ such that $\xi \in \pwanb \smallsetminus \overline{\bannulus(\zeta,\zeta'')}$.
	    \end{lemma}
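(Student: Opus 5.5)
We need $\zeta''\in(\zeta,\zeta')$ such that $\xi\notin\overline{\bannulus(\zeta,\zeta'')} = \overline{\bdisk(\dir{\zeta\zeta''})}\cap\overline{\bdisk(\dir{\zeta''\zeta})}$. Since $\zeta''\in(\zeta,\zeta')$, we have $\dir{\zeta\zeta''}=\dir{\zeta\zeta'}$. The plan is to split into cases according to where $\xi$ sits relative to $\zeta$ and the segment $[\zeta,\zeta']$.

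\emph{Case 1: $\xi\notin\bdisk(\dir{\zeta\zeta'})$.} Since $\xi\neq\zeta$, also $\xi\notin\overline{\bdisk(\dir{\zeta\zeta'})}=\bdisk(\dir{\zeta\zeta'})\sqcup\{\zeta\}$. Hence $\xi$ is outside $\overline{\bannulus(\zeta,\zeta'')}$ for every $\zeta''\in(\zeta,\zeta')$. The open segment $(\zeta,\zeta')$ is nonempty because $\zeta\neq\zeta'$ and $\pwanb$ is uniquely arc-connected, so we may pick any point of it.

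\emph{Case 2: $\xi\in\bdisk(\dir{\zeta\zeta'})$.} Then $(\zeta,\xi]\cap(\zeta,\zeta']\neq\varnothing$. Both half-open segments start at $\zeta$, and $\pwanb$ is uniquely arc-connected, so their intersection is an initial piece $(\zeta,\eta]$ with $\eta\neq\zeta$. Here $\eta$ is the point where the arcs from $\zeta$ to $\xi$ and to $\zeta'$ branch, i.e.\ the retraction of $\xi$ onto $[\zeta,\zeta']$. Choose $\zeta''\in(\zeta,\eta)$; this open segment is nonempty and contained in $(\zeta,\zeta')$, since $\eta\in(\zeta,\zeta']$.

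It remains to check that $\xi\notin\overline{\bdisk(\dir{\zeta''\zeta})}$. First, $\xi\neq\zeta''$, because $\zeta''\in(\zeta,\eta)$ while $\xi\in(\zeta,\xi]$ lies beyond $\eta$ (or equals $\eta$). Next, the segment $[\zeta'',\xi]$ is contained in $[\eta,\zeta'']\cup[\eta,\xi]$, and it avoids $\zeta$. So $\xi$ lies in the direction at $\zeta''$ pointing toward $\eta$, which is different from $\dir{\zeta''\zeta}$ because $\zeta''\in(\zeta,\eta)$. Formally, I would apply \cref{lemmabruttu} with the point $\zeta''\in(\zeta,\xi)$: this gives $\xi\in\bdisk(\dir{\zeta''\xi})$, a direction different from $\dir{\zeta''\zeta}$. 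Since the open disks attached to distinct directions at $\zeta''$ are disjoint, and $\xi\neq\zeta''$, we get $\xi\notin\overline{\bdisk(\dir{\zeta''\zeta})}$.

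The main obstacle is a rigorous justification that $(\zeta,\xi]\cap(\zeta,\zeta']$ is a segment $(\zeta,\eta]$ with $\eta\neq\zeta$. I would try to obtain this from the order and join description of segments in Section \ref{strutturadalbero}, after applying a $\PGL(2,K)$ change of coordinates to put $\zeta$ in a convenient position. Alternatively, one can avoid $\eta$ altogether: take any $\zeta''$ in the nonempty intersection $(\zeta,\xi]\cap(\zeta,\zeta']$ and then move it slightly toward $\zeta$ along $(\zeta,\zeta'']$. The intersection of two arcs from $\zeta$ is connected, since it is an intersection of subtrees, so such a point still lies in $(\zeta,\xi)\cap(\zeta,\zeta')$. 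This avoids the degenerate possibility $\zeta''=\xi$.
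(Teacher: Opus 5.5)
Your proof is correct and follows essentially the paper's argument: the same split according to whether $\xi\in\bdisk(\dir{\zeta\zeta'})$, and in the second case the same choice of $\zeta''\in(\zeta,\zeta')\cap(\zeta,\xi)$; your ``alternative'' of taking a point of the intersection and moving toward $\zeta$ is exactly the paper's route, so the branch point $\eta$ is not needed. One small point is the citation: $\xi\notin\overline{\bdisk(\dir{\zeta''\zeta})}$ comes from \cref{lemmabruttu} applied to $\zeta''\in(\xi,\zeta)$, which gives $\overline{\bdisk(\dir{\zeta''\zeta})}\subset\bdisk(\dir{\xi\zeta})$, a disk that does not contain $\xi$; the application you describe does not give this, although your direct argument via the distinct directions $\dir{\zeta''\xi}\neq\dir{\zeta''\zeta}$ is also valid.
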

	    \begin{proof}
	        If $\xi \notin \overline{\bdisk(\dir{\zeta\zeta'})}$ then the conclusion holds for any $\zeta'' \in (\zeta,\zeta'')$, because $\overline{\bannulus(\zeta,\zeta')} \subset \overline{\bdisk(\dir{\zeta\zeta'})}$. Notice that such a $\zeta''$ exists, because $\zeta \neq \zeta'$, so $(\zeta,\zeta') \neq \varnothing$.
	        Suppose now instead that $\xi \in \overline{\bdisk(\dir{\zeta\zeta'})}$. Then in fact $\xi \in \bdisk(\dir{\zeta\zeta'})$ because $\xi \neq \zeta$. It follows that $\dir{\zeta\zeta'} = \dir{\zeta\xi}$, that is, there exists $\zeta'' \in(\zeta,\zeta') \cap (\zeta,\xi)$. For such a $\zeta''$ we have $\overline{\bdisk(\dir{\zeta''\zeta})} \subset \bdisk(\dir{\xi\zeta})$ by \cref{lemmabruttu}, whence $\xi \notin \overline{\bannulus(\zeta,\zeta''}) \subset \bdisk(\dir{\xi\zeta})$.
	    \end{proof}
	    Let now $\zeta\in \pwanb$ and $\xi_1, \ldots, \xi_k \in \pwanb$ with $\dir{\zeta\xi_1}, \ldots, \dir{\zeta\xi_k}$ distinct. Using the definition of disks as connected components of $\pwanb \smallsetminus \{\zeta\}$ and the identity $\bannulus(\zeta,\xi_i) = \bdisk(\dir{\zeta\xi_i}) \cap \bdisk(\dir{\xi_i\zeta})$, we can see that
	    \begin{equation}\label{affinoidesmontato}
	        \{\zeta\} \sqcup \bannulus(\zeta, \xi_1) \sqcup \ldots \sqcup \bannulus (\zeta, \xi_k) \sqcup \left ( \bigsqcup_{v \in T_\zeta\pwanb \smallsetminus \{\dir{\zeta\xi_1}, \ldots , \dir{\zeta\xi_k}\}} \bdisk(v) \right )  = \aff{\xi_1, \ldots , \xi_k} \minis . 
	    \end{equation}
    \subsection{Connected subsets of $\pwanb$ and retractions} \label{sottoinsiemiconnessi}
        The Berkovich projective line is uniquely arc connected and locally arc connected. It follows that a subset $\Gamma \subset \pwanb$ is connected if and only if it is arc-connected and that the intersection of connected subsets of $\pwanb$ is connected.
        \begin{lemma}\label[lemma]{connessocontiene}
            Let $\Gamma \subset \pwanb$ be a connected set. Let $\xi \in \Gamma, \zeta \in \overline{\Gamma}$. Then $(\xi,\zeta)\subset \Gamma$.
        \end{lemma}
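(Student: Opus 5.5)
We must show that if $\Gamma$ is connected, $\xi\in\Gamma$ and $\zeta\in\overline{\Gamma}$, then the open segment $(\xi,\zeta)$ lies in $\Gamma$. If $\zeta=\xi$ the segment is empty, and if $\zeta\in\Gamma$ the claim follows because $\Gamma$ is arc-connected and $\pwanb$ is uniquely arc-connected: $[\xi,\zeta]\subset\Gamma$. So we assume $\zeta\in\overline{\Gamma}\smallsetminus\Gamma$ and $\zeta\neq\xi$. We fix $\eta\in(\xi,\zeta)$ and aim to produce a point $\gamma\in\Gamma$ with $\eta\in[\xi,\gamma]$. Arc-connectedness of $\Gamma$ then gives $\eta\in[\xi,\gamma]\subset\Gamma$.

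To find $\gamma$, we use an open neighbourhood of $\zeta$ all of whose points ``lie beyond'' $\eta$ as seen from $\xi$. The natural candidate is the open disk $U=\bdisk(\dir{\eta\zeta})$. It is open, being a connected component of the open set $\pwanb\smallsetminus\{\eta\}$ in a locally connected space. It contains $\zeta$ by definition. Since $\zeta\in\overline{\Gamma}$, there is some $\gamma\in\Gamma\cap U$.

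It remains to check that $\eta\in[\xi,\gamma]$. Because $\eta\in(\xi,\zeta)$, the directions $\dir{\eta\xi}$ and $\dir{\eta\zeta}$ at $\eta$ are distinct, so $\xi\notin U$. Hence $\xi$ and $\gamma$ lie in different connected components of $\pwanb\smallsetminus\{\eta\}$. The segment $[\xi,\gamma]$ is connected and joins them, so it must pass through $\eta$. Equivalently, $[\xi,\eta)\cap(\eta,\gamma]=\varnothing$, and \cref{segmentsharingonept} gives $[\xi,\eta]\cup[\eta,\gamma]=[\xi,\gamma]$. Since $\eta$ was arbitrary in $(\xi,\zeta)$, this proves the lemma.

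The only delicate point is that disks are open in the weak topology. This rests on $\pwanb$ being locally connected, which the paper has already asserted (locally arc connected), so I expect no serious obstacle.
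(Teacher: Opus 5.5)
Your proof is correct and is essentially the paper's argument: for $\eta\in(\xi,\zeta)$ you pick $\gamma\in\Gamma\cap\bdisk(\dir{\eta\zeta})$, using that this disk is a neighbourhood of $\zeta$, and then use that $\xi$ and $\gamma$ lie in distinct components of $\pwanb\smallsetminus\{\eta\}$ to get $\eta\in[\xi,\gamma]\subset\Gamma$, exactly as the paper does via \cref{segmentsharingonept}. Your separate treatment of the case $\zeta\in\Gamma$ is unnecessary, since the general argument covers it. Your justification that open disks are open, via local connectedness, fills in a point the paper leaves implicit.
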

        \begin{proof}
            Let $\alpha \in (\xi,\zeta)$. Since $\zeta\in \overline{\Gamma}$ and $\bdisk(\dir{\alpha\zeta})$ is a neighourhood of $\zeta$, there exists a point $\beta \in \bdisk(\dir{\alpha\zeta}) \cap \Gamma$. Since $[\xi,\alpha) \cap (\alpha,\zeta] = \varnothing$, by the bijection $T_\alpha\pwanb \to \pi_0(\pwanb\smallsetminus \{\alpha \} )$ we have that $\bdisk(\dir{\alpha\xi})$ and $\bdisk(\dir{\alpha\zeta})$ are distinct connected components of $\pwanb \smallsetminus \{\alpha \}$, and are therefore disjoint. It follows that $(\xi,\alpha) \cap (\alpha, \beta) = \varnothing$, whence by \cref{segmentsharingonept} and by $\Gamma$ being connected, we obtain that $ \alpha \in [\xi,\alpha] \cup [\alpha, \beta] = [\xi,\beta] \subset \Gamma $.
            Since $\alpha \in (\xi,\zeta)$ was arbitrary, this proves our statement.
        \end{proof}
        Let $\Gamma \subset \pwanb$ be a connected set and let $\zeta \in \Gamma$. We define the \textbf{tangent space at} $\zeta$ \textbf{in} $\Gamma$ as the set $T_\zeta\Gamma$ of directions $v \in T_\zeta\pwanb$ such that $\Gamma \cap \bdisk(v) \neq \varnothing$. We call an element $v \in T_\zeta\Gamma$ a \textbf{direction at} $\zeta$ \textbf{in} $\Gamma$. Tangent spaces at points in connected sets satisfy the following properties:
        \begin{flalign}\label{spazitangentiinternimonotoni}
            &&T_\zeta\Gamma' &\subset T_\zeta\Gamma, &&& \forall \ \zeta &\in \Gamma', && \text{for all connected sets } \Gamma' \subset \Gamma \subset \pwanb;\\
            \label{intersezionedispazitangentiinterni}
            &&T_\zeta\Gamma \cap T_\zeta\Gamma' &= T_{\zeta}(\Gamma \cap\Gamma'), &&& \forall \ \zeta &\in \Gamma\cap\Gamma', && \text{for all connected sets } \Gamma , \Gamma' \subset \pwanb.
        \end{flalign}
        We define $r_\Gamma(\zeta)$ as the closest point of $\Gamma$ to $\zeta$ in a segment $[\zeta, \xi]$ where $\xi\in \Gamma$ (this definition does not depend on $\xi$). 
        Writing $r_\Gamma:\pwanb \to \Gamma, \zeta \mapsto r_\Gamma(\zeta)$ defines a continuous map (see \cite{MR2599526}, p. 89), that is in fact a retraction, in that $r_\Gamma|_\Gamma = \id_\Gamma$. For connected sets $\Gamma \subset \Gamma' \subset \pwanb$, where $\Gamma$ is also closed, we define the retraction map $r_\Gamma|_{\Gamma'} =: r_{\Gamma,\Gamma'} : \Gamma'\to \Gamma$ by restriction.
        \begin{rmk}\label[rmk]{retrazionestessazione}
            Given a closed connected nonempty subset $\Gamma \subset \pwanb$ and $\zeta\in \pwanb \smallsetminus \Gamma$,  if $\xi\in\bdisk(\dir{r_\Gamma(\zeta) \zeta})$, then  $r_\Gamma(\zeta)=r_\Gamma(\xi)$.
        \end{rmk}

    \subsection{Measures on $\pwanb$}
	    All the measures we consider will be Borel. For a more in depth treatment of measures on $\pwanb$, see \cite{MR2599526}, \S 5.2. We denote  by $\meas(E)$ the space of (finite) signed Radon measures (differences of non-negative finite Radon measures)  on a set $E \subset \pwanb$. For $a\in \RR$, we denote by $\meas_a(E)$  the subspace of measures $\mu \in \meas(E)$ with total mass $a$, i.e., satisfying $\mu(E) = a$. We will denote by $\meas^+(E)$ the space of positive finite Radon measures on $E$ and by $\meas^+_1(E)$ the space of probability Radon measures on $E$. Finally, given $\alpha \in \hypb$, we will denote by $\meas^+_1[\alpha]$ the measures of the form $\delta_\alpha - \mu$, for $\mu \in \meas^+_1(\pwanb)$. Here $\delta_\alpha$ denotes the Dirac point mass  measure at $\alpha$.  As customary, for any signed measure $\mu$, we will write $|\mu| = \mu^+ + \mu^-$, where the pair $(\mu^+,\mu^-)$ is the Hahn-Jordan decomposition of $\mu$. For any set $E\subset \pwanb$, we will endow the vector space $\meas(E)$ --- and any of its subsets that we may consider --- with the weak-* topology (often referred to just as ``weak topology'', as we did in the introduction). Since this is the only topology we will use, we will not specify the type of convergence of measures in the sequel.
	    We will use the following version of the Portmanteau Theorem.
	    \begin{thm}[\cite{MR2599526}, Theorem A.13]\label{portmantone}
	        Let $X$ a locally compact Hausdorff space, and let $(\mu_n)_{n \in \NN}$ be a sequence of Radon probability measures, converging to a Radon probability measure $\mu$ on $X$. Then 
	        \begin{flalign*}
	            &&\mu_n(A) \ \xrightarrow[\; n\to \infty \; ]{}  \ \mu(A),  && \text{for all Borel sets } A \subset X \text{ such that } \mu( \partial A) = 0.
	        \end{flalign*}
	    \end{thm}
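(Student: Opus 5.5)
The plan is the classical sandwich argument: prove a lower bound on open sets and an upper bound on closed sets, then squeeze $A$ between its interior and its closure. Throughout, convergence $\mu_n \to \mu$ means weak-* convergence, i.e.\ $\int f\,d\mu_n \to \int f\,d\mu$ for every compactly supported continuous $f : X \to \RR$. This is the weakest test class available, so I will only use such test functions.

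\emph{Step 1 (open sets).} Let $U \subset X$ be open; I will show $\liminf_n \mu_n(U) \geq \mu(U)$. Since $\mu$ is Radon, it is inner regular on open sets, so it suffices to prove $\liminf_n \mu_n(U) \geq \mu(C)$ for every compact $C \subset U$. For such $C$, Urysohn's lemma for locally compact Hausdorff spaces gives a continuous $f$ with compact support in $U$ and $\uuno_C \leq f \leq \uuno_U$. Then
\begin{equation*}
\liminf_n \mu_n(U) \;\geq\; \lim_n \int f \, d\mu_n \;=\; \int f\, d\mu \;\geq\; \mu(C).
\end{equation*}
Taking the supremum over compact $C \subset U$ gives the claim.

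\emph{Step 2 (closed sets).} Let $F \subset X$ be closed; I will show $\limsup_n \mu_n(F) \leq \mu(F)$. Here I use that all $\mu_n$ and $\mu$ are probability measures, so that no mass escapes to infinity in the limit. Applying Step~1 to the open set $X \smallsetminus F$,
\begin{equation*}
\limsup_n \mu_n(F) = 1 - \liminf_n \mu_n(X\smallsetminus F) \leq 1 - \mu(X\smallsetminus F) = \mu(F).
\end{equation*}

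\emph{Step 3 (conclusion).} Let $A$ be a Borel set with $\mu(\partial A) = 0$, and write $A^\circ$ for its interior. By Steps~1 and~2,
\begin{equation*}
\mu(A^\circ) \leq \liminf_n \mu_n(A) \leq \limsup_n \mu_n(A) \leq \mu(\overline{A}).
\end{equation*}
Since $\overline{A} = A^\circ \sqcup \partial A$ and $\mu(\partial A) = 0$, the two extreme terms agree, and both equal $\mu(A)$ because $A^\circ \subset A \subset \overline{A}$. Hence $\mu_n(A) \to \mu(A)$.

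The only delicate point is Step~1, specifically inner regularity of $\mu$ on open sets by compact sets. This depends on the convention for ``Radon'' (\cite{MR2599526}, Appendix A). If the convention only guarantees inner regularity for sets of finite measure, this is harmless here because $\mu$ is finite. Note also that Step~2 genuinely uses that the limit $\mu$ has total mass $1$: without it, mass could escape and the upper bound on closed sets would fail. In the paper's setting $X = \pwanb$ is compact, so this issue does not arise in applications.
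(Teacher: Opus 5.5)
Your proof is correct. The three ingredients are the standard Portmanteau argument: the lower bound on open sets via Urysohn's lemma and inner regularity of $\mu$ by compact sets, the upper bound on closed sets by complementation using $\mu_n(X)=\mu(X)=1$, and the squeeze of $A$ between $A^\circ$ and $\overline{A}$. Since you only test against compactly supported continuous functions, the argument holds under any of the usual conventions for weak-* convergence, and it works verbatim for nets. The paper gives no proof here; it simply quotes the result from Baker--Rumely \cite{MR2599526}, Theorem A.13, so there is no in-paper argument to compare against.
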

	    
	    In \cite{MR2599526} \S 5.2 the authors discuss a measure $\lambda$ on $\pwanb$ whose restriction to a segment coincides with the push-forward of the Lebesgue measure through an arc-length parametrization. In particular, for all $\zeta,\zeta' \in \hypb$ we have $\lambda \left ( [\zeta,\zeta' ] \right ) = \varrho(\zeta,\zeta')$. We will refer to $\lambda$ as the \textbf{Lebesgue measure}. This measure is not $\sigma$-finite on $\pwanb$. Since given two points $\zeta,\zeta' \in \pwanb$ the segment $[\zeta,\zeta']$ connecting them is unique, we will use the notation
	    \[ \int_\zeta^{\zeta'} f(\xi) d \lambda(\xi) = \int_\zeta^{\zeta'} f d \lambda := \int_{[\zeta,\zeta']} f d \lambda \]
	    for the integral of a measurable function $f : \pwanb \to \RR$ on the segment $[\zeta,\zeta']$ with respect to the Lebesgue measure. We will not take orientation into account, meaning that $\int_{\zeta'}^\zeta f d \lambda$ is the same as the integral above, rather than the opposite.\\
	    For a measurable map $f:\pwanb \to E \subset \pwanb$ the \textbf{pushforward} through $f$ of a measure $\mu$ (not necessarily Radon) is the measure $f_*\mu$ defined by $f_*\mu(F) = \mu(f^{-1}F)$ on all measurable sets $F \subset E$. Pushforward through $f$ is linear and preserves total mass, so in particular it restricts to a linear operator $\meas_0(\pwanb) \to \meas_0(E)$. We will also use the pushforward operator to avoid double subscripts, thus  preferring the notation $(r_\Gamma)_* \delta_\zeta$ over $\delta_{r_\Gamma(\zeta)}$.
	    
    \subsection{Quasi-finite  and finite subgraphs}\label{grafiquasifinitiretrazpotenz}
	    We say the set of points $\{\zeta_1, \ldots , \zeta_r\} \subset \pwanb$ \textbf{spans} $\Sigma \subset \pwanb$, and we write $\Sigma = \spando \{\zeta_1, \ldots , \zeta_r\}$, if $\Sigma$ is the smallest connected set containing $\zeta_1, \ldots , \zeta_r$; equivalently, if
	    \begin{equation}\label{spancomeunione}
	        \Sigma = \bigcup_{1 \leq i < j \leq r } [\zeta_i, \zeta_j] \, .
	    \end{equation}
	    We will call a set of the form above a \textbf{quasi-finite subgraph} (of $\pwanb$); we will call it a \textbf{finite subgraph} whenever $\zeta_1, \ldots , \zeta_r \in \hypb$. Quasi-finite subgraphs are closed. For a quasi-finite subgraph $\Sigma$, in the notations above, we have
	    \begin{flalign}\label{sigmadazeta}
	        &&\Sigma= \bigcup_{i=1}^r [\zeta,\zeta_i], &&\forall \ \zeta \in \Sigma ,
	    \end{flalign}
	    from which it follows that
	    \begin{flalign}\label{tangenteinterno}
	        &&T_\zeta\Sigma =\{\dir{\zeta\zeta_1}, \ldots, \dir{\zeta\zeta_r}\}. && \forall \ \zeta\in \Sigma .
	    \end{flalign}
	    (Here the directions $\dir{\zeta\zeta_1}, \ldots , \dir{\zeta\zeta_r}$ need not be distinct.)
	    \begin{lemma} \label[lemma]{fogliespandono}
	        Let $\Sigma \subset \pwanb$ be a quasi-finite subgraph containing more than one point. Then the set of points in $\Sigma$ that have only one direction in $\Sigma$ spans $\Sigma$.
	    \end{lemma}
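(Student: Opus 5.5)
Write $\Sigma = \spando\{\zeta_1,\ldots,\zeta_r\}$ and let $L\subset\Sigma$ be the set of points having exactly one direction in $\Sigma$. By \cref{tangenteinterno}, every point of $\Sigma\smallsetminus\{\zeta_1,\ldots,\zeta_r\}$ lies in the interior of some segment $[\zeta_i,\zeta_j]$ and therefore has at least two directions in $\Sigma$. Hence $L\subset\{\zeta_1,\ldots,\zeta_r\}$, and in particular $L$ is finite. Since $\Sigma$ is connected and contains $L$, we get $\spando L\subset\Sigma$. The plan is to prove the reverse inclusion $\Sigma\subset\spando L$. By \cref{spancomeunione} it suffices to show that each $\zeta_i$ lies in a segment joining two points of $L$, or in $L$ itself.

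First I reduce to a minimal spanning set. Remove from $\{\zeta_1,\ldots,\zeta_r\}$, one at a time, any $\zeta_i$ that lies in the span of the remaining points. The span does not change: if $\zeta_i\in\spando(\text{rest})$, that span is connected and contains all the points, so it equals $\Sigma$. After finitely many removals we obtain a spanning set, still written $\zeta_1,\ldots,\zeta_r$, with no $\zeta_i$ in the span of the others. Since $\Sigma$ has more than one point, $r\ge 2$. The claim is then that each $\zeta_i$ has exactly one direction in $\Sigma$, which gives $\{\zeta_1,\ldots,\zeta_r\}\subset L$, hence $\Sigma\subset\spando L$.

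Suppose instead that $\zeta_1$ has two distinct directions in $\Sigma$. By \cref{tangenteinterno} these are $\dir{\zeta_1\zeta_j}$ and $\dir{\zeta_1\zeta_k}$ for some $j\neq k$, both different from $1$. Distinct directions give $(\zeta_j,\zeta_1)\cap(\zeta_1,\zeta_k)=\varnothing$. Then \cref{segmentsharingonept} yields $[\zeta_j,\zeta_k]=[\zeta_j,\zeta_1]\cup[\zeta_1,\zeta_k]\ni\zeta_1$. So $\zeta_1\in\spando\{\zeta_2,\ldots,\zeta_r\}$, contradicting minimality. Each $\zeta_i$ has at least one direction in $\Sigma$ because $r\ge2$ and $\zeta_i\neq\zeta_j$ for some $j$. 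After the reduction the points are distinct, since a repeated point would lie in the span of the others.

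The only delicate step is the use of \cref{segmentsharingonept}. Two segments from $\zeta_1$ in distinct directions meet only at $\zeta_1$, because directions are defined by the intersection of half-open segments. This is exactly the hypothesis of that lemma, so I expect no real obstacle.
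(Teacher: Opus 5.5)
Your proof is correct and follows essentially the paper's argument: a spanning point with two directions in $\Sigma$ lies, by \cref{segmentsharingonept}, on a segment between two other spanning points and can be discarded, until only points with exactly one direction remain. Your preliminary observation that the one-direction points all lie among the original $\zeta_i$ (so that set is finite) is a small addition the paper leaves implicit.
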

	    \begin{proof}
	        Suppose $\Sigma= \spando \{\zeta_1, \ldots ,\zeta_r\}$. Suppose one of the $\zeta_i$, which up to reordering we assume to be $\zeta_r$, has two directions in $\Sigma$. Then, by \cref{tangenteinterno}, we must have $\dir{\zeta_r\zeta_i} \neq\dir{\zeta_r\zeta_j}$ for some $i\neq j$ distinct from $r$. This means that $[\zeta_i, \zeta_r) \cap (\zeta_r, \zeta_j] = \varnothing$, whence by \cref{segmentsharingonept} we have $[\zeta_i, \zeta_r] \cup [\zeta_r, \zeta_j] = [\zeta_i,\zeta_j]$. It follows that $\zeta_r \in \spando\{\zeta_1, \ldots, \zeta_{r-1}\}$, and therefore $\Sigma \subset  \spando\{\zeta_1, \ldots, \zeta_{r-1}\}$. This shows that from any set of points spanning $\Sigma$ we can always remove those that have more than one direction in $\Sigma$ and obtain a set that still spans $\Sigma$. Repeating this operation, since the set $\{\zeta_1, \ldots, \zeta_r\}$ at the beginning is finite, we will eventually obtain a set whose points all have at most one direction in $\Sigma$ and spans $\Sigma$. In fact all of these points will also have \textit{at least} one direction in $\Sigma$ because $\Sigma$ has more than one point.
	    \end{proof}
	    
	    Let $\Sigma$ be a quasi-finite subgraph and let $\zeta_1, \ldots, \zeta_r \in \Sigma$ be the points with at most one direction in $\Sigma$. We define the \textbf{interior} of $\Sigma$ as the set
	    \begin{equation*}
	        \Sigma^\circ = \Sigma \smallsetminus \{\zeta_1, \ldots , \zeta_r\}.
	    \end{equation*}
	    (Notice that this is not the topological interior of $\Sigma$ as a subspace of $\pwanb$, in either the weak or strong topology.)\\
	    If $\Sigma'\subset \Sigma$ are quasi-finite subgraphs, by \cref{spazitangentiinternimonotoni}, if a point $\zeta$ has at most one direction in $\Sigma$, it cannot have more than one in $\Sigma'$. Therefore \cref{fogliespandono} shows that
	    \begin{flalign} \label{monotoniainteriore}
	         &&(\Sigma')^\circ \subset \Sigma^\circ, && \text{ for all quasi-finite subgraphs } \Sigma' \subset \Sigma .
	    \end{flalign}
	    \begin{lemma} \label[lemma]{approxzetacirco}
	        Let $\Sigma = \spando \{\zeta_1, \ldots , \zeta_r \}$ be a quasi-finite subgraph, where $\zeta_1, \ldots, \zeta_r$ are the points with only one direction in $\Sigma$ and $r\geq 2$. Then there exist distinct points $\zeta_1^\circ, \ldots , \zeta^\circ_r \in \Sigma^\circ \subset \hypb$ satisfying the following.\\
	        If $\zeta_1', \ldots , \zeta_r'$ are such that $\zeta_i' \in [\zeta_i, \zeta_i^\circ]$, then
	        \begin{enumerate} [(a)]
	            \item for all distinct $i, j \in  \{ 1, \ldots , r\}$ we have  $[\zeta_i, \zeta_j] = [\zeta_i, \zeta_i'] \sqcup (\zeta_i', \zeta_j' ) \sqcup [\zeta_j', \zeta_j]$.
	        \end{enumerate}
	        Moreover, if we denote $\Sigma' = \spando \{\zeta_1', \ldots , \zeta_r'\}$, then:
	        \begin{enumerate} [(a)]
	            \setcounter{enumi}{1}
	            \item  for all $\zeta \in (\Sigma')^\circ$ we have $T_\zeta\Sigma' = T_\zeta\Sigma$ and each $\zeta_i'$ only has one direction in $\Sigma'$;
	            \item $\Sigma \smallsetminus (\Sigma')^\circ = \bigsqcup_{i =1}^r [\zeta_i,\zeta_i']$.
	        \end{enumerate}
	        
	    \end{lemma}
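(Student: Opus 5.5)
The idea is to choose each $\zeta_i^\circ$ on the unique edge of $\Sigma$ leaving $\zeta_i$, before that edge reaches any branch point or any other leaf. Since $\zeta_i$ has only one direction $v_i$ in $\Sigma$, \cref{tangenteinterno} gives $v_i=\dir{\zeta_i\zeta_j}$ for every $j\neq i$. The $r-1$ segments $(\zeta_i,\zeta_j]$, $j\neq i$, therefore all lie in the same direction at $\zeta_i$ and pairwise intersect. Their common initial piece is a nondegenerate segment $(\zeta_i,\eta_i]$, obtained by intersecting finitely many segments stemming into the same direction.

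Concretely, $[\zeta_i,\zeta_j]\cap[\zeta_i,\zeta_k]$ is a segment $[\zeta_i,\eta_{ijk}]$ with $\eta_{ijk}\neq\zeta_i$; this uses unique arc-connectedness and the fact that the two segments share a point close to $\zeta_i$. I will take $\eta_i$ to be the one of these closest to $\zeta_i$. Then I choose $\zeta_i^\circ$ in the open segment $(\zeta_i,\eta_i)$, or in $(\zeta_i,\zeta_j)$ when $r=2$. I also require that $\zeta_i^\circ$ is not among $\zeta_j^\circ$, $j\neq i$, and lies strictly before the points $\zeta_j^\circ$ ($j\neq i$) that happen to fall on $[\zeta_i,\eta_i]$; this can only happen if some leaf-edges overlap. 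I will handle this by shrinking, taking $\zeta_i^\circ$ within distance less than half of $\min_{j\neq i}\varrho$-length of the common part. Since the lengths are finite only away from type I points, I will instead order the points along the segment and choose them close to $\zeta_i$ in the segment topology. By \cref{segmentiapertiduetre}, $\zeta_i^\circ\in\hypb$. It has two directions in $\Sigma$ (toward $\zeta_i$ and toward $\zeta_j$), hence $\zeta_i^\circ\in\Sigma^\circ$.

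For (a), let $\zeta_i'\in[\zeta_i,\zeta_i^\circ]\subset[\zeta_i,\zeta_j]$ and $\zeta_j'\in[\zeta_j,\zeta_j^\circ]\subset[\zeta_j,\zeta_i]$. Both lie on the single segment $[\zeta_i,\zeta_j]$. The key point is that, ordering the segment from $\zeta_i$ to $\zeta_j$, we have $\zeta_i'\le\zeta_i^\circ<\zeta_j^\circ\le\zeta_j'$. This ordering is exactly what the choice "before the branch point" guarantees: $\zeta_i^\circ$ lies before $\eta_i$, and $\eta_i$ is no farther along than the point where $[\zeta_i,\zeta_j]$ meets $[\zeta_j,\zeta_j^\circ]$ from the other side. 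The decomposition into three disjoint pieces then follows by repeated use of \cref{segmentsharingonept}.

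For (b) and (c), by (a) and \cref{spancomeunione}, $\Sigma'=\bigcup_{i<j}[\zeta_i',\zeta_j']$. Each $\zeta_i'$ has the single direction $\dir{\zeta_i'\zeta_i^\circ}$ in $\Sigma'$, because every $[\zeta_i',\zeta_j']$ leaves $\zeta_i'$ along $(\zeta_i',\eta_i]$. Hence the leaves of $\Sigma'$ are the $\zeta_i'$ (when $\zeta_i'\neq\zeta_i^\circ$ degenerate cases are fine), and $(\Sigma')^\circ=\bigcup_{i<j}(\zeta_i',\zeta_j')$ after removing the leaves. Then (a) gives
\[
\Sigma\smallsetminus(\Sigma')^\circ=\bigsqcup_i[\zeta_i,\zeta_i'].
\]
The pieces are disjoint by the ordering argument. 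For $\zeta\in(\Sigma')^\circ$, the equality of tangent spaces follows from \cref{tangenteinterno}, since $\dir{\zeta\zeta_i}=\dir{\zeta\zeta_i'}$: the point $\zeta_i'$ lies on $[\zeta,\zeta_i]$ and is distinct from $\zeta$.

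The main obstacle is making the choice of the $\zeta_i^\circ$ rigorous without metric tools near type I endpoints, so that $\zeta_i^\circ$ precedes every branch point on the leaf edge. I would handle it purely order-theoretically using the finitely many points $\eta_{ijk}$.
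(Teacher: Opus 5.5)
Your plan matches the paper's in outline: place $\zeta_i^\circ$ on the initial piece common to all the segments $(\zeta_i,\zeta_j]$, get (a) from an ordering along $[\zeta_i,\zeta_j]$, and deduce (b), (c) from (a). The gap is the step that carries the whole lemma, and you acknowledge at the end that it is not settled. That step is the strict inequality $\zeta_i^\circ<\zeta_j^\circ$ along $[\zeta_i,\zeta_j]$, ordered from $\zeta_i$; it is what gives (a) and the distinctness of the $\zeta_i^\circ$.

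Your stated reason, that $\eta_i$ lies no farther along than $\zeta_j^\circ$, amounts to saying the leaf edges $(\zeta_i,\eta_i)$ and $(\zeta_j,\eta_j)$ do not overlap. For $r=2$ they coincide, which is why you needed an extra requirement. For $r\geq 3$ non-overlap is true but needs an argument you do not give: show that $\eta_j$ is not a leaf and has at least three directions in $\Sigma$, while every point of $(\zeta_i,\eta_i)$ has exactly two, so that $\eta_i\leq\eta_j<\zeta_j^\circ$.

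Your fallback is circular as written: you ask each $\zeta_i^\circ$ to precede the $\zeta_j^\circ$ that land on $[\zeta_i,\eta_i]$, and to be ``close to $\zeta_i$'', but all the points are produced by the same rule at once. It needs a fixed anchor on each segment. The paper supplies one and never uses branch points:
\begin{itemize}
\item on each $(\zeta_i,\zeta_j)$ fix $\zeta_{ij}\neq\zeta_{ji}$ with $(\zeta_i,\zeta_j)=(\zeta_i,\zeta_{ij})\sqcup[\zeta_{ij},\zeta_{ji}]\sqcup(\zeta_{ji},\zeta_j)$;
\item define $\zeta_i^\circ$ by $(\zeta_i,\zeta_i^\circ]=\bigcap_{j\neq i}(\zeta_i,\zeta_{ij}]$, which is nonempty because all these segments stem into the unique direction at $\zeta_i$ in $\Sigma$.
\end{itemize}
Then $\zeta_i^\circ\leq\zeta_{ij}<\zeta_{ji}\leq\zeta_j^\circ$ is immediate. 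A sequential choice would also work: take each $\zeta_i^\circ$ in $\bigcap_{j\neq i}(\zeta_i,\zeta_j)$ and, for every previously chosen $\zeta_j^\circ$, in $(\zeta_i,\zeta_j^\circ)$.

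Some later steps are also asserted rather than proved:
\begin{itemize}
\item In (b), take $\zeta\in(\Sigma')^\circ$ lying on some $(\zeta_k',\zeta_l')$ with $i\notin\{k,l\}$. Part (a) does not directly give $\zeta_i'\in[\zeta,\zeta_i]$. The paper uses \cref{sigmadazeta} to write $\Sigma'=\bigcup_{j\neq i}[\zeta_i',\zeta_j']$, so $\zeta\in(\zeta_i',\zeta_j']$ for some $j\neq i$, and then applies (a) to the pair $i,j$.
\item In (c), the ordering shows the sets $[\zeta_i,\zeta_i']$ are pairwise disjoint. The claimed equality also needs $[\zeta_i,\zeta_i']\cap(\Sigma')^\circ=\varnothing$, including against segments $(\zeta_k',\zeta_l')$ with $i\notin\{k,l\}$. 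This follows because $\zeta_i'$ has a single direction in $\Sigma'$, different from $\dir{\zeta_i'\zeta_i}$, together with the connectedness of $\Sigma'$.
\item The direction $\dir{\zeta_i'\zeta_i^\circ}$ is undefined when $\zeta_i'=\zeta_i^\circ$. The paper avoids this by first shrinking $\zeta_i^\circ$, so one may assume $\zeta_i'\in[\zeta_i,\zeta_i^\circ)$.
\end{itemize}
These are easy to repair once the ordering is in place.
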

	    \begin{proof}
	        Let $\zeta_1, \ldots , \zeta_r$ be the points with only one direction in $\Sigma$. Up to replacing the $\zeta_i^\circ$ with another point in $(\zeta_i, \zeta_i^\circ)$, it is enough to prove our statement for $\zeta_i' \in [\zeta_i, \zeta_i^\circ)$, rather than $\zeta_i' \in [\zeta_i, \zeta_i^\circ]$.\\
	        For all pairs $i\neq j$, we pick $\zeta_{ij}, \zeta_{ji} \in (\zeta_i,\zeta_j)$ such that $\zeta_{ij} \neq \zeta_{ji}$ and
	        \begin{equation}\label{scomponisuzetaij}
	            (\zeta_i,\zeta_j) = (\zeta_i, \zeta_{ij}) \sqcup [\zeta_{ij}, \zeta_{ji}] \sqcup (\zeta_{ji},\zeta_j).
	        \end{equation}
	        For each $i=1, \ldots , r$, the point $\zeta_i$ only has one direction in $\Sigma$, so there exists a point $\zeta_i^\circ$ such that
	        \begin{equation}\label{sceltadizetacirco}
	            \varnothing \neq (\zeta_i, \zeta_i^\circ ] = \bigcap_{j \in \{1, \ldots , r\} \smallsetminus \{i\}}(\zeta_i , \zeta_{ij}] \, .
	        \end{equation}
	        Let $\zeta_i'\in [\zeta_i, \zeta_i^\circ )$, for $i= 1, \ldots, r$. From \cref{scomponisuzetaij} and \cref{sceltadizetacirco} we obtain that (a) holds and that the $\zeta_i^\circ$ are all distinct. Let $\Sigma'= \spando\{\zeta_1', \ldots , \zeta_r'\}$. Let $i \in \{1, \ldots, r\}$ and $\zeta\in \Sigma'\smallsetminus \{\zeta_i'\}$. By \cref{sigmadazeta}, we have that 
	        \[\Sigma' = \bigcup_{j \in \{1, \ldots , r\} \smallsetminus \{i\}} [\zeta_i',\zeta_j'] \, ,\]
	        so, $\zeta \in (\zeta_i', \zeta_j']$ for some $j \in \{1, \ldots , r\} \smallsetminus\{i\}$. From (a) it then follows that  $(\zeta_i, \zeta) = (\zeta_i, \zeta_i'] \sqcup (\zeta_i', \zeta)$. This shows that $\dir{\zeta \zeta_i} = \dir{\zeta\zeta_i'}$. Since $\zeta\in \Sigma'\smallsetminus \{\zeta_i'\}$ and $i \in \{1, \ldots, r\}$ were arbitrary, by \cref{tangenteinterno} we obtain that $T_\zeta\Sigma' = T_\zeta\Sigma$, for all $\zeta\in (\Sigma')^\circ$, which is the first part of (b).\\
	        For $i \in \{1, \ldots , r \}$ we have the following, to be justified below:
	        \begin{multline*}
	            \bigcap_{j \in \{1, \ldots, r \} \smallsetminus \{i\}} (\zeta_i', \zeta_j'] \ =  \bigcap_{j \in \{1, \ldots , r\}\smallsetminus \{i\}} \left ( (\zeta_i, \zeta_j'] \smallsetminus (\zeta_i, \zeta_i']\right ) \\
	            \supset \left ( \bigcap_{j \in \{1, \ldots , r \}\smallsetminus \{i\}} (\zeta_i, \zeta_{ji} ]\right ) \smallsetminus (\zeta_i, \zeta_i'] \ = \ (\zeta_i, \zeta_i^\circ ] \smallsetminus (\zeta_i, \zeta_i'] \ = \ (\zeta_i', \zeta_i^\circ ] \neq \varnothing .
	        \end{multline*}
	        By (a) we have $(\zeta_i, \zeta_j'] \supset (\zeta_i, \zeta_i']$ for all $i \neq j$, from which we get the first equality in the above display. For all $j \neq i$ we have $\zeta_j' \in (\zeta_j^\circ, \zeta_j] \subset (\zeta_{ji}, \zeta_j]$, whence by \cref{scomponisuzetaij}, $(\zeta_i, \zeta_j'] \supset (\zeta_i, \zeta_{ji}]$, which justifies the inclusion between the first and second line. Finally, the last two equalities follow from our definition \cref{sceltadizetacirco} and from having $\zeta_i'\in [\zeta_i, \zeta^\circ_i)$. By \cref{tangenteinterno}, this shows that $\zeta_i'$ has only one direction in $\Sigma'$, and since $i \in \{1, \ldots, r \}$ was arbitrary, this establishes the second part of (b).\\
	        Let $i \in \{1, \ldots , r\}$. Since $r \geq 2$, there exists $j \in \{1, \ldots , r\} \smallsetminus \{i\}$. By (a), the directions $\dir{\zeta_i'\zeta_i}, \dir{\zeta_i'\zeta_j'}$ are distinct; by (b), $\dir{\zeta_i'\zeta_j'}$ is the only direction at $\zeta_i'$ in $\Sigma'$, so $[\zeta_i,\zeta_i'] \cap (\Sigma')^\circ = \varnothing$. Since $i$ was arbitrary in this last equality, and since by (a) the segments $[\zeta_i,\zeta_i'], [\zeta_j', \zeta_j]$ are disjoint for $i \neq j$, we obtain the first inclusion below. The second inclusion follows from \cref{spancomeunione}, the first equality follows from (a), and the second equality follows from \cref{spancomeunione} applied to $\Sigma'$. The double inclusion proves (c).
	        \begin{equation*}
	            \begin{array}{c}
	                 \displaystyle (\Sigma')^\circ \ \subset \ \Sigma \smallsetminus \left ( \bigsqcup_{i=1}^r [\zeta_i, \zeta_i'] \right ) \ \subset \ \bigcup_{i \neq j} \left ( [\zeta_i, \zeta_j] \smallsetminus \left ([\zeta_i,\zeta_i'] \sqcup[\zeta_j,\zeta_j'] \right )\right ) \ = \ \bigcup_{i\neq j} (\zeta_i',\zeta_j') \ = \ (\Sigma')^\circ    \\
	                 \quad
	            \end{array} \qedhere
	        \end{equation*}
	        
	    \end{proof}

    \subsection{Potential theory on $\pwanb$}
	    In this section we briefly introduce the Laplacian operator and some of its properties as developed by Baker and Rumely in \cite{MR2599526}.\\
	    For a finite subgraph $\Sigma \subset \pwanb$ we denote by $\cpa(\Sigma)$ the space of continuous functions $f:\Sigma \to \RR$ that are piecewise affine, that is, for all $\zeta \in \Sigma, v \in T_\zeta\Sigma$ there exists $\zeta_v \in \Sigma \cap \bdisk(v)$ such that under an arc-length parametrization $(a,b) \to (\zeta,\zeta_v)$, the restriction $f|_{(\zeta,\zeta_v)}$ is an affine function $(a,b) \to \RR$. Baker and Rumely also define a space $\bdv(\Sigma)$ of \textbf{functions of bounded differential variation} on $\Sigma$ (whose definition will not be relevant to us) and a linear operator $\Delta_\Sigma : \bdv(\Sigma) \to \meas_0(\Sigma)$, which we call \textbf{Laplacian on} $\Sigma$. It follows from the discussion in \cite{MR2599526}, \S 3.5, that $\cpa(\Sigma) \subset \bdv(\Sigma)$. The Laplacian on $\Sigma$ is initially only defined on $\cpa(\Sigma)$ (see \cite{MR2599526}, \S 3.2), where it restricts to 
	    \begin{equation}\label{laplacianopercippiasigma}
	        \begin{array}{rcl}
	             \Delta_\Sigma : \cpa (\Sigma)  &\longrightarrow    &\operatorname{Meas}_0(\Sigma) \\
	                      f                     &\longmapsto        &  \sum_{\zeta \in \Sigma} \left ( -\sum_{v \in T_\zeta\Sigma} d_v f \right ) \delta_\zeta  \displaystyle \minis  .
	        \end{array} 
	    \end{equation}
	    Here $d_v f$ denotes the \textbf{directional derivative} of $f$ in the direction $v \in T_\zeta\Sigma$, defined as 
	    \[d_v f \ = \lim_{\substack{\xi\to \zeta \minis , \\ \zeta\in \bdisk(v)\cap \Sigma} } \frac{f(\xi) - f(\zeta) }{\varrho(\xi,\zeta)} \minis .\]
	    For a function $f: \hypb \to \RR$ we will often abuse notation and write $f\in \bdv(\Sigma)$, $f\in \cpa(\Sigma)$, and $\Delta_\Sigma f$ for, respectively, $f|_\Sigma \in \bdv(\Sigma)$, $f|_\Sigma \in \cpa(\Sigma)$, and $\Delta_\Sigma (f|_\Sigma)$. A function $f: \hypb \to \RR$ is said to be of \textbf{bounded differential variation} (on $\pwanb$) if for all finite subgraphs $\Sigma \subset \pwanb$ we have $f \in \bdv(\Sigma)$ and if there exists a constant $B(f) > 0$ such that $|\Delta_\Sigma f|(\Sigma) \leq B(f)$ for all finite subgraphs $\Sigma \subset \pwanb$. The space of functions of bounded differential variation on $\pwanb$ is denoted by $\bdv(\pwanb)$.\\
	    Baker and Rumely construct a linear operator 
	    \begin{equation*}
	        \Delta : \operatorname{BDV}(\pwanb) \longrightarrow \operatorname{Meas}_0(\pwanb)
	    \end{equation*}
	    (see \cite{MR2599526}, Chapter 5), which we call simply \textbf{Laplacian}. Its kernel is given by the constant functions. The Laplacian is related to the Laplacians of finite subgraphs via the following definition found in \cite{MR2599526} as Definition 5.15: for $f \in \bdv(\pwanb)$ there exists a unique measure $\Delta f \in \meas_0(\pwanb)$, the \textbf{Laplacian of} $f$, satisfying $\Delta_\Sigma f = (r_\Sigma)_* \Delta f$ for all finite subgraphs $\Sigma$.
	    \begin{rmk} \label[rmk]{nonservonotuttiigrafi}
	        If $\Sigma \subset \Sigma'$ are two finite subgraphs, we always have $r_\Sigma = r_{\Sigma, \Sigma'} \circ r_{\Sigma'}$, and as a consequence, $(r_\Sigma)_* = (r_{\Sigma, \Sigma'})_* \circ (r_{\Sigma'})_* : \meas_0 (\pwanb) \to \meas_0(\Sigma)$. It follows from this and from the fact that the pushforward of a measure preserves its mass, that  to determine whether $f: \hypb \to \RR$ belongs to $\bdv(\pwanb)$, it is enough to check on finite graphs $\Sigma'$ containing a given finite subgraph $\Sigma$. The same is true about checking whether $\mu = \Delta f$ for given $f\in \bdv(\pwanb), \ \mu \in \meas_0(\pwanb)$.
	    \end{rmk}
	    For $\alpha \in \hypb$ and $f \in \bdv(\pwanb)$, we say that $f$ is an \textbf{Arakelov-Green's function} relative to $\alpha$ if $f$ is upper semi-continuous on $\hypb \smallsetminus \{\alpha\}$ and $\Delta f = \delta_\alpha -\mu$ for a probability measure $\mu \in \meas^+_1(\pwanb)$. We denote the set of Arakelov-Green's functions relative to $\alpha$ by $\ag[\alpha]$. We then have the following results due to Favre and Jonsson \cite{MR2097722}.
	    \begin{prop}[\cite{MR2599526}, Proposition 8.49] \label[prop]{arachelloverdechiuso}
	        Let $\alpha \in \hypb$ and let $\{f_i\}_{i\in I}$ be a net of functions in $\ag[\alpha]$, converging pointwise to a function $f: \hypb \to [-\infty,+\infty)$ that is not identically $-\infty$. Then $f \in \ag[\alpha]$.   
	    \end{prop}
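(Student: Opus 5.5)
The plan is to reduce the statement to weak compactness of probability measures, using an explicit description of $\ag[\alpha]$ by potentials. For $\zeta,\xi \in \pwanb$ let $\langle \zeta,\xi\rangle_\alpha$ denote the distance $\varrho(\alpha, r_{[\alpha,\zeta]}(\xi))$ from $\alpha$ to the retraction of $\xi$ on $[\alpha,\zeta]$; this is the Gromov product based at $\alpha$. For $\mu\in\meas^+_1(\pwanb)$ set
\[u_\mu(\zeta)=\int_{\pwanb}\langle\zeta,\xi\rangle_\alpha\, d\mu(\xi),\qquad \zeta\in\hypb .\]

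\emph{Step 1: description of $\ag[\alpha]$.} I would first show that $\ag[\alpha]=\{C-u_\mu \ :\ C\in\RR,\ \mu\in\meas^+_1(\pwanb)\}$. Take a single kernel $\xi\mapsto\langle\cdot,\xi\rangle_\alpha$ and restrict it to a finite subgraph $\Sigma\ni\alpha$. Using \cref{laplacianopercippiasigma}, its Laplacian is $\delta_{r_\Sigma(\xi)}-\delta_\alpha$: the function has slope $1$ along $[\alpha,r_\Sigma(\xi)]$ and is constant elsewhere. This is compatible with retractions, so by \cref{nonservonotuttiigrafi} we get $\Delta\langle\cdot,\xi\rangle_\alpha=\delta_\xi-\delta_\alpha$. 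Integrating against $\mu$ (Fubini on each $\Sigma$) gives $\Delta(C-u_\mu)=\delta_\alpha-\mu$.

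For the remaining properties: $u_\mu$ is lower semicontinuous on $\hypb\smallsetminus\{\alpha\}$, being an increasing limit of integrals of continuous truncated kernels, so $C-u_\mu$ is upper semicontinuous. Conversely, if $f\in\ag[\alpha]$ with $\Delta f=\delta_\alpha-\mu$, then $f-(C-u_\mu)$ has zero Laplacian and is therefore constant. Since $u_\mu(\alpha)=0$ and $u_\mu\ge 0$, we moreover get $C=f(\alpha)$ and $f\le f(\alpha)$.

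\emph{Step 2: limits of constants.} Write $f_i=C_i-u_{\mu_i}$ with $C_i=f_i(\alpha)$. Then $C_i\to f(\alpha)$. If $f(\alpha)=-\infty$, then $f(\zeta)=\lim f_i(\zeta)\le\lim C_i=-\infty$ for every $\zeta$, contradicting the hypothesis that $f$ is not identically $-\infty$. Hence $C:=f(\alpha)\in\RR$, and $u_{\mu_i}=C_i-f_i$ converges pointwise on $\hypb$ to $C-f$.

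\emph{Step 3: compactness and identification.} $\meas^+_1(\pwanb)$ is weak-* compact, so some subnet $\mu_{i_j}$ converges to a probability measure $\mu$. For fixed $\zeta\in\hypb$, the map $\xi\mapsto\langle\zeta,\xi\rangle_\alpha$ is continuous on $\pwanb$: it is $\varrho(\alpha,\cdot)$ composed with the continuous retraction onto the finite-length segment $[\alpha,\zeta]$. Therefore $u_{\mu_{i_j}}(\zeta)\to u_\mu(\zeta)$, and hence $f=C-u_\mu$ on $\hypb$. By Step 1, $f\in\ag[\alpha]$. As a by-product, $\mu$ is determined by $f$, so the whole net converges to $\mu$.

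I expect the main obstacle to be Step 1, specifically the facts needed to make Arakelov–Green's functions genuinely equal to such potentials. These are: that $u_\mu\in\bdv(\pwanb)$ with $\Delta u_\mu=\mu-\delta_\alpha$, the sign convention (checked above on a single kernel), and the upper semicontinuity. For these I would lean on the potential-theoretic facts from \cite{MR2599526}, Chapter 5, together with the fact that the kernel of $\Delta$ consists of the constants. Steps 2 and 3 are then formal.
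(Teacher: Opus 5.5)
Your proof is correct. The paper gives no argument for this statement; it quotes it from Baker--Rumely, who credit Favre--Jonsson. Your route is the standard Favre--Jonsson compactness argument behind that citation:
\begin{itemize}
\item write $f_i=f_i(\alpha)-u_{\mu_i}$;
\item extract a weak-$*$ convergent subnet of the $\mu_i$ in the compact space $\meas^+_1(\pwanb)$;
\item pass to the limit, using that $\xi\mapsto\langle\zeta,\xi\rangle_\alpha$ is continuous and bounded for each fixed $\zeta\in\hypb$.
\end{itemize}
Your by-product, that the whole net $(\mu_i)$ converges, is exactly the continuity of $-\Delta$ asserted in \cref{arachelloverdesattoh}.

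Two remarks tighten the write-up.

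\emph{Step 1 and circularity.} By Tonelli, your $u_\mu$ coincides with the paper's $F^\alpha_\mu$. Indeed $V^\alpha(\xi)=\{y : \xi\in(\alpha,y)\}$, so
\[
\int_{[\alpha,\zeta]}\mu(V^\alpha(\xi))\,d\lambda(\xi)=\int\lambda\bigl([\alpha,\zeta]\cap(\alpha,y)\bigr)\,d\mu(y)=\int\langle\zeta,y\rangle_\alpha\,d\mu(y).
\]
Your Step 1 is therefore \cref{potenzialiefzdidistro} extended to arbitrary probability measures. You must prove it directly on finite subgraphs, as you plan, and not by approximating from that proposition. The paper's proof of \cref{potenzialiefzdidistro} for atoms at type I points itself invokes the statement you are proving, so that route would be circular.

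\emph{Lower semicontinuity.} Your justification that $u_\mu$ is lower semicontinuous needs a word more. The kernels $\min(\langle\zeta,\xi\rangle_\alpha,N)$ are only separately continuous, not jointly. On the diagonal they restrict to $\min(\varrho(\alpha,\cdot),N)$, which is not weakly continuous: points $\zeta(a_m,r/2)$, with the $a_m$ in distinct residue classes of $\cld(0,r)$, converge weakly to $\zeta(0,r)$ while $\varrho(\alpha,\cdot)$ jumps by $\log 2$. For nets you cannot simply pass continuity under the integral.

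A clean fix uses finite subgraphs $\Sigma\ni\alpha$:
\begin{itemize}
\item $u_\mu\circ r_\Sigma\le u_\mu$, with equality on $\Sigma$, because $r_\Sigma(\zeta)\in[\alpha,\zeta]$.
\item $u_\mu\circ r_\Sigma$ is continuous on $\pwanb$, because $r_\Sigma$ is continuous, $u_\mu|_\Sigma$ is $1$-Lipschitz for $\varrho$, and the weak and metric topologies agree on $\Sigma$.
\end{itemize}
Hence $u_\mu=\sup_\Sigma u_\mu\circ r_\Sigma$ on $\hypb$, and a supremum of continuous functions is lower semicontinuous.
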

	    \begin{thm}[\cite{MR2599526}, Theorem 8.50]\label[thm]{arachelloverdesattoh}
	        Let $\alpha\in \hypb$. Equip $\ag[\alpha]$ with the topology of pointwise convergence. Then the map $-\Delta : \ag[\alpha] \to \meas^+_1[\alpha]$ is continuous and the following sequence is exact:
	        \begin{equation*}
	            0 \ \longrightarrow \  \RR \ \longrightarrow \ \ag[\alpha] \ \xrightarrow[]{ \ -\Delta \ } \  \meas_1^+[\alpha] \ \longrightarrow \ 0 \minis .
	        \end{equation*}
	    \end{thm}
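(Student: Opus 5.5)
The plan is to prove the three parts of the statement separately: exactness at $\ag[\alpha]$, surjectivity onto $\meas^+_1[\alpha]$, and continuity of $-\Delta$. Throughout, the sign convention is the one fixed above, namely $\Delta f = \delta_\alpha - \mu$.

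\emph{Exactness at $\RR$ and at $\ag[\alpha]$.} The map $\RR \to \ag[\alpha]$ sends $c$ to a constant function. This requires that constants lie in $\ag[\alpha]$, which is not immediate: a constant has $\Delta c = 0$, and $0$ is not of the form $\delta_\alpha - \mu$ with $\mu$ a probability measure. So I would interpret the sequence as saying that two elements of $\ag[\alpha]$ with the same Laplacian differ by a constant, and that every constant shift of an element of $\ag[\alpha]$ stays in $\ag[\alpha]$. With this reading:
\begin{itemize}
\item injectivity of $\RR\to\ag[\alpha]$ is trivial;
\item the fibres of $-\Delta$ are exactly the $\RR$-orbits, because the kernel of $\Delta$ on $\bdv(\pwanb)$ consists of the constant functions;
\item adding a constant preserves both upper semicontinuity and the Laplacian, so $\RR$-orbits stay inside $\ag[\alpha]$.
\end{itemize}

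\emph{Surjectivity.} Given $\mu\in\meas^+_1(\pwanb)$, I would construct an explicit potential. For $\zeta,\xi\in\pwanb$ let $(\zeta|\xi)_\alpha := \varrho(\alpha, \zeta\wedge_\alpha\xi)$, where $\zeta\wedge_\alpha\xi$ is the point at which $[\alpha,\zeta]$ and $[\alpha,\xi]$ separate. This is the Gromov product based at $\alpha$; it takes values in $[0,\infty]$ and is finite whenever $\zeta\in\hypb$. Set
\[ f_\mu(\zeta) = -\int (\zeta|\xi)_\alpha \, d\mu(\xi). \]
The first step is to compute, for fixed $\xi$, the Laplacian of $\zeta\mapsto -(\zeta|\xi)_\alpha$ on a finite subgraph $\Sigma\ni\alpha$. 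This function is $-\varrho(\alpha,r_{[\alpha,\xi]}(\zeta))$. It has slope $-1$ along $[\alpha,\xi]$ moving away from $\alpha$ and is locally constant off that segment. By \cref{laplacianopercippiasigma} its Laplacian on $\Sigma$ is therefore $\delta_\alpha - \delta_{r_\Sigma(\xi)}$, which is $(r_\Sigma)_*(\delta_\alpha-\delta_\xi)$. Integrating in $\xi$ via a Fubini argument on each $\Sigma$ gives $\Delta_\Sigma f_\mu = (r_\Sigma)_*(\delta_\alpha - \mu)$ for all $\Sigma \ni \alpha$. By \cref{nonservonotuttiigrafi} this yields $f_\mu\in\bdv(\pwanb)$, with $|\Delta_\Sigma f_\mu|(\Sigma) \le 2$, and $\Delta f_\mu = \delta_\alpha-\mu$.

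For upper semicontinuity on $\hypb\smallsetminus\{\alpha\}$, I would note that $(\cdot|\xi)_\alpha$ is lower semicontinuous in the weak topology, since it is an increasing limit of the continuous functions $\min\{(\cdot|\xi)_\alpha, N\}\circ r_{\Sigma_N}$ for an exhaustion by finite subgraphs $\Sigma_N$. Monotone convergence then makes $f_\mu$ upper semicontinuous.

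\emph{Continuity of $-\Delta$.} Let $f_i\to f$ pointwise in $\ag[\alpha]$. Fix a finite subgraph $\Sigma\ni\alpha$ and a test function $g\in\cpa(\Sigma)$. On a finite graph the identity
\[ \int g\, d\Delta_\Sigma f_i = \int f_i\, d\Delta_\Sigma g \]
holds, and $\Delta_\Sigma g$ is a finite discrete measure. Pointwise convergence of $f_i$ therefore gives $\int g\, d\Delta_\Sigma f_i\to\int g\, d\Delta_\Sigma f$.

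To pass to $\pwanb$, I would use that the functions $g\circ r_\Sigma$, with $g\in\cpa(\Sigma)$ and $\Sigma$ ranging over finite subgraphs, are uniformly dense in $C(\pwanb)$ (Stone--Weierstrass / the standard density result in Baker--Rumely). I would combine this with the uniform bound $|\Delta f_i|(\pwanb)\le 2$. Since $\int g\circ r_\Sigma\, d\Delta f_i = \int g\, d\Delta_\Sigma f_i$, an $\varepsilon/3$ argument gives weak convergence $\Delta f_i\to\Delta f$.

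\emph{Main obstacle.} The delicate step is the Green's identity on $\Sigma$ with $f_i$ only in $\bdv(\Sigma)$ rather than in $\cpa(\Sigma)$. I would justify it by reducing to $f_i|_\Sigma$ being continuous, which holds for functions with measure-valued Laplacian on a finite graph, and then invoking Baker--Rumely's integration by parts on metrized graphs.
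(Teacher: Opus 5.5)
Your proof is correct. It necessarily takes a different route from the paper, because the paper does not prove this statement at all: it imports it from Baker--Rumely (Theorem~8.50, going back to Favre--Jonsson) and uses it as a black box, e.g.\ in Proposition~\ref{potenzialiefzdidistro} and in the proof of Theorem~\ref{equidistrotau}.

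You instead rebuild it from graph-level tools:
\begin{enumerate}
\item existence, via the potential $f_\mu=-\int(\cdot|\xi)_\alpha\,d\mu(\xi)$;
\item uniqueness up to constants, from $\ker\Delta=\RR$. Your torsor reading of the ``exact sequence'' is the right one, since constants are not themselves in $\ag[\alpha]$;
\item continuity, via Green's identity on a finite subgraph against $\cpa$ test functions, density of the functions $g\circ r_\Sigma$ in $C(\pwanb)$, and the bound $|\Delta f_i|(\pwanb)\le 2$. Your resolution of the ``main obstacle'' is also right: finite one-sided directional derivatives already make $f|_\Sigma$ continuous on a finite graph.
\end{enumerate}

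This buys two things the citation hides. First, your continuity argument proves more: pointwise convergence in $\bdv(\pwanb)$ with uniformly bounded Laplacian mass forces weak convergence of the Laplacians, and upper semicontinuity is never used there. Second, by Tonelli, since $\lambda\{\xi\in[\alpha,\zeta] : y\in V^\alpha(\xi)\}=(\zeta|y)_\alpha$, your $f_\mu$ is exactly $-F^\alpha_\mu$ from Section~\ref{sezequidistrotau}. So your direct computation of $\Delta_\Sigma f_\mu$ gives Proposition~\ref{potenzialiefzdidistro} for every probability measure $\mu$. It does so without the approximation argument the paper uses there, which itself relies on the continuity part of this theorem.

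Two details should be tightened.

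\emph{The semicontinuity step.} A countable exhaustion of $\hypb$ by finite subgraphs need not exist: the tangent space at a type II point is in bijection with $\PP^1(k)$, which can be uncountable. Also, lower semicontinuity of each $(\cdot|\xi)_\alpha$ separately does not by itself give lower semicontinuity of the integral. Argue instead as follows. For each finite subgraph $\Sigma\ni\alpha$, the function $\zeta\mapsto\int(r_\Sigma\zeta|\xi)_\alpha\,d\mu(\xi)$
\begin{itemize}
\item is continuous, being $1$-Lipschitz for $\varrho$ on $\Sigma$ composed with $r_\Sigma$;
\item is bounded above by $\int(\zeta|\xi)_\alpha\,d\mu(\xi)$, because $r_\Sigma\zeta\in[\alpha,\zeta]$;
\item equals $\int(\zeta|\xi)_\alpha\,d\mu(\xi)$ once $\zeta\in\Sigma$.
\end{itemize}
The supremum over all such $\Sigma$ is therefore lower semicontinuous on $\hypb$.

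\emph{The ``Fubini'' step.} This is cleanest by noting that $(\zeta|\xi)_\alpha=(\zeta|r_\Sigma\xi)_\alpha$ for $\zeta\in\Sigma$. Hence $f_\mu|_\Sigma$ is minus the graph potential $\int j_\alpha(\cdot,y)\,d\nu(y)$ with $\nu=(r_\Sigma)_*\mu$, and Baker--Rumely's Chapter~3 results apply to it directly.

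Finally, the printed statement has a sign slip: $-\Delta f=\mu-\delta_\alpha$, whereas $\meas^+_1[\alpha]$ was defined as the measures $\delta_\alpha-\mu$. Your argument is unaffected by this.
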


    \subsection{The action of a rational function on $\pwanb$}\label{sezioneazione}
        Let $R\in K(z)$ be a rational function. We will say that $R$ is \textbf{conjugate} to some rational function $S \in K(z)$ there exists $\eta \in \PGL(2,K)$ such that $\eta \circ R \circ \eta^{-1} = S$. We will sometimes refer to a property as being \textbf{coordinate-independent} if it is preserved under conjugation.\\
        The map $R:\PP^1(K) \to \PP^1(K)$ defined by a rational function $R \in K(z)$ has a unique continuous extension $\pwanb \to \pwanb$ which we will again denote by $R$. Taking this extension respects composition: if $R,S \in K(z)$, then the map $\pwanb \to \pwanb$ induced by $R\circ S\in K(z)$ is the composition $\pwanb \xrightarrow[]{S} \pwanb \xrightarrow[]{R} \pwanb$, and the rational function $z\in K(z)$ induces the identity map on $\pwanb$. The map induced by $R$ on $\pwanb$ preserves the types of points (see Section \ref{theberkoline}) whenever $R\in K(z)$ is nonconstant. \\
        If for $a\in K$ and $r>0$ the disk $\cld(a,r)$ contains no poles of $R$, then there exist $c_n \in K$ such that the power series $\sum_{n \geq 0} c_n(z-a)^n$ converges and coincides with $R$ on $\cld(a,r)$; in this case the $\sup_{n\geq 1} |c_n| r^n$ is attained by some $|c_m|r^m$ and the map $R: \pwanb \to \pwanb$ sends the point $\zeta(a,r)$ to $\zeta(R(a), |c_m|r^m)$.

        We will denote by $\deg_R : \PP^1(K) \to \{1, \ldots , \deg R\}$ the \textbf{local degree} (or \textbf{multiplicity}) function on type I points, defined in the usual algebraic way. We can then extend $\deg_R (\cdot)$ to a function $\pwanb \to \{ 1, \ldots , \deg R \}$ by setting, for all $\zeta\in \pwanb$,  $\deg_R\zeta := \inf_U  \sup_y  \sum_{x\in U \cap R^{-1}(y)} \deg_R x$, where the infimum is taken over all open sets $U \subset \pwanb$ containing $\zeta$, and the supremum is taken over all $y \in R(U) \cap \PP^1(K)$.

        A nonconstant rational function $R \in K(z)$ induces a pullback operator
        \begin{equation*}
            R^* : \meas(\pwanb) \longrightarrow \meas(\pwanb)
        \end{equation*}
        which is defined as $R^* \delta_\zeta = \sum_{\xi \in R^{-1}(\zeta)} (\deg_R \xi)\delta_\xi$ for all $\zeta \in \pwanb$ and extended by linearity to any linear combination of Dirac deltas. (For the construction of the operator on the whole space $\meas(\pwanb)$, see for instance \cite{MR2599526}, \S 9.4.) For all $\mu \in \meas(\pwanb)$ we have the identity $R_*\mu(\pwanb) = (\deg R )  \mu(\pwanb)$.
        For $\zeta\in \pwanb$ we can also define a \textbf{local degree} function \textbf{on tangent spaces}, which we will denote the same way, $\deg_R : T_\zeta\pwanb \to \{1, \ldots, \deg_R \zeta\}$,  by setting $\deg_R v := \inf_U \sup_y \sum_{x\in U \cap R^{-1}(y)} \deg_R x$ for $v \in T_\zeta\pwanb$. Here the infimum is taken over the annuli $U = \bannulus(\zeta,\xi), \ \xi\in \bdisk(v)$, and the supremum is taken again over all $y\in R(U) \cap \PP^1(K)$.
        \begin{lemma}[\cite{MR3890051}, Theorem 7.22] \label[lemma]{expansionbydegree}
            Let $R \in K(z)$ be a nonconstant rational function, let $\zeta \in \pwanb$,
            let $v \in T_{\zeta}\pwanb$.\\
            Then there is a point $\xi \in \bdisk(v)$ such that $R$ maps $[\zeta,\xi]$ homeomorphically onto $[R(\zeta), R(\xi)]$, and
            \begin{equation*}
                \varrho(R(\xi_1),R(\xi_2)) = (\deg_R v) \times \varrho(\xi_1, \xi_2), \qquad \text{for all } \xi_1, \xi_2 \in [\zeta,\xi] \cap \hypb.
            \end{equation*}
        \end{lemma}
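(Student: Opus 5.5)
The plan is to reduce the statement to an explicit computation with power series on a small annulus, using the description of the action recalled in Section~\ref{sezioneazione}: if $\cld(a,r)$ contains no poles of $R$ and $R=\sum_{n\ge0}c_n(z-a)^n$ there, then $R(\zeta(a,r))=\zeta(R(a),\max_{n\geq 1}|c_n|r^n)$. First we normalize coordinates. Conjugating by elements of $\PGL(2,K)$, separately on the source and the target, preserves $\varrho$, local degrees and segments, so we may assume that the segment leaving $\zeta$ in the direction $v$ is of the form $\{\zeta(a,s)\}$ with $s$ ranging over an interval adjacent to some $r_0$. Here we take $s<r_0$ when $v$ points "down" toward a type I point $a$, and $s>r_0$ otherwise (after applying $z\mapsto 1/z$ if $v$ points toward $\infty$). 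We may also assume that a closed disk $\cld(a,s_1)$ containing this initial piece of segment contains no poles of $R$.

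The cases by type are handled as follows.
\begin{itemize}
\item Type~I, $\zeta=a$: one takes $s\in(0,s_1]$.
\item Type~II or III, $\zeta=\zeta(a,r_0)$: one takes $s$ on the appropriate side of $r_0$.
\item Type~IV: $v$ is the unique direction, pointing toward larger disks. One picks a disk $\cld(a,s_1)$ from the defining nested sequence, with $s_1$ larger than the limiting radius $r_0$ and free of poles, so that the initial piece of $[\zeta,\zeta(a,s_1)]$ is $\{\zeta(a,s): r_0<s\le s_1\}$.
\end{itemize}
Writing $b=R(a)$ and $R-b=\sum_{n\ge1}c_n(z-a)^n$ on $\cld(a,s_1)$, we get $R(\zeta(a,s))=\zeta(b,g(s))$ with $g(s)=\max_{n\ge1}|c_n|s^n$.

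Next comes the Newton-polygon step. The function $\log s\mapsto\log g(s)$ is the maximum of affine functions $\log|c_n|+n\log s$. On the compact range $s\in[\text{near }r_0, s_1]$ (or $(0,s_1]$ in the type~I case, where for small $s$ the lowest-index nonzero term dominates) only finitely many breaks occur. Hence there is an interval of $s$ adjacent to $r_0$ on which a single index $m$ realizes the maximum, and $g(s)=|c_m|s^m$. Since $\varrho(\zeta(a,s),\zeta(a,s'))=|\log s-\log s'|$, on that interval we obtain $\varrho(R(\xi_1),R(\xi_2))=m\,\varrho(\xi_1,\xi_2)$. Moreover $g$ is strictly increasing and all the images are disks about the common centre $b$. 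So $R$ maps the closed segment, taking $\xi=\zeta(a,s_2)$ for a suitable $s_2$ in the interval, homeomorphically onto $[R(\zeta),R(\xi)]$. Continuity at the endpoint $\zeta$ itself comes from continuity of $R$ and of $g$.

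The remaining step, which I expect to be the main obstacle, is identifying $m$ with $\deg_R v$ as defined through the inf--sup count of preimages over annuli $\bannulus(\zeta,\xi)$. The approach is the Weierstrass preparation and Newton polygon count. For $y\in K$ with $|y-b|=g(s)$ for some $s$ strictly inside the interval, the function $R-y$ on the closed annulus $\{s'\le|z-a|\le s''\}$ has exactly $m$ zeros, counted with multiplicity. This holds because the single dominant term $c_m(z-a)^m$ dominates all others, including the new constant term $b-y$, except exactly on the circle $|z-a|=s$, where the Newton polygon has a segment of horizontal length $m$. By the one-to-one correspondence of the segments we also know that points $y\in R(U)$ for small annuli $U=\bannulus(\zeta,\xi)$ satisfy this condition (or lie in directions off the segment, where the same count applies to the residue disks). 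So the supremum over $y$ equals $m$ for all sufficiently small $U$, and the infimum gives $\deg_R v=m$. If the conjugations have moved $\infty$, we must check that preimages lying outside the annulus are not counted. This holds because the count is restricted to $U\cap R^{-1}(y)$, and $U$ shrinks into the pole-free disk.
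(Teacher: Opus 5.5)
\textbf{Verdict.} The paper gives no proof of this lemma; it is imported from Benedetto's book. Your strategy is the natural one: expand $R$ along $v$, isolate one dominant monomial, read the expansion factor off its exponent, and identify that exponent with $\deg_R v$ by counting zeros of $R-y$ on circles. Your type I case and your final counting paragraph are fine once a pole-free expansion is available. However, there are two genuine gaps, one in the normalization step and one in the type IV case.

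\textbf{The pole-free closed disk cannot always be arranged.} You cannot in general assume that a closed disk $\cld(a,s_1)$ containing an initial piece of the segment has no poles. Whether $v$ points up or down from $\zeta=\zeta(a,r_0)$, such a disk contains $\cld(a,r_0)$, i.e.\ the closed Berkovich disk $\pwanb\smallsetminus\bdisk(w)$ with $w=\dir{\zeta\infty}$. Changing the source coordinate only changes $w$, and changing the target coordinate only changes which fibre $R^{-1}(y)$ plays the role of the poles. So you need some $y\in\PP^1(K)$ and $w\in T_\zeta\pwanb$ with $R^{-1}(y)\subset\bdisk(w)$, and this can fail for every choice. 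Take $R(z)=(z^3+1)/z$ at the Gauss point $\zeta(0,1)$, with residue characteristic different from $2$ and $3$. Then $R$ has good reduction, so each residue class maps onto a residue class. The reduction $z^2+1/z$ has no totally ramified value, so every fibre of $R$ meets at least two directions at $\zeta(0,1)$. Hence in every coordinate system your disk contains a pole, and there is no Taylor expansion to work with.

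The repair is a Laurent expansion $R=\sum_{n\in\mathbb{Z}}c_n(z-a)^n$ on a pole-free open annulus $\bannulus(\zeta,\xi)$ in the direction $v$; such an annulus exists because there are finitely many poles. Then:
\begin{itemize}
\item $R(\zeta(a,s))=\zeta(c_0,\max_{n\neq 0}|c_n|s^n)$;
\item the dominant nonzero index $m$ is constant near $r_0$, because $R-c_0$ has only finitely many zeros;
\item $m$ may be negative, and both the expansion factor and the number of zeros of $R-y$ on the relevant circle are $|m|$.
\end{itemize}

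\textbf{The type IV case is wrong as stated.} Let $\zeta$ be defined by nested disks $D_n=\cld(a_n,r_n)$ with $\bigcap_n D_n=\varnothing$ and $r_n\downarrow r_0>0$. The segment leaving $\zeta$ is never $\{\zeta(a,s) : r_0<s\leq s_1\}$ for a fixed centre $a$. Indeed, $\zeta\leq\zeta(a,s)$ for some $s\in(r_0,r_n)$ means $\cld(a,s)$ meets $D_n$. Since $s<r_n$, the ultrametric inequality gives $\cld(a,s)\subset D_n$, so $a\in D_n$ for every $n$, a contradiction.

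You have to recentre. For $N$ large, $D_N$ is pole-free, and $(\zeta,\zeta(a_N,r_N)]$ is the increasing union of the pieces $\{\zeta(a_n,s) : r_n\leq s\leq r_N\}$ for $n\geq N$. You must then show the dominant exponent is the same on all pieces. This follows because the mapping degree of $R$ on $D_n$ (the largest dominant index of the expansion about $a_n$) is non-increasing in $n$, hence eventually constant, say $d$. Since $D_n\subset\cld(a_n,s)\subset D_N$, this forces $\max_{k\geq 1}|c^{(n)}_k|s^k=C_n s^d$ on each piece. Finally, glue the pieces and use continuity at $\zeta$.
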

        When $R\in K(z)$ is a nonconstant rational function, for all $\zeta \in \pwanb$ we have an induced map 
        \begin{equation}\label{mappatangente}
            R : T_\zeta \pwanb \to T_{R(\zeta)}\pwanb
        \end{equation}
        on tangent spaces, which we still denote by $R$. In the notation of \cref{expansionbydegree}, the map above sends the equivalence class $v\in T_\zeta\pwanb$ of the segment $(\zeta,\xi]$ to the equivalence class $R(v) \in T_{R(\zeta)}\pwanb$ of the segment $(R(\zeta), R(\xi)] = R((\zeta,\xi])$.
        
        The local degree functions satisfy the following properties.
        \begin{lemma}[\cite{MR3890051}, Theorem 7.29, Theorem 7.30; \cite{MR2599526} Theorem 9.22]\label[lemma]{degreessumup}
            Let $\zeta \in \pwanb$, let $R \in K(z)$ be a nonconstant rational function, and let $w \in T_{R(\zeta)}\pwanb$. Then
            \begin{enumerate}[(a)]
                \item \label{direzionisulpunto}$\deg_R \zeta = \sum_{\substack{v \in  R^{-1}(w)}} \deg_R(v)$;
                \item $\deg R = \sum_{\xi \in R^{-1}(\zeta)} \deg_R \xi \, $;
                \item for all $v\in T_\zeta\pwanb$ there exists a segment $(\zeta,\zeta')$ stemming into $v$ on which $\deg_R : \pwanb \to \{ 1, \ldots , \deg R \}$ is constantly equal to $\deg_R v$.
            \end{enumerate}
        \end{lemma}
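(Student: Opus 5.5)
The plan is to reduce all three statements to one topological fact about $R$ and then obtain (a), (b), (c) by choosing suitable neighbourhoods. The fact is this: if $V\subset\pwanb$ is a connected open set (a disk, annulus or open affinoid) and $U$ is a connected component of $R^{-1}V$, then $R|_U:U\to V$ is surjective, and the weighted count $N_U(y)=\sum_{x\in U\cap R^{-1}(y)}\deg_R x$ does not depend on $y\in V\cap\PP^1(K)$. I would prove it from three properties of the extension of $R$ to $\pwanb$: it is open, it is closed (being a continuous map of compact Hausdorff spaces), and it has finite fibres. These give that $R|_U:U\to V$ is proper and open with image all of $V$. Local constancy of $N_U$ on type I points then follows from the classical algebraic statement: near a type I point $x$, the solutions of $R(z)=y$ cluster in a small disk around $x$, with total multiplicity $\deg_R x$. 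By connectedness of $V$ (the argument runs along segments, and $V\cap\PP^1(K)$ can be linked through small disks), $N_U$ is constant; call the constant $d_U$. I expect this step to be the main obstacle. The delicate part is running the "no multiplicity escapes" argument through type II–IV points, where one really uses that images of disks are disks or the whole line; this is the Mittag-Leffler style description recalled in Section \ref{sezioneazione}.

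With the fact in hand, (b) goes as follows. Fix $\zeta$ and let $\xi_1,\dots,\xi_m$ be its finitely many preimages. Choose a connected open neighbourhood $V$ of $\zeta$ small enough that the components $U_1,\dots,U_m$ of $R^{-1}V$ containing the $\xi_i$ are pairwise disjoint and exhaust $R^{-1}V$. This is possible by closedness of $R$ and because $\pwanb$ has a basis of affinoid neighbourhoods. Counting preimages of a type I point $y\in V$ gives $\deg R=\sum_i d_{U_i}$, by the algebraic degree count. It remains to show $\deg_R\xi_i=d_{U_i}$. Shrinking $V$ does not change the component count at $\xi_i$, since $d$ is constant along nested components and bounded by $\deg R$, hence eventually stable. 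Moreover, every open set around $\xi_i$ contains such a $U_i$. Unwinding the inf–sup definition of $\deg_R\xi_i$ therefore gives exactly this stable value.

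For (a), take $V$ a small annulus $\bannulus(R(\zeta),\eta)$ with $\eta\in\bdisk(w)$, and take $W$ a small affinoid neighbourhood of $\zeta$ that is a component of $R^{-1}$ of a small neighbourhood of $R(\zeta)$. By \eqref{affinoidesmontato}, $W\cap R^{-1}V$ splits into pieces lying in distinct directions at $\zeta$. Only directions $v$ with $R(v)=w$ meet $R^{-1}V$ near $\zeta$, and there are finitely many such $v$ by finiteness of fibres and surjectivity on tangent spaces. For each such $v$ the relevant component is an annulus $\bannulus(\zeta,\xi_v)$, whose count is $\deg_R v$ once it is small, by the same stabilization argument. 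Counting the preimages in $W$ of a type I point $y\in V$ gives $\deg_R\zeta=\sum_{R(v)=w}\deg_R v$.

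For (c), use Lemma \ref{expansionbydegree} to choose $\xi\in\bdisk(v)$ such that $R$ maps $[\zeta,\xi]$ homeomorphically and $\bannulus(\zeta,\xi)$ is a component of $R^{-1}\bannulus(R(\zeta),R(\xi))$ with count $\deg_R v$. For $\zeta'\in(\zeta,\xi)$, first apply (a) at $\zeta'$: its local degree is the sum of its directional degrees over a fixed direction at $R(\zeta')$. Then apply the fact to small annuli around $\zeta'$ contained in $\bannulus(\zeta,\xi)$. Both directions of $[\zeta,\xi]$ at $\zeta'$ lie in the component whose total count is $\deg_R v$, and all preimages in that annulus of points near $R(\zeta')$ accumulate only at $\zeta'$, since $R$ is injective on the segment. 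Together these give $\deg_R\zeta'=\deg_R v$ on the whole open segment.
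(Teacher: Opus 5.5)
The paper does not prove this lemma; it quotes it from the books of Benedetto and of Baker--Rumely, which derive it from non-archimedean analysis. That analytic input is exactly what your proposal is missing.

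Everything hinges on your ``fact'' that $N_U$ is constant on $V\cap\PP^1(K)$, and the route you sketch cannot deliver it. Since $V\cap\PP^1(K)$ is totally disconnected, local constancy at classical points plus connectedness of $V$ yields nothing. You would need local constancy of the count at \emph{every} point of $V$, in particular at a type II point $\eta$, where points $y$ lying in different directions $w\in T_\eta\pwanb$ must be compared. For a small component around a preimage $x$ of $\eta$, the decomposition you use in your proof of (a) shows that this amounts to $\sum_{R(u)=w}\deg_R u$ being independent of $w$. That is (a) at $x$, so the argument is circular.

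Nor can this be extracted from openness, closedness, finite fibres, the classical local count at type I points and \cref{imgofdisk}. Suppose the residue characteristic is not $2$, and let $B$ be the component of $\pwanb\smallsetminus\{\zeta(0,1)\}$ containing $0$. The self-map of $\pwanb$ given by $z\mapsto z^2$ on $B$ and by the identity off $B$ has all of these properties. Yet type I points of $B$ have two preimages (with multiplicity), while those outside $B$ have one. Your fact fails for every open disk $V$ containing $\zeta(0,1)$, and (a) fails at $\zeta(0,1)$.

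The missing ingredient is the behaviour at type II points. After changing coordinates on source and target so that $\zeta=R(\zeta)=\zeta(0,1)$, the reduction $\overline{R}\in k(z)$ is nonconstant. Then $T_\zeta\pwanb$ is identified with $\PP^1(k)$, with the tangent map given by $\overline{R}$. A Weierstrass preparation / Newton polygon computation on residue classes and small annuli shows that $\deg_R v$ is the multiplicity of $\overline{R}$ at the point of $\PP^1(k)$ corresponding to $v$. Hence $\sum_{R(v)=w}\deg_R v=\deg\overline{R}$ for every $w$. This makes preimage counts constant across non-classical points, together with two further inputs: a power series mapping a disk onto a disk is $d$-to-$1$, $d$ being its Weierstrass degree; and the easier analogues hold at type III and IV points.

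Granting these inputs, the topological bookkeeping in your (a) and (b) can be made to work. In (c), however, injectivity of $R$ on $[\zeta,\xi]$ does not rule out preimages of $R(\zeta')$ off the segment. Instead, combine the lower bound $\deg_R\zeta'\geq\deg_R\dir{\zeta'\xi}=\deg_R v$, which follows from (a) and the matching expansion factors in \cref{expansionbydegree}, with the upper bound $\deg_R\zeta'\leq\deg_R v$ that your count provides.
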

        \begin{lemma}[\cite{MR3890051}, Theorem 7.34]\label[lemma]{imgofdisk}
            Let $v \in T_\zeta \pwanb$ for some $\zeta \in \pwanb$ and $R \in K(z)$ be a nonconstant rational function. Then $R(\bdisk(v))$ is either the disk $\bdisk(R(v))$, which has boundary point $R(\zeta)$, or otherwise $R(\bdisk(v)) = \pwanb$.
        \end{lemma}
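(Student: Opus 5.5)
The plan is to characterize $R(\bdisk(v))$ topologically, as a connected open set whose boundary is contained in $\{R(\zeta)\}$, and then identify it with the help of \cref{expansionbydegree}. Set $U := R(\bdisk(v))$. Two basic facts about the induced map $R:\pwanb\to\pwanb$ are needed.

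\begin{enumerate}[(i)]
\item $R$ is continuous, and $\pwanb$ is compact Hausdorff, so $R$ is a closed map.
\item $R$ is an open map. This is the open mapping property for nonconstant rational maps on the Berkovich line; it is not stated earlier in the paper, so I would cite it from \cite{MR3890051} or \cite{MR2599526}.
\end{enumerate}

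From these, $U$ is open and connected, being the image of the open connected set $\bdisk(v)$. Since $\overline{\bdisk(v)} = \bdisk(v)\sqcup\{\zeta\}$ and $R$ is closed,
\begin{equation*}
\overline{U} \subset R\bigl(\overline{\bdisk(v)}\bigr) = U \cup \{R(\zeta)\}.
\end{equation*}
Because $U$ is open, $\partial U = \overline{U}\smallsetminus U$, and therefore $\partial U \subset \{R(\zeta)\}\smallsetminus U$.

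We now split into two cases according to whether $R(\zeta)$ lies on the boundary of $U$.

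\textbf{Case $\partial U = \varnothing$.} This includes the situation $R(\zeta)\in U$. Then $U$ is a nonempty clopen subset of the connected space $\pwanb$, so $U = \pwanb$.

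\textbf{Case $\partial U = \{R(\zeta)\}$.} Then $R(\zeta)\notin U$, so $U$ is a connected subset of $\pwanb\smallsetminus\{R(\zeta)\}$. It therefore lies in a single connected component $\bdisk(w)$ for some $w\in T_{R(\zeta)}\pwanb$.
\begin{itemize}
\item $U$ is open in $\bdisk(w)$.
\item $U$ is also closed in $\bdisk(w)$, since $\overline{U}\cap\bdisk(w) \subset (U\cup\{R(\zeta)\})\cap \bdisk(w) = U$.
\end{itemize}
As $\bdisk(w)$ is connected and $U$ is nonempty, $U = \bdisk(w)$.

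It remains to show $w = R(v)$. By \cref{expansionbydegree} there is $\xi\in\bdisk(v)$ such that $R$ maps $[\zeta,\xi]$ homeomorphically onto $[R(\zeta),R(\xi)]$. Hence $(R(\zeta),R(\xi)] = R((\zeta,\xi]) \subset U = \bdisk(w)$. A segment stemming from $R(\zeta)$ and contained in $\bdisk(w)$ has direction $w$, so $w = \dir{R(\zeta)R(\xi)}$. By the description of the tangent map \cref{mappatangente}, this direction is exactly $R(v)$. Thus $U = \bdisk(R(v))$, whose boundary point is $R(\zeta)$.

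The main obstacle is the openness of $R$ in (ii), which is where the real content lies. If it has to be justified rather than cited, I would reduce to neighbourhood bases of simple domains (disks and affinoids). The idea is to use the power series description of $R$ on disks avoiding poles (Section \ref{sezioneazione}) together with \cref{degreessumup}: every direction at $R(\zeta)$ has a preimage direction at $\zeta$, which gives surjectivity near each point onto a small neighbourhood of the image.
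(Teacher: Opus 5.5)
Your argument is correct, but it does not follow the paper: the paper gives no proof and simply imports the lemma from Benedetto's book (Theorem 7.34), so all the content sits in that reference.

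You instead put all the analytic input into one fact, the openness of $R$, and reduce the rest to point-set topology:
\begin{enumerate}[(i)]
\item $U=R(\bdisk(v))$ is open and connected, and closedness of $R$ gives $\overline{U}\subset U\cup\{R(\zeta)\}$.
\item Hence $U$ is either clopen in $\pwanb$, so $U=\pwanb$, or a full connected component of $\pwanb\smallsetminus\{R(\zeta)\}$.
\item The definition of the tangent map via \cref{expansionbydegree} identifies that component as $\bdisk(R(v))$.
\end{enumerate}
Each of these steps checks out.

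This route buys transparency and some generality. If you replace $\{\zeta\}$ by the finite boundary of a connected open affinoid $V$, the same reasoning shows that $R(V)$ is a component of the complement of a finite set. So $R(V)$ is either a connected open affinoid or all of $\pwanb$.

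The cost is that openness of $R$ has essentially the same depth as the lemma. Over $\PP^1(K)$ it is usually deduced from the fact that images of disks are disks. You therefore need to cite a source that proves openness without going through the disk-image theorem, for instance the general openness of finite flat morphisms of Berkovich spaces; otherwise the argument risks circularity.

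Your fallback sketch would not supply openness. Surjectivity of $T_\zeta\pwanb\to T_{R(\zeta)}\pwanb$ only shows that every direction at $R(\zeta)$ meets the image of a neighbourhood of $\zeta$. It does not show that this image contains a neighbourhood of $R(\zeta)$. That would require all but finitely many disks at $\zeta$ to map onto the corresponding disks at $R(\zeta)$, which is essentially the lemma itself. The sketch also says nothing at type I points, where there is only one direction.
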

        
        The following lemma is a consequence of \cref{expansionbydegree} and \cref{degreessumup} (a), (b). We omit an explicit proof.
        \begin{lemma}[Preimage of a segment short enough] \label[lemma]{preimageofsegment}
            Let  $R(z) \in K(z)$ be a nonconstant rational function and $I \subset \pwanb$ be a segment parametrized by $[0,L] \to I, r \mapsto \zeta_r$, for some $L \in (0, \infty]$. Let  $m \in \NN \smallsetminus \{0\}$ be the number of distinct preimages of $\zeta_0$. Denote these preimages by $\xi_i \in \pwanb, \ i \in [m]$.\\
            Then there exist $\varepsilon>0$, integers $\ell_i \geq 1$ for each $i \in [m]$, and disjoint segments $(\xi_i, \xi_i^k(\varepsilon)] \subset \pwanb$ for $k \in [\ell_i]$ and $i \in [m]$, with parametrizations
            \[[0,\varepsilon] \longrightarrow [\xi_i, \xi_i^k(\varepsilon)], \quad r \longmapsto \xi_i^k(r), \ \xi_i^k(0) = \xi_i \, , \qquad  k \in [\ell_i], \  i \in [m],  \]
            such that
            \[R^{-1}(\zeta_r) = \Set{\xi_i^k(r) \ | \ i \in [m], k \in [\ell_i] }, \qquad 0 \leq r \leq \varepsilon  , \]
            and such that $R$ maps $(\xi_i, \xi_i^k(\varepsilon)) \longrightarrow (\zeta_0, \zeta_{\varepsilon})$ bijectively and with degree constantly equal to \\
            $\deg_R {  \overrightarrow{ \scriptstyle \xi_i \xi_i^k(\varepsilon)}}$, for all $k \in [\ell_i], i \in [m]$.
        \end{lemma}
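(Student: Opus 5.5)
The idea is to prove the statement by reducing it to the finite branching structure already established in \cref{expansionbydegree} and \cref{degreessumup}, and to build the segments one preimage at a time. Fix the parametrization $r\mapsto\zeta_r$ of $I$ and let $\xi_1,\ldots,\xi_m$ be the distinct preimages of $\zeta_0$. By \cref{degreessumup}~(b) this set is finite with $\sum_i \deg_R\xi_i=\deg R$. Let $w=\dir{\zeta_0\zeta_L}\in T_{\zeta_0}\pwanb$ (if $\zeta_0$ is of type I or IV, this is its unique direction). For each $i$, the set $R^{-1}(w)\cap T_{\xi_i}\pwanb$ is finite, say $\{v_i^1,\ldots,v_i^{\ell_i}\}$ with $\ell_i\ge1$, and by \cref{degreessumup}~(a) we have $\sum_k\deg_R v_i^k=\deg_R\xi_i$.

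For each pair $(i,k)$, \cref{expansionbydegree} provides a segment $[\xi_i,\eta_i^k]$ stemming into $v_i^k$ that $R$ maps homeomorphically onto $[\zeta_0,R(\eta_i^k)]$, expanding $\varrho$ by the constant factor $\deg_R v_i^k$. Since $R(v_i^k)=w$, the image segment stems into $w$, and hence shares an initial piece with $I$. Shrinking these segments (and using \cref{degreessumup}~(c) to make $\deg_R$ constant on them), we choose $\varepsilon>0$ so small that $[\zeta_0,\zeta_\varepsilon]\subset R([\xi_i,\eta_i^k])$ for every pair $(i,k)$. We then define $\xi_i^k(r)$ as the unique point of $[\xi_i,\eta_i^k]$ mapping to $\zeta_r$. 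This gives the parametrizations, and $R$ maps $(\xi_i,\xi_i^k(\varepsilon))$ bijectively onto $(\zeta_0,\zeta_\varepsilon)$ with constant degree $\deg_R v_i^k$. The segments are pairwise disjoint away from their endpoints: for distinct $i$ the base points differ, and after shrinking the segments lie in disjoint small neighbourhoods of distinct points; for the same $i$ they stem into distinct directions.

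It remains to show that these points exhaust $R^{-1}(\zeta_r)$ for $0<r\le\varepsilon$. Counting with multiplicity, the points $\xi_i^k(r)$ carry total local degree $\sum_{i,k}\deg_R v_i^k=\sum_i\deg_R\xi_i=\deg R$, using that $\deg_R\xi_i^k(r)=\deg_R v_i^k$ on the chosen segments. By \cref{degreessumup}~(b), the preimages of $\zeta_r$ also have total local degree exactly $\deg R$, so there is no room for any further preimage. If $r\mapsto\zeta_r$ is merely a homeomorphic parametrization and not an arc-length one, we reparametrize the domain, which is harmless.

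The main obstacle is the uniform choice of $\varepsilon$ together with disjointness, in the case where $\zeta_0$ has type I. Then $\varrho$ is not defined at the endpoint, so \cref{expansionbydegree} only controls distances on the part of the segment lying in $\hypb$. The homeomorphism statement of that lemma still gives surjectivity onto $[\zeta_0,\zeta_\varepsilon]$, which is all the argument needs.
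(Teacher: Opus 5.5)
Your proposal is correct and is essentially the argument the paper has in mind. The paper omits the proof, saying only that the lemma follows from \cref{expansionbydegree} and \cref{degreessumup} (a), (b). You lift a short initial piece of $I$ along each direction in $R^{-1}(w)$ via \cref{expansionbydegree}, and then use the degree count from (a) and (b), made legitimate by the disjointness you establish, to rule out further preimages of $\zeta_r$. Your extra appeal to \cref{degreessumup} (c) is harmless. Just pick $\eta_i^k$ strictly inside the segment that (c) provides, so that the degree identity also holds at the endpoint $r=\varepsilon$.
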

        
    \subsection{Fixed points and fixed directions} \label{defqualitadinamiche}
        Let $R \in K(z)$. We denote by $\fix(R)$ the \textbf{fixed locus} of $R$, i.e., the set of points in $\pwanb$ fixed by $R$. Let $\zeta \in \fix(R) \cap \awanb$. If $\zeta \in \affline^1(K)$, we have a power series expansion $R(z) = \sum_{n\geq 0} c_n (z-\zeta)^n, \ c_n \in K$, converging in a disk $\cld(\zeta,r)$ for some $r>0$.   We define the \textbf{multiplier} $\lambda$ of $\zeta$ as
        \begin{equation*}
            \lambda = c_1 \in K, \ \  \text{if} \ \zeta \in \affline^1(K), \qquad  \lambda = \deg_R \zeta \in \NN \smallsetminus \{0\}, \ \ \text{if} \ \zeta \in \hypb.
        \end{equation*}
        Multipliers are preserved under conjugation of $R$ by elements of $\PGL(2,K)$. Therefore, if $\zeta = \infty$ is fixed by $R$, we define its multiplier as the multiplier of some $a \in \affline(K)$ for a rational function $\eta \circ R \circ \eta^{-1}$, where $\eta \in \PGL(2,K)$ sends $\infty$ to $a$.\\
        We then say that a fixed point $\zeta \in \pwanb$ with multiplier $\lambda$ is \textbf{attracting} if $|\lambda| < 1$, \textbf{indifferent} if $|\lambda| = 1$, and repelling if $|\lambda| > 1$. We denote the sets of attracting, indifferent, and repelling fixed points by, respectively, $\fix_{<1}(R), \fix_{=1}(R), \fix_{>1}(R)$. It follows that $\fix(R) = \fix_{<1}(R) \sqcup \fix_{=1}(R)\sqcup \fix_{>1}(R)$.\\
        The following proposition collects some facts relating the Berkovich classification of points to the classification of fixed points introduced above. Points (a) and (c) can be found in \cite{MR3890051} \S 8.2, point (b) follows from (c), and point (d) follows from the definitions by expanding $R$ as a power series.
        \begin{prop}\label[prop]{classifidinamica}
            Let $R\in K(z)$ and let $\zeta\in \pwanb$ be fixed by $R$.
            \begin{enumerate}[(a)]
                \item If $\zeta$ is attracting, then it is type I.
                \item If $\zeta$ is repelling, then it is either type I or type II.
                \item If $\zeta$ is type III or IV, then it is indifferent, so in particular $\deg_R\zeta = 1$.
                \item If $\zeta$ is a fixed type I point and $\deg_R \zeta \geq 2$, then $\zeta$ is attracting.
            \end{enumerate}
        \end{prop}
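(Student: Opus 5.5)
The plan is to handle the four items separately, going back to the definition of the multiplier and using power series expansions after conjugating by $\PGL(2,K)$. For a fixed point $\zeta\in\hypb$ the multiplier is the positive integer $\deg_R\zeta$. Its archimedean absolute value is therefore at least $1$, so $\zeta$ is never attracting. This gives (a) at once: an attracting fixed point must lie in $\PP^1(K)$, i.e.\ be of type I. Item (b) is then a formal consequence of (c). A repelling point is not indifferent, so by (c) it is neither of type III nor of type IV. Hence it is of type I or II.

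For (d), let $\zeta\in\PP^1(K)$ be fixed with $\deg_R\zeta\ge 2$. Conjugating by an element of $\PGL(2,K)$ that sends $\zeta$ to $0$ (multipliers are conjugation invariant), we may assume $\zeta=0$. Expand $R(z)=\sum_{n\ge0}c_n z^n$ near $0$. Since $R(0)=0$ we have $c_0=0$. Local degree at least $2$ means the order of vanishing of $R$ at $0$ is at least $2$, so $c_1=0$. Thus the multiplier is $0$ and $\zeta$ is attracting.

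The substance is in (c). For a fixed point of type III, I would conjugate so that $\zeta=\zeta(a,r)$ with $r\notin|K^\times|$ and $\cld(a,r)$ free of poles; this is possible since type III points have only two directions. Using the expansion $\sum c_n(z-a)^n$ and the description of the action recalled in \cref{sezioneazione}, $R(\zeta)=\zeta(R(a),|c_m|r^m)$, where $m$ realizes $\sup_{n\ge1}|c_n|r^n$. Because $r\notin|K^\times|$, this supremum is attained by a \emph{unique} $m$. Fixedness gives $|c_m|r^m=r$, i.e.\ $r^{m-1}=|c_m|^{-1}\in|K^\times|$. The value group of an algebraically closed field is divisible, so $m\ge2$ would force $r\in|K^\times|$, which is impossible. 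Hence $m=1$. I would then identify $\deg_R\zeta$ with this dominant index $m$, in the spirit of \cref{expansionbydegree}: near $\zeta$, $R$ scales $\varrho$ by the factor $m$ along the segment toward $\zeta(a,0)$. Together with \cref{degreessumup}(a) and (c) this gives $\deg_R\zeta=1$, and so $\zeta$ is indifferent.

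The type IV case is where I expect the main obstacle. A type IV point $\zeta$ has a single direction $v$, so $\deg_R\zeta=\deg_R v=:d$ by \cref{degreessumup}(a). Take a segment $[\zeta,\xi]$ as in \cref{expansionbydegree} and parametrize it by distance $t$ from $\zeta$, writing $\xi_t$ for the point at distance $t$. Since $\zeta$ is fixed and has a unique direction, $R(\xi_t)$ lies on the same germ at distance $dt$. I would try to show that $d\ge2$ forces a contradiction. The idea is to track the nested classical disks $D_t$ underlying $\xi_t$ with their radii $r_t$, and compare $R(D_t)$ with $D_{dt}$. Expanding $R$ on $D_t$ about a $K$-point in it, the dominant index is $d$. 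The requirement that the centers remain compatible as $t\to0$, while $\bigcap_t D_t=\varnothing$, should then be inconsistent with $d\ge2$. I would try to obtain this from the fact that a degree $d\ge2$ map on a shrinking nest would produce a $K$-point fixed in the intersection, via a contraction argument on the inverse branch. If this turns out to be delicate, I would fall back on citing the corresponding statement in \cite{MR3890051}, \S 8.2, as the paper itself indicates.
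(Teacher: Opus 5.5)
Parts (a), (b) and (d) are fine. Parts (b) and (d) are argued exactly as in the paper. For (a) you read the claim off the definition (the multiplier of a fixed point of $\hypb$ is a positive integer, hence never of absolute value $<1$), which is more direct than the paper's citation of \cite{MR3890051}, \S 8.2. For (c) the paper only cites \cite{MR3890051}, \S 8.2, so your fallback is exactly the paper's proof. The self-contained argument you sketch, however, has two gaps.

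The first gap is in the type III case. The normalization ``conjugate so that $\cld(a,r)$ is pole-free; this is possible since type III points have only two directions'' is false in general. If both directions at $\zeta$ are mapped onto $\pwanb$ (the second alternative in \cref{imgofdisk}), every $w\in\PP^1(K)$ has preimages in both directions. Then every conjugate of $R$ has a pole in $\cld(a,r)$. For example, $R(z)=z+\epsilon/z+\delta z^3$ with $r\notin|K^\times|$ and $|\epsilon|<r^2<|\delta|^{-1}$ fixes $\zeta(0,r)$, and each of the two directions there contains both a zero and a pole of $R$.

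The repair keeps your idea. Since $r\notin|K^\times|$, no $K$-point satisfies $|z-a|=r$. So on a thin pole-free annulus around radius $r$ you have a Laurent expansion $R=\sum_{n\in\mathbb{Z}}c_n(z-a)^n$, and $R(\zeta(a,s))=\zeta(c_0,\max_{n\neq0}|c_n|s^n)$. Your divisibility argument gives a unique dominant index $m\neq0$ at $s=r$. Fixedness forces $|c_m|r^{m-1}=1$, hence $m=1$. So $R$ is an isometry on a small segment through $\zeta$, both directions have degree $1$, and $\deg_R\zeta=1$ by \cref{expansionbydegree} and \cref{degreessumup}~(a).

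The second gap is that the type IV sketch contains no working mechanism. For $d\geq2$ there is no single-valued inverse branch to contract, and the conclusion is only reached through the citation. An argument that does work along your lines counts zeros of $R(z)-z$; assume $R\neq\id$, so $R$ has finitely many fixed $K$-points.

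For small $t$, the disk $U_t=\bdisk(\dir{\xi_t\zeta})$ contains no pole and no preimage of a chosen $K$-point, because $\bigcap_t U_t=\{\zeta\}$. Hence $R(U_t)=U_{dt}\supset U_t$ by \cref{imgofdisk}. Let $D_t$ be the set of $K$-points of $U_t$, an open disk of radius $r_t$, and pick $b\in D_t$. Write $R(b+w)=\sum_{n\geq0}c_nw^n$. The expansion factor $d$ says that for radii $s$ just below $r_t$ the unique dominant index among $n\geq1$ is $d$, with $|c_d|s^d\to r_{dt}>r_t$.

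If $d\geq2$, then $|c_d|s^d$ exceeds $|c_1-1|s\leq\max(|c_1|,1)\,s$. It also exceeds $|R(b)-b|$, which is $<r_{dt}$ since $b,R(b)\in D_{dt}$. By the Newton polygon, $R(z)-z$ then has $d\geq1$ zeros in $D_t$. So every $D_t$ contains a fixed $K$-point. This is impossible, since there are finitely many of them and the $D_t$ are nested with empty intersection. Hence $d=\deg_R\zeta=1$.
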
        
        Let now $R \in K(z)$ be a nonconstant rational function and let $\zeta\in \pwanb$. We say that $v\in T_\zeta\pwanb$ is a \textbf{fixed direction} if the map on tangent spaces maps $v$ to itself. Notice that this implies that $\zeta$ is a fixed point. Given a fixed direction $v = R(v) \in T_\zeta\pwanb$, we describe the direction $v$ with the adjective in the left column if there exists a $\xi \in \bdisk(v)$ such that for all $\xi' \in (\zeta,\xi)$ the corresponding condition in the right column is satisfied:
        \begin{equation*}
            \begin{array}{ll}
                                    \textbf{attracting},       \qquad &    R((\zeta,\xi']) \subset (\zeta,\xi') \minis ;\\
                                    \textbf{non-attracting},       \qquad &    R((\zeta,\xi']) \supset (\zeta,\xi'] \minis ;\\
                                    \textbf{non-repelling},       \qquad &    R((\zeta,\xi']) \subset (\zeta,\xi'] \minis ; \\
                                    \textbf{repelling},       \qquad &    R((\zeta,\xi')) \supset (\zeta,\xi'] \minis ;\\
                                    \textbf{indifferent},       \qquad &    R((\zeta,\xi']) = (\zeta,\xi'] \minis .
            \end{array}
        \end{equation*}
        Since for $\eta \in \PGL(2,K)$ the map $\eta :\pwanb\to \pwanb$ is a homeomorphism, these definitions are coordinate-independent.
        \begin{rmk}\label[rmk]{indifferentdirections}
            If $\zeta\in \pwanb$ is fixed and $v\in T_\zeta \pwanb$, then 
            \begin{enumerate}[(a)]
                \item $v$ is an indifferent fixed direction if and only if $R$ restricts to the identity on some segment stemming from $\zeta$ into $v$;
                \item in particular, if $v$ is an indifferent fixed direction, then by \cref{expansionbydegree} we have $\deg_R v = 1$;
                \item  if $\zeta \in \hypb$ and $v$ is fixed, then  by \cref{expansionbydegree} the fixed direction $v$ is non-repelling if and only if $\deg_R v = 1$.
            \end{enumerate} 
        \end{rmk}
        It is a priori not obvious that a fixed direction $v$ at a fixed point $\zeta \in \pwanb$ must be either repelling or non-repelling, or that it must be either attracting or non-attracting. The following proposition gives us the dichotomies and trichotomy that we expect.
        \begin{prop}\label[prop]{dicotomiadirezioni}
            Suppose a nonconstant rational function $R \in K(z)$ fixes a point $\zeta \in \pwanb$ and a direction $v \in T_\zeta\pwanb$. Then:
            \begin{enumerate}[(a)]
                \item $v$ is either repelling or non-repelling;
                \item $v$ is either attracting or non-attracting;
                \item $v$ is either attracting, indifferent, or repelling.
            \end{enumerate}
        \end{prop}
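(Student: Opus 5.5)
The plan is to reduce everything to a one-variable comparison along a single segment stemming into $v$. Write $f(t)$ for the position of $R(\zeta_t)$, where $\zeta_t$ parametrizes a short segment in $v$. I would show that, after shrinking the segment, exactly one of $f(t)<t$, $f(t)=t$, $f(t)>t$ holds for all small $t>0$. The five adjectives then become statements about this single inequality, and (a), (b), (c) follow. Recall that $v$ is attracting if $f(t)<t$, indifferent if $f=\id$, repelling if $f(t)>t$, non-repelling if $f(t)\le t$, and non-attracting if $f(t)\ge t$, each for all small $t$.

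\textbf{Step 1: setup.} By \cref{expansionbydegree} there is $\xi\in\bdisk(v)$ such that $R$ maps $[\zeta,\xi]$ homeomorphically onto $[\zeta,R(\xi)]$, using $R(\zeta)=\zeta$. The segment $(\zeta,R(\xi)]$ represents $R(v)=v$, so it meets $(\zeta,\xi]$ in an initial segment. Shrinking $\xi$, I may assume $R([\zeta,\xi])$ and $[\zeta,\xi]$ both lie on one segment $[\zeta,\xi_0]$ starting at $\zeta$. Fixing a parametrization $t\mapsto\zeta_t$ of $[\zeta,\xi_0]$, this gives a continuous increasing $f$ with $f(0)=0$ and $R(\zeta_t)=\zeta_{f(t)}$. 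For $\xi'=\zeta_s$ we get $R((\zeta,\xi'])=(\zeta,\zeta_{f(s)}]$ and $R((\zeta,\xi'))=(\zeta,\zeta_{f(s)})$, so the defining conditions translate exactly into the inequalities listed above.

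\textbf{Step 2: $\zeta\in\hypb$.} Use arc length from $\zeta$, which is finite near $\zeta$ since $\zeta\in\hypb$. \Cref{expansionbydegree} gives $f(t)=d\,t$ with $d=\deg_R v\ge1$, after shrinking. If $d=1$, $f$ is the identity and $v$ is indifferent; if $d\ge2$, $f(t)>t$ and $v$ is repelling. This also matches \cref{indifferentdirections}.

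\textbf{Step 3: $\zeta$ of type I.} After conjugating by some $\eta\in\PGL(2,K)$, which is harmless because the notions are coordinate-independent, take $\zeta=0$. Since type I points have only one direction, the segment is $\{\zeta(0,r)\}$ for small $r$. Expand $R(z)=\sum_{n\ge1}c_nz^n$ on $\cld(0,r_0)$, with $c_0=0$ because $0$ is fixed. By the description of the action in \cref{sezioneazione}, $R(\zeta(0,r))=\zeta(0,g(r))$ with $g(r)=\max_n|c_n|r^n$. Let $m$ be the least index with $c_m\neq0$; such $m$ exists since $R$ is nonconstant. Then $g(r)=|c_m|r^m$ for $r$ small, because higher terms satisfy $|c_n|r^n\le C r^{m+1}$ by convergence at $r_0$. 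Comparing $g(r)$ with $r$: if $m\ge2$, or if $m=1$ and $|\lambda|<1$, then $g(r)<r$ and $v$ is attracting. If $m=1$ and $|\lambda|=1$, then $g(r)=r$ and $v$ is indifferent. If $m=1$ and $|\lambda|>1$, then $g(r)>r$ and $v$ is repelling.

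\textbf{Step 4: conclusion.} Steps 2 and 3 show that exactly one of $f<\id$, $f=\id$, $f>\id$ holds on a small interval; these are mutually exclusive for $t>0$, which gives (c). For (a), repelling means $f>\id$ and non-repelling means $f\le\id$ near $0$. For (b), attracting means $f<\id$ and non-attracting means $f\ge\id$ near $0$. In both cases the trichotomy puts $v$ in exactly one class.

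The main obstacle is the bookkeeping in Step 1. One must check that the image segment really lies along the same initial segment, so that $f$ is defined, and that the conditions are stated for all $\xi'$ below some $\xi$, so shrinking is legitimate. The type I case also needs care because arc length blows up at $\zeta$; using the radius $r$ as the parameter avoids this.
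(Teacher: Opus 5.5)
Your proof is correct, but it takes a different route from the paper's.

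\textbf{Your route.} You compute the induced map $f$ explicitly in each case and then read off the trichotomy. On $\hypb$ you use the exact dilation from \cref{expansionbydegree} to get $f(t)=(\deg_R v)\,t$. At type I points you use the power-series description of the action to get $g(r)=|c_m|r^m$ in the radius coordinate.

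\textbf{The paper's route.} The paper gives a single argument with no case split on the type of $\zeta$ and no power series. After the same normalization as your Step 1, it assumes $v$ is not repelling and picks a point $\xi$ with $R(\xi)\le\xi$. It then uses only two facts to propagate $R(\xi')\le\xi'$ to every $\xi'\in(\zeta,\xi)$: the restriction of $R$ to the segment preserves the order, and $R$ does not contract distances there, i.e.\ $\varrho(R(\xi_1),R(\xi_2))\ge\varrho(\xi_1,\xi_2)$. This inequality is applied only on the open segment, where $\varrho$ is finite even when $\zeta$ is type I. Part (b) is handled analogously. Part (c) follows because ``indifferent'' means exactly ``non-repelling and non-attracting.''

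\textbf{What each buys.} The paper's argument needs less input: only monotonicity and a dilation factor $\ge 1$, never the exact local form of $R$. It is also uniform in the type of the point. Your argument needs the explicit local formulas, but it proves more along the way:
\begin{itemize}
\item attracting fixed directions occur only at type I points;
\item at $\zeta\in\hypb$, a fixed direction is indifferent or repelling according as $\deg_R v=1$ or $\deg_R v\ge 2$;
\item at a type I point, the type of the direction is dictated by the multiplier.
\end{itemize}
This is essentially the content of \cref{indifferentdirections}~(c) and \cref{taledirezionetalepunto}, which the paper proves separately afterwards with the same power-series computation. Your Step~4 also makes the mutual exclusivity of the classes explicit, which the paper's argument leaves implicit.
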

        \begin{proof}
            (a). Suppose that $v$ is not repelling. We will show that it is non-repelling.\\
            By \cref{expansionbydegree} there exists $\xi \in \bdisk(v)$ such that on $[\zeta,\xi]$ the restriction of $R$ is a homeomorphism onto its image. We will assume without loss of generality that $\xi \in \hypb$.
            By the definition of map on tangent spaces \cref{mappatangente}, since $v$ is fixed, we have that $(\zeta,\xi] \cap (R(\zeta),R(\xi)]$ is a non-empty segment, one of whose endpoints is $\zeta$. Up to replacing $(\zeta,\xi]$ with this intersection, we then have that $R$ maps $[\zeta,\xi]$ into some segment $[\zeta,\beta]$ such that $[\zeta,\xi] \subset [\zeta,\beta]$.  Consider the order $\leq$ inducing the topology on the segment $[\zeta,\beta]$, where we take $\zeta$ to be the minimal element. Since $v$ is not a repelling direction, $R(\xi') > \xi'$ cannot hold for all $\xi' \in (\zeta,\xi)$. So, up to replacing $\xi$ with a point in $(\zeta,\xi]$ for which the aforementioned inequality fails, we can assume $R(\xi) \leq \xi$. Now, since $R|_{[\zeta,\xi]}$ is a homeomorphism onto its image and fixes $\zeta$, it must preserve the order $\leq$. Let $\xi' \in (\zeta,\xi)$. Then,
            \[ R(\xi') < R(\xi) \leq \xi \, .\]
            Since $\varrho(R(\xi'), R(\xi)) \geq \varrho(\xi',\xi)$ by \cref{expansionbydegree}, we must have $\xi' \geq R(\xi')$. By the arbitrariness of $\xi' \in (\zeta,\xi)$, this shows in fact that $R((\zeta,\xi']) \subset (\zeta,\xi']$ for all $\xi'\in (\zeta,\xi)$, that is, $v$ is non-repelling. The proof of point (b) is analogous.

            Notice that it is immediate from the definitions that, if $v$ is a fixed direction, it being an indifferent fixed direction is equivalent to it being both a non-repelling and non-attracting fixed direction. Point (c) then follows from (a) and (b). 
        \end{proof}
        \begin{prop}   \label[prop]{taledirezionetalepunto}
            Let $\zeta \in \pwanb$ be fixed by a nonconstant rational function $R \in K(z)$.
            \begin{enumerate}[(a)]
                \item The point $\zeta$ has an  attracting fixed direction if and only if $\zeta$ is an attracting fixed point.
                \item If $\zeta$ has a repelling fixed direction, then $\zeta$ is a repelling fixed  point. If $\zeta$ is type I, then the converse holds.
            \end{enumerate}
        \end{prop}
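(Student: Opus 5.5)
The plan is to split according to the Berkovich type of the fixed point $\zeta$. Type I points are handled by an explicit power series computation. Points of $\hypb$ are handled using the local expansion \cref{expansionbydegree} together with the degree identities of \cref{degreessumup}.

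\emph{Case $\zeta\in\hypb$.} The multiplier is $\lambda=\deg_R\zeta\geq 1$, so $\zeta$ is never an attracting fixed point. For (a) it therefore suffices to show that no fixed direction $v\in T_\zeta\pwanb$ is attracting. By \cref{expansionbydegree} there is $\xi\in\bdisk(v)$ such that $R$ maps $[\zeta,\xi]$ homeomorphically onto its image and multiplies $\varrho$ by $\deg_R v\geq 1$. Since $R(\zeta)=\zeta$, every $\xi'\in(\zeta,\xi)$ satisfies
\[
\varrho(\zeta,R(\xi'))=(\deg_R v)\,\varrho(\zeta,\xi')\geq \varrho(\zeta,\xi').
\]
Hence $R((\zeta,\xi'])\not\subset(\zeta,\xi')$, and $v$ is not attracting. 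For (b), suppose $v$ is a repelling fixed direction. By \cref{dicotomiadirezioni}(a) it is then not non-repelling, so \cref{indifferentdirections}(c) gives $\deg_R v\geq 2$. Applying \cref{degreessumup}(a) with $w=R(v)=v$ gives $\deg_R\zeta\geq\deg_R v\geq 2$. Thus $|\lambda|>1$, and $\zeta$ is repelling.

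\emph{Case $\zeta$ of type I.} After conjugating by an element of $\PGL(2,K)$, we may assume $\zeta=a\in\affline^1(K)$. This is allowed because multipliers and the direction notions are coordinate-independent. By \cref{classifitangent}(a), $T_a\pwanb$ has a single element $v$, and $v$ is fixed. For small $r>0$ the points $\zeta(a,r)$ trace a segment stemming from $a$ into $v$. Write $R(z)=a+\sum_{n\geq1}c_n(z-a)^n$ on a small disk, and let $m$ be the least index with $c_m\neq0$; such an $m$ exists since $R$ is nonconstant. For $r$ small enough the supremum $\sup_n|c_n|r^n$ is attained at $n=m$. By the description of the action in \cref{sezioneazione},
\[
R(\zeta(a,r))=\zeta(a,|c_m|r^m).
\]
In the coordinate $\log r$, $R$ therefore acts as an affine map $t\mapsto mt+\log|c_m|$ near $t=-\infty$. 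We now compare $|c_m|r^m$ with $r$:
\begin{enumerate}[(i)]
\item If $m=1$, the image radius is $|\lambda|r$. The direction is attracting when $|\lambda|<1$, indifferent when $|\lambda|=1$, and repelling when $|\lambda|>1$.
\item If $m\geq2$, then $\lambda=0$ and $|c_m|r^m<r$ for small $r$, so the direction is attracting, consistent with $|\lambda|<1$.
\end{enumerate}
Combined with the trichotomy of \cref{dicotomiadirezioni}(c), this gives both (a) and the two-sided statement in (b) for type I points.

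The step I expect to require the most care is the type I computation. One has to check three things. First, the image segment really lies in the same direction, so that the images $\zeta(a,|c_m|r^m)$ stay on the segment $\{\zeta(a,r)\}$. Second, the inclusion conditions in the definitions (with the half-open and open segments $(\zeta,\xi']$ and $(\zeta,\xi')$) translate correctly into the inequalities between $r$ and $|c_m|r^m$. Third, when $\zeta=\infty$, conjugating to a finite point does not change the classification; this follows from the coordinate-independence already noted.
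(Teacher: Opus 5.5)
Your proof is correct and follows essentially the paper's route: the expansion lemma \cref{expansionbydegree} rules out attracting fixed directions at points of $\hypb$, the bound $\deg_R v\geq 2$ combined with \cref{degreessumup}(a) shows that a repelling direction forces a repelling point there, and the power-series computation $\zeta(a,r)\mapsto\zeta(a,|c_m|r^m)$ settles type I points. The only cosmetic difference is how you obtain $\deg_R v\geq 2$: you go through the exclusivity of the repelling/non-repelling dichotomy and \cref{indifferentdirections}(c), whereas the paper reads it off \cref{expansionbydegree} directly. That exclusivity follows immediately from the definitions, even though the paper's proof of \cref{dicotomiadirezioni} only establishes that at least one alternative holds.
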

        \begin{proof}   
            (a). We first show that 
            \begin{equation}\label{direziattraquinditipouno}
                \text{if } \zeta \text{ has an attracting fixed direction, then } \zeta \text{ must be type I.}
            \end{equation}
            Let $v \in T_\zeta \pwanb$ be a attracting fixed direction and let $\xi \in \bdisk(v)$ be such that 
            \begin{flalign}\label{attrattino}
                &&R([\zeta,\xi']) \subset [\zeta,\xi'), &&\forall \ \xi' \in (\zeta, \xi).
            \end{flalign}    
            Then by \cref{expansionbydegree}, up to replacing $\xi$ with another point in $(\zeta,\xi)$, we have $\varrho(R(\xi_1), R(\xi_2)) \geq \varrho(\xi_1,\xi_2)$ for all $\xi_1,\xi_2 \in [\zeta,\xi] \cap \hypb$. It follows from this and from  \cref{attrattino} that $\zeta \in \pwanb \smallsetminus \hypb = \PP^1(K)$, that is, $\zeta$ is a type I point.

            Let now $\zeta \in \pwanb$ be a type I fixed point. Up to change of coordinates, we can suppose $\zeta=0$. Then as explained in Section \ref{sezioneazione}, we have a power series expansion $\sum_{n\geq 1} c_n z^n$, converging to $R(z)$ for $|z| \leq r$, for some $r>0$. Denote by $k$ the smallest integer for which $c_k \neq 0$. The $\sup_{n\geq 1} |c_n|r^n$ is attained, so that up to taking a smaller $r>0$ we can assume that $\sup_{n\geq 1} |c_n|s^n = |c_k|s^k$ for all $0< s \leq r$. The restriction of $R: \pwanb \to \pwanb$ to the segment $[0,\zeta(0,r)]$ is then given by
            \begin{equation*}
                \zeta(0,s) \ \longmapsto \ \zeta(0,|c_k| s^k).
            \end{equation*}
            Using the order on $\pwanb$ defined in Section \ref{strutturadalbero} we then have that
            \[ \big  (  \ R \left ( \zeta(0,s)\right ) < \zeta(0,s) \ \text{ for all } s>0 \ \text{small enough} \ \big ) \ \iff \ \big ( \ k \geq 2 \ \ \text{or} \ \  0 < |c_1| < 1 \ \big ).\]
            The condition on the left is equivalent to  the only direction at $0$ being attracting and the condition on the right is equivalent to $|c_1|<1$, that is, $0$ being attracting. We have thus proven point (a).

            (b). If $\zeta$ is type I, then the proof is analogous to our reasoning for point (a) after we finished establishing \cref{direziattraquinditipouno}. Suppose instead $\zeta \in \hypb$.  Let $v \in T_\zeta\pwanb$ be a repelling fixed direction and let $\xi \in \bdisk(v)$ be such that
            \begin{flalign*}
                &&R([\zeta,\xi')) \supset [\zeta,\xi'], &&\forall \ \xi' \in (\zeta,\xi) . 
            \end{flalign*}
            Then by \cref{expansionbydegree} we must have $\deg_R v \geq 2$, whence by \cref{degreessumup} (b) we conclude that $\deg_R \zeta \geq 2$ as well, so $\zeta$ is repelling.
        \end{proof}
        
    \subsection{Dynamics of a rational function} \label{sezionedinamica}
    	Let $R \in K(z)$ be a rational function. Recall that we denote the $n$-th iterate of $R$ by $R^n$, for $n \in \NN \smallsetminus \{0\}$. In addition we define $R^0(z) := z \in K(z)$, which induces the identity $R^0 = \id_\pwanb :\pwanb \to \pwanb$ on the Berkovich projective line. Given a set $E \subset \pwanb$, we will denote its iterated preimage as $R^{-n}E := (R^n)^{-1}E$.
        
        We say a point $z \in \PP^1(K)$ is in the \textbf{exceptional set} of $R$ if its backwards orbit $\bigcup_{n \geq 1} R^{-n}(z)$ is finite.  The exceptional set consists of at most two points, unless $\operatorname{char} K >0$ and $R$ is conjugate to an iterate of the Frobenius automorphism, in which case the exceptional set is countably infinite.
        \begin{thm}[\cite{MR2578470}, Th\'eor\`eme A] \label{brolin}
             Let $R\in K(z)$ be a rational function with $\deg R\geq 2$. Then there exists a Radon probability measure $\nu_R \in \meas(\pwanb)$ such that for all probability measures $\mu \in \meas(\pwanb)$ we have
             \begin{equation*}
                 (\deg R)^{-n}(R^n)^* \mu \; \xrightarrow[ \; n\to\infty  \; ]{} \; \nu_R
             \end{equation*}
             if and only if $\mu$ does not charge the exceptional set of $R$. In particular, $\nu_R$ is the unique probability measure not charging the exceptional set and satisfying $R^*\nu_R = (\deg R) \nu_R$.
        \end{thm}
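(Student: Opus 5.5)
The plan is to run the classical Brolin--Lyubich potential-theoretic argument, carried over to $\pwanb$ with the Baker--Rumely Laplacian and the Arakelov--Green's functions $\ag[\alpha]$. Write $d=\deg R$ and fix a type II base point $\alpha\in\hypb$, for instance the Gauss point. Such a point is not exceptional, since the exceptional set lies in $\PP^1(K)$.

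The first ingredient is the pullback formula $\Delta(f\circ R)=R^*\Delta f$ for $f\in\bdv(\pwanb)$, taken from \cite{MR2599526}, \S 9.4. Then:
\begin{enumerate}[(1)]
\item Choose $h\in\bdv(\pwanb)$ with $\Delta h=d^{-1}R^*\delta_\alpha-\delta_\alpha$. Since both measures are supported on finitely many points of $\hypb$, the function $h$ can be taken continuous and bounded on $\pwanb$. It is locally constant away from a finite subgraph and extends continuously to $\PP^1(K)$.
\item Set $g_n=\sum_{k=0}^{n-1}d^{-k}\,h\circ R^k$. The pullback formula and a telescoping sum give $\Delta g_n=d^{-n}(R^n)^*\delta_\alpha-\delta_\alpha$.
\item Because $\sup|h|<\infty$, the series defining $g_n$ converges uniformly to a bounded function $g_\infty$. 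By Proposition \ref{arachelloverdechiuso} and the continuity in Theorem \ref{arachelloverdesattoh}, applied to $-g_n\in\ag[\alpha]$, the limit $-g_\infty$ lies in $\ag[\alpha]$ and $d^{-n}(R^n)^*\delta_\alpha$ converges to a probability measure $\nu_R$.
\item Invariance follows from $g_\infty=h+d^{-1}g_\infty\circ R$. Applying $\Delta$ gives $R^*\nu_R=d\,\nu_R$.
\item Since $g_\infty$ is bounded, $\nu_R$ has a bounded potential. Hence it charges no point of $\PP^1(K)$, and in particular it does not charge the exceptional set.
\end{enumerate}

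For a general probability measure $\mu$, write $\mu-\delta_\alpha=\Delta u$ with $u(\xi)=\int g_\alpha(\xi,y)\,d\mu(y)$, where $g_\alpha$ is the normalized Arakelov--Green kernel. This $u$ is bounded above, finite on $\hypb$, and possibly $-\infty$ at type I points. The pullback formula gives
\[d^{-n}(R^n)^*\mu-d^{-n}(R^n)^*\delta_\alpha=\Delta\bigl(d^{-n}u\circ R^n\bigr).\]
It is enough to test against functions $\varphi\in\cpa(\Sigma)\circ r_\Sigma$ for finite subgraphs $\Sigma$, since these are dense in $C(\pwanb)$. For such $\varphi$ the measure $\Delta\varphi$ is supported on finitely many points of $\Sigma\subset\hypb$. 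The pairing $\int\varphi\,d\Delta v=\int v\,d\Delta\varphi$ then reduces the convergence for $\mu$ to the pointwise statement
\[d^{-n}\,u(R^n(\zeta))\longrightarrow 0\qquad\text{for each } \zeta\in\hypb.\]
The upper bound $\limsup\le 0$ is free, because $u$ is bounded above.

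The main obstacle is the lower bound. The orbit $R^n(\zeta)$ may approach type I points where $\mu$ concentrates, and there $u$ decays like $-\mu(\text{small disk})\cdot\varrho$. My first attempt is a quantitative estimate:
\begin{enumerate}[(1)]
\item $\varrho(\alpha,R^n\zeta)\le C\,d^n$, since each application of $R$ multiplies distances from $\alpha$ by at most $d$, up to an additive constant, by Lemma \ref{expansionbydegree}.
\item $-u(\xi)$ is at most a constant plus $\int_0^{\varrho(\alpha,\xi)}\mu\bigl(\bdisk_t(\xi)\bigr)\,dt$, where $\bdisk_t(\xi)$ denotes the disk containing $\xi$ cut off at distance $t$ along $[\alpha,\xi]$.
\end{enumerate}
Together these show that $d^{-n}u(R^n\zeta)\not\to 0$ forces $\mu$ to give mass bounded below to disks shrinking around $R^n(\zeta)$ for infinitely many $n$. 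Pulling back along the orbit and using that $(R^n)^*$ multiplies mass by $d^n$, this should force an atom of $\mu$ at a point with finite backward orbit, that is, an exceptional point. The Frobenius case, with its countable exceptional set, needs separate care. If this direct estimate is too lossy, the fallback is to prove convergence only for $\lambda$-almost every $\zeta$ on each segment. This suffices after averaging $\varphi$ over nearby graphs, or after replacing $\delta_\zeta$ by the smoothed measures $\Delta\varphi$ with $\varphi\in\cpa$.

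For the converse, suppose $\mu$ charges an exceptional point $z$. Then $R^{-n}$ permutes the finite exceptional orbit of $z$ with full local degree, so $d^{-n}(R^n)^*\mu$ keeps mass at least $\mu(\{z\})>0$ on a finite subset of $\PP^1(K)$. By step (5) above, $\nu_R$ charges no point of $\PP^1(K)$, so convergence to $\nu_R$ fails. In the countable Frobenius case I would use Theorem \ref{portmantone} on small neighbourhoods of that finite orbit instead. Finally, uniqueness: if $\nu$ is a probability measure not charging the exceptional set with $R^*\nu=d\,\nu$, then $d^{-n}(R^n)^*\nu=\nu$ for all $n$, and by the convergence already proved this constant sequence tends to $\nu_R$, so $\nu=\nu_R$.
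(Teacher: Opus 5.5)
The paper does not prove this statement; it quotes it from Favre--Rivera-Letelier. The following parts of your proposal are sound:
\begin{itemize}
\item the construction of $\nu_R$ in steps (1)--(5) and its invariance;
\item the absence of atoms in $\PP^1(K)$;
\item the converse, which works pointwise even in the Frobenius case, since each exceptional point has a finite, totally invariant backward orbit made of totally ramified points;
\item uniqueness.
\end{itemize}
There is a sign slip. With the paper's convention $\Delta=-\sum d_v$, the potential with $\Delta u=\mu-\delta_\alpha$ is $u=F^\alpha_\mu\ge 0$, equal to $+\infty$ at atoms of $\mu$. So $\liminf_n d^{-n}u(R^n\zeta)\ge 0$ is the free inequality, and the hard one is $\limsup_n d^{-n}u(R^n\zeta)\le 0$.

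\textbf{The gap.} The genuine gap is exactly this hard inequality, which is the real content of the theorem. Your estimates can be made precise using the fact that two disks not containing $\alpha$ are nested or disjoint. Then disks of $\mu$-mass $\ge c$ at depths tending to infinity contain a nested chain shrinking to a type I atom of mass $\ge c$, and you get
\[
\limsup_{n\to\infty} d^{-n}\,u(R^n\zeta)\;\le\; C_\zeta\,\sup_{a\in\PP^1(K)}\mu(\{a\}),\qquad C_\zeta:=\sup_{n} d^{-n}\varrho(\alpha,R^n\zeta)<\infty .
\]
This handles measures with no atoms in $\PP^1(K)$. For $\mu=\delta_a$ with $a$ non-exceptional, which is the basic case of preimages of a point, it is vacuous. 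Nothing in ``pulling back along the orbit'' shows that an atom $a$ must have finite backward orbit when $[\alpha,R^n\zeta]\cap[\alpha,a]$ has length $\ge c\,d^n$ for infinitely many $n$. The $\lambda$-a.e.\ fallback does not help, because the obstruction is the atom, not the choice of $\zeta$.

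\textbf{What is missing.} Two ideas are needed.
\begin{enumerate}
\item A ramification count. Unless $R$ is conjugate to an iterate of Frobenius, at most two type I points $z$ satisfy $\deg_R z=d$ (Riemann--Hurwitz for the separable part of $R$). Suppose a chain $y,R(y),\dots,R^k(y)=a$ met this set at three consecutive steps. Two of those points would coincide, giving a cycle of totally ramified points. Such a cycle is totally invariant, which would make $a$ exceptional. Hence $d^{-k}\deg_{R^k}(y)\le (1-1/d)^{\lfloor k/3\rfloor}$ for all $y\in R^{-k}(a)$, uniformly over non-exceptional $a$.
\item A semigroup trick. Apply your bound to $\mu_k=d^{-k}(R^k)^*\mu$ rather than to $\mu$. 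If $\mu$ does not charge the exceptional set, every atom of $\mu_k$ in $\PP^1(K)$ has mass at most $(1-1/d)^{\lfloor k/3\rfloor}$. Since $d^{-n}(R^n)^*\mu_k=d^{-(n+k)}(R^{n+k})^*\mu$, the limsup of the error against each test function is $O\bigl((1-1/d)^{\lfloor k/3\rfloor}\bigr)$ for every $k$, hence zero.
\end{enumerate}
In the Frobenius case every type I point is totally ramified, so this trick gives nothing. There $\nu_R$ is a Dirac mass at a type II point, and convergence of $R^{-n}(a)$ for $a$ with infinite backward orbit must be checked directly. Your proposal also leaves this case open.
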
 
        We define the \textbf{Julia set} of a rational function $R \in K(z)$ with $\deg R \geq 2$ as the set $\julia_R := \supp \nu_R$. Since $\nu_R$ is a Radon measure, the Julia set is closed. Because $R^*\nu_R = (\deg R) \nu_R$, the Julia set is strongly invariant, i.e., $R^{-1}\julia_R = \julia_R$. The complement of the Julia set is called the \textbf{Fatou set} and we will denote it by $\fatou_R$. We refer to its connected components simply as \textbf{Fatou components}.

        \begin{prop}[\cite{MR3890051}, Theorem 8.7] \label[prop]{chistadove}
            Let $R\in K(z)$ have $\deg R\geq 2$ and let $\zeta \in \pwanb$ be fixed. Then
            \begin{enumerate}[(a)]
                \item if $\zeta$ is type III or type IV, then $\zeta \in \fatou_R$;
                \item if $\zeta$ is repelling, then $\zeta\in \julia_R$;
                \item if $\zeta$ is type II and indifferent, then
                \begin{equation*}
                    \zeta \in \julia_R \quad \iff \quad \left ( \exists \ v \in T_\zeta\pwanb \quad  : \quad \left ( R(\bdisk(v)) = \pwanb \right ) \ \land  \ \left ( R^n(v) \neq v, \ \forall \ n \geq 1 \right )\right )
                \end{equation*}
            \end{enumerate}
        \end{prop}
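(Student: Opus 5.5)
The plan is to prove all three parts from one measure-theoretic criterion, using only the definition $\julia_R=\supp\nu_R$, strong invariance $R^{-1}\julia_R=\julia_R$, and \cref{brolin}.

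\emph{Tool.} Suppose $U\subset\pwanb$ is open, $U\neq\pwanb$, and $R(U)\subset U$. Pick $y\notin U$, with $y$ of type II and hence not exceptional. Since $R^n(U)\subset U$, no preimage of $y$ under $R^n$ lies in $U$, so $(R^n)^*\delta_y(U)=0$ for all $n$. By \cref{brolin} and lower semicontinuity of open sets under weak convergence, $\nu_R(U)\le\liminf_n(\deg R)^{-n}(R^n)^*\delta_y(U)=0$, and therefore $U\subset\fatou_R$. The same holds with $R$ replaced by an iterate, because $\nu_{R^k}=\nu_R$ (indeed $(R^k)^*\nu_R=(\deg R)^k\nu_R$, so uniqueness in \cref{brolin} applies). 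Conversely, to place a point in $\julia_R$ it suffices to show that each of its neighbourhoods meets $\julia_R$. By strong invariance, this reduces to finding in each neighbourhood a point whose forward image lies in $\julia_R$.

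\textbf{(a)} Let $\zeta$ be of type III or IV, so $\deg_R\zeta=1$ by \cref{classifidinamica}, and $T_\zeta\pwanb$ has one or two directions (\cref{classifitangent}). Using \cref{expansionbydegree} and \cref{degreessumup}(c), I take short segments $[\zeta,\xi_i]$ into these directions on which $R$ is an isometry with degree $1$. The tangent map permutes the at most two directions, so $R^2$ fixes both. Shrinking $\xi_i$, $R^2$ maps $[\zeta,\xi_i]$ isometrically into the same direction. Hence $R^2$ restricts to the identity there, since it is a degree-one isometry fixing $\zeta$ and preserving the direction. Let $U=\aff{\xi_1,\xi_2}$, or the disk $\bdisk(\dir{\xi_1\zeta})$ in the type IV case. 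I also need that no disk of the decomposition \cref{affinoidesmontato} of $U$ is mapped onto $\pwanb$. I will get this by choosing $\xi_i$ close enough that $U$ contains no preimage under $R^2$ of a fixed point $y$ outside $\overline U$, using \cref{imgofdisk} and \cref{preimageofsegment}. Then $R^2(U)\subset U$, and the Tool gives $\zeta\in\fatou_R$.

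\textbf{(c)} Let $\zeta$ be of type II and indifferent, so $\deg_R\zeta=1$. By \cref{degreessumup}(a) the tangent map $T_\zeta\pwanb\to T_\zeta\pwanb$ is a bijection. Only finitely many directions $v$ satisfy $R(\bdisk(v))=\pwanb$, namely those whose disks contain poles of $R$ after normalising. Call these directions bad.

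For ($\Leftarrow$), let $v$ be bad and non-periodic, and set $u_n=R^{-n}(v)$. These directions are pairwise distinct. Moreover $R^{n+1}(\bdisk(u_n))=\pwanb$, because $R^n(\bdisk(u_n))\supset\bdisk(v)$ by \cref{imgofdisk}. Hence $\bdisk(u_n)$ contains a point of $R^{-n-1}\julia_R=\julia_R$. Every neighbourhood of $\zeta$ contains $\bdisk(u)$ for all but finitely many $u$, so $\zeta\in\overline{\julia_R}=\julia_R$.

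For ($\Rightarrow$), suppose every bad direction is periodic. Let $S$ be the finite union of their cycles; $S$ is invariant under the bijection. Set $U=\pwanb\smallsetminus\bigcup_{u\in S}\overline{\bdisk(u)}\cup\{\zeta\}$, an open affinoid containing $\zeta$. For $u\notin S$, the direction $u$ is good, so $R(\bdisk(u))=\bdisk(R(u))$ with $R(u)\notin S$. Hence $R(U)\subset U$, and the Tool gives $\zeta\in\fatou_R$.

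\textbf{(b)} For type I repelling $\zeta$, the plan is to use the local power series at $\zeta$ (which we may take to be $0$ after a change of coordinates). The disks $\cld(\zeta,r)$ are mapped to $\cld(\zeta,|\lambda|r)$ while $R$ is injective on them. Iterating, some $R^n(\cld(\zeta,r))$ eventually stops being a disk of that form. At that stage the image is $\pwanb$ or a disk containing a point of $\julia_R$; the latter is secured by taking $r$ small, so that the images leave every Fatou disk around $\zeta$. Then strong invariance puts a Julia point in every small disk around $\zeta$.

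For type II with $\deg_R\zeta\ge2$, I would argue by contradiction. Suppose $\zeta\in\fatou_R$ and let $\Omega$ be its component, so $R(\Omega)=\Omega$ and $\nu_R(\Omega)=0$. The tangent map at $\zeta$ then acts like a rational map of degree $\deg_R\zeta\ge2$ on the residue line, and I want to exhibit a direction $u$ whose forward orbit hits a bad direction. Such a $u$ would make $\bdisk(u)$ meet $\julia_R$, and $\bdisk(u)$ lies in $\Omega$ up to finitely many exceptions, which is a contradiction. This last step is the main obstacle. If the bad directions are all periodic, then infinitely many backward preimages of them, available because the residue map has degree $\ge2$, supply the needed $u$. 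If there are no bad directions at all, I would instead compare $\nu_R(\overline\Omega)$ with $(\deg R)^{-1}R^*\nu_R$ near $\zeta$ to force a contradiction.
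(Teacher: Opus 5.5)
\paragraph{Verdict.} There is a genuine gap. The paper gives no proof of this statement; it quotes Theorem 8.7 of Benedetto's book, where the Julia set is defined dynamically. Your derivation from $\julia_R=\supp\nu_R$, strong invariance and \cref{brolin} is therefore a different, self-contained route. Several pieces are sound: the Tool, part (a) as sketched, and your proof of ($\Leftarrow$) in (c). For (a), note that $\deg_{R^2}=1$ along the short fixed segments by \cref{degreessumup}(c), so disks branching off the segment go to such disks once $U$ avoids the finite set $R^{-2}(y)$.

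\paragraph{Part (c), direction ($\Rightarrow$).} Here $\{\zeta\}\cup\bigcup_{u\notin S}\bdisk(u)=\pwanb\smallsetminus\bigcup_{u\in S}\bdisk(u)$ is closed, not open, because every neighbourhood of $\zeta$ meets each $\bdisk(u)$ with $u\in S$. Moreover $S\neq\varnothing$: otherwise $R^{-1}(\zeta)=\{\zeta\}$, forcing $\deg_R\zeta=\deg R\ge 2$. So the Tool only gives $\nu_R(\bdisk(u))=0$ for $u\notin S$, which says nothing about $\zeta$. What is missing is control of thin annuli in the periodic bad directions.
\begin{itemize}
\item If $u\in S$ has period $p$, it is a fixed direction of $R^p$ of degree one at a point of $\hypb$.
\item It is therefore non-repelling by \cref{indifferentdirections}(c), and not attracting by the first step of the proof of \cref{taledirezionetalepunto}(a). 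Hence it is indifferent, and $R^p$ is the identity on some $[\zeta,\xi_u]$.
\item Take $\xi_{R(u)}=R(\xi_u)$ along each cycle, with the $\xi_u$ so close to $\zeta$ that no annulus $\bannulus(\zeta,\xi_u)$ contains an $R$-preimage of any $\xi_w$. This is possible by \cref{preimageofsegment}, since $R^{-1}(w)$ is the only direction at $\zeta$ mapped to $w$. The disks $\bdisk(u')$ with $u'\notin S$ are harmless, since they map into $\bdisk(R(u'))$ with $R(u')\notin S$.
\item Then the component $U$ of $\pwanb\smallsetminus\{\xi_u : u\in S\}$ containing $\zeta$ satisfies $R(U)\subset U$ by connectedness, and only then does your Tool apply.
\end{itemize}

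\paragraph{Part (b), type I.} Part (b) carries the real content and is not proved. Shrinking $r$ does not make the images $R^n(\cld(\zeta,r))$ leave the Fatou set: past the linear regime they could a priori grow to an invariant disk about $\zeta$ inside $\fatou_R$. What rules this out is the non-archimedean Schwarz lemma. Let $B$ be the maximal open disk about $\zeta=0$ contained in $\fatou_R$. Strong invariance forces $R(B)\subset B$, since a larger image disk would meet $\julia_R$. A power series mapping an open disk about $0$ into itself has $|c_1|\le 1$, contradicting $|\lambda|>1$. Your sketch never uses such an input.

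\paragraph{Part (b), type II with $\deg_R\zeta\geq 2$.} Your plan stops at the key step, and the proposed mechanism fails. Take $R(z)=z^2+cz^3$ with $0<|c|<1$ and $\zeta=\zeta(0,1)$.
\begin{itemize}
\item Then $\deg_R\zeta=2$.
\item For $|a|\le1$ the disks $\bdisk(\dir{\zeta a})$ contain no pole and are mapped onto $\bdisk(\dir{\zeta a^2})$.
\item So the only bad direction is $\dir{\zeta\infty}$, and it is its own unique preimage direction by \cref{polifissanoinfinititangenti}.
\end{itemize}
Hence no other direction is ever mapped to a bad one. In fact $\bigcup_{|a|\le 1}\bdisk(\dir{\zeta a})$ is forward invariant and lies in $\fatou_R$ by your own Tool, so $\julia_R\smallsetminus\{\zeta\}\subset\bdisk(\dir{\zeta\infty})$. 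The same obstruction occurs for purely inseparable reductions, where the backward orbit of a periodic direction is just its cycle.

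Your argument does cover the case where some bad direction has an infinite backward orbit. It does not cover the remaining case: a bad direction $v$ of period $p$ with finite backward orbit. Then every direction in that orbit has degree $\deg_R\zeta$, and $v$ is a repelling fixed direction of $R^p$. You need to show that $\julia_R$ accumulates at $\zeta$ inside $\bdisk(v)$, and the proposal contains nothing of this kind. The case with no bad directions is easy by contrast: then $R^{-1}(\zeta)=\{\zeta\}$, so $R^*\delta_\zeta=(\deg R)\,\delta_\zeta$ and $\nu_R=\delta_\zeta$ by the uniqueness in \cref{brolin}.
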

        Let $R \in K(z)$ and let $R(z) = P(z)/Q(z)$, for $P,Q \in \anellointeri[z]$ polynomials, with at least one of them monic. (Notice that such a choice of $P,Q$ is always possible). We say $R$ has \textbf{good reduction} if $\deg (\overline{P}/\overline{Q}) = \deg R$, where $\overline{P}, \overline{Q}$ are the images of $P,Q$ under the map $\anellointeri[z] \twoheadrightarrow k[z]$ induced by the quotient map $\anellointeri \twoheadrightarrow \anellointeri/\massimale = k$. This notion is independent of our choice of $P,Q$. We say $R$ has \textbf{potential good reduction} if it is conjugate to a rational function with good reduction. 
        \begin{thm}[\cite{MR2578470}, Th\'eor\`eme E] \label{reduciergodo}
            Let $R \in K(z)$ be a rational function with $\deg R \geq 2$. Then the following are equivalent.
            \begin{enumerate}[(a)]
                \item $R$ has potential good reduction.
                \item The measure $\nu_R$ charges a point.
                \item The measure $\nu_R$ equals $\delta_\zeta$ for some type II point $\zeta \in \pwanb$.
            \end{enumerate}
        \end{thm}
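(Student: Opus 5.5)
The plan is to prove the cycle of implications (a)$\Rightarrow$(c)$\Rightarrow$(b)$\Rightarrow$(a). The two nontrivial implications rest on one fact: \emph{a type II point $\zeta_0$ is totally invariant, i.e. $R^{-1}(\zeta_0)=\{\zeta_0\}$, if and only if $R$ has good reduction in a coordinate in which $\zeta_0$ is the Gauss point $\zeta_G=\zeta(0,1)$.} The implication (c)$\Rightarrow$(b) is trivial.

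\textbf{(a)$\Rightarrow$(c).} Since $\PGL(2,K)$ acts by homeomorphisms preserving types and intertwines pullbacks, I will conjugate so that $R=P/Q$ has good reduction. Then $R(\zeta_G)=\zeta_G$, and the induced map on $T_{\zeta_G}\pwanb\cong\PP^1(k)$ is the reduction $\overline{P}/\overline{Q}$. Since this reduction has degree $\deg R$, I get $\deg_R\zeta_G=\deg R$ by \cref{degreessumup}(a). Then \cref{degreessumup}(b) forces $R^{-1}(\zeta_G)=\{\zeta_G\}$, so $R^*\delta_{\zeta_G}=(\deg R)\delta_{\zeta_G}$. Because $\delta_{\zeta_G}$ does not charge the exceptional set (which lies in $\PP^1(K)$), the uniqueness clause of \cref{brolin} gives $\nu_R=\delta_{\zeta_G}$. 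Undoing the conjugation gives a type II point.

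\textbf{(b)$\Rightarrow$(a).} Write $d=\deg R$ and $\nu=\nu_R$. Evaluating $R^*\nu=d\nu$ on a singleton $\{\xi\}$ gives
\[
\nu(\{\xi\})=\frac{\deg_R\xi}{d}\,\nu(\{R(\xi)\})\le \nu(\{R(\xi)\}).
\]
So if $\nu(\{\zeta\})=c>0$, the forward orbit of $\zeta$ consists of atoms of mass $\ge c$, of which there are at most $1/c$. Hence the orbit is preperiodic and ends in a cycle $\zeta_0,\dots,\zeta_{p-1}$. Along the cycle the displayed inequality must be an equality, so $\deg_R\zeta_i=d$. By \cref{degreessumup}(b), $\zeta_i$ is then the only preimage of $\zeta_{i+1}$, so the backward orbit of the cycle is the cycle itself.

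Next I identify the type of the cycle points. They cannot be type I: they would then lie in the exceptional set, which $\nu$ does not charge. They cannot be type III or IV either: $\zeta_0$ is fixed by $R^p$ with $\deg_{R^p}\zeta_0=d^p\ge 2$, which contradicts \cref{classifidinamica}(c) applied to $R^p$. So $\zeta_0$ is type II and $R^{-p}(\zeta_0)=\{\zeta_0\}$. Then $(R^p)^*\delta_{\zeta_0}=d^p\delta_{\zeta_0}$, and since $\nu_{R^p}=\nu_R$ (both are characterized by \cref{brolin}), I obtain $\nu=\delta_{\zeta_0}$. Pushing $R^*\nu=d\nu$ forward gives $R_*\nu=\nu$, so $R(\zeta_0)=\zeta_0$ and $p=1$: $\zeta_0$ is totally invariant for $R$ itself.

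\textbf{Main obstacle.} It remains to show that total invariance of a type II point gives good reduction. I will conjugate $\zeta_0$ to $\zeta_G$ and normalize $R=P/Q$ as in the definition, with $\overline{R}=\overline{P}/\overline{Q}$. After cancelling the common factor, $\overline{R}$ describes the tangent map at $\zeta_G$, and $\deg\overline{R}=\deg_R\zeta_G$. I expect this to be the hardest step; it is standard in Baker--Rumely and Benedetto, but it needs care precisely when $\overline{P}$ and $\overline{Q}$ share a common factor. Granting it, $\deg_R\zeta_G=d$ gives $\deg(\overline{P}/\overline{Q})=d$, i.e. good reduction. The same identity is what justifies the step $\deg_R\zeta_G=\deg R$ in (a)$\Rightarrow$(c).
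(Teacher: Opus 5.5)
The paper gives no proof of this statement. It is imported from Favre--Rivera-Letelier and used as a black box, so there is no internal argument to compare yours with. Your proof is correct and follows the standard route:
\begin{itemize}
\item atoms of $\nu_R$ can only grow along forward orbits, $\nu_R(\{\xi\})\le\nu_R(\{R(\xi)\})$, so they pile up on a finite cycle of full local degree;
\item such a cycle is totally invariant;
\item the exceptional set rules out type I, and \cref{classifidinamica}(c) applied to $R^p$ rules out types III and IV;
\item total invariance of a type II point means good reduction in a coordinate where that point is $\zeta(0,1)$.
\end{itemize}

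A few steps need a line more than you give them.
\begin{itemize}
\item The identity $R^*\nu(\{\xi\})=(\deg_R\xi)\,\nu(\{R(\xi)\})$ for an arbitrary Radon measure is not stated in the paper, which writes $R^*$ only on Dirac masses. The half you actually use is the inequality $\le$: it drives both the forward monotonicity and the conclusion $\deg_R\zeta_i=d$ along the cycle. This half is not mere positivity of $R^*$. To get it, test $R^*\nu$ against continuous $0\le f\le\uuno_U$ with $f(\xi)=1$, where $U$ is a neighbourhood of $\xi$ small enough that $\sum_{R(x)=y}(\deg_R x)f(x)\le(\deg_R\xi)\,\uuno_{R(U)}(y)$. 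Then shrink $U$, using continuity of $R$ and outer regularity of $\nu$.
\item You do not need multiplicativity of local degrees to get $\deg_{R^p}\zeta_0=d^p$. It follows at once from $R^{-p}(\zeta_0)=\{\zeta_0\}$ and \cref{degreessumup}(b) applied to $R^p$.
\item The conclusion $p=1$ is immediate once $\nu_R=\delta_{\zeta_0}$, since $\nu_R$ charges every $\zeta_i$.
\item The step you flag as the main obstacle is not a gap. It is the fact that, when $R$ fixes $\zeta(0,1)$, the tangent map there is the reduction $\overline{P}/\overline{Q}$ after cancellation, with directional degrees equal to its multiplicities, so $\deg_R\zeta(0,1)=\deg(\overline{P}/\overline{Q})$. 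This is textbook material in Baker--Rumely and in Benedetto. You could equally cite it directly in the form both of your nontrivial implications need: $R$ has good reduction if and only if $R^{-1}(\zeta(0,1))=\{\zeta(0,1)\}$.
\end{itemize}
That fact is the only ingredient not already among the tools collected in the paper.
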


\section{Non-attracting subtrees and their degrees} \label{nonattcptsec}
    \subsection{Definitions and basic properties}\label{defefattibasesuisottalberi}
	    Let $R \in K(z)$ be a nonconstant rational function. We say a point $\zeta \in \pwanb$ is a \textbf{non-attracting fixed point} if it is fixed by $R$ and it is not attracting.  We define the  \textbf{non-attracting fixed locus} as 
	    \[\nafl(R) := \Set{ \zeta \in \pwanb \ | \ \zeta \text{ is a non-attracting fixed point}}.\]
	    \begin{rmk} \label[rmk]{gradiinnafl}
	        Let $\zeta\in \nafl(R)$. It follows from \cref{classifidinamica} that:
	        \begin{enumerate}[(a)]
	            \item  if $\zeta$ is not repelling, then $\deg_R\zeta = 1$;
	            \item $\deg_R \zeta \geq 2$ if and only if $\zeta$ is a type II repelling fixed point.
	        \end{enumerate}
	    \end{rmk}
	
	    \begin{lemma} \label[lemma]{naflnonvuoto}
	            The non-attracting fixed locus $\nafl(R)$ is nonempty.
	    \end{lemma}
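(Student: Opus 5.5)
The approach is to split according to whether $R$ has a non-attracting type I fixed point, and otherwise to produce a fixed point in $\hypb$ by a fixed point argument on a compact subtree. A fixed point in $\hypb$ is automatically non-attracting by \cref{classifidinamica}(a).

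\emph{Reduction to simple attracting fixed points.} The fixed points of $R$ in $\PP^1(K)$ are the solutions of $R(z)=z$, or all of $\PP^1(K)$ if $R=\id$. In the latter case $\nafl(R)=\pwanb$ and we are done. Otherwise there are $\deg R+1$ solutions counted with multiplicity. If some type I fixed point is non-attracting, we are done. A fixed point of multiplicity at least $2$ as a solution of $R(z)=z$ has multiplier $1$, hence is indifferent. So we may assume that all type I fixed points are simple and attracting, say $a_1,\dots,a_{N}$ with $N=\deg R+1\geq 2$.

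\emph{Construction of a trapping subtree.} By \cref{taledirezionetalepunto}(a), the unique direction at each $a_i$ is an attracting fixed direction. So we can choose $\zeta_i\in\hypb$ close to $a_i$ with $R([a_i,\zeta_i])\subset[a_i,\zeta_i)$, taking the $\zeta_i$ so close that the closed disks $\overline{\bdisk(\dir{\zeta_i a_i})}$ are pairwise disjoint. Let $\Gamma=\pwanb\smallsetminus\bigcup_i\bdisk(\dir{\zeta_i a_i})$. It is closed, connected and compact (a dendrite), it contains no type I fixed point, and it contains points of $\hypb$. The plan is to produce a fixed point of $R$ inside $\Gamma$; such a point cannot be of type I, so it lies in $\hypb$ and is non-attracting.

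\emph{The fixed point argument, and the main obstacle.} The natural map is $h=r_\Gamma\circ R:\Gamma\to\Gamma$. This is a continuous self-map of a dendrite, so it has a fixed point $\xi$. If $R(\xi)\in\Gamma$, then $R(\xi)=\xi$ and we are done. The difficulty is the spurious case $R(\xi)\in\bdisk(\dir{\zeta_i a_i})$ with $\xi=\zeta_i$. This case really occurs, since $R(\zeta_i)\in[a_i,\zeta_i)$.

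To remove it, I would use that the boundary points of $\Gamma$ are pushed \emph{outward}, and apply a one-dimensional intermediate value argument along segments. Start with the segment $[a_1,a_2]$ and the map $g=r_{[a_1,a_2]}\circ R$. With the order $a_1<a_2$, we have $g(\xi)<\xi$ near $a_1$ and $g(\xi)>\xi$ near $a_2$. Let $\xi_0$ be the supremum of the points with $g(\xi)<\xi$; it satisfies $g(\xi_0)=\xi_0$ and lies in the open segment.
\begin{itemize}
\item If $R(\xi_0)=\xi_0$, we are done.
\item Otherwise $R(\xi_0)$ lies in a direction $v$ at $\xi_0$ off the segment. In that case I would use \cref{expansionbydegree} and \cref{imgofdisk} to follow the path from $\xi_0$ into $v$. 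I would replace $[a_1,a_2]$ by a segment from $a_1$, or from $a_2$, through $\xi_0$ into $\bdisk(v)$, ending at a point where the outward-pushing condition still holds. This would come either from another attracting $a_j$ in that disk, or from $R(\bdisk(v))=\pwanb$, which forces preimages of points of the segment inside the disk.
\end{itemize}
Making this iteration terminate is the main obstacle. I would try to organize it as a descending chain of nested closed subtrees on which $R$ satisfies the outward boundary condition, and then conclude using compactness of $\pwanb$ together with finiteness of the set of type I fixed points.
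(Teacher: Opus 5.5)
\textbf{There is a genuine gap: the proposal stops exactly where the content of the lemma lies.}

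Your reduction is fine: you may assume every type I fixed point is attracting and that there are at least two of them, $a_1\neq a_2$. You also correctly see that $r_\Gamma\circ R$ is useless, since every $\zeta_i$ is one of its fixed points.

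The trouble starts once you have $\xi_0\in(a_1,a_2)$ with $g(\xi_0)=\xi_0$ and $R(\xi_0)\in\bdisk(v)$ for a direction $v$ off the segment. You give no argument that produces a genuine fixed point, and the proposed continuation is not justified:
\begin{itemize}
\item \cref{expansionbydegree} and \cref{imgofdisk} say nothing about fixed points in $\bdisk(v)$.
\item The alternative $R(\bdisk(v))=\pwanb$ only yields preimages, not an endpoint at which $R$ pushes outward.
\item Because $\xi_0$ is taken as a supremum, a new segment through $\xi_0$ gives no control over where the next crossing lies, so it can again be spurious.
\end{itemize}
``Compactness plus finiteness of type I fixed points'' is not a termination argument. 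Such a walk can pass through infinitely many type II branch points without ever reaching a new $a_j$.

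\textbf{How to close it.} One step suffices if you choose $\xi_0$ differently and apply your fixed point principle to a disk rather than to $\Gamma$.
\begin{itemize}
\item Let $\xi_0$ be the \emph{infimum}, for the order with $a_1$ minimal, of $\{\xi\in(a_1,a_2] : g(\xi)\geq\xi\}$. Then $g(\xi_0)=\xi_0$, and $R(\xi)\in\bdisk(\dir{\xi a_1})$ for every $\xi\in(a_1,\xi_0)$.
\item If $R(\xi_0)\neq\xi_0$, then $\xi_0$ is of type II, because a type III point has only the two directions along the segment. So $R(\xi_0)\in\bdisk(w)$ for some $w\notin\{\dir{\xi_0a_1},\dir{\xi_0a_2}\}$.
\item The map $r_{\overline{\bdisk(w)}}\circ R$ has a fixed point $y$ on the closed disk $\overline{\bdisk(w)}$, by the same dendrite fixed point property you already use. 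This time $y$ cannot be spurious: a spurious fixed point would have to be the boundary point $\xi_0$, but $\xi_0$ is moved into $\bdisk(w)$. Hence $y\in\bdisk(w)$ and $R(y)=y$.
\item Suppose $y$ were some type I point $a_j$. Then $\xi_0\in(a_1,a_j)$, and the continuous map $r_{[a_1,a_j]}\circ R$ would move every $\xi\in(a_1,\xi_0)$ strictly toward $a_1$ while moving $\xi_0$ strictly toward $a_j$. That is impossible. So $y\in\hypb$, and $y$ is non-attracting by \cref{classifidinamica}(a).
\end{itemize}

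Completed this way, your route is self-contained and uniform in $\deg R$; it only uses the fixed point property of closed connected subsets of $\pwanb$. The paper instead cites the existence of a repelling fixed point when $\deg R\geq 2$ (\cite{MR3890051}, Theorem 12.5). It handles $\deg R=1$ through the normal forms of M\"obius transformations: one fixed point has multiplier $1$, and two fixed points have multipliers $\lambda,\lambda^{-1}$. That is shorter and gives more than the lemma needs.
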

	    \begin{proof}
	        If $\deg R \geq 2$, then $R$ has a repelling fixed point by \cite{MR3890051} Theorem 12.5. For the case $\deg R = 1$, we will use the discussion in \cite{MR3890051}, \S 1.5. If $R$ only has one fixed point in $\PP^1(K)$, then that point has multiplier $1$, and is therefore non-attracting. If $R$ has two distinct fixed points in $\PP^1(K)$, then $R$ is conjugate to a linear transformation $\eta\in \PGL(2,K)$ of the form $\eta(z) = \lambda z $ for some $\lambda \in K^\times$. Such an $\eta$ has fixed points $0,\infty$, with multipliers $\lambda,\lambda^{-1}$, respectively. So $0$ and $\infty$ cannot both be attracting.
	    \end{proof}
	    We will refer to a connected component of the non-attracting fixed locus as a \textbf{maximal non-attracting fixed subtree}, or just \textbf{non-attracting subtree} for short, and we will denote the set of such components by $\nas(R) := \pi_0(\nafl(R))$. We say that $\Gamma\in \nas(R)$ is a (\textbf{maximal}) \textbf{repelling} (\textbf{fixed}) \textbf{subtree} if it contains a fixed repelling point; otherwise we say $\Gamma$ is an (\textbf{maximal}) \textbf{indifferent} (\textbf{fixed}) \textbf{subtree}.
	    \begin{lemma}\label[lemma]{gammachiuso}
	        Non-attracting subtrees are closed.
	    \end{lemma}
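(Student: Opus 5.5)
The plan is to reduce the statement to showing that $\nafl(R)$ itself is closed in $\pwanb$. Once that is done, each $\Gamma\in\nas(R)$ is a connected component of a closed set, hence closed: the closure of a connected set is connected, so $\overline{\Gamma}\subset\overline{\nafl(R)}=\nafl(R)$ is a connected subset of $\nafl(R)$ containing $\Gamma$, and maximality gives $\overline{\Gamma}=\Gamma$.

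To show $\nafl(R)$ is closed, I would write
\[
\nafl(R)=\fix(R)\smallsetminus\fix_{<1}(R).
\]
The fixed locus $\fix(R)$ is closed, since it is the preimage of the diagonal under $\zeta\mapsto(\zeta,R(\zeta))$, $R$ is continuous and $\pwanb$ is Hausdorff. It then suffices to show that every attracting fixed point $z_0$ has an open neighbourhood $U$ containing no non-attracting fixed point. Then $\pwanb\smallsetminus\nafl(R)$ is the union of the open set $\pwanb\smallsetminus\fix(R)$ with these neighbourhoods, hence open.

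The key step is the construction of $U$. By \cref{classifidinamica}(a), $z_0$ is of type I, and after conjugation we may assume $z_0=0$. As in the proof of \cref{taledirezionetalepunto}(a), there is $r>0$ with $R(\zeta(0,s))=\zeta(0,|c_k|s^k)<\zeta(0,s)$ for all $0<s\le r$, where either $k\ge2$ or $k=1$ and $|c_1|<1$. I would show that, after shrinking $r$, $R$ maps the closed disk $\cld(0,s)$ into $\cld(0,|c_k|s^k)\subsetneq \cld(0,s)$ for every $s\le r$. This holds on $K$-points by the power series estimate $|R(z)|\le \sup_n|c_n||z|^n=|c_k||z|^k$, and it transfers to the Berkovich disk $\overline{\bdisk}$ by density and continuity, or equivalently via \cref{imgofdisk}.

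Now take $U$ to be the open Berkovich disk $\bdisk(v)$, where $v$ is the direction at $\zeta(0,r)$ pointing towards $0$. Any point $\xi\in U$ other than $0$ lies in some Berkovich closed disk of radius $s<r$ but not in the smaller one of radius $|c_k|s^k$; taking $s$ to be the diameter of $\xi$ shows $R(\xi)\ne\xi$. For type IV points, whose diameter is not attained by a disk centred at $0$, I would use the nested family of disks $\overline{\bdisk}(0,s)$ together with the strict contraction of diameters. So $U\cap\fix(R)=\{0\}$, and in particular $U\cap\nafl(R)=\varnothing$.

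I expect the main obstacle to be making the contraction statement rigorous on all of $\bdisk(v)$, not just on the segment $[0,\zeta(0,r)]$, especially for type IV points and for points not comparable with $0$ in the partial order. A convenient formulation is the following: if $\xi\in\bdisk(v)$ with $r_{[0,\zeta(0,r)]}(\xi)=\zeta(0,s)$, then $R(\xi)$ lies in the closed disk below $\zeta(0,|c_k|s^k)$. If $s>0$, this point differs from $\zeta(0,s)$, and the retraction of $\xi$ shows that $\xi$ lies outside that smaller disk, so $R(\xi)\ne\xi$. If $s=0$, then $\xi=0$. This uses only \cref{imgofdisk} and the image formula for $\zeta(a,r)$ from Section \ref{sezioneazione}.
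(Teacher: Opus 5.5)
Your proof is correct, but it takes a different route from the paper.

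\textbf{The paper's route.} The paper argues locally at a limit point. For $\zeta\in\overline{\Gamma}$, $\zeta$ is fixed by continuity. If $\Gamma$ has a second point $\xi$, then \cref{connessocontiene} gives $(\zeta,\xi)\subset\Gamma$, so $R$ is the identity on $(\zeta,\xi)$ and $\dir{\zeta\xi}$ is a non-attracting fixed direction. By \cref{taledirezionetalepunto}(a) and \cref{classifidinamica}(a), $\zeta$ cannot be attracting: an attracting fixed point is of type I, and its only direction would have to be attracting. Hence $\zeta\in\nafl(R)$, and $\zeta\in\Gamma$ by maximality. This takes a few lines, reuses the direction dichotomies already established, and needs no power-series estimates.

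\textbf{Your route.} You prove the stronger statement that $\nafl(R)$ itself is closed. You do this by showing that every attracting fixed point is isolated in $\fix(R)$, via a contraction estimate on the whole Berkovich disk $\{\xi : |z|_\xi<r\}$. This costs more work. You need to transfer the estimate from $K$-points to Berkovich points by density and continuity, and you need the retraction argument for points off the segment $[0,\zeta(0,r)]$.

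\textbf{What your route buys.} It rules out an attracting point being a limit of infinitely many distinct (e.g.\ singleton) components, which the paper's component-by-component argument does not address. In the paper, closedness of $\nafl(R)$ would only follow afterwards, from the finiteness of $\nas(R)$ given by \cref{tausuigamma}. That proof itself relies on the retractions $r_\Gamma$, and hence on this lemma.

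\textbf{One small imprecision.} In your third paragraph, ``the diameter of $\xi$'' is not the right radius when $0$ does not lie below $\xi$, for example $\xi=\zeta(a,t)$ with $t<|a|$. The correct choice is $s=|z|_\xi$, the radius of $\xi\vee 0=r_{[0,\zeta(0,r)]}(\xi)$. This is exactly what your final paragraph uses, and with that choice the type IV case needs no separate treatment.
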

	    \begin{proof}
	        Let $\zeta \in \overline{\Gamma}$. By continuity, $R(\zeta) = \zeta$ is fixed. We then only need to prove that $\zeta$ is non-attracting.
	         We can reduce to the case where $\Gamma$ has more than one point. We then have a $\xi \in \Gamma \smallsetminus \{\zeta\}$, and by \cref{connessocontiene} we have $(\zeta,\xi) \subset \Gamma$, so $R|_{(\zeta,\xi)} = \id_{(\zeta,\xi)}$, and therefore $\dir{\zeta\xi} \in T_\zeta\pwanb$ is a non-attracting fixed direction. It then follows from \cref{taledirezionetalepunto} (a) and \cref{classifidinamica} (a) that $\zeta$ cannot be attracting.
	    \end{proof}
	    \begin{lemma}\label[lemma]{indifferelligammelli}
	        Let $\zeta\in \Gamma \in \nas(R)$ and $v \in T_\zeta\pwanb$. Then $v$ is an indifferent fixed direction if and only if $v \in T_\zeta\Gamma$.
	    \end{lemma}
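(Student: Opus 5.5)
We prove the two implications separately. In both, the key input is \cref{indifferentdirections}(a): $v$ is an indifferent fixed direction if and only if $R$ restricts to the identity on some segment stemming from $\zeta$ into $v$.

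\emph{($\Leftarrow$)} Suppose $v\in T_\zeta\Gamma$.
By definition there is a point $\xi\in \Gamma\cap\bdisk(v)$, so $\xi\neq\zeta$. Since $\Gamma$ is connected, \cref{connessocontiene} (with $\xi,\zeta\in\Gamma$) gives $(\zeta,\xi)\subset\Gamma\subset\fix(R)$. Hence $R$ is the identity on the segment $(\zeta,\xi)$, which stems into $v=\dir{\zeta\xi}$. It follows that $v$ is fixed and indifferent: for every $\xi'\in(\zeta,\xi)$ we have $R((\zeta,\xi'])=(\zeta,\xi']$.

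\emph{($\Rightarrow$)} Suppose $v$ is an indifferent fixed direction.
By \cref{indifferentdirections}(a), there is $\xi\in\bdisk(v)$ such that $R$ is the identity on $(\zeta,\xi]$; shrinking if necessary, we work with this half-open segment. Every point of $(\zeta,\xi]$ is therefore fixed. We claim none of these points is attracting. By \cref{segmentiapertiduetre}, each point of the open segment $(\zeta,\xi)$ lies in $\hypb$. By \cref{classifidinamica}(a), attracting fixed points are of type I, so the points of $(\zeta,\xi)$ are non-attracting.

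To avoid the endpoint $\xi$, which may be of type I, we replace $\xi$ by some $\xi''\in(\zeta,\xi)$. Then $[\zeta,\xi'']\subset\nafl(R)$, using that $\zeta\in\Gamma\subset\nafl(R)$. The set $[\zeta,\xi'']$ is connected and meets $\Gamma$ at $\zeta$. Since $\Gamma$ is a connected component of $\nafl(R)$, this gives $[\zeta,\xi'']\subset\Gamma$. Consequently $\xi''\in\Gamma\cap\bdisk(v)$, that is, $v\in T_\zeta\Gamma$.

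The only real subtlety is the maximality argument in the forward direction: we must make sure the whole connected segment, including $\zeta$ itself, lies in $\nafl(R)$. The endpoint $\zeta$ is handled by the hypothesis $\zeta\in\Gamma$. The far endpoint is handled by stopping short at an interior point of type II or III. No other step poses an obstacle.
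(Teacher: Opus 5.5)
Your proof is correct and is essentially the paper's argument. For $v\in T_\zeta\Gamma \Rightarrow$ indifferent, you use \cref{connessocontiene} to get $R|_{(\zeta,\xi)}=\id_{(\zeta,\xi)}$ for some $\xi\in\Gamma\cap\bdisk(v)$; for the converse, you use \cref{indifferentdirections}~(a), the fact that points of an open segment are not of type I (hence not attracting, by \cref{classifidinamica}~(a)), and maximality of the component $\Gamma$. Your care in stopping short of the far endpoint matches the paper's conclusion $[\zeta,\xi)\subset\Gamma$.
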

	    \begin{proof}
	        Suppose there is a segment $(\zeta,\xi)$ stemming into $v$ on which $R$ restricts to the identity. Then all points in $(\zeta,\xi)$ are fixed and have more than one direction, whence by \cref{classifitangent} (a) and \cref{classifidinamica} (a), we have $(\zeta,\xi) \subset \nafl(R)$, and therefore in fact $[\zeta,\xi) \subset \Gamma$. So $v \in T_\zeta\Gamma$. Vice versa, if $v\in T_\zeta\Gamma$, any segment stemming into $v$ will contain a point $\xi \in \Gamma$, whence $(\zeta,\xi) \subset \Gamma$ and $R|_{(\zeta,\xi)} = \id_{(\zeta,\xi)}$. Therefore, considering \cref{indifferentdirections}, we have shown that
	        \[ v \in T_\zeta\Gamma \ \iff \  \exists \ \xi \in \bdisk(v) \ : \ R|_{(\zeta,\xi)} = \id_{(\zeta,\xi)} \ \iff \ v \text{ is an indifferent fixed direction.} \qedhere \]
	    \end{proof}
	    
	    \begin{defn}
	        For $\Gamma \in \nas (R)$, we define the \textbf{degree} of $\Gamma$ with respect to $R$ as 
	        \begin{equation}\label{defgradociocco}
	            \deg_R \Gamma := 1 + \sum_{\zeta \in \Gamma} (\deg_R \zeta - 1 ) \, .
	        \end{equation}
	    \end{defn}
	    \begin{rmk}
	            The degree of a non-attracting component is a well-defined positive integer, as all  but finitely many summands in the sum above are zero: all points in $\Gamma$ are fixed, and those satisfying $\deg_R \zeta \geq 2$ are finitely many.  The latter finiteness is found for example in \cite{Rumely_2017}, Corollary 6.2.
	    \end{rmk}
	    \begin{lemma}\label[lemma]{piuvicinoadalphaspento}
	        Let $\alpha \in \pwanb$ and let $\Gamma\in \nas(R)$. Then
	        
	        \begin{enumerate}[(a)]
	            \item $\uunononrepel{\dir{r_\Gamma(\alpha)\alpha}} = 0 \minis .$
	            \item $\uunononrepel{\dir{\zeta\alpha}} = 1, \quad \forall \ \zeta \in \Gamma \smallsetminus \{r_\Gamma(\alpha)\} \minis$.
	        \end{enumerate}
	    \end{lemma}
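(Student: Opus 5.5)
The plan is to unwind the definition of the retraction and reduce both parts to \cref{indifferelligammelli}, \cref{dicotomiadirezioni} and \cref{taledirezionetalepunto}. Throughout, write $r := r_\Gamma(\alpha)$. Recall that $\Gamma$ is closed (\cref{gammachiuso}) and connected, so $r_\Gamma$ is defined. By definition, $r$ is the point of $\Gamma$ closest to $\alpha$ on $[\alpha,\xi]$ for any $\xi\in\Gamma$; in particular $r\in[\alpha,\xi]$ for every $\xi\in\Gamma$, and $(r,\alpha]\cap\Gamma=\varnothing$.

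For (b), fix $\zeta\in\Gamma\smallsetminus\{r\}$. Taking $\xi=\zeta$ in the definition of the retraction gives $r\in[\zeta,\alpha]$, and $r\neq\zeta$. Hence the segment $(\zeta,r]$ stems into $\dir{\zeta\alpha}$, so $\dir{\zeta\alpha}=\dir{\zeta r}$. Since $\Gamma$ is connected and contains $\zeta$ and $r$, it contains $[\zeta,r]$, and $R$ restricts to the identity there. By \cref{indifferentdirections} (a) (or directly from the definitions), $\dir{\zeta\alpha}$ is then an indifferent fixed direction. In particular it is fixed and satisfies $R((\zeta,\xi'])=(\zeta,\xi']$ for $\xi'\in(\zeta,r)$, so it is non-repelling and the indicator equals $1$.

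For (a), first dispose of the case $\alpha\in\Gamma$. Then $r=\alpha$, and by the paper's convention $\dir{rr}=r$ is not a direction, so the indicator is $0$.

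Now suppose $\alpha\notin\Gamma$, and assume for contradiction that $v:=\dir{r\alpha}$ is a non-repelling fixed direction. By \cref{dicotomiadirezioni} (c), $v$ is attracting, indifferent, or repelling; being non-repelling, and a repelling direction not being non-repelling by (a) of the same proposition, it is attracting or indifferent.

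\begin{itemize}
\item If $v$ is attracting, then \cref{taledirezionetalepunto} (a) makes $r$ an attracting fixed point, contradicting $r\in\Gamma\subset\nafl(R)$.
\item If $v$ is indifferent, then \cref{indifferelligammelli} gives $v\in T_r\Gamma$, so there is some $\xi\in\Gamma\cap\bdisk(v)$. Since $\xi,\alpha\in\bdisk(v)$, the segments $(r,\xi]$ and $(r,\alpha]$ intersect. By connectedness $[r,\xi]\subset\Gamma$, so $\Gamma$ meets $(r,\alpha]$. This contradicts $(r,\alpha]\cap\Gamma=\varnothing$, i.e.\ that $r$ is the closest point of $\Gamma$ to $\alpha$ on $[\alpha,r]$.
\end{itemize}

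Either way we reach a contradiction, so the indicator in (a) is $0$.

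The only delicate point is this last step: translating "$v\in T_r\Gamma$" into a point of $\Gamma$ strictly between $r$ and $\alpha$. It rests on the fact, stated after \cref{classifitangent}, that two points of the same disk $\bdisk(v)$ have intersecting segments to its boundary point.
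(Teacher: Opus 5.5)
Your proof is correct and follows essentially the same approach as the paper. Part (b) is identical to the paper's, and in part (a) you use the same three ingredients (\cref{dicotomiadirezioni}, \cref{taledirezionetalepunto} (a), and \cref{indifferelligammelli}), arguing by contradiction where the paper argues directly, and you spell out why $\dir{r_\Gamma(\alpha)\alpha}\notin T_{r_\Gamma(\alpha)}\Gamma$, a step the paper simply attributes to the definition of the retraction.
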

	    \begin{proof}
	        (a). If $r_\Gamma(\alpha) = \alpha$, then $\dir{r_\Gamma(\alpha)\alpha} = \dir{\alpha\alpha} = \alpha$, which is not a direction, so  (a) holds. Suppose now instead that $r_\Gamma(\alpha) \neq \alpha$ and that the direction $\dir{r_\Gamma(\alpha)\alpha}$ is fixed. Then by the definition of $r_\Gamma : \pwanb \to \Gamma$ we have  $\dir{r_\Gamma(\alpha) \alpha} \notin T_{r_\Gamma(\alpha)} \Gamma$. It follows from \cref{indifferelligammelli} that $\dir{r_\Gamma(\alpha)\alpha}$ is not an indifferent fixed direction. Since $r_\Gamma(\alpha) \in \Gamma \subset \nafl(R)$ is non-attracting, by \cref{taledirezionetalepunto} (a), the direction $\dir{r_\Gamma(\alpha)\alpha}$ cannot be an attracting fixed direction either, whence we conclude by \cref{dicotomiadirezioni} that it must be repelling.
	        
	        (b). Suppose now that $\zeta \in \Gamma \smallsetminus \{r_\Gamma(\alpha)\}$. Then by the definition of $r_\Gamma : \pwanb \to \Gamma$ we have $\zeta \neq \alpha$, so $\dir{\zeta\alpha}$ is a direction. Moreover, again by the definition of $r_\Gamma : \pwanb \to \Gamma$, we have $\varnothing \neq (\zeta,r_\Gamma(\alpha)) \subset \Gamma \cap (\zeta,\alpha)$, whence $R|_{(\zeta,r_\Gamma(\alpha))} = \id_{(\zeta,r_\Gamma(\alpha))}$, so that $\dir{\zeta r_\Gamma(\alpha)} = \dir{\zeta\alpha}$ is a fixed indifferent direction.
	    \end{proof}

    \subsection{Counting non-attracting subtrees} \label{contodeisottalberi}
	    Let $R \in K(z)$ be a nonconstant rational function and $\alpha \in \hypb$.\\
	    In \cite{okuyama2025intrinsicreductionsintrinsicdepths} Okuyama defines the quantity
	    \begin{flalign*}
	        &&\dep_R v := R^*\delta_\zeta\left ( \bdisk(v) \right ),      &&\forall \ v \in T_\zeta\pwanb, \ \zeta \in \pwanb, 
	    \end{flalign*}
	    the \textbf{depth} of a direction $v$. In words, this is the number of preimages of the point $\zeta$ (counted with multiplicity) contained in the disk $\bdisk(v)$, for a direction $v$ at $\zeta$. Since the total number of preimages of a point $\zeta \in \pwanb$ is always $\deg R$ and $\pwanb = \{ \zeta \} \sqcup \left (  \bigsqcup_{v \in T_\zeta\pwanb} \bdisk(v) \right )$, we have the identity
	    \begin{flalign} \label{okuyamasformula}
	        &&\deg R = (\deg_R \zeta ) \times \uuno\left ( \zeta \text{ is fixed}\right ) + \sum_{v \in T_\zeta\pwanb} \dep_R v,  &&&\forall \ \zeta \in \pwanb .
	    \end{flalign}
	    \begin{defn} \label[defn]{tomorrowpot}
	        Define, for $\zeta \in \pwanb \smallsetminus \{\alpha\}$, the open set
	        \begin{equation}\label{defvalpha}
	            V^\alpha(\zeta) := \pwanb \smallsetminus \overline{\bdisk(\dir{\zeta \alpha})} = \bigsqcup_{u \in T_\zeta\pwanb \smallsetminus \{\dir{\zeta\alpha}\} } \bdisk(u) \minis .
	        \end{equation}
	        Define the function $t_R^\alpha: \pwanb \to \NN$ by
	        \begin{equation} \label{lilt}
	            t_R^\alpha(\zeta) := \begin{cases}
	                             R^* \delta_\zeta \left ( V^\alpha(\zeta) \right ) \ & \zeta \neq \alpha \\
	                             0 \                                          & \zeta = \alpha
	                        \end{cases} \, .
	        \end{equation}
	    \end{defn}

        \begin{prop} \label[prop]{contaccio}
           Let $\zeta_0 \in \pwanb, v \in T_{\zeta_0}\pwanb$. Then there exists $\zeta_v \in \bdisk(v)$ such that the restriction of $t_R^\alpha$ to the segment $(\zeta_0,\zeta_v)$ is constant and its value is given by
            \begin{equation}\label{comeffattatpiccolo}
                t_R^\alpha|_{(\zeta_0, \zeta_v)} = \begin{cases}
                    \dep_R v                                                                                                                                                                  & v \neq \dir{\zeta_0 \alpha} \\
                    \deg R - \dep_R \dir{\zeta_0 \alpha} - (\deg_R{\dir{\zeta_0\alpha}}) \times \uunononrepel{\dir{\zeta_0\alpha}} \quad      & v = \dir{\zeta_0 \alpha}
                \end{cases}\ .
            \end{equation}
            (Notice that when $\zeta_0=\alpha$, then $\dir{\zeta_0\alpha}=\dir{\alpha\alpha}=\alpha$ is not a direction and is therefore different from all $v\in T_\alpha\pwanb$; that is, the second line in the brace never applies when $\zeta_0=\alpha$.)
        \end{prop}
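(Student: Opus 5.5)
The proof is a local computation. For $\zeta$ on a short segment stemming from $\zeta_0$ into $v$, I would track the preimages of $\zeta$ along the branches of \cref{preimageofsegment}, and decide for each branch, once the segment is short enough, whether it lies in $V^\alpha(\zeta)$.

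\textbf{Setup.} Parametrize a segment $[\zeta_0,\zeta_L]$ stemming into $v$ and apply \cref{preimageofsegment}. This gives the distinct preimages $\xi_1,\ldots,\xi_m$ of $\zeta_0$ and branches $r\mapsto \xi_i^k(r)$, $r\in[0,\varepsilon]$, that exhaust $R^{-1}(\zeta_r)$. On $(\xi_i,\xi_i^k(\varepsilon))$ the map $R$ is a bijection onto $(\zeta_0,\zeta_\varepsilon)$ with constant degree $\deg_R\dir{\xi_i\xi_i^k(\varepsilon)}$. The multiplicities add up correctly: by \cref{degreessumup}(a), the branches at $\xi_i$ carry total multiplicity $\deg_R\xi_i$.

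\textbf{Branches at preimages $\xi_i\neq\zeta_0$.} For each such $\xi_i$, use \cref{lemmadisgrazia} to shrink the segment to $(\zeta_0,\zeta'')$ so that $\xi_i\notin\overline{\bannulus(\zeta_0,\zeta'')}$. Then shrink $\varepsilon$ further so that the whole branch at $\xi_i$ stays inside the open set $\pwanb\smallsetminus\overline{\bannulus(\zeta_0,\zeta'')}$; this is possible by continuity and finiteness of the branches. For $\zeta=\zeta_r$ in the segment, every such branch point then lies in $V^\alpha(\zeta_r)$ exactly when $\xi_i$ does. That happens iff:
\begin{itemize}
\item $\xi_i\in\bdisk(v)$, when $v\neq\dir{\zeta_0\alpha}$; here $V^\alpha(\zeta_r)$ meets the complement of the annulus only inside $\bdisk(v)$;
\item $\xi_i\notin\bdisk(v)$, when $v=\dir{\zeta_0\alpha}$; here $V^\alpha(\zeta_r)$ contains everything outside $\overline{\bdisk(v)}$ together with $\zeta_0$, by \cref{lemmaobbrobrioso}.
\end{itemize}
Counting with multiplicity, the preimages $\xi_i\neq\zeta_0$ therefore contribute $\dep_R v$ in the first case. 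In the second case they contribute $\deg R-\dep_R v-(\deg_R\zeta_0)\,\uuno(\zeta_0\text{ fixed})$, by \cref{okuyamasformula}.

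\textbf{Branches at $\zeta_0$ itself (only if $\zeta_0$ is fixed).} A branch starts in a direction $u$ with $R(u)=v$.
\begin{itemize}
\item If $u\neq v$, the branch lies in $\bdisk(u)$. This is disjoint from $\bdisk(v)$, so the branch is not counted in the first case. It is contained in $V^\alpha(\zeta_r)$, so it is counted in the second case.
\item If $u=v$ (so $v$ is fixed), the branch lies on the segment itself, at position $\xi(r)$ with $R(\xi(r))=\zeta_r$. By \cref{dicotomiadirezioni}, after shrinking, there are three possibilities:
 \begin{itemize}
 \item $v$ repelling: $\xi(r)\in(\zeta_0,\zeta_r)$;
 \item $v$ indifferent: $\xi(r)=\zeta_r$;
 \item $v$ attracting: $\xi(r)$ lies beyond $\zeta_r$.
 \end{itemize}
\end{itemize}
In the first case, $v\neq\dir{\zeta_0\alpha}$, and $\xi(r)$ is in $V^\alpha(\zeta_r)$ only if it lies beyond $\zeta_r$. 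That would force $v$ to be attracting. Then $\zeta_0$ is type I by \cref{taledirezionetalepunto}(a) and \cref{classifidinamica}(a), so its unique direction is $\dir{\zeta_0\alpha}$, a contradiction. Hence the first case gives exactly $\dep_R v$.

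In the second case, $\xi(r)\in V^\alpha(\zeta_r)$ iff $\xi(r)$ lies strictly between $\zeta_0$ and $\zeta_r$, i.e. iff $v$ is repelling. Using $\sum_{u\in R^{-1}(v)}\deg_R u=\deg_R\zeta_0$ from \cref{degreessumup}(a), the branches at $\zeta_0$ contribute $\deg_R\zeta_0-(\deg_R v)\,\uunononrepel{v}$. Adding the two contributions gives the second line of \cref{comeffattatpiccolo}. When $v$ is not fixed the indicator vanishes, consistently.

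\textbf{Main obstacle.} The delicate step is the uniform choice of $\zeta_v$ and $\varepsilon$. The segment must be short enough that:
\begin{itemize}
\item every branch at $\xi_i\neq\zeta_0$ is uniformly separated from the moving boundary point $\zeta_r$, which \cref{lemmadisgrazia} handles;
\item the fixed branch at $\zeta_0$ behaves monotonically on the whole segment, which needs the non-repelling/repelling dichotomy applied on a single common segment.
\end{itemize}
I would handle this by first fixing the dichotomy segment and then intersecting with the finitely many segments coming from \cref{lemmadisgrazia}.
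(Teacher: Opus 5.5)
Your proof is correct and follows essentially the same route as the paper. You parametrize the preimages via \cref{preimageofsegment}, use \cref{lemmadisgrazia} plus connectedness to place the branches not running along $v$ relative to $V^\alpha(\zeta_r)$, and settle the fixed branch in direction $v$ by the attracting/indifferent/repelling behaviour, ruling out the attracting case when $v\neq\dir{\zeta_0\alpha}$ because $\zeta_0$ cannot then be type I; the differences are only organizational (grouping branches by base point, a fixed $\zeta''$ instead of the paper's shrinking annulus $\bannulus(\zeta_0,\zeta_\varepsilon)$, and in the case $v=\dir{\zeta_0\alpha}$ counting $V^\alpha(\zeta_r)$ directly via \cref{okuyamasformula} rather than its complement $\overline{\bdisk(\dir{\zeta_r\alpha})}$).
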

        \begin{proof}        
            Let $[\zeta_0,\zeta_L]$ be a segment stemming into $\bdisk(v)$. Let then $\varepsilon >0$, $\xi_i, \ \xi_i^k(r),\ \zeta_r$, for $i\in [m], \ k \in [\ell_i]$, and $r \in [0,\varepsilon]$, be as in \cref{preimageofsegment}.
            \begin{clamio}\label{scollateve}
                For all $\varepsilon>0$ small enough, we have
                \begin{equation}\label{implicagnoscollateve}
                    \dir{\xi_i \xi_i^k(\varepsilon)} \neq v \quad \implies \quad [\xi_i,\xi_i^k(\varepsilon)] \subset \pwanb \smallsetminus \bannulus \left (\zeta_0, \zeta_\varepsilon \right )  .
                \end{equation}
            \end{clamio}
            \begin{dimclamio}
                Suppose first that $\xi_i=\zeta_0$. Then $(\xi_i,\xi_i^k(\varepsilon)] \subset \bdisk(\dir{\zeta_0\xi_i^k(\varepsilon)})$, which is an open disk with boundary point $\xi_i = \zeta_0$. Therefore if  $\dir{\xi_i\xi_i^k(\varepsilon)} \neq v$, this disk is disjoint from $\overline{\bdisk(v = \dir{\zeta_0\zeta_\varepsilon})} \supset \overline{\bdisk(\dir{\zeta_0\zeta_\varepsilon})} \cap \overline{\bdisk(\dir{\zeta_\varepsilon\zeta_0})} = \overline{\bannulus(\zeta_0,\zeta_\varepsilon)}$, and we obtain the right-hand side in \cref{implicagnoscollateve}.\\
                We notice that if $0<s<r\leq \varepsilon$, then $\zeta_s \in (\zeta_0,\zeta_r)$ and by \cref{lemmabruttu} we have $\overline{\bdisk(\dir{\zeta_{s}\zeta_0})} \subset \overline{\bdisk(\dir{\zeta_r \zeta_0})}$, whence, by writing $\overline{\bannulus(\zeta_0,\zeta_r)}= \overline{\bdisk(\dir{\zeta_0\zeta_r})} \cap \overline{\bdisk(\dir{\zeta_r\zeta_0})} = \overline{\bdisk(\dir{\zeta_0\zeta_s})} \cap \overline{\bdisk(\dir{\zeta_r\zeta_0})} \supset \overline{\bdisk(\dir{\zeta_0\zeta_s})} \cap \overline{\bdisk(\dir{\zeta_s\zeta_0})}  =\overline{\bannulus(\zeta_0,\zeta_s)}$, we find that
                \begin{flalign}\label{monotoniadellecorone}
                    &&\pwanb \smallsetminus \overline{\bannulus(\zeta_0,\zeta_r)} \subset \pwanb \smallsetminus \overline{\bannulus (\zeta_0,\zeta_{s})}, && \forall \ s,r\in (0,\varepsilon], \ s < r.
                \end{flalign}
                Suppose now that $\xi_i \neq \zeta_0$. We will prove that for all $\varepsilon>0$ small enough we have
                \begin{equation}\label{preimplicagnoscollateve}
                	\dir{\xi_i \xi_i^k(\varepsilon)} \neq v \quad \implies \quad (\xi_i,\xi_i^k(\varepsilon)] \subset \pwanb \smallsetminus \overline{\bannulus \left (\zeta_0, \zeta_\varepsilon \right )} \minis .
                \end{equation}
                We then obtain \cref{implicagnoscollateve} by taking closures in the right-hand side.\\
                For $\varepsilon >0$ small enough we have $\xi_i \in \pwanb \smallsetminus \overline{\bannulus(\zeta_0,\zeta_\varepsilon)}$ by \cref{lemmadisgrazia}. The latter is an open set, so if $C$ is its connected component containing $\xi_i$, we will have an intersection $C \cap [\xi_i,\xi_i^k(\varepsilon)] \supset [\xi_i,\xi_i^k(\varepsilon')]$ for some $\varepsilon' \in (0,\varepsilon)$. By the monotonicity \cref{monotoniadellecorone}, it follows that $[\xi_i,\xi_i^k(\varepsilon')] \subset C  \subset \pwanb \smallsetminus \overline{\bannulus (\zeta_0,\zeta_{\varepsilon'})}$ for all $\varepsilon'>0$ small enough.                Since the $[m],[\ell_i]$ are finite sets, we can therefore repeat the argument until we find $\varepsilon >0$ small enough so that for all $i,k$ satisfying $\dir{\xi_i\xi_i^k(\varepsilon)} \neq v$ it holds that $(\xi_i,\xi_i^k(\varepsilon)]\subset \pwanb \smallsetminus \overline{\bannulus (\zeta_0,\zeta_{\varepsilon})}$. 
            \end{dimclamio}
            \noindent\textit{Back to the main proof (\cref{contaccio}).}
            \begin{caso}\label{zetalphaneqv}
                In this case we suppose that $v\neq \dir{\zeta_0\alpha}$.\\
                Since $v= \dir{\zeta_0\zeta_\varepsilon}$, by our \cref{zetalphaneqv} hypothesis, the segments $[\alpha,\zeta_0)$ and $(\zeta_0,\zeta_\varepsilon]$ are disjoint, whence by \cref{segmentsharingonept} we have that $\zeta_r\in (\zeta_0,\zeta_\varepsilon] \subset (\alpha,\zeta_\varepsilon]$ for all $r\in (0,\varepsilon]$. It follows that
                \begin{flalign}\label{direzionizetalphaneqv}
                    &&\dir{\zeta_r\zeta_0} =\dir{\zeta_r\alpha}, \quad \dir{\zeta_0\zeta_r} =\dir{\zeta_0\zeta_\varepsilon} = v\minis , && \forall \ r \in (0,\varepsilon] \minis .
                \end{flalign}
                \begin{clamio}\label{seguardiamov}
                    For all $\varepsilon>0$ small enough, if $\dir{\xi_i\xi_i^k(\varepsilon)} = v$ for some $i\in [m], \ k \in [\ell_i]$, then $[\xi_i,\xi_i^k(r)] \subset \overline{\bdisk \left ( \dir{\zeta_r\alpha}\right )}$ for all $r \in (0,\varepsilon)$. 
                \end{clamio}
                \begin{dimclamio}
                    Suppose that $\dir{\xi_i\xi_i^k(\varepsilon)} = v = \dir{\zeta_0\zeta_\varepsilon}$ for some $i\in [m], \ k \in [\ell_i]$; in particular, we have that $\zeta_0=\xi_i$ is a fixed point. If now $\zeta_0=\alpha$, then $\xi_i =\zeta_0=\alpha \in \hypb$ and is therefore not type I. Instead, if $\zeta_0\neq \alpha$, then $\dir{\zeta_0\alpha}$ is a direction and since we are in \cref{zetalphaneqv} it is distinct from $v$; therefore $T_{\zeta_0}\pwanb$ has at least two elements, so by \cref{classifitangent} we find again that $\zeta_0$ cannot be type I. By \cref{classifidinamica} we have thus shown, that whether $\zeta_0=\alpha$ or not, the point $\zeta_0$ is a non-attracting fixed point. It then follows from \cref{taledirezionetalepunto} (a) and \cref{dicotomiadirezioni} (b) that $v=\dir{\xi_i\xi_i^k(\varepsilon)}$ is a non-attracting fixed direction. Therefore, for $\varepsilon >0$ small enough, we have for all $r\in (0,\varepsilon)$, that $[\xi_i=\zeta_0, \xi_i^k(r)] \subset [\zeta_0,\zeta_r]$. The latter segment is contained in $\{\zeta_r\} \sqcup \bdisk(\dir{\zeta_r\zeta_0}) =\overline{\bdisk(\dir{\zeta_r\zeta_0})}$, which equals $\overline{\bdisk(\dir{\zeta_r\alpha})}$ by \cref{direzionizetalphaneqv}.
                \end{dimclamio}
                \begin{clamio}\label{bastaguardareglixiconi}
                    For all $\varepsilon>0$ small enough, for all $r \in (0,\varepsilon), \ i \in [m], \ k\in [\ell_i]$, we have
                    \begin{equation}\label{implicagnobastaguardareglixiconi}
                        \xi_i \in \bdisk(v) \quad \iff \quad \xi_i^k(r) \in V^\alpha(\zeta_r)\minis .
                    \end{equation}
                \end{clamio}
                \begin{dimclamio}
                    We will show that for each pair $i\in [m], \ k\in [\ell_i]$, if $\varepsilon>0$ is small enough, the right- and left-hand side of \cref{implicagnobastaguardareglixiconi} are either both true or both false for all $r\in (0,\varepsilon)$. Since $[m]$ and $[\ell_i]$ are finite sets, this is enough to show that for small enough $\varepsilon>0$ the double implication \cref{implicagnobastaguardareglixiconi} holds for all pairs $i,k$ at once.\\
                    Suppose first that $\dir{\xi_i\xi_i^k(\varepsilon)} = v$. Then $\xi_i = \zeta_0$, so the left-hand side in \cref{implicagnobastaguardareglixiconi} is false, and since $\overline{\bdisk(\dir{\zeta_r\alpha})}$ is the complement of $V^\alpha(\zeta_r)$, by \cref{seguardiamov} the right-hand side is false as well.\\ 
                    Suppose now that $\dir{\xi_i\xi_i^k(\varepsilon)} \neq v$. Since $\zeta_r\in (\zeta_0,\zeta_\varepsilon)$, by \cref{lemmabruttu} we have $\overline{\bdisk(\dir{\zeta_r \zeta_0})} \subset \bdisk(\dir{\zeta_\varepsilon\zeta_0})$, from which it follows that 
                    \begin{equation}\label{preinclugio}
                        \bannulus(\zeta_0,\zeta_r) \sqcup \{\zeta_r\} = \bdisk(\dir{\zeta_0\zeta_r}) \cap \overline{\bdisk(\dir{\zeta_r\zeta_0})} \subset \bdisk(\dir{\zeta_0\zeta_r}) \cap \bdisk(\dir{\zeta_\varepsilon\zeta_0}) = \bannulus(\zeta_0,\zeta_\varepsilon) \minis .
                    \end{equation}
                    Using \cref{scollateve} and \cref{preinclugio}, we have that for all $\varepsilon >0$ small enough, 
                    \begin{multline}\label{inclugio}
                        [\xi_i, \xi_i^k(\varepsilon)] \subset \pwanb \smallsetminus \bannulus(\zeta_0,\zeta_\varepsilon) \subset  \pwanb \smallsetminus \left (\bannulus(\zeta_0,\zeta_r) \sqcup \{\zeta_r\}\right )
                        = \left ( \pwanb \smallsetminus \bdisk\left (\dir{\zeta_0\zeta_r}\right ) \right ) \cup \left ( \pwanb \smallsetminus \overline{\bdisk\left (\dir{\zeta_r\zeta_0}\right )} \right ) \\
                        = \left ( \pwanb \smallsetminus \bdisk\left (v\right ) \right ) \cup \left ( \pwanb \smallsetminus \overline{\bdisk\left (\dir{\zeta_r\alpha}\right )} \right ), \qquad \forall \ r \in (0,\varepsilon) \minis .
                    \end{multline}
                    In the last equality we used \cref{direzionizetalphaneqv}. 
                    Now, $\bdisk(v),\bdisk(\dir{\zeta_r\alpha})$ are open disks, so by definition their complement is nonempty. By \cref{lemmaobbrobrioso}, the closed sets $\pwanb \smallsetminus \bdisk(v) = \pwanb \smallsetminus \bdisk(\dir{\zeta_0\zeta_r})$ and $\pwanb \smallsetminus \bdisk(\dir{\zeta_r\alpha}) = \pwanb \smallsetminus \bdisk(\dir{\zeta_r\zeta_0})$ are disjoint. It then follows from the inclusion \cref{inclugio} that the segment $[\xi_i,\xi_i^k(\varepsilon)]$, being connected, must be completely contained in either $\pwanb \smallsetminus \bdisk(v)$ or $\pwanb \smallsetminus \bdisk(\dir{\zeta_r\alpha})$, and in fact, in either $\pwanb \smallsetminus \bdisk(v)$ or in $\pwanb \smallsetminus \overline{\bdisk(\dir{\zeta_r\alpha})} = V^\alpha(\zeta_r)$. This shows that the conditions in \cref{implicagnobastaguardareglixiconi} either both hold at once or neither holds.
                \end{dimclamio}
                \noindent \textit{Back to the main proof (\cref{contaccio}).}
                Let $\varepsilon>0$ be small enough for the conclusion of \cref{bastaguardareglixiconi} to hold. Then, since $V^\alpha(\zeta_r)$ is the complement of $\overline{\bdisk(\dir{\zeta_r\alpha})}$, the double implication \cref{implicagnobastaguardareglixiconi} shows that for $r\in (0,\varepsilon)$ the first and second line in the following display are equal:
                \begin{multline*} 
                    t^\alpha_R(\zeta_r) = R^*\delta_{\zeta_r}(V^\alpha(\zeta_r)) = \sum_{\xi_i^k(r) \in V^\alpha(\zeta_r)} \deg_R \xi_i^k(r)  \\
                    = \sum_{\substack{i \in [m] \\ : \ \xi_i \in \bdisk(v)}} \sum_{k \in [\ell_i]} \deg_R \xi_i^k(r) \ = \sum_{\substack{i \in [m] \\ : \ \xi_i \in \bdisk(v)}} \deg_R \xi_i = \dep_R v \minis .
                \end{multline*} 
                (In the second line we use \cref{degreessumup} (a) together with the fact that $\deg_R \dir{\xi_i\xi_i^k(\varepsilon)} = \deg_R \xi_i^k(r)$, for all $i\in [m], \ k \in [\ell_i]$, which we had from \cref{preimageofsegment}.) This shows that if we take $\zeta_v= \zeta_\varepsilon$, then \cref{comeffattatpiccolo} holds if $v\neq \dir{\zeta_0\alpha}$. This concludes the proof for \cref{zetalphaneqv}.
            \end{caso}
            \begin{caso}\label{vzetazeroalpha}
                In this case we suppose that $v = \dir{\zeta_0\alpha}$. Then we are assuming in particular that $\dir{\zeta_0\alpha}$ is a direction, and therefore that $\zeta_0 \neq \alpha$. Then for all $\varepsilon >0$ small enough we have
                \begin{equation}\label{situasegmentivzetazeroalpha}
                    [\zeta_0,\zeta_\varepsilon] \subset [\zeta_0,\alpha) \minis .
                \end{equation}
                For $r \in (0, \varepsilon)$, we have $\pwanb \smallsetminus V^\alpha(\zeta_r) = \overline{\bdisk(\dir{\zeta_r\alpha})}$, so that, to obtain the equality with the second line in \cref{comeffattatpiccolo}, we can equivalently show that for small enough $\varepsilon >0$,
	            \begin{equation}\label{possiamodimostrarequesto}
	                R^*\delta_{\zeta_r}\left ( \overline{\bdisk \left (\dir{\zeta_r\alpha} \right )}\right ) = \, \dep_R \dir{\zeta_0\alpha} \, + \minis \left (\deg_R \dir{\zeta_0\alpha}\right )\times \uunononrepel{\dir{\zeta_0\alpha}},  \qquad \forall \ r \in (0,\varepsilon) ,
	            \end{equation}
	            and choose $\zeta_\varepsilon$ as our $\zeta_v$.
	            \begin{clamio}\label{nonzetazeroalpha}
	                For all $\varepsilon >0$ small enough, for all $r\in (0,\varepsilon)$, we have
	                \begin{equation}\label{nonzetazeroalphauguaglio}
	                    \Set{\xi_i^k(r) \ | \ \xi_i \in \bdisk \left ( \dir{\zeta_0\alpha} \right)} \ = \ \Set{\xi_i^k(r) \ | \ \dir{\xi_i\xi_i^k(\varepsilon)} \neq \dir{\zeta_0\alpha}} \cap \overline{\bdisk\left (\dir{\zeta_r\alpha} \right )}\minis .
	                \end{equation}
	            \end{clamio}
	            \begin{dimclamio}
	                Let $r \in (0,\varepsilon)$. Since $\zeta_r \in (\zeta_0,\zeta_\varepsilon)$, it follows from \cref{lemmaobbrobrioso} that $\pwanb \smallsetminus \bdisk \left (\dir{\zeta_\varepsilon\zeta_r} \right ) \,  = \, \bdisk \left (\dir{\zeta_r\zeta_\varepsilon} \right ) \smallsetminus \bdisk \left (\dir{\zeta_r\zeta_\varepsilon} \right ) \ \subset \ \overline{\bdisk \left (\dir{\zeta_r\zeta_\varepsilon} \right )}$. Since $\zeta_r \in (\zeta_0,\zeta_\varepsilon)$ we have that $\dir{\zeta_\varepsilon \zeta_r} =  \dir{\zeta_\varepsilon \zeta_0}$ and taking $\varepsilon >0$ small enough for \cref{situasegmentivzetazeroalpha} to hold, we also have that $\dir{\zeta_r \zeta_\varepsilon} = \dir{\zeta_r\alpha}$. Therefore, for all small enough $\varepsilon >0$, 
	                \begin{flalign}\label{complementonelzetaralpha}
	                    &&\pwanb \smallsetminus \bdisk \left (\dir{\zeta_\varepsilon\zeta_0} \right ) \, \subset \, \overline{\bdisk \left (\dir{\zeta_r\alpha} \right )}, && \forall \ r \in (0,\varepsilon) \minis .
	                \end{flalign}
	                Recalling our hypothesis that $\dir{\zeta_0\zeta_\varepsilon}= \dir{\zeta_0\alpha}$, we find that
	                \begin{multline}
	                     \pwanb \smallsetminus \bannulus(\zeta_0,\zeta_\varepsilon) = \left ( \pwanb \smallsetminus \bdisk \left (\dir{\zeta_0\zeta_\varepsilon} \right )\right ) \cup \left ( \pwanb \smallsetminus \bdisk \left (\dir{\zeta_\varepsilon \zeta_0}\right ) \right ) \\
	                    \subset \left ( \pwanb \smallsetminus \bdisk \left (\dir{\zeta_0\alpha}\right )\right ) \cup \overline{\bdisk\left (\dir{\zeta_r \alpha}\right )}  ,  \qquad \forall \ r \in (0,\varepsilon) \minis .
	                \end{multline}
	                Now, for all $\varepsilon > 0$ small enough we have \cref{implicagnoscollateve}, which together with the inclusion above and our hypothesis that $v = \dir{\zeta_0\alpha}$, gives us
	                \begin{flalign} \label{inclugiozetazeroalpha}
	                    && \dir{\xi_i\xi_i^k(\varepsilon)} \neq \dir{\zeta_0\alpha} \quad \implies \quad  [\xi_i,\xi_i^k(\varepsilon)]
	                    \subset \left ( \pwanb \smallsetminus \bdisk \left (\dir{\zeta_0\alpha}\right )\right ) \cup \overline{\bdisk\left (\dir{\zeta_r \alpha}\right )} , && \forall \ r \in (0,\varepsilon) \minis .
	                \end{flalign}
	                The set $\pwanb \smallsetminus \bdisk (\dir{\zeta_0\alpha})$ is nonempty because it is the complement of an open disk, and the set $\overline{\bdisk\left (\dir{\zeta_r \alpha}\right )}$ is nonempty because it is the closure of a open disk. By \cref{lemmabruttu} and \cref{situasegmentivzetazeroalpha} we have that $\overline{\bdisk(\dir{\zeta_r \alpha})} \subset \bdisk(\dir{\zeta_0\alpha})$, whence $\left ( \pwanb \smallsetminus \bdisk \left (\dir{\zeta_0\alpha}\right )\right ) \cup \overline{\bdisk\left (\dir{\zeta_r \alpha}\right )}$ is a disjoint union of nonempty  closed sets.   Therefore in the right-hand side of \cref{inclugiozetazeroalpha}, since $[\xi_i,\xi_i^k(\varepsilon)]$ is connected, it is completely contained in either $\pwanb \smallsetminus \bdisk (\dir{\zeta_0\alpha})$ or $\overline{\bdisk\left (\dir{\zeta_r \alpha}\right )}$. Now, if $\xi_i^k(r)$ is an element of either member of the equality \cref{nonzetazeroalphauguaglio}, then we have $\dir{\xi_i\xi_i^k(\varepsilon)} \neq \dir{\zeta_0\alpha}$, and hence by \cref{inclugiozetazeroalpha} and our discussion, the segment $[\xi_i,\xi_i^k(\varepsilon)]$ must either be completely contained in $\overline{\bdisk(\dir{\zeta_r\alpha})}$ or it must be disjoint from $\bdisk(\dir{\zeta_0\alpha})$. This proves the equality \cref{nonzetazeroalphauguaglio}. 
	            \end{dimclamio}
	            \noindent\textit{Back to the main proof (\cref{contaccio}).} It follows from \cref{nonzetazeroalpha} that for all $\varepsilon >0$ small enough, we have the first equality in the following display: 
	            \begin{multline} \label{contoapartezetazeroalfa}
	                R^* \delta_{\zeta_r} \left (  \Set{ \xi_i^k(r) \ | \ \dir{\xi_i\xi_i^k(\varepsilon)} \neq \dir{\zeta_0\alpha}} \cap \overline{\bdisk \left (  \dir{\zeta_r\alpha}\right )} \right ) 
	                = \sum_{\substack{i\in [m] \ : \\ \xi_i \in \bdisk(\dir{\zeta_0 \alpha})}} \sum_{k \in [\ell_i]} \deg_R \xi_i^k(r) \\
	                = \sum_{\substack{i \in [m] \ : \\ \xi_i \in \bdisk(\dir{\zeta_0\alpha})}} \deg_R \xi_i = \dep_R \dir{\zeta_0\alpha} \minis , \qquad \forall \ r \in (0,\varepsilon) \minis .
	            \end{multline}
	            (In going to the second line we used \cref{degreessumup} (a), and the fact that $\deg_R \xi_i^k(r) = \deg_R \dir{\xi_i \xi_i^k(\varepsilon)}$ for all $r \in (0,\varepsilon)$, which we had from \cref{preimageofsegment}.) 
	
	            Suppose now that for some $i\in [m], \ k\in [\ell_i]$, we have $\dir{\xi_i\xi_i^k(\varepsilon)} = \dir{\zeta_0\alpha}$. Then by our \cref{vzetazeroalpha} hypothesis, $\dir{\xi_i\xi_i^k(\varepsilon)} = \dir{\zeta_0\alpha}$ is a fixed direction and in particular $\zeta_0 = \xi_i$ is a fixed point. For all $\varepsilon > 0$ small enough we have $[\xi_i, \xi_i^k(\varepsilon)]\subset [\zeta_0,\alpha]$. By \cref{dicotomiadirezioni} (a), up to picking a smaller $\varepsilon>0$, we have that
	            \begin{equation}\label{repulzoono}
	               	\begin{array}{rcl}
	               		\text{either } \dir{\zeta_0\alpha} \text{ is non-repelling, i.e.,} &	[\zeta_0,\zeta_r] \subset [\zeta_0,\xi_i^k(r)],	&\forall \ r \in (0,\varepsilon), \\
	               		\text{or } \dir{\zeta_0\alpha} \text{ is repelling, i.e.,} & [\zeta_0,\zeta_r) \supset [\zeta_0,\xi_i^k(r)],	& \forall \ r\in (0,\varepsilon).
	               	\end{array}
	            \end{equation} 
	            Now, for all small enough $\varepsilon>0$ we have $[\xi_i = \zeta_0,\xi_i^k(\varepsilon)] \subset [\zeta_0,\alpha]$. For such $\varepsilon>0$, if $\dir{\zeta_0\alpha}$ is non-repelling, then by \cref{lemmabruttu} and \cref{repulzoono} we have $\xi_i^k(r) \in \overline{\bdisk(\dir{\xi_i^k(r)})} \subset  \overline{\bdisk(\dir{\zeta_r\alpha})}$ for all $r\in (0,\varepsilon)$. In \cref{situasegmentivzetazeroalpha} we had taken small enough $\varepsilon>0$ to have $(\zeta_0 = \xi_i, \zeta_\varepsilon] \subset (\zeta_0, \alpha]$. For such $\varepsilon >0$, if $\dir{\zeta_0\alpha}$ is repelling, then by \cref{lemmabruttu} and \cref{repulzoono} we have $\overline{\bdisk(\dir{\zeta_r \alpha})} \subset \bdisk(\dir{\xi_i^k(r) \alpha}) \not \ni \xi_i^k(r)$.
	            It follows from this discussion that
	            \begin{equation*}
	               	\dir{\xi_i\xi_i^k(\varepsilon)} = \dir{\zeta_0\alpha} \quad \implies \quad \left ( \xi_i^k(r) \in  \overline{\bdisk(\dir{\zeta_r\alpha})}  \ \iff \ \substack{\dir{\zeta_0\alpha} \text{ is a non-repelling} \\ \text{fixed direction}} \right )
	            \end{equation*}
	             Since we also had that $\deg_R \xi_i^k(r) = \deg_R \dir{\xi_i\xi_i^k(\varepsilon)} = \deg_R \dir{\zeta_0\alpha}$ for all $r\in (0,\varepsilon)$, from \cref{preimageofsegment}, we conclude that for all $\varepsilon >0$ small enough,
	            \begin{multline*}
	                R^* \delta_{\zeta_r} \left (  \Set{ \xi_i^k(r) \ | \ \dir{\xi_i\xi_i^k(\varepsilon)} = \dir{\zeta_0\alpha}} \cap \overline{\bdisk \left (  \dir{\zeta_r\alpha}\right )} \right ) \\
	                =  (\deg_R \dir{\zeta_0\alpha}) \times \uunononrepel{\dir{\zeta_0\alpha}} , \qquad \forall \ r\in(0,\varepsilon)  .
	            \end{multline*}
	            Summing this equality to \cref{contoapartezetazeroalfa}, we obtain \cref{possiamodimostrarequesto}. \qedhere
	        \end{caso}
	    \end{proof}
        \begin{defn}
            We define a the function $T_R^\alpha : \hypb \to \RR_{\geq 0}$ by
            \begin{equation} \label{bigt}
                T_R^\alpha (\zeta) :=  \int_{\alpha}^\zeta t_R^\alpha d \lambda \minis .
            \end{equation}
        \end{defn}
        We will show in the proof of the theorem below that $t^\alpha_R$ is $\lambda$-a.e. decreasing on segments $(\alpha,\zeta)$, so in particular the integral above is well-defined.
        %\begin{propdef} \label[propdef]{contolaplaciazzo}
        \begin{theorem}[manual-num=\ref{tausuigamma}]
            The function $T_R^\alpha$ belongs to $\operatorname{BDV}(\pwanb)$ and there exists a positive finite Radon measure $\tau_R^\alpha$ of total mass $\deg R$ satisfying 
             \begin{equation}\label{laplaciazzoT}
                \tau_R^\alpha = \Delta T_R^\alpha + (\deg R) \times \delta_\alpha =       \sum_{\Gamma \in \nas(R)} \left ( (r_\Gamma)_*\delta_{\alpha} \ + \  \sum_{\zeta \in \Gamma} \left ( \deg_R \zeta - 1 \right ) \delta_\zeta  \right )   .
            \end{equation}
            In particular, the measure $\tau^\alpha_R$ charges each $\Gamma \in \nas(R)$ with mass $\deg_R\Gamma$ and
            \begin{equation}\label{sommadeigradideipinaffoli}
                \sum_{\Gamma \in \nas(R)} \deg_R \Gamma \; = \; \deg R \minis .
            \end{equation}
        \end{theorem}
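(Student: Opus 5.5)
The plan is to compute the graph Laplacians $\Delta_\Sigma T_R^\alpha$ directly from \cref{contaccio}. Then I assemble them into a measure via \cref{nonservonotuttiigrafi}, and finally identify that measure with the right-hand side of \cref{laplaciazzoT}.

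\textbf{Step 1: $T_R^\alpha$ is piecewise affine.} By \cref{contaccio}, $t_R^\alpha$ is locally constant on every segment stemming from every point, in every direction. A compactness argument on a finite subgraph $\Sigma$ then shows that $t_R^\alpha|_\Sigma$ is piecewise constant with finitely many breakpoints, so $T_R^\alpha\in\cpa(\Sigma)$. By \cref{nonservonotuttiigrafi} we may restrict to finite subgraphs $\Sigma\ni\alpha$.

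Since $T_R^\alpha(\zeta)=\int_\alpha^\zeta t_R^\alpha\,d\lambda$, the directional derivatives at $\zeta_0\in\Sigma$ are as follows. For $v\neq\dir{\zeta_0\alpha}$ we get $d_vT_R^\alpha=\dep_R v$. For $v=\dir{\zeta_0\alpha}$ we get minus the second line of \cref{comeffattatpiccolo}. Inserting these into \cref{laplacianopercippiasigma} and using Okuyama's identity \cref{okuyamasformula}, I expect, for $\zeta_0\neq\alpha$,
\[
\bigl(\Delta_\Sigma T_R^\alpha\bigr)(\{\zeta_0\}) = (\deg_R\zeta_0)\,\uuno(\zeta_0 \text{ fixed}) - (\deg_R\dir{\zeta_0\alpha})\,\uunononrepel{\dir{\zeta_0\alpha}} + \sum_{v\in T_{\zeta_0}\pwanb\smallsetminus T_{\zeta_0}\Sigma}\dep_R v .
\]
At $\alpha$ I expect the same expression minus $\deg R$. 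Here $\deg_R\dir{\zeta_0\alpha}\le\deg_R\zeta_0$ by \cref{degreessumup}, so each such mass plus the $\deg R\,\delta_\alpha$ correction is nonnegative. Hence $\mu_\Sigma:=\Delta_\Sigma T_R^\alpha+(\deg R)\delta_\alpha$ is a positive measure of mass $\deg R$ on $\Sigma$. This gives the uniform bound $|\Delta_\Sigma T_R^\alpha|(\Sigma)\le 2\deg R$, and thus $T_R^\alpha\in\bdv(\pwanb)$. Consequently $\tau_R^\alpha:=\Delta T_R^\alpha+(\deg R)\delta_\alpha$ is positive of total mass $\deg R$, since $(r_\Sigma)_*\tau_R^\alpha=\mu_\Sigma\ge0$ for all $\Sigma$.

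\textbf{Step 2: the local term.} I interpret the first two terms using \cref{gradiinnafl} and \cref{piuvicinoadalphaspento}. If $\zeta_0$ is not fixed, they vanish. If $\zeta_0\in\Gamma\in\nas(R)$ (note that points of $\Sigma\subset\hypb$ that are fixed are automatically non-attracting), then for $\zeta_0\neq r_\Gamma(\alpha)$ the direction toward $\alpha$ is indifferent of degree $1$, and the local term is $\deg_R\zeta_0-1$. For $\zeta_0=r_\Gamma(\alpha)$ the local term is $\deg_R\zeta_0=(\deg_R\zeta_0-1)+1$. This is exactly the mass that the right-hand side of \cref{laplaciazzoT} assigns to $\zeta_0$.

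\textbf{Step 3: the off-graph terms (main obstacle).} The remaining work is to handle $\sum_{v\notin T_{\zeta_0}\Sigma}\dep_R v$, i.e.\ to show that the retraction to $\Sigma$ of the candidate measure $\sum_\Gamma\bigl((r_\Gamma)_*\delta_\alpha+\sum_{\zeta\in\Gamma}(\deg_R\zeta-1)\delta_\zeta\bigr)$ reproduces these terms. Only finitely many directions have positive depth, since preimages are finite. So for any given $\zeta_0$ we can enlarge $\Sigma$ to include points in all such directions, killing the off-graph sum at $\zeta_0$; this shows $\tau_R^\alpha(\{\zeta_0\})$ equals the local term of Step 2.

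It remains to show that $\tau_R^\alpha$ is purely atomic and supported on $\bigcup\Gamma$, together with the type I points $r_\Gamma(\alpha)$ when $\Gamma$ is a type I singleton. I would prove this by showing that for an open disk $\bdisk(v)$ at $\zeta_0$ with $\alpha\notin\bdisk(v)$, the mass $\tau_R^\alpha(\bdisk(v))=\dep_R v$ agrees with the candidate measure's mass of $\bdisk(v)$. I would argue by exhausting the disk with finite subgraphs rooted at $\zeta_0$, using the local-constancy of \cref{contaccio} to show that mass escapes only toward fixed points. Measures agreeing on all such disks and atoms coincide.

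Finally, $\tau_R^\alpha(\Gamma)=\deg_R\Gamma$ follows by summing the atoms on each closed $\Gamma$ (\cref{gammachiuso}). Then \cref{sommadeigradideipinaffoli} follows from total mass $\deg R$, since the $\Gamma$ are disjoint and carry all the mass.
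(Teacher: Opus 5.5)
Your Steps 1 and 2 are sound. Step 1 is also a more direct route than the paper's to the first half of the statement. You compute $\Delta_\Sigma T_R^\alpha$ on an arbitrary finite subgraph $\Sigma\ni\alpha$ from \cref{contaccio} and \cref{okuyamasformula}, keeping the off-graph depths explicit. This gives $T_R^\alpha\in\bdv(\pwanb)$, and shows that $\tau_R^\alpha:=\Delta T_R^\alpha+(\deg R)\delta_\alpha$ is positive of mass $\deg R$, without the paper's quasi-finite support of $t_R^\alpha$, the kernel $\Sigma^{\ker}$, and the twigs.

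There is a small slip in Step 3. Enlarging $\Sigma$ only shows that $(r_\Sigma)_*\tau_R^\alpha(\{\zeta_0\})$ equals the local term. To get the atom itself you also need $\tau_R^\alpha(\bdisk(v))=0$ for the directions left off the graph. Comparing $\Sigma$ with $\Sigma\cup[\zeta_0,\xi]$, $\xi\in\bdisk(v)\cap\hypb$, gives $\tau_R^\alpha(\bdisk(v))=\dep_R v$ for every $v\neq\dir{\zeta_0\alpha}$, which settles this.

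The genuine gap is the rest of Step 3, which is where the identification with the sum over $\nas(R)$ actually happens. Atoms at points of $\hypb$ say nothing about a possible diffuse part of $\tau_R^\alpha$, e.g.\ mass spread over type I or type IV ends of an infinitely branching tree. Moreover, type I points never lie in finite subgraphs, so your atom computation does not reach them at all. Your proposed remedy is to check that the candidate measure gives mass $\dep_R v$ to each disk $\bdisk(v)\not\ni\alpha$. That is a localized form of the theorem itself. It even presupposes that the candidate sum is a finite measure, i.e.\ that $\nas(R)$ is finite, which is part of what must be proved. ``Mass escapes only toward fixed points'' is exactly the claim that needs an argument. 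The paper supplies it by showing that the nonnegative flux makes $t_R^\alpha$ decrease away from $\alpha$. Integrality and compactness then force $t_R^\alpha$ to vanish off a quasi-finite subgraph, after which $\tau_R^\alpha$ is an explicit finite sum of Dirac masses.

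Within your own framework the gap closes quickly, as follows.
\begin{enumerate}
\item By Step 1, every $(r_\Sigma)_*\tau_R^\alpha$ with $\Sigma\ni\alpha$ has nonnegative integer coefficients and mass $\deg R$, so it has the form $\sum_{j=1}^{\deg R}\delta_{p_j}$.
\item The set of such measures is the continuous image of the compact space $(\pwanb)^{\deg R}$, hence weak-$*$ closed.
\item $(r_\Sigma)_*\tau_R^\alpha\to\tau_R^\alpha$ along the directed set of finite subgraphs; this is the fact implicit in your positivity claim.
\item Therefore $\tau_R^\alpha$ is itself a sum of $\deg R$ unit point masses.
\item Its atoms in $\hypb$ are your local terms.
\item At a type I point $\zeta_0$, the atom is the limit of $t_R^\alpha(\xi)=\tau_R^\alpha(V^\alpha(\xi))$ as $\xi\to\zeta_0$ along $(\zeta_0,\alpha)$. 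By \cref{contaccio} and \cref{okuyamasformula}, this limit is $1$ if $\zeta_0$ is a repelling fixed point and $0$ otherwise.
\end{enumerate}
Comparing these atoms with Step 2 then yields the displayed formula for $\tau_R^\alpha$, the finiteness of $\nas(R)$, and the count $\sum_\Gamma\deg_R\Gamma=\deg R$.
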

        \begin{proof}
            Throughout the proof, we will write $T= T_R^\alpha, t = t_R^\alpha, \tau =\tau_R^\alpha$, thus omitting the dependence from $R$ and $\alpha$. For $\zeta \in \pwanb, v \in T_\zeta\pwanb$, we denote by $t(v)$ the value of $t$ on the segment $(\zeta,\zeta_v)$ in the direction $v$ from \cref{contaccio}.
            
            Recalling that $\dir{\alpha\alpha}=\alpha$, it follows from \cref{contaccio} and \cref{okuyamasformula} that
            \begin{flalign}\label{bucogauss}
                t(\dir{\alpha\alpha}) \ - \sum_{v \in T_\alpha \pwanb \smallsetminus \{\dir{\zeta\alpha}\}} t(v) \ = -\sum_{v \in T_\alpha \pwanb } t(v) \ = -\sum_{v \in T_{\alpha}\pwanb} \dep v \ =  (\deg_R \alpha ) \times \uuno   \left ( \alpha \text{ is fixed} \right ) -\deg R \, .
            \end{flalign}
            Suppose now $\zeta\in \fix_{<1}(R)$. Then $\zeta$ is type I by \cref{classifidinamica} (a), so $\zeta \neq \alpha \in \hypb$, whence $\dir{\zeta\alpha}$ is a direction. By \cref{classifitangent}  the point $\zeta$ only has one direction, $\dir{\zeta \alpha}$, which is therefore fixed by the map on tangent spaces $R: T_\zeta\pwanb \to T_\zeta\pwanb = \{\dir{\zeta\alpha}\}$, whence by \cref{degreessumup} (a) we have $\deg_R \zeta = \deg_R \dir{\zeta\alpha}$. Since $\zeta$ is attracting, by \cref{taledirezionetalepunto} (a) the fixed direction $\dir{\zeta\alpha}$ is attracting, and is therefore non-repelling by \cref{dicotomiadirezioni}. From this discussion,  \cref{contaccio} and \cref{okuyamasformula}, it follows that
            \begin{multline}\label{piattofissoatt}
                t(\dir{\zeta\alpha}) \ - \sum_{v \in T_\zeta\pwanb \smallsetminus \{\dir{\zeta\alpha}\}} t(v) = \  t(\dir{\zeta\alpha}) \
                =  \ \deg R \ - \deg_R \dir{\zeta\alpha} \ - \dep_R \dir{\zeta\alpha}    \ \\
                = \ \deg R \ - \deg_R \zeta \ - \dep_R \dir{\zeta\alpha} \ = \ 0, \qquad  \forall \ \zeta \in \fix_{<1}(R)   .
            \end{multline}
            If $\zeta \in \pwanb \smallsetminus ( \fix(R) \cup \{\alpha \} )$, then no direction at $\zeta$ is fixed, so that, again by Propostion \ref{contaccio} and \cref{okuyamasformula},
            \begin{multline}\label{piattononfisso}
                t(\dir{\zeta\alpha}) \ - \sum_{v \in T_\zeta \pwanb \smallsetminus \{\dir{\zeta\alpha}\}} t(v) \ = \ \deg R \ -\dep_R \dir{\zeta \alpha} \ - \sum_{v \in T_\zeta \pwanb \smallsetminus \{ \dir{\zeta \alpha}\}} \dep_R v \ = \ 0 , \\
                \forall \ \zeta \in \pwanb \smallsetminus ( \fix(R) \cup \{\alpha \}).
            \end{multline}
            If now $\zeta \in \nafl(R)$ and is type I, then $\zeta \neq \alpha \in \hypb$, so by \cref{degreessumup} (a) we have $\deg_R \dir{\zeta\alpha} \leq \deg_R \zeta$. By \cref{gradiinnafl} (b) we then have $1 = \deg_R \zeta = \deg_R \dir{\zeta\alpha}$. Instead, if $\zeta \in \nafl(R) \smallsetminus \{\alpha\}$, is not type I, and $\dir{\zeta \alpha}$ is a non-repelling fixed direction, then $\deg_R \dir{\zeta \alpha} = 1$ by \cref{indifferentdirections} (c). It follows that  
            \begin{flalign*}
                &&(\deg_R \dir{\zeta \alpha}) \times \uuno \left (\substack{\dir{\zeta \alpha} \text{ is a non-repelling} \\ \text{fixed direction}} \right ) \ = \  \uuno \left (\substack{\dir{\zeta \alpha} \text{ is a non-repelling} \\ \text{fixed direction}} \right ), && \forall \ \zeta \in \nafl(R) \smallsetminus \{\alpha \},
            \end{flalign*}
            whence by \cref{contaccio} and \cref{okuyamasformula},
            \begin{multline}  \label{bozzononatt}
                t(\dir{\zeta\alpha}) \ - \sum_{v \in T_\zeta\pwanb \smallsetminus \{\dir{\zeta\alpha}\} } t(v) 
                = \  \deg R \ - \dep_R \dir{\zeta \alpha} \ - \uuno \left ( \substack{\dir{\zeta\alpha} \text{ is a non-repelling} \\ \text{fixed direction}} \right ) \ - \sum_{v \in T_\zeta \pwanb \smallsetminus \{\dir{\zeta \alpha}\}} \dep_R v \\
                 = \ \deg_R \zeta - \ \uuno \left ( \substack{ \dir{\zeta\alpha} \text{ is a non-repelling}  \\ \text{fixed direction}}\right )  ,  \qquad \qquad \forall \ \zeta \in \nafl(R) \smallsetminus \{\alpha \} \minis .
            \end{multline}
            The right-hand sides in (\ref{piattofissoatt}, \ref{piattononfisso}, \ref{bozzononatt}) are all nonnegative, hence
            \begin{flalign}\label{decrescentina}
                &&0 \leq  \sum_{v \in T_\zeta \pwanb \smallsetminus \{\dir{\zeta \alpha}\}} t(v)  \leq  t  (  \dir{\zeta\alpha}  ) ,  &&\forall \ \zeta \in \pwanb \smallsetminus \{\alpha \} \minis .
            \end{flalign}
            Let $\zeta \in \pwanb \smallsetminus\{\alpha\}$, let $v \in T_\zeta\pwanb \smallsetminus \{\dir{\zeta \alpha}\}$, and let $\zeta_v, \zeta_{ \dir{\zeta\alpha}}$ be as in \cref{contaccio}. Then by the above inequality, we have that $t$ is Lebesgue$\text{-a.e.}$ decreasing on the segment $(\zeta_{\dir{\zeta\alpha}}, \zeta_v )$. By using an open covering, it then follows that on any segment $(\alpha, \zeta], \ \zeta\in \hypb$, the function $t$ is Lebesgue-a.e. decreasing, with respect to the order on $(\alpha,\zeta]$ in which $\zeta$ is the maximal element. Without loss of generality, since our statement is about $T$, we can in fact assume that $t$ is decreasing, on all segments $(\alpha,\zeta)$. It follows that  
            \begin{equation}\label{direzioneschiaccina}
                0 \leq t(\xi) \leq t(v), \qquad \forall \ \zeta \in \pwanb , \ v \in T_\zeta\pwanb \smallsetminus \{\dir{\zeta\alpha}\},\ \xi \in \bdisk(v)  . 
            \end{equation}
            Now, if $t$ is constant on a segment $(\zeta,\zeta'') \subset (\alpha,\zeta'')$, then for any $\zeta' \in (\zeta,\zeta'')$, we have $t( \dir{\zeta'\alpha}) = t({\dir{\zeta' \zeta}}) = t({\dir{\zeta' \zeta''}})$, so that by the inequality \cref{decrescentina}, it follows that $t(v)=0$ for all $v \in T_{\zeta'}\pwanb \smallsetminus \{\dir{\zeta'\alpha}, \dir{\zeta' \zeta''} \}$. Considering \cref{direzioneschiaccina} and that $\bannulus(\zeta,\zeta'') \smallsetminus (\zeta,\zeta'') = \bigsqcup_{\zeta'\in(\zeta,\zeta'')} \bigsqcup_{v\in \pwanb \smallsetminus \{\dir{\zeta'\zeta},\dir{\zeta'\zeta''}\}} \bdisk(v)$,  it then follows that
            \begin{equation}\label{zerofuoridalloscheletro}
                \left ((\zeta,\zeta'')\subset (\alpha,\zeta''), \ t|_{(\zeta,\zeta'')} \text{ constant} \right ) \ \implies \ t|_{\bannulus(\zeta,\zeta'') \smallsetminus (\zeta,\zeta')} = 0 \minis .
            \end{equation}
            Similarly, it follows from \cref{decrescentina}, from \cref{bucogauss}, and from the fact that $t$ is integer valued, that for $\zeta \in \pwanb$ all but finitely many directions $v\in T_\zeta\pwanb$ satisfy $t(v) =0$, so that by \cref{direzioneschiaccina} we have
            \begin{flalign}\label{cofinitozero}
                \big \{v\in T_\zeta\pwanb \ \bigm | \  t(v) = 0 \big \} = \big \{v\in T_\zeta\pwanb  \ \bigm  | \ t|_{\bdisk(v)} \equiv 0 \big \} \text{ has a finite complement in } T_\zeta\pwanb,  \qquad  \forall \ \zeta \in \pwanb .&& 
            \end{flalign}
            Now, fixing a $\zeta \in \pwanb$, we can find $\zeta_v$ as in \cref{contaccio} for all finitely many directions $v \in T_\zeta \pwanb$ such that $t(v) >0$, and denote these $\zeta_v$ by $\xi_1,\ldots , \xi_k$. Then by \cref{zerofuoridalloscheletro} and \cref{cofinitozero} we can compose an open affinoid as in \cref{affinoidesmontato},
            \begin{equation*}
                U_\zeta = \{\zeta\} \sqcup \bannulus(\zeta, \xi_1) \sqcup \ldots \sqcup \bannulus (\zeta, \xi_k) \sqcup \left ( \bigsqcup_{v \in T_\zeta\pwanb \smallsetminus \{\dir{\zeta\xi_1}, \ldots , \dir{\zeta\xi_k}\}} \bdisk(v) \right )  = \aff{\xi_1, \ldots , \xi_k} 
            \end{equation*}
            which is a neighbourhood of $\zeta$ where $t$ is zero everywhere outside $\Sigma_\zeta = \spando\{\zeta, \xi_1, \ldots , \xi_k\}$. Since $\zeta \in \pwanb$ was arbitrary, we can in fact cover $\pwanb$ with open sets $U_\zeta$ as above, and in fact with finitely many of these, because $\pwanb$ is compact.  It follows that there exists a quasi-finite subgraph $\Sigma = \spando \{ \zeta_1, \ldots , \zeta_r\} \subset \pwanb$ outside of which $t$ vanishes. Up to enlarging $\Sigma$, we can assume $r \geq 2$ and $\alpha \in \Sigma^\circ$. By \cref{fogliespandono} we can take $\zeta_1, \ldots ,\zeta_r$ to be the points with only one direction in $\Sigma$, so that 
            \begin{flalign}\label{zetaconipuntanoalpha}
                &&T_{\zeta_i} \Sigma = \{\dir{\zeta_i \alpha} \}, &&\forall \ i \in \{ 1, \ldots ,r \}.
            \end{flalign}
            We have
            \begin{equation}\label{zerofuoridasigma}
                t(\zeta) = t(v) = 0,  \quad \forall \  \zeta \in \pwanb \smallsetminus \Sigma ,  \ v \in T_\zeta\pwanb, \quad \text{and}  \quad t(w) = 0, \quad  \forall \ w \in T_\zeta \pwanb \smallsetminus T_\zeta \Sigma, \ \zeta \in \Sigma \, . 
            \end{equation}
            Given our quasi-finite graph $\Sigma$ and our points $\zeta_1, \ldots ,\zeta_r$ with only one direction in $\Sigma$, let $\zeta_1^\circ, \ldots , \zeta_r^\circ$ be as in \cref{approxzetacirco}. By \cref{approxzetacirco} (a) we have in particular that
            \begin{flalign}\label{scomponisuzetacirco}
                &&(\zeta_i,\zeta_j) = (\zeta_i,\zeta_i^\circ] \sqcup (\zeta_i^\circ,\zeta_j^\circ) \sqcup [\zeta_j^\circ, \zeta_j), && \forall \ i,j\in \{1, \ldots, r \}, \ i \neq j .
            \end{flalign}
            Since $\alpha \in (\Sigma)^\circ = \bigcup_{1 \leq i <j \leq r} (\zeta_i,\zeta_j)$, there exist $i, j \in \{1, \ldots, r\} , \ i\neq j$ such that $\alpha \in (\zeta_i,\zeta_j)$. By \cref{scomponisuzetacirco}, up to replacing $\zeta_i^\circ$ with another point of $(\zeta_i, \zeta_i^\circ)$ and $\zeta_j^\circ$ with another point in $(\zeta_j^\circ, \zeta_j)$, we can assume that $\alpha \in (\zeta_i^\circ,\zeta_j^\circ)$. Doing this for all such $i \neq j$, since  $\alpha \in(\Sigma)^\circ = \bigcup_{j \in \{1, \ldots , r \}\smallsetminus \{i\}} (\zeta_i, \zeta_j)$ for all $i$, we have
            \begin{equation}\label{alphastainunijcirco}
                \forall \ i \in \{1, \ldots , r\}, \: \exists \ j \in \{1, \ldots, r\}\smallsetminus \{i\}, \quad \text{such that } \quad  \alpha \in (\zeta_i^\circ,\zeta_j^\circ).
            \end{equation}
            Let now again $i \in \{1, \ldots, r \}$, and let $\zeta\in (\zeta_i, \zeta_i^\circ)$. Then $\dir{\zeta\zeta_i} \neq \dir{\zeta\zeta_i^\circ}$, and by \cref{scomponisuzetacirco} we have $\dir{\zeta\zeta_j} =\dir{\zeta\zeta_i^\circ}$, for all $j\in \{1, \ldots , r\} \smallsetminus \{i\}$. Moreover, from \cref{scomponisuzetacirco} and \cref{alphastainunijcirco} together, it follows that $\dir{\zeta\zeta_i^\circ} =\dir{\zeta\alpha}$. By \cref{tangenteinterno} we conclude that $T_\zeta\Sigma$ consists of exactly two distinct directions, namely $\dir{\zeta\zeta_i}$ and $\dir{\zeta\alpha}$, for all $\zeta\in (\zeta_i,\zeta_i^\circ)$. Then, up to replacing $\zeta_i^\circ$ with another point in $(\zeta_i,\zeta_i^\circ)$, in fact we obtain
            \begin{flalign}\label{spazitangentisuiramoscelli}
                &&T_\zeta\Sigma = \{\dir{\zeta\zeta_i}, \dir{\zeta\alpha}\}, \quad \dir{\zeta\zeta_i} \neq \dir{\zeta\alpha}, && \forall \ \zeta \in (\zeta_i, \zeta_i^\circ], \ \forall \ i \in \{1, \ldots , r \}.
            \end{flalign}
            Let $i \in \{1, \ldots, r\}$. Again up to replacing $\zeta_i^\circ$ with another point in $(\zeta_i, \zeta_i^\circ)$, by \cref{contaccio}, we can assume that $t$ is constant on $(\zeta_i,\zeta_i^\circ)$. In particular, by \cref{zetaconipuntanoalpha} and \cref{spazitangentisuiramoscelli}, we have that $t(\dir{\zeta_i\alpha}) = t(\dir{\zeta\zeta_i}) = t(\zeta) = t(\dir{\zeta\alpha})$ for all $\zeta\in (\zeta_i, \zeta_i^\circ)$. It follows from \cref{spazitangentisuiramoscelli} that $\dir{\zeta\zeta_i^\circ} = \dir{\zeta\alpha}$ for all $\zeta\in (\zeta_i,\zeta_i^\circ)$, whence, up to replacing $\zeta_i^\circ$ with another point in $(\zeta_i, \zeta_i^\circ)$, we can in fact write
            \begin{flalign}\label{tcostantesuiramoscelli}
                &&t(\dir{\zeta_i\alpha}) = t(\dir{\zeta\zeta_i}) = t(\zeta) = t(\dir{\zeta\alpha}), && \forall \ \zeta \in (\zeta_i,\zeta_i^\circ], \ \forall \ i \in \{1, \ldots, r\}.
            \end{flalign}
            From \cref{spazitangentisuiramoscelli} and \cref{tcostantesuiramoscelli}, we obtain
            \begin{flalign}\label{piattosuiramoscelli}
                &&t(\dir{\zeta\alpha}) \ - \sum_{v \in T_\zeta \Sigma \smallsetminus \{\dir{\zeta\alpha}\}} t(v) \ = \ 0, && \forall \ \zeta \in (\zeta_i,\zeta_i^\circ], \ \forall \ i \in \{1,\ldots, r \}.  
            \end{flalign}
            Let now        
            \begin{equation*}
                \Sigma^{\ker} := \spando \{\zeta_1^\circ, \ldots , \zeta_r^\circ\} \minis .
            \end{equation*} 
            We then have from \cref{alphastainunijcirco} that 
            \begin{equation}\label{alphastainsigmacare}
                \alpha \in (\Sigma^{\ker})^\circ .
            \end{equation}
            Let $\widetilde\Sigma$ be a finite subgraph containing $\Sigma^{\ker}$, so in particular $\zeta_1^\circ,\ldots ,\zeta_r^\circ \in \widetilde\Sigma$.  For all $i = 1, \dots , r$, since $\widetilde\Sigma$ is closed, connected, and contains $\zeta_i^\circ$, there exists $\zeta_i' \in [\zeta_i, \zeta_i^\circ]$ such that 
            \begin{equation} \label{piedinodisigmatilde}
                [\zeta_i, \zeta_i^\circ] \cap \widetilde \Sigma = [\zeta_i', \zeta_i^\circ].
            \end{equation}
            For distinct $i,j$, \cref{approxzetacirco} (a) gives us that $[\zeta_i, \zeta_j] = [\zeta_i, \zeta_i^\circ) \sqcup [\zeta_i^\circ, \zeta_j^\circ ] \sqcup (\zeta_j^\circ, \zeta_j]$, which together with \cref{piedinodisigmatilde} yields $[\zeta_i,\zeta_j]\cap \widetilde \Sigma = [\zeta_i',\zeta_i^\circ) \sqcup [\zeta_i^\circ,\zeta_j^\circ] \sqcup (\zeta_j^\circ, \zeta_j'] = [\zeta_i',\zeta_j']$. It then follows from \cref{spancomeunione} that 
            \begin{equation*}
                \Sigma \cap \widetilde\Sigma = \bigcup_{1 \leq i < j \leq r} [\zeta_i', \zeta_j'] \ = \ \spando \{\zeta_1', \ldots, \zeta_r'\} =: \Sigma'.
            \end{equation*}
            Since $\Sigma'\subset \widetilde\Sigma$, we have that $\Sigma'$ is a finite subgraph.
            
            We now define the measure $\tau_R^\alpha$ from the statement. (As mentioned above we will be omitting its dependence from $R$ and $\alpha$ in this proof.)
            \begin{equation}\label{deftau}
                \tau := \tau_R^\alpha := (\deg R) \times \delta_\alpha \ +  \sum_{\zeta\in \pwanb} \left ( t(\dir{\zeta\alpha} ) \  - \sum_{v \in T_\zeta\pwanb \smallsetminus \{\dir{\zeta\alpha}\}} t(v)\right ) \delta_\zeta.
            \end{equation}
            We see that $\tau$ is a well-defined positive measure. Indeed, by \cref{decrescentina}, all coefficients in the sum over $\zeta\in\pwanb$ are finite and nonnegative, except at most for $\zeta=\alpha$, in which case the coefficient is given by \cref{bucogauss} and is bounded from below by $-\deg R$. A priori though, $\tau$ might not be finite or even $\sigma$-finite.\\
            By \cref{piattosuiramoscelli} and \cref{zetaconipuntanoalpha} we  have
            \begin{flalign}\label{tausuiramoscelli}
                &&\sum_{\zeta \in [\zeta_i,\zeta_i']} \left ( t(\dir{\zeta\alpha}) \ - \sum_{v \in T_\zeta\Sigma \smallsetminus \{\dir{\zeta\alpha}\}} t(v) \right ) \delta_\zeta \ = t(\dir{\zeta_i\alpha} ) \times \delta_{\zeta_i}, && \forall \ i \in \{1, \ldots , r \}. 
            \end{flalign}

            By \cref{zerofuoridasigma}, we can restrict the sums in \cref{deftau} to $\zeta \in \Sigma$ and $v \in T_\zeta\Sigma \smallsetminus \{\dir{\zeta\alpha}\}$. Moreover, since $\Sigma' \subset \Sigma$, by \cref{approxzetacirco} (c) we have that $\Sigma = (\Sigma')^\circ \sqcup \left ( \bigsqcup_{i=1}^r [\zeta_i,\zeta_i'] \right )$, and by (b) in the same lemma we have $T_\zeta\Sigma' = T_\zeta\Sigma$, for all $\zeta\in (\Sigma')^\circ$. Using these facts together with \cref{tausuiramoscelli}, we obtain
            \begin{equation} \label{taupiallata}
                \tau - (\deg R ) \times \delta_\alpha = \sum_{\zeta \in (\Sigma')^\circ} \left ( t(\dir{\zeta\alpha} ) \  - \sum_{v \in T_\zeta\Sigma' \smallsetminus \{\dir{\zeta\alpha}\}} t(v)\right ) \delta_\zeta\ + \sum_{i=1}^r t(\dir{\zeta_i\alpha} ) \times \delta_{\zeta_i}.
            \end{equation}
            Let us now compute the Laplacian of $T$ on $\widetilde\Sigma$. Applying the Fundamental Theorem of Calculus to the definition \cref{bigt}, we obtain that
            \begin{align} \label{traducidtatpiccoloversoalpha}
                d_{\dir{\zeta\alpha}}T &= -t(\dir{\zeta\alpha}), \quad &\forall \ \zeta \in \widetilde\Sigma \smallsetminus \{\alpha\},\\
                \label{traducitpiccoloaltrove}
                d_vT &= t(v), \quad &\forall \ v \in T_\zeta\widetilde \Sigma \smallsetminus \{\dir{\zeta\alpha}\}, \quad \forall \ \zeta \in \widetilde\Sigma . 
            \end{align}
            Then, by summing over the directions at $\zeta$ in $\widetilde\Sigma$, we get
            \begin{flalign}\label{tgrandeatpiccolo}
                && -\sum_{v \in T_\zeta\widetilde\Sigma} d_v T \ = \ t(\dir{\zeta\alpha}) \ - \sum_{v \in T_\zeta \widetilde\Sigma \smallsetminus \{\dir{\zeta\alpha}\}} t(v) \, , && \forall \ \zeta \in \widetilde\Sigma .
            \end{flalign}
            By \cref{contaccio} the function $t$ is constant on segments short enough stemming from a point, so it follows from \cref{traducidtatpiccoloversoalpha} and \cref{traducitpiccoloaltrove} that 
            \begin{equation}\label{tincippiasigmatilde}
                T \in \cpa(\widetilde\Sigma).
            \end{equation}
            We can then plug \cref{tgrandeatpiccolo} into the formula \cref{laplacianopercippiasigma}, thus obtaining
            \begin{equation}\label{laplacianoditsusigmatilde}
                \Delta_{\widetilde\Sigma} T = \sum_{\zeta \in \widetilde \Sigma} \left ( t(\dir{\zeta\alpha}) \ - \sum_{v \in T_\zeta\widetilde\Sigma \smallsetminus \{\dir{\zeta\alpha}\}} t(v) \right ) \delta_\zeta \, .
            \end{equation}
            By \cref{zerofuoridasigma} we can restrict the outer sum to indices $\zeta \in \Sigma \cap \widetilde \Sigma = \Sigma'$ and the inner sum to indices $v \in (T_\zeta\Sigma \cap T_\zeta\widetilde\Sigma ) \smallsetminus \{\dir{\zeta\alpha}\}$, which by \cref{intersezionedispazitangentiinterni} can be written as $v \in T_\zeta\Sigma' \smallsetminus \{\dir{\zeta\alpha}\}$.  Notice that $\Sigma^{\ker} \subset \Sigma \cap \widetilde\Sigma = \Sigma'$. It then follows from \cref{alphastainsigmacare} and \cref{monotoniainteriore} that $\alpha \in (\Sigma')^\circ$, hence $\dir{\zeta_i'\alpha} \in T_{\zeta_i'}\Sigma'$ for all $i=1, \ldots,r$. Using (b) from \cref{approxzetacirco}, we find that in fact $\{\dir{\zeta_i'\alpha}\}  = T_{\zeta_i'} \Sigma'$, for all $i=1,\ldots,r$. By these observations we can rewrite \cref{laplacianoditsusigmatilde} as
            \begin{equation}\label{deltasigmatildefinale}
                \Delta_{\widetilde\Sigma}T = \sum_{\zeta\in (\Sigma')^\circ}  \left ( t(\dir{\zeta\alpha}) \ - \sum_{v \in T_\zeta\Sigma' \smallsetminus \{\dir{\zeta\alpha}\}} t(v) \right ) \delta_\zeta \ + \sum_{i=1}^r t(\dir{\zeta_i'\alpha}) \times \delta_{\zeta_i'}.
            \end{equation}
            In the following display the first equality follows from \cref{piedinodisigmatilde}. The second one follows from the fact that $\zeta_i'\in [\zeta_i,\zeta_i^\circ]$ for all $i$ and  \cref{tcostantesuiramoscelli}.
            \begin{flalign*}
                &&r_{\widetilde \Sigma}(\zeta_i)= \zeta_i', \qquad t(\dir{\zeta_i\alpha}) = t(\dir{\zeta_i'\alpha}), && \forall \ i \in \{1,\ldots, r\} \minis .
            \end{flalign*}
            Now, $\Sigma'\subset \widetilde \Sigma$, so $r_{\widetilde\Sigma}|_{\Sigma'} = \id_{\Sigma'}$. Considering this and the display above, if we evaluate $(r_{\widetilde\Sigma})_*$ on \cref{taupiallata}, we obtain the right-hand side in \cref{deltasigmatildefinale}, and therefore
            \begin{equation}\label{tausiritraeadeltasigmatildet}
                (r_{\widetilde\Sigma})_* \left (\tau - (\deg R ) \times \delta_\alpha \right ) = \Delta_{\widetilde\Sigma} T .
            \end{equation}
            Using \cref{decrescentina} in \cref{laplacianoditsusigmatilde}, we see that support of the negative part of $\Delta_{\widetilde\Sigma}T$ is contained in $\{\alpha\}$. Since $\Delta_{\widetilde{\Sigma}}T \in \meas_0 (\widetilde\Sigma)$, the positive and negative parts of $\Delta_{\widetilde\Sigma}$ have the same total mass, hence
            \begin{equation*}
                |\Delta_{\widetilde\Sigma}T|(\widetilde\Sigma) = -2 \Delta_{\widetilde\Sigma}T(\{\alpha\}) \leq 2\deg R \minis ,
            \end{equation*}
            where the inequality follows from \cref{bucogauss} and \cref{traducitpiccoloaltrove}. Since $\widetilde\Sigma$ is an arbitrary finite subgraph containing $\Sigma^{\ker}$, by \cref{nonservonotuttiigrafi} the inequality above and \cref{tincippiasigmatilde} are enough to conclude that $T \in \bdv(\pwanb)$. 
            
            Taking $\widetilde\Sigma = \Sigma^{\ker}$ to avert empty hypotheses, the equality \cref{tausiritraeadeltasigmatildet} shows that $\tau$ has total mass $\deg R$, because the total mass $0 = \Delta_{\Sigma^{\ker}} T(\Sigma^{\ker})$ is preserved under the pushforward operator $(r_{\Sigma^{\ker}})_*$. In \cref{deftau} we see that $\tau$ is a linear combination of Dirac deltas with integer coefficients. The finite total mass of $\tau$ then implies that the sum in \cref{deftau} is actually finite, so that in particular we obtain that $\tau$ is a finite Radon measure, that is, $\tau \in \meas(\pwanb)$. Since moreover we established that $\tau (\pwanb) = \deg R$, it follows that $\tau - (\deg R) \times \delta_\alpha \in \meas_0(\pwanb)$. Now, in the equality \cref{tausiritraeadeltasigmatildet}, $\widetilde\Sigma$ is an arbitrary finite subgraph containing $\Sigma^{\ker}$, therefore by \cref{nonservonotuttiigrafi} we conclude that 
            \begin{equation}\label{deltatugualetau}
                \Delta T = \tau - (\deg R) \times \delta_\alpha.
            \end{equation}
            Now  plugging \cref{piattofissoatt}, \cref{piattononfisso}, \cref{bozzononatt} and \cref{bucogauss} in \cref{deftau}, by the equality above we have
            \begin{equation}\label{ultimodeltat}
                \tau \ 
                = \sum_{\zeta\in \nafl(R) \smallsetminus \{\alpha\}} \left ( \deg_R \zeta - \uuno\left ( \substack{\dir{\zeta\alpha} \text{ is a non-repelling} \\ \text{fixed direction}}\right )\right ) \delta_\zeta \ + \ (\deg_R \alpha) \times \uuno\left ( \alpha \text{ is fixed}\right ) \times \delta_\alpha \, .
            \end{equation}
            Since $\alpha \in \hypb$, it cannot be an attracting fixed point. Therefore, $\uuno \left ( \alpha \text{ is fixed}\right ) = \uuno \left ( \alpha\in \nafl(R) \right )$. Moreover, since $\dir{\alpha\alpha} = \alpha$ is not a direction, we have $\uuno\left ( \substack{\dir{\alpha\alpha} \text{ is a non-repelling} \\ \text{fixed direction}}\right ) = 0$. So we can write
            \[  (\deg_R \alpha) \times \uuno\left ( \alpha \text{ is fixed}\right ) = \left ( \deg_R \alpha - \uuno\left ( \substack{\dir{\alpha\alpha} \text{ is a non-repelling} \\ \text{fixed direction}}\right )\right ) \times \uuno 
            \left ( \alpha \in \nafl(R) \right ) ,  \]
            which plugged into \cref{ultimodeltat} yields
            \[  \tau = {\mspace{-8mu} }\sum_{\zeta \in \nafl(R)} {\mspace{-8mu} } \left ( \deg_R \zeta - \uuno \left ( \substack{ \dir{\zeta \alpha} \text{ is a non-repelling}  \\ \text{fixed direction}} \right ) \right ) \delta_\zeta = {\mspace{-8mu} } \sum_{\Gamma \in \nas(R)} \sum_{\zeta \in \Gamma} \left ( \deg_R \zeta - \uuno \left ( \substack{ \dir{\zeta \alpha} \text{ is a non-repelling}  \\ \text{fixed direction}} \right ) \right ) \delta_\zeta \minis .\]
            The equalities \cref{laplaciazzoT} from the statement are now given by rearranging  \cref{deltatugualetau} and by applying Lemma \ref{piuvicinoadalphaspento} (a), (b) in the equality above.
        \end{proof}
        \begin{cor}\label[cor]{tauincasobenridotto}
            Let $R \in K(z)$ be a nonconstant rational function with potential good reduction.\\
            Then $\nas(R)$ has exactly one element $\Gamma$. Moreover, if $\alpha \in \hypb$, then:
            \begin{enumerate}[(a)]
                \item if $\deg R = 1$, then $\tau_R^\alpha = (r_\Gamma)_*\delta_{\alpha}$;
                \item if $\deg R \geq 2$ and we denote  $\{\xi\} = \julia_R$ (see \cref{reduciergodo}),      then $\tau_R^\alpha = (\deg R -1) \times \delta_\xi + (r_\Gamma)_*\delta_{\alpha}$.
            \end{enumerate}
        \end{cor}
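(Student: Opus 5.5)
Since $\nas(R)$, $\tau_R^\alpha$, local degrees and types of fixed points are all coordinate-independent (the conjugating $\eta\in\PGL(2,K)$ is a homeomorphism of $\pwanb$ preserving local degrees and multipliers, and it moves $\alpha$ to another point of $\hypb$), I would first conjugate so that $R$ itself has good reduction, or is an element of $\PGL(2,K)$ when $\deg R=1$. The whole argument then rests on \cref{tausuigamma}. Since $\sum_{\Gamma\in\nas(R)}\deg_R\Gamma=\deg R$ and each $\deg_R\Gamma\ge 1$, the task is to locate all repelling type II points and show that they all lie in a single subtree.

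\textbf{Case $\deg R\ge 2$.} By \cref{reduciergodo} we have $\julia_R=\{\xi\}$ with $\xi$ of type II, and $\xi$ is fixed because $\julia_R$ is totally invariant. Total invariance gives $R^{-1}(\xi)=\{\xi\}$, so \cref{degreessumup} (b) yields $\deg_R\xi=\deg R\ge2$. Hence $\xi$ is repelling and lies in some $\Gamma_0\in\nas(R)$. Then $\deg_R\Gamma_0\ge 1+(\deg R-1)=\deg R$, and \cref{tausuigamma} forces $\deg_R\Gamma_0=\deg R$ and leaves no room for any other subtree, since each other subtree would contribute at least $1$. So $\nas(R)=\{\Gamma_0\}$, and $\xi$ is the only point of $\Gamma_0$ with local degree $\ge2$. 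Formula \cref{laplaciazzoT} then gives $\tau_R^\alpha=(\deg R-1)\delta_\xi+(r_{\Gamma_0})_*\delta_\alpha$, which is (b).

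\textbf{Case $\deg R=1$.} Here \cref{tausuigamma} gives $\sum_\Gamma\deg_R\Gamma=1$ directly. Each term is a positive integer, and $\nas(R)\neq\varnothing$ by \cref{naflnonvuoto}, so there is exactly one $\Gamma$ and its degree is $1$. Every point has local degree $1$, so the formula reduces to $\tau_R^\alpha=(r_\Gamma)_*\delta_\alpha$, which is (a).

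\textbf{Main obstacle.} The main point to check is the coordinate-independence claim, in particular that $\deg R\ge2$ case really has $\deg_R\xi=\deg R$. The identification of $\xi$ with the Gauss point and total invariance should handle this: $R^{-1}\julia_R=\julia_R$ is stated in the paper, and combined with \cref{degreessumup} (b) it is enough. In fact the conjugation step is unnecessary, because \cref{reduciergodo} already gives $\xi$ in any coordinates. So the proof is just: counting from \cref{tausuigamma}, plus the single fixed point $\xi$ of full local degree.
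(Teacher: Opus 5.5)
Your proof is correct and essentially the same as the paper's. In degree $1$ you combine \cref{naflnonvuoto} with the count in \cref{tausuigamma}; in degree $\geq 2$ you use total invariance of $\julia_R=\{\xi\}$ and \cref{degreessumup} (b) to get $\deg_R\xi=\deg R$, which forces a single non-attracting subtree in which $\xi$ is the only point of local degree $\geq 2$. As you note yourself, the preliminary conjugation to good reduction is unnecessary and can be dropped.
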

        \begin{proof}
            Let $R \in K(z)$ with $\deg R = 1$. By \cref{naflnonvuoto} there exists $\Gamma \in \nas(R)$. Since $\deg_R \Gamma \geq 1$, by \cref{tausuigamma} we conclude that $\Gamma$ is the only non-attracting subtree of $R$. \\
            Let now $\deg R \geq 2$. Then by \cref{reduciergodo} we have that $\julia_R = \{\xi\}$ for some type II point $\xi \in \hypb$. Since $R^{-1}\julia_R = \julia_R$, it follows from \cref{degreessumup} (b) that $\xi$ is a repelling fixed point with $\deg_R \xi = \deg R$. Let $\Gamma \in \nas(R)$ be the non-attracting subtree containing $\xi$. By the definition \cref{defgradociocco} and by \cref{tausuigamma}, we have that 
            \begin{equation}\label{puntogiuliotto}
                \deg R \geq \deg_R \Gamma \geq  1 + \deg_R \xi -1 = \deg R.
            \end{equation}
            Invoking \cref{tausuigamma} again, it follows that $\Gamma$ is the only non-attracting subtree of $R$.\\
            Let now $R\in K(z)$ have potential good reduction and any degree. By what we showed above, we can write $\nas(R) =\{\Gamma\}$, and therefore by \cref{laplaciazzoT},
            \begin{equation}\label{tausuunsologamma}
                \tau_R^\alpha = (r_\Gamma)_*\delta_\alpha + \sum_{\zeta\in \Gamma} \left ( \deg_R \zeta - 1 \right ) \times \delta_\zeta .
            \end{equation}
            If now $\deg R = 1$, then $\deg_R \zeta = 1$ for all $\zeta \in \pwanb$.  If instead $\deg R \geq 2$, then it follows from \cref{puntogiuliotto} and the definition of degree of a non-attracting subtree \cref{defgradociocco}, that $\xi$ is the only point in $\Gamma$ satisfying $\deg_R \xi \geq 2$, and in fact $\deg_R \xi = \deg R$. Applying these considerations in \cref{tausuunsologamma} yields (a) and (b).
        \end{proof}
        The condition that $\nas(R)$ only have one element is not sufficient for $R$ to have potential good reduction, as we see in the following example.
        \begin{ex} \label[ex]{esempiodirumely}
            In \cite{rumely2014geometry} \S 11 Example C, for each integer $d\geq 2$, Rumely gives an example of a rational function of degree $d$ with $d-1$ type II repelling fixed points in $\hypb$, all with local degree $2$. Let $R \in K(z)$ be such a function. Then considering \cref{tausuigamma} and \cref{gradiinnafl} (b), we have
            \begin{multline*}
                d = \sum_{\Gamma \in \nas(R)} \left ( 1  + \sum_{\zeta \in \Gamma} (\deg_R \zeta - 1 ) \right ) = \# \nas(R) + \sum_{\zeta \in \rf(R) \cap \hypb} (2 -1 ) \\
                = \# \nas(R) + d - 1 \geq 1 + d  - 1 = d 
            \end{multline*}
            It follows that $\# \nas(R) = 1$. Since there are $d-1$ repelling fixed points of type II and therefore at least $d-1$  points in $\julia_R = \supp \nu_R$, for $d \geq 3$ the rational function $R$ cannot have potential good reduction by \cref{reduciergodo}. 
        \end{ex}
        \begin{prop}\label[prop]{formaditausuciascungamma}
        	Let $\Gamma \in \nas (R)$.
        	\begin{enumerate}[(a)]
        		\item If $\Gamma$ contains a type I repelling point $\zeta$, then 
        		\begin{equation*}
        			\Gamma = \{\zeta\}, \quad \tau^\alpha_\Gamma = \delta_\zeta, \ \ \ \text{and}  \  \deg_R \Gamma = \deg_R \zeta = 1 \minis .
        		\end{equation*}
        		\item If $\Gamma$ is a repelling subtree containing no repelling type I points, then
        		\begin{equation*}
        			\tau^\alpha_R|_{\Gamma} = (r_\Gamma)_*\delta_\alpha + \sum_{\zeta \in \Gamma \cap \fix_{>1}(R)} ( \deg_R \zeta -1 ) \delta_\zeta \minis .
        		\end{equation*} 
        		\item If $\Gamma$ is an indifferent subtree, then \begin{equation*}
        			\tau^\alpha_R|_{\Gamma} = (r_\Gamma)_*\delta_\alpha \minis .
        		\end{equation*}
        	\end{enumerate}
        \end{prop}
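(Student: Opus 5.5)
The plan is to read everything off the explicit formula \cref{laplaciazzoT} from \cref{tausuigamma}, which gives
\[
\tau^\alpha_R|_\Gamma = (r_\Gamma)_*\delta_\alpha + \sum_{\zeta\in\Gamma}(\deg_R\zeta-1)\delta_\zeta .
\]
Here distinct non-attracting subtrees are disjoint, and the term $(r_{\Gamma'})_*\delta_\alpha$ for another subtree $\Gamma'$ is supported on $\Gamma'$. The problem therefore reduces to understanding which points $\zeta\in\Gamma$ have $\deg_R\zeta\geq 2$, and this is settled by \cref{gradiinnafl}: for $\zeta\in\nafl(R)$ we have $\deg_R\zeta\geq 2$ if and only if $\zeta$ is a type II repelling fixed point. (I read $\tau^\alpha_\Gamma$ in (a) as $\tau^\alpha_R|_\Gamma$.)

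For (c), an indifferent subtree contains no repelling points. Hence every $\zeta\in\Gamma$ has $\deg_R\zeta=1$ by \cref{gradiinnafl} (a), the sum vanishes, and only $(r_\Gamma)_*\delta_\alpha$ survives. For (b), the repelling points of $\Gamma$ are all of type II by hypothesis and \cref{classifidinamica} (b). The nonzero summands are thus exactly those with $\zeta\in\Gamma\cap\fix_{>1}(R)$. Conversely, every such $\zeta$ has $\deg_R\zeta\geq2$, although that is not needed, since terms with $\deg_R\zeta=1$ contribute zero anyway. Restricting the sum to $\Gamma\cap\fix_{>1}(R)$ gives the formula.

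For (a), the main step is to show $\Gamma=\{\zeta\}$ when $\zeta\in\Gamma$ is type I and repelling. A type I point has a single direction $v$ by \cref{classifitangent} (a). Suppose $\Gamma$ contained another point. Then $v\in T_\zeta\Gamma$, so by \cref{indifferelligammelli} the direction $v$ would be an indifferent fixed direction. But \cref{taledirezionetalepunto} (b), in its converse for type I points, says that $\zeta$ repelling forces it to have a repelling fixed direction, which must be $v$. By \cref{dicotomiadirezioni} (c), a fixed direction cannot be both indifferent and repelling; this contradiction gives $\Gamma=\{\zeta\}$. Then $r_\Gamma(\alpha)=\zeta$, and $\deg_R\zeta=1$ because a type I point with $\deg_R\zeta\geq2$ would be attracting by \cref{classifidinamica} (d). The formula then yields $\tau^\alpha_R|_\Gamma=\delta_\zeta$ and $\deg_R\Gamma=1$.

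The only delicate point is the trichotomy argument in (a): one must check that the definitions of indifferent and repelling directions are genuinely exclusive. This holds because the indifferent condition gives $R((\zeta,\xi'])=(\zeta,\xi']$, while the repelling condition gives $R((\zeta,\xi'))\supset(\zeta,\xi']$; on a segment where $R$ is the identity the image of $(\zeta,\xi')$ cannot contain $\xi'$. Alternatively, \cref{indifferentdirections} (a) gives $R=\id$ near $\zeta$ along $v$, which is immediately incompatible with the repelling condition.
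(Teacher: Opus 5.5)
Your proposal is correct and follows essentially the same route as the paper. Parts (b) and (c) are read off from the formula in \cref{tausuigamma} using \cref{gradiinnafl}~(a). In (a), the unique direction at the type I point is repelling by \cref{taledirezionetalepunto}~(b), which rules out $\Gamma$ containing a second point, and $\deg_R\zeta=1$ follows from \cref{classifidinamica}~(d). The only cosmetic difference is in how (a) reaches its contradiction: the paper says directly that a repelling direction has short segments free of fixed points, while you go through \cref{indifferelligammelli} and the exclusivity of indifferent and repelling directions, which you verify correctly.
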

        \begin{proof}
        	Suppose $\Gamma \in \nas(R)$ and that $\zeta\in \Gamma$ is a repelling type I point. Then by \cref{classifitangent} (a) it only has one direction and by \cref{taledirezionetalepunto} (b)  that direction is repelling. It follows that every nonempty segment $(\zeta,\xi)$ will contain some nonempty subsegment $(\zeta,\zeta')$ containing no fixed points. We conclude that $\Gamma = \{\zeta\}$. By \cref{classifidinamica} (d), we have $\deg_R\zeta = 1$. Now the rest of point (a) follows from \cref{tausuigamma}.
        	Points (b) and (c) follow from \cref{tausuigamma} and \cref{gradiinnafl} (a).
        \end{proof}

        \section{Equidistribution}\label{sezequidistro}
        \subsection{Equidistribution of the measures $\tau_{R^n}^\alpha$}\label{sezequidistrotau}
        For a Radon probability measure $\mu$ on $\pwanb$, we define
        \begin{flalign*}
            && f_\mu^\alpha(\alpha) = 0, \quad f_\mu^\alpha(\xi) = \mu(V^\alpha(\xi)) , \qquad F_\mu^\alpha(\zeta) = \int_\alpha^\zeta f_\mu^\alpha d \lambda && \forall \ \xi \in \hypb \smallsetminus \{\alpha\}, \ \zeta \in \hypb .
        \end{flalign*}
        Notice that by the definition of $V^\alpha(\xi)$ and by \cref{lemmabruttu},  $f^\alpha_\mu$ is monotone on $(\alpha,\zeta)$ for all $\zeta\in \pwanb$, so  $F^\alpha_\mu$ is well-defined.
        \begin{prop}\label[prop]{potenzialiefzdidistro}
            Let $\mu \in \meas(\pwanb)$ be a discrete measure on $\pwanb$. Then
            \begin{equation}\label{fgrandeeilpotenziale}
                F^\alpha_\mu \in \bdv(\pwanb), \qquad \Delta F^\alpha_\mu = \mu - \delta_\alpha \minis .
            \end{equation}
            Moreover, if $\mu$ is a discrete \textit{probability} measure, then $-F^\alpha_\mu \in \ag[\alpha]$.
        \end{prop}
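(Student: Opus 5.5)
The plan is to prove the statement first for a single Dirac mass, then pass to finite convex combinations by linearity, and finally to countable ones by a limiting argument based on the Favre--Jonsson results (\cref{arachelloverdechiuso} and \cref{arachelloverdesattoh}). Since $F^\alpha_\mu$ is only defined for probability measures and $\Delta$ takes values in $\meas_0(\pwanb)$, I will take $\mu = \sum_k c_k \delta_{\zeta_k}$ with $c_k \geq 0$ and $\sum_k c_k = 1$.

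\emph{Step 1: a single Dirac mass.} Let $\mu = \delta_\zeta$. For $\xi \in \hypb \smallsetminus \{\alpha\}$ we have $\zeta \in V^\alpha(\xi)$ if and only if $\xi \in (\alpha,\zeta)$. This follows from \cref{defvalpha}, \cref{lemmabruttu} and \cref{lemmaobbrobrioso}. Hence $f^\alpha_{\delta_\zeta} = \uuno_{(\alpha,\zeta)}$, and therefore
\[ F^\alpha_{\delta_\zeta}(\xi) = \varrho\bigl(\alpha, r_{[\alpha,\zeta]}(\xi)\bigr). \]
This is finite for $\xi \in \hypb$ even when $\zeta$ is of type I, and it is continuous on $\hypb$.

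Now fix a finite subgraph $\Sigma$ containing $\alpha$; by \cref{nonservonotuttiigrafi} it suffices to check such $\Sigma$. Put $\zeta' = r_\Sigma(\zeta)$. The restriction of $F^\alpha_{\delta_\zeta}$ to $\Sigma$ behaves as follows:
\begin{itemize}
\item along $[\alpha,\zeta']$ it has slope $1$ in the direction away from $\alpha$;
\item on each branch of $\Sigma$ leaving $[\alpha,\zeta']$ it is constant.
\end{itemize}
So it lies in $\cpa(\Sigma)$. Formula \cref{laplacianopercippiasigma} then gives
\[ \Delta_\Sigma F^\alpha_{\delta_\zeta} = \delta_{\zeta'} - \delta_\alpha = (r_\Sigma)_*(\delta_\zeta - \delta_\alpha). \]
The total variation of this measure is at most $2$ on every $\Sigma$, so $F^\alpha_{\delta_\zeta} \in \bdv(\pwanb)$ and $\Delta F^\alpha_{\delta_\zeta} = \delta_\zeta - \delta_\alpha$. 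Since $F^\alpha_{\delta_\zeta}$ is continuous, $-F^\alpha_{\delta_\zeta}$ is upper semicontinuous, and $\Delta(-F^\alpha_{\delta_\zeta}) = \delta_\alpha - \delta_\zeta$. Hence $-F^\alpha_{\delta_\zeta} \in \ag[\alpha]$.

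\emph{Step 2: finite convex combinations.} By countable additivity, $f^\alpha_\mu = \sum_k c_k f^\alpha_{\delta_{\zeta_k}}$ pointwise. By monotone convergence, $F^\alpha_\mu = \sum_k c_k F^\alpha_{\delta_{\zeta_k}}$ pointwise on $\hypb$, with finite values. For finite sums, the statement follows from Step 1 by linearity of $\Delta$. The set $\ag[\alpha]$ is convex, because the upper semicontinuity condition and the condition that $\Delta f = \delta_\alpha - \nu$ with $\nu$ a probability measure are both preserved under convex combinations. So finite convex combinations of the $-F^\alpha_{\delta_{\zeta_k}}$ again lie in $\ag[\alpha]$.

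\emph{Step 3: countable sums, the main obstacle.} Let $s_N = \sum_{k \leq N} c_k$ and $G_N = s_N^{-1} \sum_{k \leq N} c_k F^\alpha_{\delta_{\zeta_k}}$, which is the potential of the normalized truncated measure $\mu_N$. By Step 2, $-G_N \in \ag[\alpha]$ and $\Delta(-G_N) = \delta_\alpha - \mu_N$. As $N \to \infty$, $-G_N \to -F^\alpha_\mu$ pointwise, and the limit is not identically $-\infty$. By \cref{arachelloverdechiuso} the limit lies in $\ag[\alpha]$, and in particular in $\bdv(\pwanb)$. By \cref{arachelloverdesattoh}, $-\Delta$ is continuous on $\ag[\alpha]$ for the topology of pointwise convergence, so
\[ \Delta(-F^\alpha_\mu) = \lim_{N \to \infty} (\delta_\alpha - \mu_N) = \delta_\alpha - \mu, \]
where the limit holds weakly because $\mu_N \to \mu$ in total variation. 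This yields \cref{fgrandeeilpotenziale} and $-F^\alpha_\mu \in \ag[\alpha]$.

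I expect the delicate points to be the exact identification of $f^\alpha_{\delta_\zeta}$, including the endpoint cases $\xi = \alpha$ and $\zeta$ of type I, and the justification of interchanging $\Delta$ with the infinite sum. For the latter I rely on the closedness and continuity results cited above rather than on a direct estimate. If one preferred to avoid them, the fallback is to show that the partial sums converge uniformly on each finite $\Sigma$, using $0 \leq F^\alpha_{\delta_\zeta} \leq \max_{\xi \in \Sigma} \varrho(\alpha,\xi)$, and then to pass the directional derivatives to the limit.
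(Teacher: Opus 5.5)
Your proof is correct, but you place the limiting argument somewhere other than where the paper does.

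The paper computes $\Delta F^\alpha_{\delta_\zeta}$ by hand only for $\zeta\in\hypb$, on finite subgraphs containing $[\zeta,\alpha]$. For arbitrary $\zeta$, notably of type I, it approximates $\zeta$ by points $\zeta_n\in(\alpha,\zeta)$ increasing to $\zeta$ and invokes \cref{arachelloverdechiuso} and \cref{arachelloverdesattoh}. It then passes to discrete measures ``by linearity'', which literally covers only finitely supported ones.

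You instead derive the closed form $F^\alpha_{\delta_\zeta}=\varrho\bigl(\alpha,r_{[\alpha,\zeta]}(\cdot)\bigr)$. You then compute $\Delta_\Sigma$ on every finite subgraph $\Sigma\ni\alpha$ through $\zeta'=r_\Sigma(\zeta)$. This handles type I points with no approximation at all and gives continuity of $F^\alpha_{\delta_\zeta}$ immediately. You spend the Favre--Jonsson results on the passage from finite to countable sums instead. So your argument genuinely covers countably supported discrete measures. The paper proves only the finitely supported case, which is all it needs for $\nu_n$ and $\tau^\alpha_{R^n}$ in \cref{equidistrotau}.

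Your normalization caveat is also right: $\Delta F^\alpha_\mu=\mu-\delta_\alpha$ presupposes $\mu(\pwanb)=1$. For a positive discrete measure of mass $m$, scaling gives $\Delta F^\alpha_\mu=\mu-m\,\delta_\alpha$. That is the form the paper actually applies to $\tau^\alpha_{R^n}$, with $m=\deg R^n$.
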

        \begin{proof}
            Suppose first that $\zeta\in \hypb$ and $\mu= \delta_\zeta$. Then we can verify that $f_\mu^\alpha = \uuno_{(\alpha,\zeta)}$ and that on each finite subgraph $\Sigma$ containing $[\zeta,\alpha]$ we have $\Delta_\Sigma F^\alpha_\mu = \delta_\zeta - \delta_\alpha$, which shows in particular that $|\Delta_\Sigma F^\alpha_\mu|(\Sigma) \leq 2 $. It follows in particular that $F^\alpha_\mu \in \bdv(\pwanb)$ and that $\Delta F^\alpha_\mu = \mu - \delta_\alpha$. Notice that $F^\alpha_\mu = F \circ r_{[\alpha,\zeta]}$, where $F : [\alpha,\zeta] \to \RR, \xi \mapsto \lambda((\alpha,\xi)) =\varrho(\alpha,\xi)$. It follows from this that $F^\alpha_\mu$ is continuous, so we obtain that $-F^\alpha_\mu \in \ag[\alpha]$.

            Let now $\zeta\in \pwanb$ be arbitrary and again $\mu= \delta_\zeta$. Excluding the trivial case where $\zeta=\alpha$, we can choose a sequence $\zeta_n \in (\alpha,\zeta)$ that is increasing to $\zeta$ if we order the segment from $\alpha$ to $\zeta$. Then if $\mu_n = \delta_{\zeta_n}$, the functions $f^\alpha_{\mu_n} = \uuno_{(\alpha,\zeta_n)}$, in turn, increase to $f^\alpha_{\mu} = \uuno_{(\alpha,\zeta)}$ and by monotone convergence we have $F^\alpha_\mu = \lim_{n\to\infty} F^\alpha_{\mu_n}$ pointwise. Since $-F_{\mu_n}^\alpha \in \ag[\alpha]$ for all $n$, it follows from \cref{arachelloverdechiuso} that $-F^\alpha_\mu \in \ag[\alpha]$ and from the continuity of $-\Delta$ in \cref{arachelloverdesattoh} that $\Delta F^\alpha_\mu = \lim_{n\to\infty} \mu_n - \delta_\alpha = \mu -\delta_\alpha$. Here the last equality follows by directly computing that for a continuous function $f: \pwanb \to \RR$ we have $\int fd \mu_n = f(\zeta_n) -f(\alpha) \longrightarrow f(\alpha) - f(\zeta)$, as $n\to\infty$. We have therefore obtained \cref{fgrandeeilpotenziale} for all $\mu = \delta_\zeta, \ \zeta\in \pwanb$.
            
            The case of general finite discrete measures follows by linearity. 
            
            If now $\mu$ is a discrete probability measure, it is in particular a linear combination of Dirac masses $\delta_\zeta$, for which we showed that $-F^\alpha_{\delta_\zeta} \in \ag[\alpha]$. We then obtain by linearity that $-F_\mu^\alpha$ is in $\bdv(\pwanb)$, that it is upper semi-continuous on $\hypb \smallsetminus \{\alpha\}$, and since $\mu$ is a probability measure, $\Delta ( - F_\mu^\alpha ) = \delta_\alpha -\mu \in  \meas^+_1[\alpha]$. Therefore $-F^\alpha_\mu \in \ag[\alpha]$.
        \end{proof}
        \begin{theorem}[manual-num=\ref{equidistrotau}]
            Let $R\in K(z)$ with $\deg R \geq 2$ and let $\alpha \in \hypb$. As $n \to \infty$, the probability measures $(\deg R^n)^{-1}\tau_{R^n}^\alpha$ converge to the equilibrium measure $\nu_R$.
        \end{theorem}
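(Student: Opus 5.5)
The plan is to deduce the statement from Brolin's \cref{brolin}, applied pointwise through the ``diagonal'' definition of $t^\alpha_{R^n}$, and then to transfer pointwise convergence of the potentials into weak convergence of their Laplacians using the Arakelov--Green machinery (\cref{arachelloverdechiuso}, \cref{arachelloverdesattoh}). Write $d=\deg R$ and $\tau_n := d^{-n}\tau^\alpha_{R^n}$, which is a discrete probability measure by \cref{tausuigamma}.

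First I dispose of the potential good reduction case. There \cref{tauincasobenridotto} (b), applied to $R^n$ (which also has good reduction in the same coordinates and satisfies $\julia_{R^n}=\julia_R=\{\xi\}$), gives
\[\tau_n=(1-d^{-n})\delta_\xi+d^{-n}(r_{\Gamma_n})_*\delta_\alpha ,\]
which visibly converges to $\delta_\xi=\nu_R$. Otherwise, by \cref{reduciergodo}, $\nu_R$ charges no point, and this is the case I treat from now on.

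The first key step is the identity $t^\alpha_{R}=f^\alpha_{\tau^\alpha_R}$ on $\hypb\smallsetminus\{\alpha\}$, up to a $\lambda$-null set; equivalently $T^\alpha_R=F^\alpha_{\tau^\alpha_R}$. The proof goes through the Laplacians. By \cref{tausuigamma} and \cref{potenzialiefzdidistro}, both $T^\alpha_R$ and $(\deg R)F^\alpha_{\tau^\alpha_R/\deg R}$ lie in $\bdv(\pwanb)$ and have Laplacian $\tau^\alpha_R-(\deg R)\delta_\alpha$. They therefore differ by a constant, since the kernel of $\Delta$ consists of the constants. Both vanish at $\alpha$, so they coincide, and differentiating along segments from $\alpha$ identifies $t$ with $f$ almost everywhere.

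Consequently $-T^\alpha_{R^n}/d^n=-F^\alpha_{\tau_n}$, and by \cref{potenzialiefzdidistro} this lies in $\ag[\alpha]$.

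Next I get pointwise convergence of the potentials. Fix $\zeta\in\hypb\smallsetminus\{\alpha\}$. Since $\zeta$ is not of type I, $\delta_\zeta$ does not charge the exceptional set, so \cref{brolin} gives $d^{-n}(R^n)^*\delta_\zeta\to\nu_R$. The set $V^\alpha(\zeta)$ has boundary $\{\zeta\}$, which is $\nu_R$-null, so \cref{portmantone} yields
\[d^{-n}t^\alpha_{R^n}(\zeta)\to \nu_R(V^\alpha(\zeta))=f^\alpha_{\nu_R}(\zeta).\]
All these functions take values in $[0,1]$, so dominated convergence on $[\alpha,\zeta]$ gives $F^\alpha_{\tau_n}(\zeta)\to F^\alpha_{\nu_R}(\zeta)$, which is finite and bounded by $\varrho(\alpha,\zeta)$. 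By \cref{arachelloverdechiuso} the limit $-F^\alpha_{\nu_R}$ lies in $\ag[\alpha]$, and by the continuity in \cref{arachelloverdesattoh} we get $\tau_n\to\mu$ for the probability measure $\mu:=\delta_\alpha+\Delta F^\alpha_{\nu_R}$.

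It remains to show $\mu=\nu_R$, which I expect to be the main obstacle, since \cref{potenzialiefzdidistro} only covers discrete measures. I would avoid computing $\Delta F^\alpha_{\nu_R}$ directly and argue instead via distribution functions. For all but countably many $\xi\in\hypb\smallsetminus\{\alpha\}$ we have $\mu(\{\xi\})=0$, so \cref{portmantone} applied to $\tau_n\to\mu$ gives
\[\mu(V^\alpha(\xi))=\lim_n \tau_n(V^\alpha(\xi))=\lim_n d^{-n}t^\alpha_{R^n}(\xi)=\nu_R(V^\alpha(\xi)),\]
where the middle equality is the identity $t=f_\tau$ from the first step. One small point to handle here is that the identity holds only almost everywhere; since $t$ is piecewise constant, this can be fixed by choosing $\xi$ at which both sides are locally constant.

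Finally I must show that two probability measures agreeing on the open disks $V^\alpha(\xi)$, for $\xi$ outside a countable set, are equal. The family of such disks (together with their complements and intersections) is stable enough to form a $\pi$-system generating the Borel $\sigma$-algebra. Indeed, open affinoids are finite intersections of disks of this form or their complements, and an excluded $\xi$ can be approximated by nearby admissible points along segments. This closes the argument.
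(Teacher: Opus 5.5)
\textbf{Verdict: correct up to the last step, but the final identification $\mu=\nu_R$ has a gap as justified.}

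Everything up to $\tau_n\to\mu:=\delta_\alpha+\Delta F^\alpha_{\nu_R}$ is correct and is essentially the paper's argument. This includes:
\begin{itemize}
\item matching the Laplacians of $T^\alpha_{R^n}$ and $F^\alpha_{\tau^\alpha_{R^n}}$;
\item Brolin plus Portmanteau on $V^\alpha(\xi)$;
\item dominated convergence;
\item \cref{arachelloverdechiuso} and \cref{arachelloverdesattoh}.
\end{itemize}

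\textbf{The gap.} You justify $\mu=\nu_R$ by saying the sets $V^\alpha(\xi)$, with complements and intersections, form a $\pi$-system generating the Borel $\sigma$-algebra. This is false for general $K$. Suppose the residue field $k$ is uncountable and $\zeta$ is a type II point. Consider the class of Borel sets $B$ that either contain $\bdisk(v)$ for all but countably many $v\in T_\zeta\pwanb$, or are disjoint from $\bdisk(v)$ for all but countably many $v$. This class is a $\sigma$-algebra and contains every open affinoid. But it does not contain the open set $\bigcup_{v\in S}\bdisk(v)$ when $S$ and $T_\zeta\pwanb\smallsetminus S$ are both uncountable. So Dynkin's theorem is unavailable. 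Your argument would work if $\pwanb$ were second countable, e.g.\ $K=\mathbb{C}_p$, but the paper allows arbitrary $K$.

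\textbf{How to repair it.} The conclusion survives if you use that $\mu$ and $\nu_R$ are Radon:
\begin{itemize}
\item Pass from good $\xi$ to every open disk by monotone limits of $V^\alpha(\xi_k)$ along segments.
\item Pass to open affinoids, whose complements are finite disjoint unions of closed disks.
\item Pass to finite unions of affinoids by inclusion--exclusion.
\item Pass to all open sets by inner regularity: a compact subset of an open $U$ is covered by finitely many affinoids inside $U$.
\item Pass to Borel sets by outer regularity.
\end{itemize}

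\textbf{How the paper avoids this.} The paper never needs a uniqueness theorem for measures; it computes $\Delta F^\alpha_{\nu_R}$ directly. It runs the same Arakelov--Green continuity argument on the auxiliary discrete probability measures $\nu_n=(\deg R^n)^{-1}(R^n)^*\delta_{\zeta(0,1)}$. These converge to $\nu_R$ by \cref{brolin}. Their potentials $F^\alpha_{\nu_n}$ converge pointwise to $F^\alpha_{\nu_R}$, by Portmanteau and dominated convergence, using that $\nu_R$ has no atoms. Since \cref{potenzialiefzdidistro} applies to each $\nu_n$, continuity of $-\Delta$ gives $\Delta F^\alpha_{\nu_R}=\nu_R-\delta_\alpha$, i.e.\ $\mu=\nu_R$ at once.

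Using a second approximating sequence whose weak limit is known a priori is the cleaner way to close your argument. With it, your detour through distribution functions and the pointwise identity $t^\alpha_{R^n}=f^\alpha_{\tau^\alpha_{R^n}}$ becomes unnecessary.
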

        \begin{proof}
            In the case of potential good reduction, by \cref{reduciergodo} there exists a type II point $\xi \in \hypb$ such that $\nu_R = \delta_\xi$, so that the convergence in the statement follows readily from  \cref{tauincasobenridotto} (b).
            
            Assume now that $R$ has no potential good reduction. To simplify our notation, we will write $t_{R^n}^\alpha =: t_n, T_{R^n}^\alpha =: T_n, \tau_{R^n}^\alpha =: \tau_n$, omitting the dependence from $R$ and $\alpha$.\\
            By Brolin's \cref{brolin}, since  $\zeta=\zeta(0,1) \in \hypb$ is outside of the exceptional set of $R$, we have a sequence of probability measures $\nu_n = (\deg R^n)^{-1}(R^n)^* \delta_\zeta$ converging to $\nu_R$. Since $R$ has no potential good reduction, by \cref{reduciergodo} we know that $\nu_R$ does not charge points. Therefore, for all $\xi \in \hypb$ we have $\nu_R(\partial V^\alpha(\xi)) = \nu_R(\{\xi\}) = 0$. It then follows from the Portmanteau \cref{portmantone} that
            \begin{equation*}
                f^\alpha_{\nu_n}(\xi) = \nu_n( V^\alpha(\xi))  \xrightarrow[ \quad n \to \infty \quad]{ } \nu_R(V^\alpha(\xi)) = f^\alpha_{\nu_R}(\xi) \minis .
            \end{equation*}
            Since $0 \leq f^\alpha_{\nu_n} \leq 1$ for all $n$, by dominated convergence we obtain
            \begin{flalign*}
                && F^\alpha_{\nu_n}(\zeta) \xrightarrow[\quad n\to\infty \quad ]{} F^\alpha_{\nu_R}(\zeta) , && \forall \ \zeta \in \hypb .
            \end{flalign*}
            Each $\nu_n$ is a discrete probability measure, so by \cref{potenzialiefzdidistro} we have $-F^\alpha_{\nu_n} \in \ag[\alpha]$ and $\Delta F^\alpha_{\nu_n} = \nu_n - \delta_\alpha$, for all $n \in \NN$. By \cref{arachelloverdechiuso} we have $-F^\alpha_{\nu_R} \in \ag[\alpha]$ and by continuity of $-\Delta$ in \cref{arachelloverdesattoh}, we have
            \begin{equation} \label{deltaeffemu}
                 \Delta F^\alpha_{\nu_R} = \lim_{n \to \infty} \Delta F^\alpha_{\nu_n} = \lim_{n \to \infty } \nu_n - \delta_\alpha  = \nu_R - \delta_\alpha \minis . 
            \end{equation}
            Again using Brolin's \cref{brolin} and the Portmanteau \cref{portmantone}, we have that
            \begin{flalign}\label{convergiotpiccolo}
                &&(\deg R^n)^{-1} t_n(\xi) = (\deg R^n)^{-1}(R^n)^*\delta_\xi (V^\alpha(\xi)) \xrightarrow[\quad n \to \infty \quad]{} \nu_R(V^\alpha(\xi)) = f^\alpha_{\nu_R}(\xi),  && \forall \ \xi \in \hypb .
            \end{flalign}
            By \cref{tausuigamma} the measure $(\deg R^n)^{-1}\tau_n$ is a discrete probability measure for all $n$, whence, by \cref{potenzialiefzdidistro}, we have   
            \begin{equation}\label{deltaeffetauenne}
                -(\deg R^n)^{-1}F^\alpha_{\tau_n} \in \ag[\alpha] \minis , \qquad \Delta F^\alpha_{\tau_n} = \tau_n - (\deg R^n) \times \delta_\alpha = \Delta T_n \minis ,
            \end{equation} 
            where the second equality is the one from \cref{laplaciazzoT}.
            Hence $F^\alpha_{\tau_n}$ and $T_n$ must differ by a constant, whence $-(\deg R^n)^{-1} T_n \in \ag[\alpha]$.
            Using \cref{convergiotpiccolo}, and again dominated convergence, we obtain that
            \begin{equation*}
                 (\deg R^n)^{-1}T_n(\zeta) = (\deg R^n)^{-1}\int_\alpha^\zeta t_n(\xi) d \lambda (\xi)  \xrightarrow[\quad n \to \infty \quad ]{} \int_\alpha^\zeta f_{\nu_R} (\xi) d \lambda (\xi) = F^\alpha_{\nu_R}(\zeta) , \qquad \forall \ \zeta \in \hypb.
            \end{equation*}
            It then follows from \cref{deltaeffemu} and continuity of $-\Delta$ in \cref{arachelloverdesattoh} that $(\deg R^n)^{-1} \tau_n  \xrightarrow[\ n \to \infty \ ]{} \nu_R$.
        \end{proof}
    \subsection{Equidistribution of repelling points}\label{sezequidistropuntirep}
    Recall that we denote by $\rf(R)$ the set of its repelling fixed points of $R\in K(z)$ and we set
    \begin{equation*}
        \sigma_R \ = \sum_{\zeta \in \rf(R)} (\deg_R \zeta ) \times \delta_\zeta \minis .
    \end{equation*}
    \paragraph{Polynomials.} We will now show that when $R\in K(z)$ is a polynomial, for $\alpha\in \hypb$ close to $\infty \in \pwanb$, we have $\tau_R^\alpha = \sigma_R$. We first prove the following well-known fact, for lack of a suitable reference.
    \begin{lemma}\label[lemma]{polifissanoinfinititangenti}
        Let $R\in K[z]$ be a nonconstant polynomial, $\zeta\in \pwanb, v \in T_\zeta\pwanb$. Under the map on tangent spaces $R: T_\zeta\pwanb \to T_{R(\zeta)}\pwanb$, the direction $\dir{\zeta\infty}$ is the only preimage of the direction $\dir{R(\zeta)\infty}$.  
    \end{lemma}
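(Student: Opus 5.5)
The plan is to use \cref{degreessumup} (a) applied with $w = \dir{R(\zeta)\infty}$. This gives
\[
\deg_R \zeta = \sum_{v \in R^{-1}(w)} \deg_R v .
\]
The lemma therefore follows once we show two things: that $\dir{\zeta\infty}$ is mapped to $w$, and that $\deg_R \dir{\zeta\infty} = \deg_R \zeta$. Since every direction has local degree at least $1$, these two facts leave no room for any other preimage. The degenerate case $\zeta = \infty$ needs separate handling. There $R(\infty)=\infty$, so $\dir{\zeta\infty}$ is not a direction, and the statement should be read as vacuous or interpreted accordingly. So I would assume $\zeta \neq \infty$, which forces $R(\zeta) \neq \infty$ because $R^{-1}(\infty)=\{\infty\}$ for a polynomial.

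For the first claim I would take a short segment $[\zeta,\xi] \subset [\zeta,\infty]$ on which $R$ is a homeomorphism, as in \cref{expansionbydegree}. I would then argue that $R$ maps $[\zeta,\infty]$ onto a path ending at $\infty$ which leaves $R(\zeta)$ in the direction of $\infty$. Concretely, work in coordinates with $\zeta = \zeta(a,r)$. The segment toward $\infty$ consists of the points $\zeta(a,s)$ with $s \geq r$, and $R(\zeta(a,s)) = \zeta(R(a), \max_n |c_n| s^n)$ by the power series description in \cref{sezioneazione}. Here the expansion is a finite polynomial in $z-a$, so it converges everywhere. The resulting radius is nondecreasing in $s$ and strictly increasing for $s$ close to $r$, so the image moves up the partial order, which is exactly the direction $\dir{R(\zeta)\infty}$. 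The same formula handles type I points, where $r=0$. For type IV points there is only one direction, and the claim is trivial.

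For the second claim I would use the local degree directly. The open disk $\bdisk(\dir{\zeta\infty})$ contains $\infty$, and $R^{-1}(\infty) = \{\infty\}$ with full multiplicity $\deg R$. Every other direction $v \neq \dir{\zeta\infty}$ has $\bdisk(v)$ a bounded disk $\{\xi : \xi < \zeta\}$-type component. By \cref{imgofdisk}, $R(\bdisk(v))$ is then either $\bdisk(R(v))$ or all of $\pwanb$. The latter is impossible because $\infty \notin R(\bdisk(v))$: polynomials map bounded disks to bounded disks. So $R(\bdisk(v)) = \bdisk(R(v))$, which is bounded and hence does not contain $\infty$, so $R(v) \neq \dir{R(\zeta)\infty}$.

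This second argument already proves the lemma without any degree count: the bounded directions cannot map to the direction toward $\infty$. The main obstacle is therefore just justifying that $\bdisk(v)$ for $v \neq \dir{\zeta\infty}$ is a classical bounded disk $\{\xi \in \awanb : \xi \text{ corresponds to a subdisk of some } \cld(b,s)\}$ not containing $\infty$, and that its image under a polynomial avoids $\infty$. Both follow from $R^{-1}(\infty)=\{\infty\}$, since $\infty \notin \bdisk(v)$.
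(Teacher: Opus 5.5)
Your argument is correct, and its core is the paper's: for $v\neq\dir{\zeta\infty}$ you have $\infty\notin\bdisk(v)$, so $R^{-1}(\infty)=\{\infty\}$ together with \cref{imgofdisk} forces $R(\bdisk(v))=\bdisk(R(v))\not\ni\infty$, which is the contrapositive of the paper's step ``$\bdisk(v)$ must contain a preimage of $\infty$''. The only difference is the existence half, which the paper gets in one line from surjectivity of the tangent map via \cref{degreessumup} (a); that is the identity you already wrote down, and since $\deg_R\zeta\geq 1$ the fiber over $\dir{R(\zeta)\infty}$ is nonempty, so your power-series computation and separate type IV case are correct but unnecessary.
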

    \begin{proof}
        Suppose that $v\in T_\zeta\pwanb$ is mapped to $\dir{R(\zeta) \infty} \in T_{R(\zeta)}\pwanb$. By \cref{imgofdisk}, the disk $\bdisk(v)$ contains a preimage of $\infty$. Since $R$ is a polynomial, this preimage must be $\infty$ and therefore $v = \dir{\zeta\infty}$. On the other hand, by \cref{degreessumup} (a), the map on tangent spaces is surjective, so in particular, $\dir{R(\zeta)\infty}$ has a preimage.
    \end{proof}

    \begin{theorem}[manual-num=\ref{polynomials}]
         If $R \in K[z]$ is a polynomial with $\deg R \geq 2$ and $\alpha$ is contained in the Fatou component of $\infty \in \pwanb$, then $\sigma_R = \tau_R^\alpha$. In particular, $\deg R = \sum_{\zeta \in \rf(R)} \deg_R \zeta$, and if $\deg R \geq 2$, we have the equidistribution of repelling points:
        \[\left (\deg R^n \right )^{-1}  \sigma_{R^n} \; \xrightarrow[\quad n\to \infty \quad ]{} \; \nu_R \minis .\]
    \end{theorem}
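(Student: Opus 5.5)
Given \cref{tausuigamma} and \cref{formaditausuciascungamma}, it suffices to show two things for $\alpha$ in the Fatou component $U_\infty$ of $\infty$: every $\Gamma\in\nas(R)$ contains a repelling point, so that there are no indifferent subtrees, and $r_\Gamma(\alpha)$ is itself a repelling point at which $\tau^\alpha_R$ has the correct coefficient $\deg_R r_\Gamma(\alpha)$. With these, \cref{formaditausuciascungamma} gives $\tau_R^\alpha|_\Gamma = \sum_{\zeta\in\Gamma\cap\rf(R)}\deg_R\zeta\,\delta_\zeta$ for every $\Gamma$, since type I repelling subtrees are singletons with coefficient $1=\deg_R\zeta$. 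Summing over $\Gamma$ yields $\tau_R^\alpha=\sigma_R$. The counting statement $\sum_{\zeta\in\rf(R)}\deg_R\zeta=\deg R$ then follows from the total mass in \cref{tausuigamma}. For the equidistribution, $U_\infty$ is the Fatou component of $\infty$ for every iterate $R^n$ (Fatou and Julia sets are unchanged under iteration, as $\nu_{R^n}=\nu_R$), so $\tau^\alpha_{R^n}=\sigma_{R^n}$ for all $n$, and \cref{equidistrotau} concludes.

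The key structural input is the following. Let $\zeta\in\Gamma\in\nas(R)$ with $\zeta\neq\infty$. By \cref{polifissanoinfinititangenti} the direction $\dir{\zeta\infty}$ is fixed, and it is the unique preimage of itself, so by \cref{degreessumup} (a) we get $\deg_R\dir{\zeta\infty}=\deg_R\zeta$. I would first prove that $\Gamma\cap U_\infty=\varnothing$ and that $\dir{\zeta\infty}\notin T_\zeta\Gamma$. Since $\infty$ is a superattracting fixed point of the polynomial, $U_\infty$ is its immediate basin, and every point of $U_\infty\setminus\{\infty\}$ is pushed strictly toward $\infty$ on the segment $[\,\cdot\,,\infty]$, essentially by $\zeta(0,s)\mapsto\zeta(0,|c|s^d)$ for large $s$. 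Hence $U_\infty$ contains no fixed points other than $\infty$, which is attracting. From this it also follows that $r_\Gamma(\alpha)=r_\Gamma(\infty)$, because the segment from $\alpha$ to $\infty$ stays inside $U_\infty$ and therefore misses $\Gamma$. Writing $\zeta_0=r_\Gamma(\alpha)$, the point $\zeta_0$ is the unique point of $\Gamma$ closest to $\infty$.

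Next, by \cref{piuvicinoadalphaspento} (a), the fixed direction $\dir{\zeta_0\alpha}=\dir{\zeta_0\infty}$ is repelling. If $\zeta_0$ is type I, \cref{taledirezionetalepunto} (b) makes it repelling. If $\zeta_0\in\hypb$, the same result again shows it is repelling, with $\deg_R\zeta_0=\deg_R\dir{\zeta_0\infty}\geq2$ by \cref{indifferentdirections} (c). So every $\Gamma$ is a repelling subtree, and its retraction point is repelling. In case (b) of \cref{formaditausuciascungamma} the coefficient at $\zeta_0$ is $1+(\deg_R\zeta_0-1)=\deg_R\zeta_0$. At any other repelling $\zeta\in\Gamma$ the coefficient is $\deg_R\zeta-1$, and $\zeta$ is then of type II with $\deg_R\zeta\geq2$. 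So I still need to reconcile these other points with $\sigma_R$, which requires $\deg_R\zeta$.

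This reconciliation is the main obstacle, and I would show that $\Gamma\cap\rf(R)=\{\zeta_0\}$. Take another repelling $\zeta\in\Gamma$, $\zeta\ne\zeta_0$. By \cref{piuvicinoadalphaspento} (b), $\dir{\zeta\infty}$ is an indifferent fixed direction, so $\deg_R\dir{\zeta\infty}=1$ by \cref{indifferentdirections} (b). But $\deg_R\dir{\zeta\infty}=\deg_R\zeta$, which forces $\deg_R\zeta=1$, contradicting \cref{gradiinnafl} (b). So the extra peaks never occur, and $\tau^\alpha_R|_\Gamma=\deg_R\zeta_0\,\delta_{\zeta_0}=\sigma_R|_\Gamma$.

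The remaining delicate point is the claim that $U_\infty$ contains no non-attracting fixed points and that every $\Gamma$ lies outside it. If a direct proof is messy, I would argue as follows. Any fixed $\zeta\ne\infty$ with $\dir{\zeta\infty}$ indifferent lies on a fixed segment toward $\infty$, whereas points of $U_\infty$ near $\infty$ escape. Then I would use the invariance of $U_\infty$ together with the fact that $R$ maps $[\zeta,\infty]$ onto $[R(\zeta),\infty]$ monotonically, which follows from \cref{polifissanoinfinititangenti}.
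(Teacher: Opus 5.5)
Your overall architecture matches the paper's. The unique-preimage property of $\dir{\zeta\infty}$ gives $\deg_R\dir{\zeta\infty}=\deg_R\zeta$. \cref{piuvicinoadalphaspento} (a) makes the retraction point repelling. \cref{piuvicinoadalphaspento} (b) together with \cref{indifferentdirections} rules out every other peak of $\Gamma$.

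\textbf{The gap.} The problem is how you connect $\alpha$ to $\infty$. You derive $r_\Gamma(\alpha)=r_\Gamma(\infty)$, and hence $\dir{\zeta_0\alpha}=\dir{\zeta_0\infty}$, from the claim that $U_\infty$ contains no fixed point other than $\infty$. You need that equality before you can even conclude that $\dir{\zeta_0\alpha}$ is fixed and $\zeta_0$ is repelling. The claim itself is not proved:
\begin{enumerate}[(i)]
\item The computation $\zeta(0,s)\mapsto\zeta(0,|c|s^d)$ only controls one axis near $\infty$.
\item Read literally, the assertion that every point of $U_\infty\smallsetminus\{\infty\}$ is pushed strictly toward $\infty$ along $[\,\cdot\,,\infty]$ is false. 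For $R(z)=z^2$ and $\xi=\zeta(a,r)\in U_\infty$ with $|a|>1$ and $r$ small, the image $\zeta(a^2,|2a|r)$ does not lie on $[\xi,\infty]$.
\item Your fallback sketch does not close. Along $[\zeta,\infty]$ you would find a maximal fixed segment $[\zeta,\zeta_1]$ followed by escaping points. Nothing in the invariance of $U_\infty$ or the monotonicity of $R$ on $[\zeta,\infty]$ forbids this.
\end{enumerate}
What forbids it is that $\zeta_1$ has a repelling fixed direction. So it is a repelling fixed point by \cref{taledirezionetalepunto} (b), and hence lies in $\julia_R$ by \cref{chistadove} (b). You never invoke Julia-set membership, yet it is the only place where the hypothesis on $\alpha$ can enter. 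Citing the theorem that a Fatou component containing an attracting fixed point is its immediate basin would also suffice, but that is a substantial external result you neither state nor prove.

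\textbf{The fix.} The paper avoids the issue by reversing the order. \cref{piuvicinoadalphaspento} holds for an arbitrary base point in $\pwanb$, so apply it with $\infty$ in place of $\alpha$:
\begin{enumerate}[(i)]
\item The direction $\dir{r_\Gamma(\infty)\infty}$ is fixed by \cref{polifissanoinfinititangenti} and is not a non-repelling fixed direction, so it is repelling. Hence $r_\Gamma(\infty)\in\rf(R)\subset\julia_R$.
\item $U_\infty$ is connected and disjoint from $\julia_R$, so the segment $[\alpha,\infty]\subset U_\infty$ avoids $r_\Gamma(\infty)$. Therefore $\alpha\in\bdisk(\dir{r_\Gamma(\infty)\infty})$, and $r_\Gamma(\alpha)=r_\Gamma(\infty)$ by \cref{retrazionestessazione}.
\end{enumerate}
With this substitution the rest of your argument goes through essentially verbatim, and $\Gamma\cap U_\infty=\varnothing$ comes out as a byproduct rather than an input.

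A minor slip: $\dir{\zeta\infty}\notin T_\zeta\Gamma$ holds only at $\zeta=\zeta_0$. At every other point of $\Gamma$ it is exactly the indifferent direction you use in your reconciliation step.
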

    \begin{proof}
        Notice that since $R$ is a polynomial, $R^{-1}(\infty) = \{\infty\}$.
        Therefore, $\deg_R \infty = \deg R \geq 2$, which makes $\infty$ an attracting fixed point by \cref{classifidinamica} (a). In particular, $\infty \notin \nafl(R)$. Let $\Gamma \in \nas(R)$.
        
        Since $\infty \notin \nafl(R)$, we have $r_\Gamma(\infty) \neq \infty$ and therefore $\dir{r_\Gamma(\infty) \infty}$ is a direction. By \cref{polifissanoinfinititangenti} it is fixed.
        Taking $\alpha = \infty$ in \cref{piuvicinoadalphaspento} (a) and using the dichotomy \cref{dicotomiadirezioni} (a), we find that $\dir{r_\Gamma(\infty) \infty}$ must be a repelling fixed direction, which makes $r_\Gamma(\infty)$ a repelling fixed point by \cref{taledirezionetalepunto} (b).
      
        Let now $\zeta \in \Gamma \smallsetminus \{r_\Gamma(\infty)\}$. Then $\dir{\zeta\infty} \in T_\zeta\Gamma$, so by \cref{indifferelligammelli} it is an indifferent fixed direction, in particular, not a repelling fixed direction. Then if $\zeta$ is type I, by \cref{classifitangent} (a) and \cref{taledirezionetalepunto} (b), we see that $\zeta$ cannot be a repelling fixed point. Then since $\zeta\in \nafl(R)$, by \cref{gradiinnafl} (b), we have $\deg_R \zeta =1$. Suppose now instead that $\zeta\in \hypb$. Since $\dir{\zeta\infty}$ is an indifferent fixed direction, by \cref{indifferentdirections} (b), we have $\deg_R \dir{\zeta\infty} = 1$. It then follows from \cref{polifissanoinfinititangenti} and \cref{degreessumup} (a) that $\deg_R \zeta = 1$, and therefore from the definitions in Section \ref{defqualitadinamiche} that $\zeta$ is not a repelling fixed point.
        
        Summarizing the last two paragraphs:  the point $r_\Gamma(\infty)\in \Gamma$ is a repelling fixed point in $\Gamma$ and is in fact the unique repelling point in $\Gamma$. We conclude that, since $\Gamma \in \nas(R)$ was arbitrary,
        \begin{flalign}\label{tausugammapoli}
            &&(r_\Gamma)_*\delta_\infty + \sum_{\zeta\in \Gamma} (\deg_R \zeta - 1) \times \delta_\zeta = (\deg r_\Gamma(\infty)) \times (r_\Gamma)_* \delta_\infty \minis , &&  \forall \ \Gamma \in \nas(R). 
        \end{flalign}
        Let $\Gamma \in \nas(R)$. Since $\alpha$ is in the Fatou component containing $\infty$, and the repelling point $r_\Gamma(\infty)$ is contained in $\julia_R$ by \cref{chistadove} (b), it follows that $\bdisk(\dir{r_\Gamma(\infty)\infty}) = \bdisk( \dir{r_\Gamma(\infty) \alpha})$. 
        From \cref{retrazionestessazione} it then follows that $r_\Gamma(\alpha) = r_\Gamma(\infty)$.
        By definition of $r_\Gamma: \pwanb \to \Gamma$, for all $\zeta\in \Gamma\smallsetminus \{r_\Gamma(\infty)\}$, we have that $\dir{\zeta\infty}= \dir{\zeta r_\Gamma(\infty)} = \dir{\zeta r_\Gamma(\alpha)} = \dir{\zeta \alpha}$.
        Moreover, all repelling fixed points are contained in some $\Gamma \in \nas(R)$. Hence, by \cref{laplaciazzoT} and \cref{tausugammapoli},
        \begin{equation}\label{nuesigma}
            \tau_R^\alpha =  \sum_{\zeta \in \rf(R)} (\deg_R \zeta ) \times \delta_\zeta  = \sigma_R.
        \end{equation}
        In particular we have that $\deg R = \tau_R^\alpha(\pwanb) = \sigma_R(\pwanb) = \sum_{\zeta \in \rf(R)} \deg_R \zeta$. By applying the equality \cref{nuesigma} to the iterates of $R$, we obtain the equidistribution of repelling points from \cref{equidistrotau}.
    \end{proof}
    \paragraph{Rational functions with connected Julia set.} Recall the following measures mentioned in the introduction:
    \begin{equation}\label{errorucci}
        \tau^\alpha_{R, \operatorname{ind}} =\sum_{\substack{\Gamma \in\nas(R) \\ : \ \Gamma \cap \rf(R) = \varnothing }} \delta_{r_\Gamma(\alpha)} \minis , \qquad 
        \tau^\alpha_{R,\operatorname{sh}} = \sum_{\substack{\Gamma \in \nas(R) \\ : \ \Gamma \cap \rf(R) \neq \varnothing}} \left (  - \minis\delta_{r_\Gamma(\alpha)} +  \sum_{\zeta\in \rf(R) \cap\Gamma} \delta_\zeta \right ).
    \end{equation}
    It follows from \cref{formaditausuciascungamma} that
    \begin{equation*}
        \tau_R^\alpha - \sigma_R = \tau^\alpha_{R,\operatorname{ind}} - \tau^\alpha_{R,\operatorname{sh}} \minis .
    \end{equation*}
    Notice that both $ \tau^\alpha_{R,\operatorname{ind}}$ and $\tau^\alpha_{R,\operatorname{sh}}$ are nonnegative, and therefore are respectively the positive and negative part of $\tau_R^\alpha-\sigma$.
    
    We will now show that when the Julia set of $R$ is connected, the error term $\tau^\alpha_{R^n, \operatorname{sh}}$ vanishes for all $n$ if we choose $\alpha \in \julia_R$, so that repelling points equidistribute if we assume the amount of indifferent subtrees to become negligible in the limit. The key observation is that when $\tau^\alpha_{R, \operatorname{sh}} \neq 0$, there is a segment of the Fatou set between two fixed repelling points.
    \begin{lemma}\label[lemma]{seriddiallorafatu}
        Let $R\in K(z)$ with $\deg R\geq 2$. Suppose $\zeta,\zeta'\in \pwanb$ are distinct and that $R|_{(\zeta,\zeta')} = \id_{(\zeta,\zeta')}$. Then $(\zeta,\zeta') \cap \fatou_R$ contains a non-empty open segment.
    \end{lemma}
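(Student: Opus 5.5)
The plan is to pick any point $\xi_0\in(\zeta,\zeta')$ and show that all points of a short segment $(\xi_0,\zeta_\varepsilon)\subset(\zeta,\xi_0)$ or $(\xi_0,\zeta')$ are in $\fatou_R$. The tool is \cref{chistadove}, applied one point at a time. Every point of $(\zeta,\zeta')$ is fixed and, by \cref{segmentiapertiduetre}, of type II or III. Type III points are in $\fatou_R$ by \cref{chistadove} (a). Type II points are indifferent, since $R$ restricts to the identity on segments through them; so \cref{gradiinnafl} (a) together with \cref{indifferentdirections} gives $\deg_R=1$. For them I will check that the criterion in \cref{chistadove} (c) fails. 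I cannot simply use type III points plus openness of $\fatou_R$: when $|K^\times|=\RR_{>0}$ there are no type III points at all.

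First I parametrize a segment $[\xi_0,\zeta_L]\subset[\xi_0,\zeta')$ by $r\mapsto\zeta_r$ and apply \cref{preimageofsegment}. This gives $\varepsilon>0$ and finitely many disjoint arcs $(\xi_i,\xi_i^k(\varepsilon)]$ carrying all preimages $\xi_i^k(r)$ of $\zeta_r$ for $r\in[0,\varepsilon]$. One of these arcs is $(\xi_0,\zeta_\varepsilon]$ itself, because $R$ is the identity there. Any other arc starting at $\xi_0$ must leave in a direction different from $v=\dir{\xi_0\zeta_\varepsilon}$: two arcs from $\xi_0$ in the same direction would intersect, contradicting disjointness. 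Any arc starting at some $\xi_i\neq\xi_0$ lies, after shrinking $\varepsilon$ (as in \cref{scollateve}, via \cref{lemmadisgrazia}), outside $\overline{\bannulus(\xi_0,\zeta_\varepsilon)}$.

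The second step is to locate these preimages as seen from $\zeta_r$. Every preimage of $\zeta_r$ other than $\zeta_r$ itself lies either in $\pwanb\smallsetminus\bdisk(v)$ or in a disk $\bdisk(u)$ with $u\in T_{\xi_0}\pwanb\smallsetminus\{v\}$. Using \cref{lemmabruttu} and \cref{lemmaobbrobrioso}, all of these sets are contained in $\bdisk(\dir{\zeta_r\xi_0})$. Hence for every direction $w\in T_{\zeta_r}\pwanb\smallsetminus\{\dir{\zeta_r\xi_0},\dir{\zeta_r\zeta_\varepsilon}\}$, the disk $\bdisk(w)$ contains no preimage of $\zeta_r=R(\zeta_r)$. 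Then \cref{imgofdisk} forces $R(\bdisk(w))=\bdisk(R(w))\neq\pwanb$, since $\zeta_r\notin R(\bdisk(w))$. The two remaining directions $\dir{\zeta_r\xi_0}$ and $\dir{\zeta_r\zeta_\varepsilon}$ lie along the segment where $R=\id$, so they are fixed and in particular periodic. Thus no direction at $\zeta_r$ satisfies the condition in \cref{chistadove} (c), and $\zeta_r\in\fatou_R$ for every $r\in(0,\varepsilon)$. This gives the nonempty open segment $(\xi_0,\zeta_\varepsilon)\subset(\zeta,\zeta')\cap\fatou_R$.

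The main obstacle is the bookkeeping in the second step. One has to verify that arcs emanating from $\xi_0$ into off-segment directions, and arcs from the other preimages $\xi_i$, really remain inside $\bdisk(\dir{\zeta_r\xi_0})$ uniformly for $r\in(0,\varepsilon)$. I would handle this exactly as in \cref{scollateve} and \cref{bastaguardareglixiconi}: shrink $\varepsilon$ finitely many times, once per arc, and then use connectedness of each arc together with the disjointness of the closed sets supplied by \cref{lemmaobbrobrioso}.
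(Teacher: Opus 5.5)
\textbf{Gap: indifference of the segment points.} Your first paragraph claims that every type II point of $(\zeta,\zeta')$ is indifferent, because $R$ is the identity on segments through it. You cite \cref{gradiinnafl} (a) and \cref{indifferentdirections}, but neither gives this. \cref{gradiinnafl} (a) assumes the point is already known not to be repelling. \cref{indifferentdirections} (b) only gives $\deg_R=1$ for the indifferent \emph{directions} along the segment. The local degree of the point is a sum over a whole fibre of directions (\cref{degreessumup} (a)), and off-segment directions can map onto the segment directions.

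In fact the claim is false. Take $K=\CC_p$ with $p\geq 5$ and $R(z)=z^3$. The expansions $R(\pm1+w)=\pm1+3w\pm3w^2+w^3$ with $|3|=1$ give $R(\zeta(\pm1,s))=\zeta(\pm1,s)$ for $0<s\leq 1$, so $R$ is the identity on $(1,-1)$. Yet this open segment passes through $\zeta(0,1)$, which is a repelling fixed point of local degree $3$ and is all of $\julia_R$. So \cref{chistadove} (c) cannot be used at a point of the segment until indifference has been proved there. This is exactly why the segment must be shrunk.

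\textbf{Repair.} You only apply (c) at the points $\zeta_r$, $r\in(0,\varepsilon)$, so the fix is short. The direction $v=\dir{\xi_0\zeta_\varepsilon}$ is an indifferent fixed direction, so $\deg_R v=1$ by \cref{indifferentdirections} (b). By \cref{degreessumup} (c), you may then shrink $\varepsilon$ so that $\deg_R\zeta_r=1$, i.e.\ $\zeta_r$ is indifferent, for all $r\in(0,\varepsilon)$. Equivalently, this follows from the degree statement of \cref{preimageofsegment} applied to the arc $(\xi_0,\zeta_\varepsilon)$, which $R$ maps to itself. With this inserted, your argument is complete.

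\textbf{A smaller slip.} Take an arc issuing from some $\xi_i\neq\xi_0$ that lies in $\bdisk(v)$ beyond $\zeta_\varepsilon$. The preimages of $\zeta_r$ on it are neither in $\pwanb\smallsetminus\bdisk(v)$ nor in $\bdisk(\dir{\zeta_r\xi_0})$. They lie in $\bdisk(\dir{\zeta_r\zeta_\varepsilon})$, which is one of your two excluded directions, so your conclusion about off-segment disks at $\zeta_r$ still holds.

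\textbf{Comparison with the paper.} The paper changes coordinates so that $\zeta=\zeta(0,r)$ and $\zeta'=\zeta(0,r')$, with no pole in $\bannulus(\zeta,\zeta')$. Then every off-segment disk at a point of the segment has image omitting $\infty$. You instead detect $R(\bdisk(w))\neq\pwanb$ through the preimages of $\zeta_r$ itself. Your route is coordinate-free but needs all the bookkeeping around \cref{preimageofsegment}; the paper's takes a few lines. In the paper's setting too, indifference of the segment points is what makes (c) applicable. There it follows from \cref{degreessumup} (c), or from the absence of poles.
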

    \begin{proof}
        Up to restricting ourselves to a smaller segment, by \cref{segmentiapertiduetre} we can assume that $[\zeta,\zeta']$ consists of type II and type III points, with $\zeta,\zeta'$ type II. Up to a change of coordinates, we can then assume $\zeta= \zeta(0,r), \zeta'=\zeta(0,r')$, with $r'>r>0$. Up to replacing $\zeta'$ again, we can assume $r'>0$ close enough to $r>0$ so as to have no poles in $\bannulus(\zeta,\zeta')$. Let now $\xi \in (\zeta,\zeta')$. For all $v \in T_\xi\pwanb\smallsetminus \{\dir{\xi\zeta}, \dir{\xi\zeta'}\}$ we have $R(\bdisk(v)) \subset R(\bannulus(\zeta,\zeta') ) \subset \pwanb \smallsetminus \{\infty \}$. On the other hand, $\dir{\xi\zeta}, \dir{\xi\zeta'}$ are fixed by $R:T_\xi\pwanb \to T_\xi\pwanb$. By \cref{chistadove} (a), (c) it then follows that $\xi \in \fatou_R$. Since $\xi\in (\zeta,\zeta')$ was arbitary, we obtain that $(\zeta,\zeta')\subset \fatou_R$.
    \end{proof}
   
    \begin{theorem}[manual-num=\ref{connectedcounted}]
         Let $R\in K(z)$ with $\deg R \geq 2$ and suppose that $\julia_R$ is connected. Then $\julia_R \cap \hypb \neq \varnothing$, and if $\alpha \in \julia_R \cap \hypb$, then $\tau^\alpha_{R,\operatorname{sh}} = 0$.\\
         Therefore, picking $\alpha \in \julia_R \cap \hypb$ and assuming additionally that $\sigma_{R^n}(\pwanb) \sim \deg R^n$, or equivalently that $\tau^\alpha_{R^n,\operatorname{ind}}(\pwanb) = o(\deg R^n)$, we have 
        \begin{equation*}
            \left(\deg R^n \right )^{-1}\sigma_{R^n} \xrightarrow[\; n\to \infty \;]{} \nu_R \minis .
        \end{equation*}
    \end{theorem}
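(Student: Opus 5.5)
First I show $\julia_R\cap\hypb\neq\varnothing$. By \cref{naflnonvuoto} and its proof, $R$ has a repelling fixed point, which lies in $\julia_R$ by \cref{chistadove} (b). If $\julia_R$ contained only type I points, then, being connected, it would be a single point $z_0\in\PP^1(K)$: a connected set with two distinct points $\xi,\xi'$ contains $(\xi,\xi')$ by \cref{connessocontiene}, and $(\xi,\xi')\subset\hypb$ by \cref{segmentiapertiduetre}. But then $\nu_R=\delta_{z_0}$ charges a point, and \cref{reduciergodo} would force $\nu_R=\delta_\zeta$ with $\zeta$ of type II, a contradiction. Hence $\julia_R\cap\hypb\neq\varnothing$.

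Next fix $\alpha\in\julia_R\cap\hypb$ and a repelling subtree $\Gamma\in\nas(R)$. By \eqref{errorucci}, $\tau^\alpha_{R,\operatorname{sh}}$ vanishes exactly when every such $\Gamma$ contains a unique repelling point $\zeta$, and that point equals $r_\Gamma(\alpha)$. So suppose $\zeta\in\Gamma\cap\rf(R)$ and $\zeta\neq r_\Gamma(\alpha)$; I derive a contradiction. Set $\beta=r_\Gamma(\alpha)$. Then $\zeta\neq\beta$ both lie in $\Gamma$, so $(\zeta,\beta)\subset\Gamma$ by \cref{connessocontiene}. Every point of $\Gamma$ is fixed, so $R$ is the identity on $(\zeta,\beta)$. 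By \cref{seriddiallorafatu} there is then a nonempty open segment $I\subset(\zeta,\beta)\cap\fatou_R$.

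Now the connectedness of $\julia_R$ enters. We have $\zeta\in\julia_R$ by \cref{chistadove} (b), and $\alpha\in\julia_R$ by choice. Because $\beta=r_\Gamma(\alpha)$, the segment $[\zeta,\alpha]$ passes through $\beta$; that is, $[\zeta,\beta]\subset[\zeta,\alpha]$, since $[\zeta,\alpha]$ must go through $[\zeta,\beta]$ and then $[\beta,\alpha]$. The set $\julia_R$ is connected and closed, and it contains $\zeta$ and $\alpha$, so \cref{connessocontiene} gives $(\zeta,\alpha)\subset\julia_R$. In particular $I\subset\julia_R$, contradicting $I\subset\fatou_R$. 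Therefore every repelling point of $\Gamma$ equals $r_\Gamma(\alpha)$, so $\Gamma$ has exactly one repelling point and it is $r_\Gamma(\alpha)$. Each summand in $\tau^\alpha_{R,\operatorname{sh}}$ is then $-\delta_{r_\Gamma(\alpha)}+\delta_{r_\Gamma(\alpha)}=0$. I expect this step to be the main obstacle, specifically checking that the Fatou segment found by \cref{seriddiallorafatu} really lies on the arc between two Julia points.

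Finally, the Julia set of $R^n$ is the same as that of $R$: by \cref{brolin}, $\nu_{R^n}=\nu_R$. So the first part applies to each $R^n$ with the same $\alpha$, giving $\tau^\alpha_{R^n,\operatorname{sh}}=0$ and hence $\tau^\alpha_{R^n}-\sigma_{R^n}=\tau^\alpha_{R^n,\operatorname{ind}}\geq0$. By \cref{tausuigamma}, $\tau^\alpha_{R^n}(\pwanb)=\deg R^n$, so $\sigma_{R^n}(\pwanb)\sim\deg R^n$ holds if and only if $\tau^\alpha_{R^n,\operatorname{ind}}(\pwanb)=o(\deg R^n)$. Under this hypothesis the positive measures $(\deg R^n)^{-1}\tau^\alpha_{R^n,\operatorname{ind}}$ have total mass tending to $0$, so they converge weakly to $0$. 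Combined with \cref{equidistrotau}, this gives $(\deg R^n)^{-1}\sigma_{R^n}\to\nu_R$.
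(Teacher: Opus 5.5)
Your proposal is correct and follows essentially the same route as the paper: \cref{reduciergodo} plus connectedness gives $\julia_R\cap\hypb\neq\varnothing$; \cref{connessocontiene} applied to $\julia_R$, together with \cref{seriddiallorafatu}, forces $\rf(R)\cap\Gamma=\{r_\Gamma(\alpha)\}$; and $\julia_{R^n}=\julia_R$ with the mass comparison against \cref{equidistrotau} finishes the argument. The only difference is cosmetic: you argue by contradiction on the fixed segment $(\zeta,r_\Gamma(\alpha))\subset(\zeta,\alpha)$, whereas the paper shows directly that $(\zeta,\alpha)\cap\Gamma=\varnothing$.
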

    \begin{proof}
        Since $\julia_R$ is connected, by \cref{reduciergodo}, it either consists of one point in $\hypb$ or it contains a non empty open segment. Therefore $\julia_R \cap \hypb \neq \varnothing$, so we can fix an $\alpha \in \julia_R \cap \hypb$.
        
        Let $\Gamma$ be a non-attracting subtree containing a repelling point $\zeta$. Since $\julia_R$ is connected, we have $(\zeta,\alpha) \subset \julia_R$. It then follows from \cref{seriddiallorafatu} that $R$ cannot restrict to the identity on any subsegment $(\zeta,\zeta') \subset (\zeta,\alpha)$. Therefore $(\zeta,\alpha) \cap \Gamma = \varnothing$, from which we obtain that $\zeta= r_\Gamma(\alpha)$.
        We have thus shown that if a non-attracting subtree $\Gamma$ contains a repelling point, it is unique and equal to $r_\Gamma(\alpha)$. Looking at \cref{errorucci}, we then see that $\tau^\alpha_{R,\operatorname{sh}} = 0$. Since $\julia_R = \julia_{R^n}$ for all $n \geq 1$, we can apply the same reasoning to all iterates of $R$, thus obtaining $\tau^\alpha_{R^n, \operatorname{sh}} = 0$ for all $n\geq 1$, that is, $\tau_{R^n}^\alpha - \sigma_{R^n} = \tau^\alpha_{R^n,\operatorname{ind}}$. Therefore assuming $\sigma_{R^n}(\pwanb) \sim \deg R^n$ is equivalent to assuming $\tau^\alpha_{R^n,\operatorname{ind}}(\pwanb) = o(\deg R^n )$. With this extra hypotesis, $(\deg R^n)^{-1}\tau^\alpha_{R^n}- (\deg R^n)^{-1}\sigma_{R^n}$ is a sequence of nonnegative measures whose total mass tends to zero as $n\to \infty$, therefore showing that $(\deg R^n)^{-1}\sigma_{R^n}$ has the same limit as $(\deg R^n)^{-1}\tau^\alpha_{R^n}$, which by \cref{equidistrotau} is $\nu_R$.        
    \end{proof}
    \paragraph{Rational functions with positive Lyapunov exponent.}
    Let $R\in K(z)$. Denoting by $||R'|| : \pwanb \to \RR_{\geq 0}$ the spherical derivative of $R$ (see \cite{MR3369346} for its definition), the integral 
    \begin{equation}
        L(R)=\int_\pwanb \log || R'|| d \nu_R
    \end{equation}
    is known as the \textit{Lyapunov exponent} of $R$ (with respect to $\nu_R$).
    \begin{theorem}[manual-num=\ref{positivelyapunov}]
        Let $R\in K(z)$ with $\deg R \geq 2$. If $L(R) >0$, then we have
        \begin{equation}\label{sparisconogliperbolici}
            (\deg R^n)^{-1} \sum_{\zeta \in \fix_{>1} \cap \hypb} (\deg_R \zeta) \times\delta_\zeta \ \xrightarrow[\ n\to\infty \ ]{} \ 0 \minis , \text{ weakly on } \pwanb .
        \end{equation}
    \end{theorem}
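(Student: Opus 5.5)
The plan is a pure counting argument: we bound the total mass and never need to locate any points. We use \cref{tausuigamma} for $R^n$ together with \cref{thmb3frl}. Write $\mu_n := (\deg R^n)^{-1}\sum_{\zeta\in \fix_{>1}(R^n)\cap\hypb}(\deg_{R^n}\zeta)\,\delta_\zeta$, a positive measure. Since weak convergence of positive measures to $0$ is equivalent to total mass tending to $0$, it suffices to show $\mu_n(\pwanb)\to 0$.

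The first step is a mass identity from \cref{tausuigamma} applied to $R^n$. We have
\[
\deg R^n \;=\; \sum_{\Gamma\in\nas(R^n)} \deg_{R^n}\Gamma \;=\; \#\nas(R^n) + \sum_{\zeta\in\nafl(R^n)}(\deg_{R^n}\zeta-1).
\]
By \cref{gradiinnafl} (b), the nonzero summands are exactly the type II repelling fixed points. Hence
\[
\sum_{\zeta\in\fix_{>1}(R^n)\cap\hypb}(\deg_{R^n}\zeta-1) \;=\; \deg R^n-\#\nas(R^n).
\]

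The second step bounds $\#\nas(R^n)$ from below. By \cref{formaditausuciascungamma} (a), every repelling type I fixed point of $R^n$ forms its own non-attracting subtree $\{\zeta\}$. So
\[
\#\nas(R^n)\;\ge\; N_n:=\#\big(\fix_{>1}(R^n)\cap\PP^1(K)\big).
\]
By \cref{thmb3frl}, applied with $L(R)>0$, we have $N_n/\deg R^n\to 1$: the normalized counting measures converge weakly to the probability measure $\nu_R$, and testing against the constant function $1$ gives convergence of total masses. Therefore
\[
\sum_{\zeta\in\fix_{>1}(R^n)\cap\hypb}(\deg_{R^n}\zeta-1)\;\le\;\deg R^n - N_n \;=\; o(\deg R^n).
\]

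The third step, which I expect to be the only real obstacle, handles the fact that $\mu_n$ weights each point by $\deg_{R^n}\zeta$ rather than $\deg_{R^n}\zeta-1$. Every type II repelling fixed point satisfies $\deg_{R^n}\zeta\ge 2$, so $\deg_{R^n}\zeta\le 2(\deg_{R^n}\zeta-1)$. This gives
\[
\mu_n(\pwanb)\;\le\; 2(\deg R^n)^{-1}\big(\deg R^n-N_n\big)\;\to\;0,
\]
which finishes the proof. The only things to double-check are that \cref{formaditausuciascungamma} (a) genuinely makes the singleton subtrees distinct, so that $\#\nas(R^n)\ge N_n$, and that $\#\nas(R^n)$ is finite. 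Finiteness holds because each subtree has degree at least $1$ and the degrees sum to $\deg R^n$.
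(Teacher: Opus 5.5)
Your proposal is correct and follows essentially the paper's argument: it combines \cref{tausuigamma} for $R^n$, the fact from \cref{formaditausuciascungamma}~(a) that repelling type I points form singleton subtrees of degree $1$, \cref{thmb3frl}, and the bound $\deg_{R^n}\zeta\le 2(\deg_{R^n}\zeta-1)$. The only cosmetic difference is bookkeeping. You use the global identity $\sum_{\zeta}(\deg_{R^n}\zeta-1)=\deg R^n-\#\nas(R^n)$ together with $\#\nas(R^n)\ge N_n$, whereas the paper groups the repelling type II points by subtree and uses the exact identity $\deg R^n=\#(\rf(R^n)\cap\PP^1(K))+\sum_{\Gamma\,:\,\Gamma\cap\rf(R^n)\subset\hypb}\deg_{R^n}\Gamma$.
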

    \begin{proof}
        If $\zeta\in \rf(R) \cap \hypb$, then $\deg_{R} \zeta \geq 2$, whence $\deg_{R} \zeta \leq 2 (\deg_{R} \zeta - 1)$. It then follows from the definition \cref{defgradociocco} that
        \begin{multline}\label{ildoppiogrado}
             \sum_{\zeta \in \Gamma \cap \rf(R)} \deg_{R} \zeta   
             \leq \sum_{\zeta \in \Gamma \cap \rf(R) \cap \hypb} 2(\deg_{R} \zeta - 1) 
             \leq \ 2\deg_{R} \Gamma \minis , \\
             \qquad  \forall \ \Gamma \in \nas(R), \ \Gamma \cap \rf(R) \subset \hypb \minis .
        \end{multline} 
        Instead, if $\Gamma$ contains a repelling type I point $\zeta$, by \cref{formaditausuciascungamma} (a) we have $\Gamma = \{\zeta\}$ and $\deg_R \Gamma = 1$. It then follows  from \cref{tausuigamma} that
        \begin{equation}\label{tipounotipo2}
            \deg R \ = \ \# \left (\rf(R) \cap \PP^1(K) \right ) \ + \sum_{\substack{\Gamma \in \nas(R) \ : \\   \Gamma \cap \rf(R)\subset \hypb }} \deg_{R} \Gamma \minis .
        \end{equation}
        Now, by Favre and Rivera-Letelier's \cref{thmb3frl} we have that, since $L(R) >0$,
        \begin{equation*}
            (\deg R^n)^{-1}\sum_{\zeta \in \rf(R^n) \cap \PP^1(K)} \ \delta_\zeta \xrightarrow[ \ n\to\infty \ ]{} \ \nu_R \minis ,
        \end{equation*}
        which implies in particular that $\deg R^n -  \# (\rf(R) \cap \PP^1(K)) = o(\deg R^n)$.
        By \cref{formaditausuciascungamma} (a), all repelling fixed points of $R^n$ in $\hypb$ lie in some $\Gamma \in \nas(R^n)$ containing no type I repelling point. Using this fact and applying \cref{ildoppiogrado} and \cref{tipounotipo2} to the iterates of $R$, we find that 
        \begin{multline*}
            \sum_{\zeta \in \rf(R^n) \cap \hypb} \deg_{R^n} \zeta
            \ =  \sum_{\substack{\Gamma \in \nas(R^n) \ : \\   \Gamma \cap \rf(R^n) \subset \hypb }}   \sum_{\zeta \in \Gamma \cap \rf(R^n)} \deg_{R^n} \zeta   \\ 
            \leq \ 2 \left ( \sum_{\substack{\Gamma \in \nas(R^n) \ : \\   \Gamma \cap \rf(R^n) \subset \hypb }}  \deg_{R^n} \Gamma \right )
            = 2\left ( \deg R^n - \# \left (\rf  (R^n) \cap \PP^1(K) \right ) \right ) = o(\deg R^n)
        \end{multline*}
        Now it follows from the inequality above that the positive measure in the left-hand side of \cref{sparisconogliperbolici} has total mass tending to zero as $n\to \infty$, which implies our statement. 
    \end{proof}

\bibliographystyle{alpha}
\bibliography{bibbio}

%indirizzi
\bigskip
\bigskip

\noindent
\textsc{University of Washington,  Box 354350, Seattle, WA 98195, USA} \\
\textit{Email address}: \texttt{\href{loren20@uw.edu}{loren20@uw.edu}}

\end{document}